\documentclass[11pt]{amsart}
\pdfoutput=1
\setcounter{tocdepth}{1}
\usepackage[utf8]{inputenc}
\usepackage{amsfonts,amssymb}
\usepackage{amsmath,mathabx}
\usepackage{graphicx}
\usepackage{array}
\usepackage{enumitem}
\usepackage{url}
\usepackage{xargs}
\usepackage{etoolbox}

\DeclareFontFamily{U}{mathb}{}
\DeclareFontShape{U}{mathb}{m}{n}{ <5> <6> <7> <8> <9> <10> <12> gen * mathb <11> mathb10}{}
\DeclareSymbolFont{mathb}{U}{mathb}{m}{n}
\DeclareMathSymbol{\boxslash}     {2}{mathb}{"6D}
\DeclareMathSymbol{\smalltriangleup}   {2}{mathb}{"98}

\DeclareFontFamily{U}{FdSymbolA}{}
\DeclareFontShape{U}{FdSymbolA}{m}{n}{
    <-7.1> s * [1.0] FdSymbolA-Regular
    <7.1-> s * [1.0] FdSymbolA-Book
}{}
\DeclareFontShape{U}{FdSymbolA}{b}{n}{
    <-7.1> s * [1.0] FdSymbolA-Regular
    <7.1-> s * [1.0] FdSymbolA-Book
}{}
\DeclareSymbolFont{fdsymbols}{U}{FdSymbolA}{m}{n}
\DeclareMathSymbol{\smalltriangleup}   {2}{fdsymbols}{"4F}
\DeclareMathSymbol{\medtriangleup}   {2}{fdsymbols}{"57}
\DeclareMathSymbol{\largetriangleup}   {2}{fdsymbols}{"5E}
\DeclareMathSymbol{\smallboxslash}   {2}{fdsymbols}{"79}

\usepackage{empheq}
\numberwithin{equation}{section}

\usepackage{amsthm}
\theoremstyle{plain}
\newtheorem{introtheorem}{Theorem}{\bf}{\it}

{\bf}{\it}

\newtheorem{thm}{Theorem}[section]
\newtheorem{lem}[thm]{Lemma}
\newtheorem{cor}[thm]{Corollary}
\newtheorem{prop}[thm]{Proposition}

\theoremstyle{definition}
\newtheorem{defn}[thm]{Definition}
\newtheorem{exmp}[thm]{Example}
\newtheorem{rem}[thm]{Remark}
\newtheorem{convt}[thm]{Convention}

\AfterEndEnvironment{introtheorem}{\noindent\ignorespaces}
\AfterEndEnvironment{thm}{\noindent\ignorespaces}
\AfterEndEnvironment{lem}{\noindent\ignorespaces}
\AfterEndEnvironment{cor}{\noindent\ignorespaces}
\AfterEndEnvironment{prop}{\noindent\ignorespaces}
\AfterEndEnvironment{obs}{\noindent\ignorespaces}
\AfterEndEnvironment{fact}{\noindent\ignorespaces}
\AfterEndEnvironment{defn}{\noindent\ignorespaces}
\AfterEndEnvironment{exmp}{\noindent\ignorespaces}
\AfterEndEnvironment{rem}{\noindent\ignorespaces}
\AfterEndEnvironment{convt}{\noindent\ignorespaces}
\AfterEndEnvironment{proof}{\noindent\ignorespaces}
\makeatletter
\let\@afterindenttrue\@afterindentfalse
\@afterindentfalse
\makeatother

\newcommand{\Z}{\mathbb{Z}}
\newcommand{\R}{\mathbb{R}}
\newcommand{\C}{\mathbb{C}}
\newcommand{\braid}{\textsc{Braid}}
\newcommand{\purebraid}{\textsc{PBraid}}

\newcommand{\sym}{\textsc{Sym}}
\newcommand{\affsym}{\widetilde{\sym}}
\newcommand{\cat}{\textsc{CAT}}
\newcommand{\conf}{\textsc{Conf}}

\newcommand{\uconf}{\textsc{UConf}}
\newcommand{\Prod}{\textsc{Prod}}
\newcommand{\Cube}{\raisebox{\depth}{$\Box$}}
\newcommand{\Orth}{\raisebox{\depth}{$\boxslash$}}

\newcommand{\onto}{\twoheadrightarrow}
\newcommand{\into}{\hookrightarrow}
\newcommand{\diag}{\textsc{Diag}}

\newcommand{\fix}{\textsc{Fix}}
\newcommand{\conv}{\textsc{Conv}}
\newcommand{\bool}{\textsc{Bool}}
\newcommand{\cube}{\textsc{Cube}}

\newcommand{\comp}{\textsc{Cplx}}
\newcommand{\cay}{\operatorname{Cay}}
\newcommand{\NC}{\operatorname{NC}}
\newcommand{\DS}{\operatorname{DS}}
\newcommand{\NP}{\operatorname{NP}}

\newcommand{\flag}{\textsc{Flag}}
\newcommand{\bdry}{\textsc{bdry}}
\newcommand{\cov}{\textsc{cov}}
\newcommand{\splt}{\textsc{split}}
\newcommand{\move}{\textsc{Move}}
\renewcommand{\mod}{\mathrel{\operatorname{mod}}}

\newcommand{\defeq}{\mathrel{\mathop{:}}=}

\newcommand{\abs}[1]{\lvert #1 \rvert}
\newcommand{\norm}[1]{\lVert #1 \rVert}

\newcommand{\rk}{\operatorname{rk}}

\newcommand{\zero}{\mathbf{0}}
\newcommand{\one}{\mathbf{1}}
\newcommand{\card}[1]{\lvert{#1}\rvert}
\newcommand{\oprod}{\mathbin{\smallboxslash}}

\newcommand{\nrm}{\textsc{norm}}
\newcommand{\vx}{\vec{x}}
\newcommand{\vv}{\vec{v}}
\newcommand{\vp}{\vec{p}}
\newcommand{\vz}{\vec{z}}
\newcommand{\D}{\mathbb{D}}
\newcommand{\sph}{\mathbb{S}}
\newcommand{\set}{\textsc{set}}
\newcommand{\perm}{\textsc{perm}}
\newcommand{\proj}{\textsc{proj}}
\newcommand{\Edges}{\textsc{Edges}}

\usepackage{tikz,tikz-cd}
\usetikzlibrary{calc}
\newcommand{\drawpoints}{ 
	\foreach \n in {1,...,4} { \draw (\n) node [littledot] {}; }; }
\newcommand{\drawAngle}[3]{\path #1 -- ++#2 coordinate(temp0) ++#3
  coordinate(temp1) #1 -- ++#3 coordinate(temp2); \draw (temp0)--(temp1)--(temp2);}
\newcommand{\drawAngleD}[3]{\path #1 -- ++#2 coordinate(temp0) ++#3
  coordinate(temp1) #1 -- ++#3 coordinate(temp2); \draw[dashed] (temp0)--(temp1)--(temp2);}
\tikzstyle{BlueLine}=[line width=0.3mm,color=blue,text=black]
\tikzstyle{BluePoly}=[BlueLine,fill=blue!20]
\tikzstyle{RedLine}=[line width=0.3mm,color=red,text=black]
\tikzstyle{RedPoly}=[RedLine,fill=red!20]
\tikzstyle{GreenLine}=[thick,color=black!30!green,text=black]
\tikzstyle{GreenPoly}=[thick,color=green!50!black,fill=green!30,join=bevel]
\tikzstyle{OrangeLine}=[thick,color=orange]
\tikzstyle{GrayLine}=[thick,color=black!50!gray]
\tikzstyle{GrayPoly}=[GrayLine,fill=gray!20]
\tikzstyle{dot}=[shape=circle,draw,color=black,fill=black,inner sep=1.5pt]
\tikzstyle{bigdot}=[dot,inner sep=2pt]
\tikzstyle{littledot}=[dot,inner sep=1pt]
\tikzstyle{disk}=[thick,shape=circle,draw,color=black,fill=yellow!10]
\tikzstyle{plate}=[thick,shape=circle,draw,color=black,fill=yellow!10,
	rounded corners,minimum size=1.2cm]
\tikzstyle{dot}=[shape=circle,draw,color=black,fill=black,inner sep=1.5pt]
\tikzstyle{opendot}=[dot,fill=white, inner sep=1.5pt]
\tikzstyle{disk}=[thick,shape=circle,draw,color=black]

\newcommand{\makepent}{ \foreach \n in {1,...,5} { \coordinate (\n) at (\n*72 :0.5cm); }; }
\newcommand{\drawpentb}{ 
	\foreach \n in {2,4,5} { 
		\draw (\n) node [opendot] {}; 
	};
	\foreach \n in {1,3} { 
		\draw (\n) node [littledot] {}; 
	}; 
}

\numberwithin{equation}{section}

%
%

\hyphenation{pa-ra-me-trized}

\begin{document}

\title{Boundary Braids}
\date{\today}
\subjclass[2010]{
	Primary 20F36; 
	Secondary 20F65.   
}

\keywords{Braid groups}

\author[M.~Dougherty]{Michael Dougherty} \address{Dept. of
  Mathematics and Statistics, Grinnell College, Grinnell, IA 50112, USA}
\email{doughert2@grinnell.edu}

\author[J.~McCammond]{Jon McCammond} \address{Dept. of Mathematics,
  UC Santa Barbara, Santa Barbara, CA 93106, USA}
\email{jon.mccammond@math.ucsb.edu}

\author[S.~Witzel]{Stefan Witzel} \address{Dept. of Mathematics,
  Bielefeld University, PO Box 100131, 33501 Bielefeld, Germany}
\thanks{The third author was funded through the DFG project WI~4079/2. He also acknowledges the hospitality of UC Santa Barbara.}
\email{switzel@math.uni-bielefeld.de}

\begin{abstract}
The $n$-strand braid group can be defined as the fundamental group of
the configuration space of $n$ unlabeled points in a closed disk based
at a configuration where all $n$ points lie in the boundary of the
disk.  Using this definition, the subset of braids that have a
representative where a specified subset of these points remain
pointwise fixed forms a subgroup isomorphic to a braid group with fewer
strands. In this article, we generalize this phenomenon by introducing
the notion of boundary braids.  A boundary braid is a braid that has a
representative where some specified subset of the points remains in the
boundary cycle of the disk.  Although boundary braids merely form a
subgroupoid rather than a subgroup, they play an interesting geometric
role in the piecewise Euclidean dual braid complex defined by Tom
Brady and the second author.  We prove several theorems in this
setting, including the fact that the subcomplex of the dual braid
complex determined by a specified set of boundary braids metrically
splits as the direct metric product of a Euclidean polyhedron and a
dual braid complex of smaller rank.
\end{abstract} 

\maketitle

\noindent
Braids and braid groups play an important role throughout mathematics, in part because of the multiple ways in which they can be described.
In this article we view the $n$-strand braid group $\braid_n$ as the fundamental group of the unordered configuration space of $n$ distinct points in the closed unit disk $\mathbb{D}$,
based at an initial configuration $P$ where all $n$ points lie in the boundary of 
$\mathbb{D}$. Requiring the points indexed by $B \subseteq \{1,\ldots,n\}$ to 
remain fixed defines a parabolic subgroup $\fix_n(B)$ which is isomorphic to 
$\braid_{n - \card{B}}$.

We introduce an extension of this idea. A \emph{$(B,\cdot)$-boundary braid}
is a braid that has a representative where the points indexed by $B$ remain in the boundary of the disk but need not be fixed (see Definition~\ref{def:boundary-braids}). Our first main result is that the subgroup $\fix_n(B)$ has a canonical complement $\move_n(B,\cdot)$ in the set $\braid_n(B,\cdot)$ of 
$(B,\cdot)$-boundary braids that gives rise to a unique decomposition (see Section~\ref{sec:boundary_subcomplex}):

\begin{introtheorem}\label{inthm:decn}
Let $B \subseteq \{1,\ldots,n\}$ and let $\beta$ be a 
$(B,\cdot)$-boundary braid in $\braid_n$. Then there are unique braids 
$\fix^B(\beta)$ in $\fix_n(B)$ and $\move^B(\beta)$ in $\move_n(B,\cdot)$ such that
\[
\beta = \fix^B(\beta)\move^B(\beta)\text{.}
\]
\end{introtheorem}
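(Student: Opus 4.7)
The plan is to construct $\move^B(\beta)$ canonically from the boundary motion recorded by $\beta$, set $\fix^B(\beta) \defeq \beta \cdot \move^B(\beta)^{-1}$, verify it lies in $\fix_n(B)$, and then establish uniqueness by showing the intersection $\fix_n(B) \cap \move_n(B,\cdot)$ is trivial.

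To build $\move^B(\beta)$, I would fix a representative of $\beta$ in which the $B$-indexed strands lie in $\partial\mathbb{D}$ throughout (which exists by hypothesis) and restrict attention to the paths traced by these $\card{B}$ points in the circle $\partial\mathbb{D}$. This restriction is a loop of $\card{B}$-point configurations in $\partial\mathbb{D}$ based at the $B$-subset of $P$; it determines, and is determined up to homotopy rel endpoints by, the cyclic rearrangement of the $B$ points and their winding data around the circle. Let $\move^B(\beta)$ be the braid in $\braid_n$ whose representative executes exactly this boundary motion while leaving every non-$B$ strand stationary; by construction this lies in $\move_n(B,\cdot)$. Before proceeding I would check well-definedness: two representatives of $\beta$ that keep the $B$-points in $\partial\mathbb{D}$ differ by a homotopy through such representatives, and the induced circle motions of the $B$ strands are therefore homotopic in $\partial\mathbb{D}$, producing the same element of $\move_n(B,\cdot)$.

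For existence, the product $\beta \cdot \move^B(\beta)^{-1}$ has a representative that first performs $\beta$ and then undoes its boundary motion, so the $B$ strands return to their basepoint positions via a null-homotopic loop in $\partial \mathbb{D}$. A homotopy supported near the boundary contracts the $B$ strands to stationary paths, showing $\fix^B(\beta) \in \fix_n(B)$. For uniqueness, assume $\beta = f_1 m_1 = f_2 m_2$ with $f_i \in \fix_n(B)$ and $m_i \in \move_n(B,\cdot)$. Then $\gamma \defeq f_2^{-1} f_1 = m_2 m_1^{-1}$ lies in both $\fix_n(B)$ and $\move_n(B,\cdot)$. From the first membership, $\gamma$ admits a representative in which the $B$ strands are stationary; from the second, its only nontrivial motion is of the $B$ strands in $\partial\mathbb{D}$. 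Together these force $\gamma = 1$, and hence $f_1 = f_2$, $m_1 = m_2$.

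The main obstacle I anticipate is showing that $\move^B$ is well-defined on braids, i.e.\ that the canonical ``boundary motion'' extracted above is independent of the chosen $(B,\cdot)$-boundary representative. This hinges on a homotopy argument relating two boundary representatives through boundary representatives, and on interpreting the resulting motion in the circle $\partial\mathbb{D}$ as a well-defined element of $\move_n(B,\cdot)$. A secondary subtlety is the intersection-triviality argument: one must simultaneously realise an element of $\fix_n(B) \cap \move_n(B,\cdot)$ by a representative in which every strand is stationary, which requires combining the two representatives via a homotopy that respects both the boundary and the fixed non-$B$ strands.
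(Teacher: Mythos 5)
Your construction of $\move^B(\beta)$ cannot work. You define it as a braid ``whose representative executes exactly this boundary motion while leaving every non-$B$ strand stationary''. But the non-$B$ points $p_j$ with $j \notin B$ also lie in $\partial D$ and stay there throughout, so if any $B$-strand has nonzero wrapping number it would have to terminate at a vertex $p_j$ already occupied by a constant non-$B$ strand; the endpoint configuration is not even legal. Moreover, if $B \ne [n]$, then a $(B,\cdot)$-boundary braid admitting a representative that fixes every non-$B$ strand pointwise confines its $B$-strands to the arcs of $\partial D$ between consecutive non-$B$ vertices, so every wrapping number vanishes and the braid lies in $\fix_n(B)\cap\fix_n([n]-B)=\{1\}$ by Lemma~\ref{lem:wrapping_0} and Proposition~\ref{prop:para-induct}. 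Thus your $\move^B(\beta)$ can only be trivial, and the decomposition $\beta = \fix^B(\beta)\cdot 1$ would force $\beta\in\fix_n(B)$, which is false for a general $(B,\cdot)$-boundary braid.

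The underlying misconception is about what $\move_n(B,\cdot)$ is: the paper's move braids genuinely move non-$B$ strands. In Example~\ref{ex:fix-move} with $B=\{2,4,5,7\}$, the move braid has permutation $(2\ 3)(4\ 5\ 6)(7\ 8)$, nontrivial on the non-$B$ indices $3,6,8$. The real content of Theorem~\ref{inthm:decn} is the coherent canonical choice of how the non-$B$ strands must move: the paper defines $\move_n(B,\cdot)$ as the image of a section $\splt^B$ (Lemma~\ref{lem:splitting}) of the boundary-motion map $\bdry^B$ (Lemma~\ref{lem:bundle}), with well-definedness of the section resting on the noncrossing-partition decomposition (Proposition~\ref{prop:nc-decomposition}) and the multiplicativity of $\move^B$ on dual simple braids (Lemma~\ref{lem:move_multiplicative}). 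Uniqueness in Proposition~\ref{prop:fix-move-product} then follows from Corollary~\ref{cor:move-cat}, which uses the dilated-column model of Section~\ref{sec:points_on_cycle}; your argument that $\fix_n(B)\cap\move_n(B,\cdot)$ is trivial relies on the same incorrect picture of $\move_n(B,\cdot)$ and also tacitly assumes closure of $\move_n(B,\cdot)$ under products and inverses, which is itself part of what must be proved.
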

We call the elements of $\fix_n(B)$ \emph{fix braids} and the elements of $\move_n(B,\cdot)$  \emph{move braids}.

Associated to the braid group $\braid_n$ is the \emph{dual braid complex},
introduced by Tom Brady \cite{brady01} and denoted $\comp(\braid_n)$. 
It is a contractible simplicial complex on which $\braid_n$ acts freely and cocompactly. Brady and the second author equipped $\comp(\braid_n)$ with a piecewise Euclidean \emph{orthoscheme metric} and conjectured that it is $\cat(0)$ with respect to this metric \cite{bradymccammond10}. They verified the conjecture for $n < 6$ and Haettel, Kielak and Schwer proved it for $n = 6$ \cite{haettelkielakschwer16}. We are interested in boundary braids as an approach to proving the conjecture. The sets of boundary braids, fix braids, and move braids have induced subcomplexes in $\comp(\braid_n)$ with the following metric decomposition.

\begin{introtheorem}\label{inthm:cx_decn}
  Let $B\subseteq \{1,\ldots,n\}$. The complex of $(B,\cdot)$-boundary braids decomposes as a metric direct product
  \[
  \comp(\braid_n(B,\cdot)) \cong \comp(\fix_n(B)) \times \comp(\move_n(B,\cdot))\text{.}
  \]
The complex $\comp(\move_n(B,\cdot))$ is $\R$ times a Euclidean simplex and therefore $\cat(0)$.
In particular, $\comp(\braid_n(B,\cdot))$ is $\cat(0)$ if and only if the smaller dual braid complex 
$\comp(\fix_n(B)) \cong \comp(\braid_{n - \card{B}})$ is $\cat(0)$.
\end{introtheorem}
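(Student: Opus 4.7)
The plan is to promote the group-level factorization of Theorem~\ref{inthm:decn} to a simplicial and then metric decomposition of the subcomplex of boundary braids, and finally to identify the move factor explicitly. I would begin by recalling that the simplices of $\comp(\braid_n)$ correspond to chains in the noncrossing partition lattice at a fixed Coxeter element, that each simplex carries its orthoscheme metric coming from a Euclidean coordinate model, and that the induced subcomplex $\comp(\braid_n(B,\cdot))$ consists of those simplices whose vertex labels are all $(B,\cdot)$-boundary braids.

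Next I would establish the combinatorial product structure. Applying Theorem~\ref{inthm:decn} vertex-by-vertex along a chain that defines a simplex should produce a canonical bijection between simplices of $\comp(\braid_n(B,\cdot))$ and pairs consisting of a simplex of $\comp(\fix_n(B))$ and a simplex of $\comp(\move_n(B,\cdot))$. The content of this step is to verify that a chain whose vertices are all $(B,\cdot)$-boundary braids decomposes coherently into a chain of fix braids together with a chain of move braids, and conversely that any such compatible pair of chains reassembles into a boundary-braid chain. This should follow from Theorem~\ref{inthm:decn} once one observes that fix reflections only permute points outside $B$ while move reflections only rearrange the points of $B$ along the boundary cycle, so the two classes commute in the noncrossing partition lattice.

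The key and most delicate step, and the place where I expect the main obstacle to lie, is upgrading this combinatorial product to a \emph{metric} product: one must show that the orthoscheme metric on each boundary simplex splits as the $\ell_2$ product of the orthoscheme metrics on its fix and move factors. For this I would unfold the coordinate description of an orthoscheme. Each edge along a reflection $t$ has a canonical direction given by the root of $t$ in the standard Euclidean representation. Fix reflections act nontrivially only on the coordinates indexed by $\{1,\ldots,n\}\setminus B$, while move reflections act nontrivially only on the coordinates indexed by $B$, so their root vectors span orthogonal subspaces. The orthoscheme built by interleaving a fix chain and a move chain is therefore isometric to the Euclidean product of the two sub-orthoschemes, and this local isometry globalises to the required metric decomposition.

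Finally, to identify $\comp(\move_n(B,\cdot))$ as $\R$ times a Euclidean simplex, I would analyse the move groupoid directly: once a total order on the points of $B$ around the boundary circle is fixed, the only freedom is a single cyclic rotation parameter, which unfolds in the universal cover to the $\R$-factor, together with a Euclidean simplex recording the relative cyclic arrangement of the $\card{B}$ boundary points. Granting this, the $\cat(0)$ conclusion is immediate: a Euclidean simplex times $\R$ is convex Euclidean hence $\cat(0)$, and a metric product of $\cat(0)$ spaces is $\cat(0)$, so the biconditional with $\comp(\fix_n(B))\cong\comp(\braid_{n-\card{B}})$ follows at once.
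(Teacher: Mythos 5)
The central step in your proposal---that the orthoscheme metric on a boundary simplex splits because ``fix reflections act nontrivially only on coordinates outside $B$ while move reflections act nontrivially only on coordinates in $B$, so their root vectors span orthogonal subspaces''---is based on a false premise. Move braids do \emph{not} act only on coordinates indexed by $B$. For instance, with $B = \{2,4,5,7\}$ and $\sigma = (1\ 2\ 3\ 4\ 5\ 6)(7\ 8\ 9)$ (Example~\ref{ex:fix-move}), one has $\move^B(\sigma) = (2\ 3)(4\ 5\ 6)(7\ 8)$, which visibly moves $3,6,8 \notin B$, and these indices also appear in $\fix^B(\sigma) = (1\ 3\ 6)(8\ 9)$. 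So the roots of $(1\ 3)$ and $(2\ 3)$ share the coordinate $3$ and are not orthogonal. Relatedly, your claim that ``the two classes commute in the noncrossing partition lattice'' is not what is true: the groupoid decomposes as a \emph{semidirect} product $\fix_n(\cdot) \rtimes \move_n(\cdot,\cdot)$, and the interaction of $\fix^B$ with multiplication involves a genuine conjugation twist, $\fix^B(\beta\beta') = \fix^B(\beta)\,\fix^{B'}(\beta')^{\move^B(\beta)^{-1}}$ (Lemma~\ref{lem:fix-product}). So the coherence of chain decomposition cannot be deduced from a commutation of disjoint supports.

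The paper avoids the orthogonality issue entirely. The metric splitting is not deduced simplex-by-simplex from an orthogonal decomposition of roots; rather, once it is shown that the poset $\NC_n(B',\cdot)$ labeling the edges out of a vertex decomposes as a poset \emph{product} $\fix(\NC_n(B',\cdot))\times\move(\NC_n(B',\cdot))$ (Proposition~\ref{prop:nc-decomposition}), the complex $\comp(\braid_n(B,\cdot))$ is identified with the ordered-simplex product $\comp(\fix_n(B)) \oprod \comp(\move_n(B,\cdot))$, and a general fact about orthoscheme complexes (Lemma~\ref{lem:prod-ortho}, Proposition~\ref{prop:prod-oc}) guarantees this $\oprod$-product is \emph{isometric} to the metric direct product. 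The conjugation twist above is exactly what makes the poset isomorphism work at each vertex. Finally, your account of $\comp(\move_n(B,\cdot))$ as ``$\R$ times a simplex recording the cyclic arrangement'' skips the actual identification: the move complex is shown (Lemmas~\ref{lem:bundle}, \ref{lem:splitting}, Corollary~\ref{cor:move-cat}) to be the universal cover of the orthoscheme configuration space of $\card{B}$ unlabeled points on an oriented $n$-cycle, which by Proposition~\ref{prop:pt-curv} is a $(\card{B},n)$-dilated column, hence $\R \times (\text{Coxeter simplex of type }\widetilde A)$. This identification, and in particular why the splitting map $\splt^B$ in diagram~\eqref{eq:split} exists and is injective, is nontrivial and is where most of the technical work resides.
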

As part of our proof for Theorem~\ref{inthm:cx_decn},
we introduce a new type of configuration space for directed graphs and
the broader setting of $\Delta$-complexes. We refer to these as
\emph{orthoscheme configuration spaces} and explore their geometry for 
the case of oriented $n$-cycles.
The other key element for proving Theorem~\ref{inthm:cx_decn}
is a combinatorial study of noncrossing partitions associated to boundary
braids.

Because the points indexed by $B$ do not necessarily return to their original 
positions, either pointwise or as a set, boundary braids form a subgroupoid
rather than a subgroup. More precisely, if we refer to a boundary braid
where the points indexed by $B$ move in the boundary to end at points indexed
by $B'$, then we see that a $(B,B')$-boundary braid can be composed with a 
$(B',B'')$-boundary braid to produce a $(B,B'')$-boundary braid. 
The groupoid $\braid_n(\cdot,\cdot)$ of boundary braids has subgroupoids 
$\fix_n(\cdot)$ and $\move_n(\cdot,\cdot)$ consisting of fix braids and 
move braids respectively. We prove the following algebraic result (see Section~\ref{sec:groupoid}).

\begin{introtheorem}
The groupoid of boundary braids decomposes as a semidirect product
\[
\braid_n(\cdot,\cdot) \cong \fix_n(\cdot) \rtimes \move_n(\cdot,\cdot)\text{.}
\]
\end{introtheorem}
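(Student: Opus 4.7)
The plan is to lift the object-wise decomposition of Theorem~\ref{inthm:decn} to the whole groupoid $\braid_n(\cdot,\cdot)$. Theorem~\ref{inthm:decn} already provides, for every boundary braid $\beta$ with source $B$, a unique factorization $\beta = \fix^B(\beta) \cdot \move^B(\beta)$ with $\fix^B(\beta) \in \fix_n(B)$ and $\move^B(\beta) \in \move_n(B,\cdot)$. What is left is to define the semidirect product of groupoids $\fix_n(\cdot) \rtimes \move_n(\cdot,\cdot)$ and to verify that this factorization intertwines its multiplication with composition in $\braid_n(\cdot,\cdot)$.

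The semidirect structure will come from conjugation in the ambient groupoid. Given a move braid $\mu \in \move_n(B,B')$ and a fix braid $\phi' \in \fix_n(B')$, the first key step is to show that $\mu \phi' \mu^{-1} \in \fix_n(B)$. Geometrically, a representative of $\mu$ only slides the boundary points indexed by $B$ along $\partial \D$ to the positions indexed by $B'$, while leaving the complementary strands untouched; concatenating representatives of $\mu$, of $\phi'$ chosen to fix the points indexed by $B'$ pointwise, and of $\mu^{-1}$ then produces a path whose strands indexed by $B$ remain pointwise fixed throughout. This defines an action of $\move_n(\cdot,\cdot)$ on $\fix_n(\cdot)$ by groupoid isomorphisms via $\mu \cdot \phi' := \mu \phi' \mu^{-1}$, and so gives meaning to the semidirect product $\fix_n(\cdot) \rtimes \move_n(\cdot,\cdot)$.

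The final step is to check that composition in $\braid_n(\cdot,\cdot)$ agrees with composition in the semidirect product. If $\beta : B \to B'$ and $\gamma : B' \to B''$ have decompositions $\beta = \phi\mu$ and $\gamma = \phi'\mu'$ supplied by Theorem~\ref{inthm:decn}, then
\[
\beta\gamma \;=\; \phi \mu \phi' \mu' \;=\; \phi\bigl(\mu \phi' \mu^{-1}\bigr)\bigl(\mu \mu'\bigr)\text{.}
\]
The first factor lies in $\fix_n(B)$ by the previous paragraph and since $\fix_n(B)$ is a subgroup; the second factor lies in $\move_n(B,B'')$ because $\move_n(\cdot,\cdot)$ is a subgroupoid. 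Uniqueness of the decomposition in Theorem~\ref{inthm:decn} therefore forces $\fix^B(\beta\gamma) = \phi \cdot (\mu\phi'\mu^{-1})$ and $\move^B(\beta\gamma) = \mu\mu'$, which is exactly the semidirect product composition rule. The main obstacle is the conjugation claim $\mu \fix_n(B') \mu^{-1} = \fix_n(B)$, which I expect to handle by exhibiting the representatives sketched above and checking that a suitable concatenation fixes the $B$-indexed points throughout the deformation; once this is in hand, the rest is a formal bookkeeping application of Theorem~\ref{inthm:decn}.
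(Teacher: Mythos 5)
Your overall strategy is essentially the one the paper takes — unique factorization (Theorem~\ref{inthm:decn}), normality of $\fix_n(\cdot)$ under conjugation, and closure of $\move_n(\cdot,\cdot)$ under composition — but two of the three ingredients are not actually in hand.

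First, the geometric picture you use to justify normality is incorrect. A move braid does \emph{not} in general leave the complementary strands untouched: in $\braid_3$ with $B=\{2\}$, one has $\move^B(\delta_{\{1,2,3\}}) = \delta_{\{2,3\}}$, which moves strand $3$ even though $3\notin B$ (see also Example~\ref{ex:fix-move}, where $\move^B(\sigma)=(2\,3)(4\,5\,6)(7\,8)$ moves the non-$B$ strands $3$, $6$, $8$). More seriously, in the concatenation you describe, the $B$-indexed strands are not ``pointwise fixed throughout'': they slide along $\partial D$ to $B'$, remain there, and slide back. What is actually true is that their wrapping numbers all vanish, so one must still invoke the wrapping-number characterization of $\fix_n(B)$ (Lemma~\ref{lem:wrapping_0}, together with additivity, Lemma~\ref{lem:wrapping_add}) to conclude $\mu\phi'\mu^{-1}\in\fix_n(B)$. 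This is exactly how the paper proves the corresponding normality lemma; your geometric shortcut does not go through as written, though the conclusion is correct.

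Second, and more importantly, you assert in passing that the second factor $\mu\mu'$ ``lies in $\move_n(B,B'')$ because $\move_n(\cdot,\cdot)$ is a subgroupoid.'' That is not a formal consequence of Theorem~\ref{inthm:decn}: unique factorization alone does not say that the product of two move braids has trivial fix part. Closure of $\move_n(\cdot,\cdot)$ under composition (and under inverses) is a genuine theorem, proved in the paper as Proposition~\ref{prop:fix-move-product}\eqref{item:move_inverses}--\eqref{item:move_multiplicative} via the splitting $\splt^B$ and the lift-of-paths argument in the covering $\widetilde{\uconf}_{\card{B}}(\Gamma_n,\Orth) \to \uconf_{\card{B}}(\Gamma_n,\Orth)$. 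Without this the ``formal bookkeeping'' step is not available: you cannot conclude from uniqueness that $\move^B(\beta\gamma)=\mu\mu'$ unless you already know $\mu\mu'\in\move_n(B,B'')$. Once normality and the subgroupoid property are supplied (as the paper does), your bookkeeping is correct.
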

The article is organized as follows. The first part, Sections~\ref{sec:braids} through~\ref{sec:parabolic}, develops standard material about braid groups and their dual Garside structure in a way that suits our later applications. The second part, Sections~\ref{sec:ord-simp} through~\ref{sec:columns}, is concerned with complexes of ordered simplices. Specifically we show how to equip them with an orthoscheme metric and that a combinatorial direct product gives rise to a metric direct product. The final part, Sections~\ref{sec:conf-sp} through~\ref{sec:groupoid}, 
contains our work on boundary braids and the proofs of the main theorems.

\part{Braids}\label{part:braids}

Braids can be described in a variety of ways.  In this part
we establish the conventions used throughout the article, review
basic facts about dual simple braids and the dual presentation for the
braid group, and introduce the concept of a boundary braid.

\section{Braid Groups}\label{sec:braids}

In this article, braid groups are viewed as fundamental groups of
certain configuration spaces.

\begin{defn}[Configuration spaces]\label{def:conf-sp}
  Let $X$ be a topological space, let $n$ be a positive integer and
  let $X^n$ denote the product of $n$ copies of $X$ whose elements are
  $n$-tuples $\vx = (x_1,x_2,\ldots,x_n)$ of elements $x_i \in X$.
  Alternatively, the elements of $X^n$ can be thought of as functions
  from $[n]$ to $X$ where $[n]$ is the set $\{1,2,\ldots,n\}$.  The
  \emph{configuration space of $n$ labeled points in $X$} is the
  subspace $\conf_n(X)$ of $X^n$ of $n$-tuples with distinct entries,
  i.e. the subspace of injective functions.  The \emph{thick diagonal
    of $X^n$} is the subspace $\diag_n(X) = \{(x_1,\ldots,x_n) \mid
  x_i = x_j \text{ for some } i\neq j\}$ where this condition fails.
  Thus $\conf_n(X) = X^n - \diag_n(X)$. The symmetric group acts on
  $X^n$ by permuting coordinates and this action restricts to a free
  action on $\conf_n(X)$.  The \emph{configuration space of $n$
    unlabeled points in $X$} is the quotient space $\uconf_n(X) = (X^n
  - \diag_n(X))/\sym_n$.  Since the quotient map sends the $n$-tuple
  $(x_1,\ldots,x_n)$ to the $n$-element set $\{x_1,\ldots,x_n\}$, we
  write $\set\colon \conf_n(X) \to \uconf_n(X)$ for this natural
  quotient map.
\end{defn}
Since the topology of a configuration space only depends on the
topology of the original space, the following lemma is immediate.

\begin{lem}[Homeomorphisms]\label{lem:homeos}
  A homeomorphism $X \to Y$ induces a homeomorphism
  $h\colon \uconf_n(X) \to \uconf_n(Y)$. In particular, for any
  choice of basepoint $*$ in $\uconf_n(X)$, there is an induced
  isomorphism $\pi_1(\uconf_n(X),*) \cong \pi_1(\uconf_n(Y),h(*))$.
\end{lem}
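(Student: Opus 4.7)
The plan is to build the map $h$ in stages, following the same construction by which $\uconf_n(X)$ itself is defined, and to check that homeomorphism is preserved at each stage. Given a homeomorphism $f\colon X\to Y$, I would first form the coordinatewise product $f^n\colon X^n\to Y^n$ sending $(x_1,\ldots,x_n)$ to $(f(x_1),\ldots,f(x_n))$. Since $f$ is a homeomorphism, so is $f^n$, with inverse $(f^{-1})^n$.

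Next, I would observe that $f^n$ carries the thick diagonal onto the thick diagonal: $(x_1,\ldots,x_n)\in\diag_n(X)$ means $x_i=x_j$ for some $i\neq j$, and applying the injective map $f$ gives $f(x_i)=f(x_j)$, so $f^n(\diag_n(X))\subseteq\diag_n(Y)$, and the same argument applied to $f^{-1}$ gives the reverse inclusion. Consequently $f^n$ restricts to a homeomorphism $\conf_n(X)\to\conf_n(Y)$ between the complements.

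The coordinatewise map $f^n$ is manifestly equivariant for the permutation action of $\sym_n$, since $f^n$ acts on each coordinate separately while $\sym_n$ only permutes coordinates. Therefore $f^n$ descends to a continuous bijection $h\colon \uconf_n(X)\to \uconf_n(Y)$ on the quotients, and the same construction applied to $f^{-1}$ produces a continuous two-sided inverse; so $h$ is a homeomorphism. The statement about fundamental groups is then just functoriality of $\pi_1$ applied to the homeomorphism $h$ with the chosen basepoint.

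Honestly, there is no real obstacle here — the lemma is essentially naturality of the $\uconf_n(-)$ construction, and each of the steps above is a one-line verification. The only mildly subtle point worth flagging is that one is using the standard fact that a continuous equivariant map between $G$-spaces descends to a continuous map of orbit spaces, and that an equivariant homeomorphism descends to a homeomorphism of orbit spaces; but this is a routine property of quotient topologies and requires no extra hypotheses in our setting.
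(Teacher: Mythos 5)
Your proof is correct, and it is exactly the routine argument the paper has in mind: the paper gives no proof at all, remarking only that the lemma is immediate because the topology of a configuration space depends only on the topology of the underlying space. Your write-up simply makes explicit the naturality of the $\uconf_n(-)$ construction (coordinatewise map, preservation of the thick diagonal, $\sym_n$-equivariance, descent to the quotient, and functoriality of $\pi_1$), which is precisely what is being left unstated.
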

\begin{exmp}[Configuration spaces]\label{ex:conf-sp}
  When $X$ is the unit circle and $n=2$, the space $X^2$ is a torus,
  $\diag_2(X)$ is a $(1,1)$-curve on the torus, its complement
  $\conf_2(X)$ is homeomorphic to the interior of an annulus and the
  quotient $\uconf_2(X)$ is homeomorphic to the interior of a M\"obius
  band.
\end{exmp}
\begin{defn}[Braids in $\C$]\label{def:braid-C}
  Let $\C$ be the complex numbers with its usual topology and let $\vz
  = (z_1,z_2,\ldots,z_n)$ denote a point in $\C^n$.  The thick
  diagonal of $\C^n$ is a union of hyperplanes $H_{ij}$, with $i < j
  \in [n]$, called the \emph{braid arrangement}, where $H_{ij}$ is the
  hyperplane defined by the equation $z_i = z_j$.  The configuration
  space $\conf_n(\C)$ is the complement of the braid arrangement and
  its fundamental group is called the \emph{$n$-strand pure braid
    group}.  The \emph{$n$-strand braid group} is the fundamental
  group of the quotient configuration space $\uconf_n(\C) =
  \conf_n(\C)/\sym_n$ of $n$ unlabeled points.  In symbols
  \[
  \purebraid_n = \pi_1(\conf_n(\C),\vz)\quad\text{and}\quad \braid_n =
  \pi_1(\uconf_n(\C),Z)
  \]
  where $\vz$ is some specified basepoint in
  $\conf_n(\C)$ and $Z=\set(\vz)$ is the corresponding basepoint in
  $\uconf_n(\C)$.
\end{defn}
\begin{rem}[Short exact sequence]\label{rem:ses}
  The quotient map $\set$ is a covering map, so the induced map
  $\set_*\colon \purebraid_n \to \braid_n$ on fundamental groups is
  injective.  In fact, $\conf_n(\C)$ is a regular cover of
  $\uconf_n(\C)$, so the subgroup $\set_*(\purebraid_n) \subset
  \braid_n$ is a normal subgroup and the quotient group
  $\braid_n/\set_*(\purebraid_n)$ is isomorphic to the group $\sym_n$
  of covering transformations. The quotient map sends each braid to
  the permutation it induces on the $n$-element set used as the
  basepoint of $\braid_n$, a map we define more precisely in the next
  section.  We call this map $\perm$.  These maps form a short exact
  sequence
  \begin{equation} \purebraid_n \stackrel{\set_*}{\into} \braid_n
    \stackrel{\perm}{\onto} \sym_n.\label{eq:ses}
  \end{equation}
\end{rem}
\begin{exmp}[$n \leq 2$]\label{ex:n<=2}
  When $n=1$ the spaces $\uconf_1(\C)$, $\conf_1(\C)$ and $\C$ are
  equal and contractible, and all three groups in
  Equation~\ref{eq:ses} are trivial.  When $n=2$ the space
  $\conf_2(\C)$ is $\C^2$ minus a copy of $\C^1$, which retracts first
  to $\C^1 - \C^0$ and then to the circle $\sph^1$ of unit length
  complex numbers.  The quotient space $\uconf_2(\C)$ also deformation
  retracts to $\sph^1$ and the map from $\conf_2(\C)$ to
  $\uconf_2(\C)$ corresponds to the map from $\sph^1$ to itself
  sending $z$ to $z^2$.  In particular $\purebraid_2 \cong \braid_2
  \cong \Z$ and $\set_*$ is the map that multiplies by $2$ with
  quotient $\Z/2\Z \cong \sym_2$.
\end{exmp}
\begin{convt}[$n >2$]\label{conv:n>2}
  For the remainder of the article, we assume that the integer $n$ is
  greater than $2$, unless we explicitly state otherwise.
\end{convt}
Let $\D \subset \C$ be the closed unit disk centered at the origin.
Restricting to configurations of points that remain in $\D$ does not
change the fundamental group of the configuration space.

\begin{prop}[Braids in $\D$]\label{prop:braid-D}
  The configuration space $\uconf_n(\C)$ deformation retracts to the
  subspace $\uconf_n(\D)$, so for any choice of basepoint $Z$ in the
  subspace, $\pi_1(\uconf_n(\D),Z) = \pi_1(\uconf_n(\C),Z) =\braid_n$.
\end{prop}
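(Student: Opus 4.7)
The plan is to construct an explicit strong deformation retraction of $\uconf_n(\C)$ onto $\uconf_n(\D)$ by simultaneously rescaling the points of each configuration toward the origin by a scalar factor that depends continuously on the configuration and equals $1$ whenever the configuration already lies in $\D$.

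Concretely, I would define $R \colon \uconf_n(\C) \to \R$ by $R(P) = \max_{z \in P}|z|$, which is continuous because $(x_1,\ldots,x_n) \mapsto \max_i|x_i|$ is a $\sym_n$-invariant continuous function on $\C^n$ and therefore descends to the unordered quotient. Then $s(P) = 1/\max(1, R(P))$ is a continuous function $\uconf_n(\C) \to (0,1]$ taking the value $1$ precisely when $P \subseteq \D$. The candidate homotopy is
\[
H \colon \uconf_n(\C) \times [0,1] \to \uconf_n(\C)\text{,}\qquad H(P,t) = \bigl((1-t) + t\, s(P)\bigr)\cdot P\text{,}
\]
where the scalar acts pointwise on $P$. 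Because the scaling factor is strictly positive, scalar multiplication preserves distinctness of the $n$ points, so the image really does lie in $\uconf_n(\C)$, and joint continuity of $H$ follows from the continuity of $s$ and of scalar multiplication.

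The three conditions for a strong deformation retraction are then immediate: $H(P,0) = P$; at $t=1$ each point of $H(P,1)$ has modulus at most $s(P)\,R(P) \le 1$, so $H(\cdot,1)$ takes values in $\uconf_n(\D)$; and if $P \in \uconf_n(\D)$ then $s(P) = 1$ and $H(P,t) = P$ for every $t$. Since the basepoint $Z$ is assumed to lie in $\uconf_n(\D)$ and is pinned by the entire homotopy, the inclusion $\uconf_n(\D) \hookrightarrow \uconf_n(\C)$ is a based homotopy equivalence, yielding $\pi_1(\uconf_n(\D),Z) = \pi_1(\uconf_n(\C),Z) = \braid_n$. There is no substantive obstacle here; the main thing to get right is the definition of $s$ so that it is both continuous everywhere on $\uconf_n(\C)$ (the $\max(1,\cdot)$ in the denominator handles the degenerate configuration $\{0\}$, irrelevant under Convention~\ref{conv:n>2}) and identically $1$ on the target subspace.
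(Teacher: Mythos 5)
Your proof is correct and essentially identical to the paper's: both rescale each configuration by $1/\max(1,R)$ (the paper calls this $m(\vz)^{-1}$ on $\conf_n(\C)$) and use the straight-line homotopy, relying on $\sym_n$-invariance of the scaling factor to descend to the unordered quotient. The only cosmetic difference is that the paper performs the homotopy upstairs on $\conf_n(\C)$ and then descends, while you work directly on $\uconf_n(\C)$.
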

\begin{proof}
  Let $m(\vz) = \max\{1, \card{z_1}, \ldots, \card{z_n}\}$ for each
  $\vz = (z_1,\ldots,z_n) \in \C^n$ and note that $m$ defines a
  continuous map from $\conf_n(\C)$ to $\R_{\geq 1}$.  The
  straight-line homotopy from the identity map on $\conf_n(\C)$ to the
  map that sends $\vz$ to $\frac{1}{m(\vz)}\vz$ is a deformation 
  retraction from $\conf_n(\C)$ to $\conf_n(\D)$ and since $m(\vz)$ 
  only depends on the entries of $\vz$ and not their order, this 
  deformation retraction descends to one from $\uconf_n(\C)$ 
  to $\uconf_n(\D)$.
\end{proof}
The following result combines Lemma~\ref{lem:homeos} and
Proposition~\ref{prop:braid-D}.

\begin{cor}[Braids in $D$]\label{cor:braid-P}
  A homeomorphism $\D \to D$ induces a homeomorphism
  of configuration spaces
  $h\colon \uconf_n(\D) \to \uconf_n(D)$. In particular, for any
  choice of basepoint $Z$ in $\uconf_n(\D)$, there is an induced
  isomorphism $\pi_1(\uconf_n(\D),Z) \cong \pi_1(\uconf_n(D),h(Z))
  =\braid_n$.
\end{cor}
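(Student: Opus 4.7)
The plan is very short because the corollary is essentially a formal concatenation of the two immediately preceding results, as the authors themselves note.

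First, I would apply Lemma~\ref{lem:homeos} directly with $X = \D$ and $Y = D$. A homeomorphism $\D \to D$ induces a coordinate-wise homeomorphism $\D^n \to D^n$ which carries thick diagonals to thick diagonals and is $\sym_n$-equivariant, so it descends to a homeomorphism $h\colon \uconf_n(\D) \to \uconf_n(D)$. Any such homeomorphism of spaces induces an isomorphism $h_*\colon \pi_1(\uconf_n(\D), Z) \to \pi_1(\uconf_n(D), h(Z))$ for any chosen basepoint $Z \in \uconf_n(\D)$.

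Second, I would invoke Proposition~\ref{prop:braid-D}, which identifies $\pi_1(\uconf_n(\D), Z)$ with $\braid_n = \pi_1(\uconf_n(\C), Z)$ via the deformation retraction constructed there. Composing this identification with $h_*$ yields the desired isomorphism $\pi_1(\uconf_n(D), h(Z)) \cong \braid_n$.

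There is no genuine obstacle here; the only thing to be careful about is bookkeeping with basepoints, namely to ensure that the basepoint $Z$ lies in $\uconf_n(\D)$ (so that Proposition~\ref{prop:braid-D} applies with this $Z$ on both sides), and then to transport it forward via $h$ to obtain a valid basepoint $h(Z)$ in $\uconf_n(D)$. The proof can therefore be written as a single sentence: apply Lemma~\ref{lem:homeos} to produce $h$ and $h_*$, then precompose with the isomorphism of Proposition~\ref{prop:braid-D}.
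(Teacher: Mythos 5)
Your proof is correct and takes exactly the approach the paper intends: the paper gives no separate proof for this corollary, stating only that it ``combines Lemma~\ref{lem:homeos} and Proposition~\ref{prop:braid-D},'' which is precisely the two-step argument you supply.
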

\begin{rem}[Points in $\partial D$]\label{rem:pt-boundary}
  When $\braid_n$ is viewed as the mapping class group of an $n$-times
  punctured disk, the punctures are not allowed to move into the
  boundary of the disk since doing so would alter the topological type
  of the punctured space.  When $\braid_n$ is viewed as the
  fundamental group of a configuration space of points in a closed
  disk, points are allowed in the boundary and we make extensive use
  of this extra flexibility.
\end{rem}
We have a preferred choice of basepoint and disk for $\braid_n$.

\begin{figure}
  \begin{tikzpicture}[scale=2]
    \begin{scope}[xshift=-1.5cm]
      \foreach \n in {1,2,...,9} {\coordinate (\n) at (\n*40:1cm);}
      \draw[GreenPoly] (1)--(2)--(3)--(4)--(5)--(6)--(7)--(8)--(9)--cycle;
      \draw[->] (-1.3,0)--(1.3,0);
      \draw[->] (0,-1.3)--(0,1.3);
      \foreach \n in {1,2,...,9} {\draw (\n) node
        [dot,thick,fill=green!20!white] {};}
      \draw (1) node[anchor=south west]{$p_1$};
      \draw (2) node[anchor=south]{$p_2$};
      \draw (3) node[anchor=south east]{$p_3$};
      \draw (4) node[anchor=east]{$p_4$};
      \draw (5) node[anchor=east]{$p_5$};
      \draw (6) node[anchor=north east]{$p_6$};
      \draw (7) node[anchor=north]{$p_7$};
      \draw (8) node[anchor=north west]{$p_8$};
      \draw (9) node[anchor=south west]{$p_9$};
    \end{scope}
    \begin{scope}[xshift=1.5cm]
      \foreach \n in {0,1,...,9} {\coordinate (\n) at (\n*40:1cm);}
      \draw[GreenPoly,fill=green!10]
      (1)--(2)--(3)--(4)--(5)--(6)--(7)--(8)--(9)--cycle;
      \draw[GreenPoly] (0)--(1)--(2)--(6)--cycle;
      \draw[GreenPoly] (3) to[bend right=10] (5) to[bend right=10] (3)--cycle;
      \draw[GreenPoly] (7)--(8) to[bend right=20] (7)--cycle;
      \draw[->] (-1.3,0)--(1.3,0);
      \draw[->] (0,-1.3)--(0,1.3);
      
      \foreach \n in {1,2,...,9} {\draw (\n) node
        [dot,thick,fill=green!20!white] {};} 
      \draw (1) node[anchor=south west]{$p_1$};
      \draw (2) node[anchor=south]{$p_2$};
      \draw (3) node[anchor=south east]{$p_3$};
      \draw (4) node[anchor=east]{$p_4$};
      \draw (5) node[anchor=east]{$p_5$};
      \draw (6) node[anchor=north east]{$p_6$};
      \draw (7) node[anchor=north]{$p_7$};
      \draw (8) node[anchor=north west]{$p_8$};
      \draw (9) node[anchor=south west]{$p_9$};
    \end{scope}
    \end{tikzpicture}
    \caption{The figure on the left shows the standard basepoint $P =
      \{p_1,p_2,\ldots,p_9\}$ and the standard disk $D$ for $n=9$.
      The figure on the right shows the standard subdisks $D_A$ for
      $A$ equal to $\{1,2,6,9\}$, $\{3,5\}$ and
      $\{7,8\}$.\label{fig:std-disks}}
\end{figure}
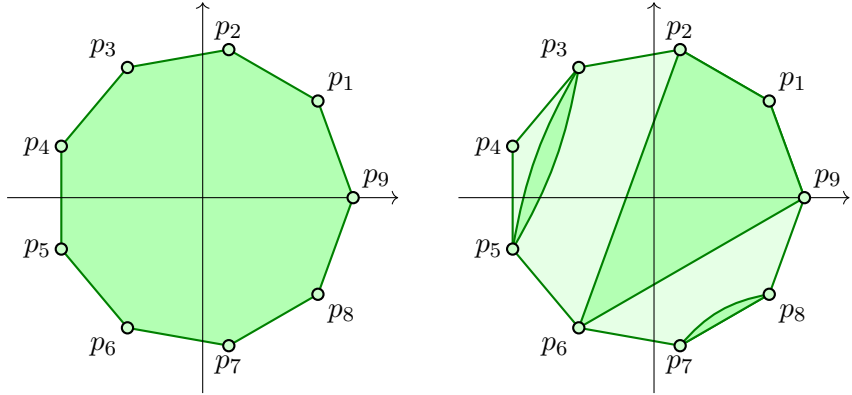
\begin{defn}[Basepoints and disks]\label{def:basepoints}
  Let $\zeta = e^{2\pi i/n} \in \C$ be the standard primitive $n$-th
  root of unity and let $p_i$ be the point $\zeta^i$ for all $i \in
  \Z$.  Since $\zeta^n= 1$, the subscript $i$ should be interpreted as
  an integer representing the equivalence class $i + n\Z \in \Z/n\Z$.
  In particular, we consider $p_{i-n} = p_i = p_{i+n} = p_{i+2n}$
  without further comment.  The \emph{standard basepoint for
    $\purebraid_n$} is the $n$-tuple $\vp = (p_1,p_2,\ldots,p_n)$ and
  the \emph{standard basepoint for $\braid_n$} is the $n$-element set
  $P = \set(\vp) = \{p_1,p_2,\ldots,p_n\}$ of all $n$-th roots of
  unity.  Let $D$ be the convex hull of the points in $P$. Our
  standing assumption of $n>2$ means that $D$ is homeomorphic to the
  disk $\D$.  We call $D$ the \emph{standard disk for $\braid_n$}.
  See Figure~\ref{fig:std-disks}.
\end{defn}
\begin{rem}[Braid groups]\label{rem:braid-def}
  By Corollary~\ref{cor:braid-P}, the braid group $\braid_n$ is
  isomorphic to $\pi_1(\uconf_n(D),P)$, the fundamental group of the
  configuration space of $n$ unlabeled points in the standard disk $D$
  based at the standard basepoint $P$.  In the remainder of the
  article, we use the notation $\braid_n$ to refer to the specific
  group $\pi_1(\uconf_n(D),P)$.  
\end{rem}
%

\section{Individual Braids}\label{sec:individual}

This section establishes our conventions for describing individual
braids and we introduce the concept of a boundary braid.

\begin{defn}[Representatives]\label{def:braid-reps}
  Each braid $\alpha \in \braid_n$ is a basepoint-preserving homotopy
  class of a path $f\colon [0,1] \to \uconf_n(D,P)$ that describes a
  loop based at the standard basepoint $P$.  We write $\alpha = [f]$
  and say that the loop \emph{$f$ represents $\alpha$}.  We use Greek
  letters such as $\alpha$, $\beta$ and $\delta$ for braids and Roman
  letters such as $f$, $g$ and $h$ for their representatives.
\end{defn}
Vertical drawings of braids in $\R^3$ typically have the $t=0$ start
at the top and the $t=1$ end at the bottom.  See
Definition~\ref{def:drawings} for the details.  As a mnemonic, we
use superscripts for information about the start of a braid or a path
and subscripts for information about its end.

\begin{defn}[Strands]\label{def:strands}
  Let $\alpha \in \braid_n$ be a braid with representative $f$.  A
  \emph{strand of $f$} is a path in $D$ that follows what happens to
  one of the vertices in $P$.  There are two natural ways to name
  strands: by where they start and by where they end.  The
  \emph{strand that starts at $p_i$} is the path $f^i\colon [0,1] \to
  D$ defined by the composition $f^i = \proj_i \circ
  \widetilde{f}^{\vp}$, where the map $\widetilde{f}^{\vp}$ is the unique
  lift of the path $f$ through the covering map $\set \colon
  \conf_n(D) \to \uconf_n(D)$ so that the lifted path starts at $\vp$,
  i.e. $\widetilde{f}^{\vp}(0) = \vp$, and $\proj_i:\conf_n(D) \to D$
  is projection onto the $i$-th coordinate.  Similarly the
  \emph{strand that ends at $p_j$} is the path $f_j\colon [0,1] \to D$
  defined by the composition $f_j = \proj_j \circ \widetilde{f}_{\vp}$
  where $\widetilde{f}_{\vp}$ is the unique lift of the path $f$
  through the covering map $\set \colon \conf_n(D) \to \uconf_n(D)$ 
  that ends at $\vp$, i.e. $\widetilde{f}_{\vp}(1) = \vp$.  When the
  strand of $f$ that starts at $p_i$ ends at $p_j$ the path $f^i$ is
  the same as the path $f_j$.  We write $f^i$, $f_j$ or $f^i_j$ for
  this path and we call it the \emph{$(i,\cdot)$-strand}, the
  \emph{$(\cdot,j)$-strand} or the \emph{$(i,j)$-strand of $f$}
  depending on the information specified.
\end{defn}
A braid representative is drawn by superimposing the graphs of its
strands.

\begin{defn}[Drawings]\label{def:drawings}
  Let $\alpha \in \braid_n$ be a braid with representative $f$.  A
  \emph{drawing of $f$} is formed by superimposing the graphs of its
  strands inside the polygonal prism $[0,1] \times D$.  There are two
  distinct conventional embeddings of this prism into $\R^3$.  The
  complex plane containing $D$ is identified with either the first two
  or the last two coordinates of $\R^3$ and the remaining coordinate
  indicates the value $t \in [0,1]$ with the $t$-dependence arranged
  so that the $t=0$ start of $f$ is on the left or at the top and the
  $t=1$ end of $f$ is on the right or at the bottom.  In the
  left-to-right orientation, for each $j \in [n]$, for each $t_0\in
  [0,1]$ and for each point $f_j(t_0) = z_0= x_0 + iy_0 \in D \subset
  \C$ on the $(\cdot,j)$-strand we draw the point $(t_0,x_0,y_0) \in
  \R^3$.  In the top-to-bottom orientation, the same point on the
  $(\cdot,j)$-strand is drawn at $(x_0,y_0,1-t_0) \in \R^3$.
\end{defn}
\begin{defn}[Multiplication]\label{def:multn}
  Let $\alpha_1$ and $\alpha_2$ be braids in $\braid_n$ with
  representatives $f_1$ and $f_2$.  The product $\alpha_1\cdot
  \alpha_2$ is defined to be $[f_1.f_2]$ where $f_1.f_2$ is the
  concatenation of $f_1$ and $f_2$.  In the drawing of $f_1.f_2$ the
  drawing of $f_1$ is on the top or left of the drawing of $f_2$ which
  is on the bottom or right.  See Figure~\ref{fig:rotations}.
\end{defn}
\begin{defn}[Permutations]\label{def:perm}
  A \emph{permutation of the set $[n]$} is a bijective correspondence
  between a left/top copy of $[n]$ and a right/bottom copy of
  $[n]$. Permutations are compactly described in disjoint cycle
  notation.  A cycle such as $(1\ 2\ 3)$, for example, means that $1$
  on the left corresponds to $2$ on the right, $2$ on the left
  corresponds to $3$ on the right, and $3$ on the left corresponds to
  $1$ on the right.  Multiplication of permutations is performed by
  concatenating the correspondences left-to-right or top-to-bottom.
  The permutation $\tau$ of $[n]$ acts on $[n]$ from either the left 
  or the right by following the correspondence: if $i$ on the left corresponds
  to $j$ on the right, then $i\cdot \tau = j$ and $i=\tau\cdot j$.  
\end{defn}
\begin{defn}[Permutation of a braid]\label{def:braid-perm}
  The \emph{permutation} of a braid $\alpha$ is the bijective
  correspondence of $[n]$ under which $i$ on the left corresponds
  to $j$ on the right if $\alpha$ has an $(i,j)$-strand.
  Note that the function
  $\perm(\alpha)$ only depends on the braid $\alpha$ and not on the
  representative $f$.  The direction of the bijection $\perm(\alpha)$
  is defined so that it is compatible with function composition,
  i.e. so that $\perm(\alpha_1 \cdot \alpha_2) = \perm(\alpha_1) \circ
  \perm(\alpha_2)$.
\end{defn}
Information about how a braid permutes its strands can be be used to
distinguish different types of braids.

\begin{defn}[$(i,j)$-braids]
  Let $\alpha \in \braid_n$ be a braid with permutation $g =
  \perm(\alpha)$.  We say that $\alpha$ is an \emph{$(i,j)$-braid} if
  the strand that starts at $p_i$ ends at $p_j$. In other words,
  $\alpha$ is an $(i,j)$-braid if and only if $g(j) = i$.  When
  $\alpha$ is an $(i,j)$-braid and $\beta$ is a $(j,k)$-braid, $\gamma
  = \alpha \cdot \beta$ is a $(i,k)$-braid and the inverse of $\alpha$
  is a $(j,i)$-braid.
\end{defn}
For our applications, we make use of a generating set for the braid
group which is built out of braids we call rotation braids. Rotation braids
are defined using special subsets of our standard basepoint $P$ 
and subspaces of our standard disk $D$.

\begin{defn}[Subsets of $P$]\label{def:subsets}
  For each non-empty $A \subset [n]$ of size $k$, we define $P_A =
  \{p_i \mid i \in A\} \subset P$ to be the subset of points indexed
  by the numbers in $A$.  In this notation our
  original set $P$ is $P_{[n]}$.
  Using this notation we can extend the notion
  of an $(i,j)$-braid.  Let $A$ and $B$ be two subsets of $[n]$ of the
  same size and let $\alpha \in \braid_n$ be a braid with permutation
  $g = \perm(\alpha)$.  We say that $\alpha$ is an
  \emph{$(A,B)$-braid} if every strand that starts in $P_A$, ends in
  $P_B$, i.e. if and only if $g(B) = A$. 
\end{defn}

\begin{defn}[Subdisks of $D$]\label{def:subdisks}
  For all distinct $i,j\in [n]$ let the \emph{edge $e_{ij}$} be the
  straight line segment connecting $p_i$ and $p_j$.  For $k>2$, let
  $D_A$ be $\conv(P_A)$, the convex hull of the points in $P_A$ and
  note that $D_A$ is a $k$-gon homeomorphic to $\D$.  We call this the
  \emph{standard subdisk for $A \subset [n]$}.  In this notation, our
  original disk $D$ is $D_{[n]}$.  For $k=2$ and $A=\{i,j\}$, we
  define $D_A$ so that it is also a topological disk.  Concretely, we
  take two copies of the path along the edge $e = e_{ij}$ from $p_i$
  to $p_j$ and then bend one or both of these copies so that they
  become injective paths from $p_i$ to $p_j$ with disjoint interiors
  which together bound a bigon inside of $D$.  Moreover, when the edge
  $e$ lies in the boundary of $D$ we require that one of the two paths
  does not move so that $e$ itself is part of the boundary of the
  bigon.  See Figure~\ref{fig:std-disks}.  For $k=1$, we define $D_A$
  to be the single point $p_i \in P_A$, but note that this subspace is
  not a subdisk.  The bending of the edges to form the bigons are
  chosen to be slight enough so that for all $A$ and $B \subseteq [n]$
  the standard disks $D_A$ and $D_B$ intersect if and only if the
  convex hulls $\conv(P_A)$ and $\conv(P_B)$ intersect.
\end{defn}
We view the boundaries of these subdisks as directed graphs.

\begin{defn}[Boundary edges]\label{def:boundary-edges}
  When $A$ has more than $1$ element, we view $\partial D_A$, the
  topological boundary of the subdisk $D_A$, as having the structure
  of a directed graph. The vertex set is $P_A$ and for every vertex
  $p_i$ in $P_A$ there is a directed edge that starts $p_i$, proceeds
  along $\partial D_A$ in a counter-clockwise direction with respect
  to the interior of $D_A$, and ends at the next vertex in $P_A$ that
  it encounters.  The edges of the graph $\partial D_A$ are called the
  \emph{boundary edges of $D_A$}.  Note that edges and boundary edges
  are distinct concepts.  An edge is unoriented and necessarily
  straight.  A boundary edge is directed, it belong to the boundary of
  a specific subdisk $D_A$ and the path it describes might curve.
\end{defn}
\begin{figure}
	\begin{tikzpicture}[scale=3]
  \begin{scope}[xshift=3cm,x={(.75cm,-.01cm)},y={(.1cm,.18cm)},z={(0cm,1cm)}]
    \draw[->] (-1.5,0,0)--(1.5,0,0);
    \draw[->] (0,-1.5,0)--(0,.-.5,0);
    \draw[->] (0,0,-.5)--(0,0,0); 
    
    \foreach \i in {0,1,...,6} {\coordinate  (a\i) at (\i*60:1);}
    \foreach \i in {0,1,...,6} {\coordinate  (b\i) at ($(\i*60:1)+(0,0,1)$);}
    \foreach \i in {0,1,...,6} {\coordinate  (c\i) at ($(\i*60:1)+(0,0,2)$);}
    \draw[fill = green!15] (a0)--(a5)--(b5)--(b0)--cycle;
    \draw[fill = green!15] (b2)--(b3)--(c3)--(c4)--(c5)--(c6)--(b6)--(b1)--cycle;
    \draw[fill = green!35] (c1)--(c2)--(c3)--(c4)--(c5)--cycle;
    \draw[fill = green!35,color=green!10] (b1)--(b2)--(b5)--cycle;
    \draw[fill = green!35,color=green!10] (a1)--(a2)--(a5)--(a6)--cycle;
    \draw[fill = blue!20] (b0)--(b1)--(b5)--(c5)--(c0)--cycle;
    \draw[fill = blue!20] (a5)--(b5)--(b4)--(b3)--(a3)--(a2)--cycle;
    \draw[->] (0,.707,0)--(0,1.5,0);

    \draw[GreenLine,very thin]  (a5)--(a0)--(a1)--(a2)--(a3) (b0)--(b1)--(b2)--(b3);
    \draw[blue!20, ultra thick] (c1)--(b5) (c0)--(b1) (b5)--(a2) (b2)--(a3);
    \draw[green!10!white,ultra thick]  (b1)--(a1);
    \draw[GreenPoly,thin,fill=green!60!white] (a2)--(a3)--(a4)--(a5)--cycle;
    \draw[GreenPoly,thin,fill=green!60!white] (b2)--(b3)--(b4)--(b5)--cycle;
    \draw[GreenPoly,thin,fill=green!60!white] (b0)--(b1)--(b5)--cycle;
    \draw[GreenPoly,thin,fill=green!60!white] (c0)--(c1)--(c5)--cycle;
    \draw[GreenLine,very thin] (a2)--(b2) (b1)--(c1);
    \draw[red!90] (c0)--(b1) (c1)--(b5) (c2)--(b2) (b1)--(a1) (b5)--(a2) (b2)--(a3);
    \foreach \x in {a1,a2,b1,b2} {\filldraw[dot,color=black,fill=red!30] (\x) circle (.3mm);}
    \node[anchor=south west] at (a1) {$v_1$};
    \node[anchor=south east] at (a2) {$v_2$};
    \draw[dashed] (-1,0,0)--(1,0,0) (0,-1,0)--(0,1,0);
    \draw[green!10,very thick] (b3)--(b4)--(b5)--(b6)
    (c3)--(c4)--(c5);
    
    \draw[GreenLine,thick] (b3)--(b4)--(b5)--(b6) (c3)--(c4)--(c5)--(c6);
    \draw[blue!20,ultra thick] (a4)--(b4) (c5)--(c6);
    \draw[green!10,thick] (a5)--(c5);
    \draw[GreenLine,thin] (a3)--(b3) (a4)--(b4) (a5)--(c5) (b6)--(c6);
    \draw[GreenLine] (c1)--(c2)--(c3)--(c4)--(c5)--(c6)--cycle;
    \draw[->] (0,0,2)--(0,0,2.5);

    \draw[blue!20, ultra thick] (c5)--(b0) (b3)--(a4) (b4)--(a5);
    \draw[green!10,ultra thick] (b4)--(c4);
    \draw[red!80!black,thick] (c5)--(b0) (c3)--(b3) (c4)--(b4) (b3)--(a4) (b4)--(a5) (b0)--(a0);
    \foreach \x in {a3,a4,a5,a6,b3,b4,b5,b6,c1,c2,c3,c4,c5,c6}{
      \filldraw[dot,color=black,fill=red] (\x) circle (.3mm);}
    \foreach \i in {0,2} {\filldraw (0,0,\i) circle (.12mm);}
    \node[anchor=north east] at (a3) {$v_3$};
    \node[anchor=north east] at (a4) {$v_4$};
    \node[anchor=north west] at (a5) {$v_5$};
    \node[anchor=north west] at (a6) {$v_6=v_0$};
    \node at (-2,0,.5) {$\alpha_2 = \delta_{\{2,3,4,5\}}$};
    \node at (-2,0,1.5) {$\alpha_1 = \delta_{\{1,5,6\}}$};
    \draw[<-] (2,0,.1)--(2,0,1.9);
    \node at (2,0,0) {$t=1$};
    \node at (2,0,2) {$t=0$};
  \end{scope}
\end{tikzpicture}

  \caption{A drawing of $\alpha_1 \cdot \alpha_2$ where $\alpha_1 =
    \delta_{A_1}$ and $\alpha_2 = \delta_{A_2}$ are rotations with
    $A_1 = \{1,5,6\}$ and $A_2 = \{2,3,4,5\}$. \label{fig:rotations}}
\end{figure}
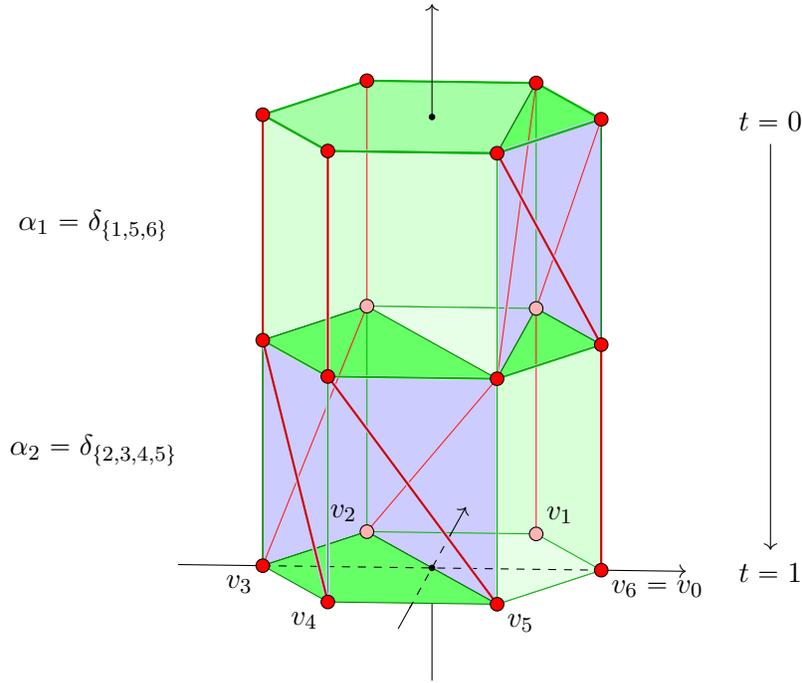
\begin{defn}[Rotation braids]\label{def:rotations}
  For $A \subset [n]$ of size $k = \card{A}>1$ we define an element
  $\delta_A \in \braid_n$ that we call the \emph{rotation
    braid of the vertices in $P_A$}.  It is the braid represented by
  the path in $\uconf_n(D)$ that fixes the vertices in $P - P_A$ and
  where every vertex $p_i \in P_A$ travels in a counter-clockwise
  direction along the oriented edge in the directed graph $\partial
  D_A$ to the next vertex it encounters.  If $f$ is any representative
  of $\delta_A$ satisfying this description, we call $f$ a
  \emph{standard representative} of $\delta_A$.
\end{defn}
When $A = [n]$ we write $\delta$ instead of $\delta_{[n]}$ and when
$A$ has only a single element we let $\delta_A$ denote the identity
element in $\braid_n$.  If $A = \{i,j\}$ and $e=e_{ij}$ is the edge
connecting $p_i$ and $p_j$, then we sometimes write $\delta_e$ to mean
$\delta_A$, the rotation of $p_i$ and $p_j$ around the boundary of the
bigon $D_A$.  Note that if $A = \{i_1,i_2,\ldots,i_k\} \subset [n]$
and $i_1 < i_2 < \cdots < i_k$ is the natural linear order of its
elements, then the bijection $\perm(\delta_A)$ is equal to the
$k$-cycle $(i_1,i_2,\ldots,i_k)$.

\begin{exmp}[Rotation braids]\label{ex:rotations}
  Figure~\ref{fig:rotations} shows a drawing of the product of two
  rotation braids in $\braid_9$.  The top braid $\alpha_1$ is the
  rotation $\delta_{A_1}$ with $A_1 =\{1,5,6\}$ and $\perm(\alpha_1) =
  (1\ 5\ 6)$.  The bottom braid $\alpha_2$ is the rotation braid
  $\delta_{A_2}$ with $A_2 = \{2,3,4,5\}$ and $\perm(\alpha_2) =
  (2\ 3\ 4\ 5)$.  The product braid $\alpha = \alpha_1\cdot \alpha_2$
  is the rotation braid $\delta$ of all $6$ vertices with
  $\perm(\alpha) = \perm(\alpha_1\cdot \alpha_2) = \perm(\alpha_1)
  \circ \perm(\alpha_2) = (1\ 5\ 6)(2\ 3\ 4\ 5) = (1\ 2\ 3\ 4\ 5\ 6)$.
\end{exmp}
In this article, we focus on whether or not particular strands pass
through the interior of the polygonal disk $D$.  Those that remain in
the boundary of $D$ are called boundary strands.

\begin{defn}[Boundary braids]\label{def:boundary-braids}
  Let $f$ be a representative of an $(i,j)$-braid $\alpha \in
  \braid_n$.  If the $(i,j)$-strand of $f$ remains in the boundary
  $\partial D$, then it is a \emph{boundary strand} of $f$ and a
  \emph{boundary parallel strand} of $\alpha$.  The linguistic shift
  from ``boundary'' to ``boundary parallel'' reflects the fact that
  while for many representatives of $\alpha$, the $(i,j)$-strand will
  not remain in the boundary, it will always remain parallel to the
  boundary in a sense that can be made precise.  When $\alpha$ has
  some representative in which its $(i,j)$-strand is a boundary
  strand, $\alpha$ is called a \emph{$(i,j)$-boundary braid}.  More
  generally, suppose $\alpha$ is an $(B,C)$-braid and there is a
  representative $f$ of $\alpha$ so that every strand that starts in
  $P_B$ is a boundary strand of $f$.  We then call $\alpha$ an
  \emph{$(B,C)$-boundary braid}.
\end{defn}
  Note that the definition of an $(B,C)$-boundary braid requires
  a single representative where all of these strands remain in
  $\partial D$.  
  We will see in Section~\ref{sec:boundary_braids} that such a 
  representative exists as soon as there are representatives 
  which keep the $(i,\cdot)$-strand in the boundary
  for each $i \in B$.

\begin{exmp}[Boundary braids]\label{ex:boundary-braids}
  In Figure~\ref{fig:rotations} the rotation braid $\alpha_2 =
  \delta_{A_2} \in \braid_6$ with $A_2 =\{2,3,4,5\}$ is an
  $(A_2,A_2)$-braid but it is not an $(A_2,A_2)$-boundary braid since
  the $(5,2)$-strand passes through the interior of $D$.  It is,
  however, a $(B,C)$-boundary braid with $B = \{1,2,3,4,6\}$ and $C =
  \{1,3,4,5,6\}$ since all five of the corresponding strands, i.e. the
  $(1,1)$, $(2,3)$, $(3,4)$, $(4,5)$ and $(6,6)$ strands, remain in
  the boundary of $D$ in its standard representative.
\end{exmp}
%
\section{Dual Simple Braids}\label{sec:dual-simple}

This section defines dual simple braids and the dual presentation of
the braid group using the rotation braids from the previous section.
We begin with the combinatorics of the noncrossing partition lattice.
Recall that $P = P_{[n]} \subset \C$ denotes the set of $n$-th roots
of unity.

\begin{defn}[Noncrossing partitions]\label{def:nc-part}
  A partition $\pi = \{A_1,\ldots,A_k\}$ of the set $[n]$ is
  \emph{noncrossing} when the convex hulls
  $\conv(P_{A_1}),\ldots,\conv(P_{A_k})$ of the corresponding sets of
  points in $P$ are pairwise disjoint.  A partition is
  \emph{irreducible} if it has exactly one block with more than one
  element.  Since there is an obvious bijection between irreducible
  partitions and subsets of $[n]$ of size at least $2$, we write
  $\pi_A$ to indicate the irreducible partition whose unique
  non-singleton block is $A$.
\end{defn}

\tikzstyle{plate}=[thick,shape=circle,draw,color=black,fill=yellow!10,
	rounded corners,minimum size=1.2cm]
\begin{figure}
  \begin{center}
    \begin{tikzpicture}
      \def\rowA{3.5} \def\rowB{1.5} \def\rowC{-1.5} \def\rowD{-3.5}

      \begin{scope}
	\coordinate (top) at (0,\rowA);
	\foreach \i in {1,...,6} \coordinate (a\i) at (-5.25+\i*1.5,\rowB);
	\foreach \i in {1,...,6} \coordinate (b\i) at (-5.25+\i*1.5,\rowC);
	\coordinate (bottom) at (0,\rowD);
      \end{scope}
      \draw[thick] 
      (top) edge (a1) edge (a2) edge (a3) edge (a4) edge (a5) edge (a6) 
      (a1) edge (b1) edge (b2) edge (b4) 
      (a2) edge (b1) edge (b3) edge (b5)
      (a3) edge (b2) edge (b3)
      (a4) edge (b4) edge (b5) 
      (a5) edge (b2) edge (b5) edge (b6)
      (a6) edge (b3) edge (b4) edge (b6)
      (bottom) edge (b1) edge (b2) edge (b3) edge (b4) edge (b5) edge (b6);
      
      \begin{scope}
	\node[plate] (top) at (0,\rowA) {};
	\foreach \i in {1,...,6} \node[plate] (a\i) at (-5.25+\i*1.5,\rowB) {};
	\foreach \i in {1,...,6} \node[plate] (b\i) at (-5.25+\i*1.5,\rowC) {};
	\node[plate] (bottom) at (0,\rowD) {};
      \end{scope}
      \def\makepoints{\foreach \n in {1,2,3,4} {\coordinate (\n) at (\n*90:0.4cm);};}
      \newcommand{\drawNCP}[1]{\makepoints \filldraw #1 \drawpoints}
      \begin{scope}[BluePoly]
	\begin{scope}[shift={(0,\rowA)}] \drawNCP{(1)--(2)--(3)--(4)--cycle;} \end{scope}
	\begin{scope}[shift={(-3.75,\rowB)}] \drawNCP{(1)--(2)--(3)--cycle;} \end{scope}
	\begin{scope}[shift={(-2.25,\rowB)}] \drawNCP{(1)--(3)--(4)--cycle;} \end{scope}
	\begin{scope}[shift={(-0.75,\rowB)}] \drawNCP{(1)--(4); \draw (2)--(3);} \end{scope}
	\begin{scope}[shift={(0.75,\rowB)}] \drawNCP{(1)--(2); \draw (3)--(4);} \end{scope}
	\begin{scope}[shift={(2.25,\rowB)}] \drawNCP{(2)--(3)--(4)-- cycle;} \end{scope}
	\begin{scope}[shift={(3.75,\rowB)}] \drawNCP{(1)--(2)--(4)--cycle;} \end{scope}
	\begin{scope}[shift={(-3.75,\rowC)}] \drawNCP{(1)--(3);} \end{scope}
	\begin{scope}[shift={(-2.25,\rowC)}] \drawNCP{(2)--(3);} \end{scope}
	\begin{scope}[shift={(-0.75,\rowC)}] \drawNCP{(1)--(4);} \end{scope}
	\begin{scope}[shift={(0.75,\rowC)}] \drawNCP{(1)--(2);} \end{scope}
	\begin{scope}[shift={(2.25,\rowC)}] \drawNCP{(3)--(4);} \end{scope}
	\begin{scope}[shift={(3.75,\rowC)}] \drawNCP{(2)--(4);} \end{scope}
	\begin{scope}[shift={(0,\rowD)}] \makepoints \drawpoints \end{scope}
      \end{scope}
    \end{tikzpicture}
  \end{center}
  \caption{The noncrossing partition lattice $\NC_4$}
  \label{fig:nc-part}
\end{figure}

\begin{defn}[Noncrossing partition lattice]\label{def:ncp-lattice}
  Let $\pi$ and $\pi'$ be noncrossing partitions of $[n]$.  If each
  block of $\pi$ is contained in some block of $\pi'$, then $\pi$ is
  called a \emph{refinement} of $\pi'$ and we write $\pi \leq \pi'$.
  The set of all noncrossing partitions of $[n]$ under the refinement
  partial order has well defined meets and joins and is called the
  \emph{lattice of noncrossing partitions $\NC_n$}.
\end{defn}
The Hasse diagram for $\NC_4$ is shown in Figure~\ref{fig:nc-part}.
The noncrossing partition lattice has a maximum partition with only
one block and a minimum partition, also called the \emph{discrete
  partition}, where each block contains a single element.  We write
$\NC_n^*$ for the poset of non-trivial noncrossing partitions,
i.e. $\NC_n$ with the discrete partition removed.

\begin{defn}[Rank function]\label{def:rank}
  The noncrossing partition lattice $\NC_n$ is a graded poset with a
  rank function.  The \emph{rank} of the noncrossing partition $\pi =
  \{A_1,A_2,\ldots,A_k\}$ is $\rk(\pi) = n-k$.  In particular, the
  rank of the discrete partition is $0$, the rank of the maximum
  partition is $n-1$ and the rank of the irreducible partition $\pi_A$
  is $\card{A}-1$.
\end{defn}
For more about noncrossing partitions, see \cite{mccammond06,
  armstrong09, stanley-ec1}.  Using the rotation braids defined in
Definition \ref{def:rotations}, there is a natural map from
noncrossing partitions to braids.

\begin{defn}[Dual simple braids]\label{def:dual-simple-braids}
  Let $\pi = \{A_1,\ldots,A_k\} \in \NC_n$ be a
  noncrossing partition. The \emph{dual simple braid} $\delta_\pi$ is
  defined to be the product of the rotation braids
  $\delta_{A_1}\cdots\delta_{A_k}$ and since the rotation braid of a
  singleton set is the trivial braid, the product only needs to be
  taken over the blocks of size at least $2$.  Moreover, because the
  standard subdisks $D_{A_i}$ are pairwise disjoint, the rotation
  braids $\delta_{A_i}$ pairwise commute and the order in which they
  are multiplied is irrelevant.  Finally note that for each $A \subseteq
  [n]$ of size at least $2$, the irreducible partition $\pi_A$ corresponds 
  to the rotation braid $\delta_A$.  In accordance with notation for noncrossing
  partitions we denote by $\DS_n \defeq \{\delta_\pi \mid \pi \in \NC_n\}$
  the set of dual simple braids and by 
  $\DS_n^* \defeq \{\delta_\pi \mid \pi \in \NC_n^*\}$
  the set of non-trivial dual simple braids.
  We equip both sets with the order coming from $\NC_n$.
\end{defn}
Taking the transitive closure gives a partial order on all of $\braid_n$.
It has the following property:

\begin{prop}\label{prop:dual-simple-order}
	The partial order $\le$ on $\braid_n$ is a left-invariant lattice order. The set $\DS_n$
	is the interval $[1,\delta]$ with respect to this order.
	In particular, if $\sigma,\tau\in\NC_n$ then $\sigma \leq \tau$
	if and only if
	${\delta_\sigma}^{-1}\delta_\tau$ is a dual simple braid if and only
	if $\delta_\tau{\delta_\sigma}^{-1}$ is a dual simple braid.
\end{prop}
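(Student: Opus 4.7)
The plan is to realize the stated partial order on $\braid_n$ as the left-divisibility order coming from the dual braid monoid $\braid_n^+$ of Birman--Ko--Lee, and then to quote the Garside-theoretic consequences. First I would verify the key combinatorial fact that for $\sigma,\tau \in \NC_n$, one has $\sigma \le \tau$ if and only if $\delta_\tau = \delta_\sigma \cdot \delta_\rho$ for some (necessarily unique) $\rho \in \NC_n$. This is essentially a statement about subdisks: if $\sigma$ refines $\tau$, then each block of $\sigma$ is contained in a block of $\tau$, so within each block $A$ of $\tau$ the induced partition $\sigma|_A$ gives a factorization of $\delta_A$ as $\delta_{\sigma|_A}$ times a dual simple braid (the ``Kreweras complement'' of $\sigma|_A$ inside $A$), and multiplying over blocks of $\tau$ yields the factorization $\delta_\tau = \delta_\sigma \delta_\rho$. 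Conversely, a dual simple factor on either side can only refine the cycle structure, so the factorization forces $\sigma \le \tau$ in $\NC_n$.

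Next I would import the dual Garside structure. The submonoid $\braid_n^+ \le \braid_n$ generated by the two-element rotations $\delta_e$ is a Garside monoid with Garside element $\delta$, and its set of simples (the divisors of $\delta$) is exactly $\DS_n$; this is proved in detail by Bessis and by Birman--Ko--Lee. From this one gets for free that left-divisibility in $\braid_n^+$ extends to a left-invariant partial order on the group of fractions $\braid_n$, that this order is a lattice, and that the interval $[1,\delta]$ coincides with $\DS_n$ and is order-isomorphic to $\NC_n$. Combined with the factorization of the previous paragraph, the order given in the proposition is recognized as the restriction of left-divisibility, proving the first assertion and the characterization $\sigma \le \tau \iff \delta_\sigma^{-1}\delta_\tau \in \DS_n$.

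For the right-multiplicative characterization $\delta_\tau \delta_\sigma^{-1} \in \DS_n$, I would use that conjugation by $\delta$ preserves $\DS_n$: for any dual simple $\delta_\rho$, one has $\delta \cdot \delta_\rho \cdot \delta^{-1} \in \DS_n$, corresponding on the combinatorial side to the Kreweras-type rotational symmetry of $\NC_n$. Writing $\delta_\tau \delta_\sigma^{-1} = \delta_\tau \delta_\sigma^{-1} \delta_\tau^{-1} \cdot \delta_\tau$ and rearranging, one sees that $\delta_\sigma^{-1}\delta_\tau \in \DS_n$ is equivalent to $\delta_\tau\delta_\sigma^{-1} \in \DS_n$.

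The main obstacle is not any of the translations above, but the underlying fact that the dual braid monoid is indeed a Garside monoid --- i.e., that left-divisibility is antisymmetric, that meets and joins exist, and that $\delta$ is balanced. I would not reprove this; instead I would cite Bessis's dual braid monoid paper and the Brady--Watt treatment, and devote the argument only to translating their statements into the language of rotation braids and noncrossing partitions used here.
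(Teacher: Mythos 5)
Your overall strategy --- recognize the stated order as left-divisibility for the dual braid (Birman--Ko--Lee) monoid and then import the Garside machinery --- is essentially what the paper does; the paper simply cites Brady's \cite{brady01} (his ``allowable elements'', Lemma~3.10, Lemma~5.6, Theorem~5.7) in place of Bessis and Brady--Watt.  Your first paragraph sketching the subdisk/Kreweras factorization is a reasonable gloss on what the paper records as Proposition~\ref{prop:relations}.

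There is, however, a genuine gap in your third paragraph, the one handling the right-quotient characterization $\delta_\tau\delta_\sigma^{-1}\in\DS_n$.  The algebraic identity you write, $\delta_\tau\delta_\sigma^{-1} = (\delta_\tau\delta_\sigma^{-1}\delta_\tau^{-1})\delta_\tau$, is conjugation by $\delta_\tau$, not by $\delta$, and conjugation by a generic dual simple braid does \emph{not} preserve $\DS_n$, so the fact you quote (``$\delta\cdot\delta_\rho\cdot\delta^{-1}\in\DS_n$'') does not engage with your rearrangement.  More fundamentally, the equivalence ``$\delta_\sigma$ left-divides $\delta_\tau$ iff $\delta_\sigma$ right-divides $\delta_\tau$'' (for simples) is \emph{not} a general Garside-theoretic fact and cannot be obtained ``for free'' from citing Bessis or Brady--Watt on the dual monoid being Garside.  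Indeed it fails in the \emph{classical} braid monoid: in $\braid_3^+$ the simple $s_1$ left-divides the simple $s_1s_2$, but $s_1s_2s_1^{-1}$ is not a permutation braid, so $s_1$ does not right-divide $s_1s_2$.  That left- and right-divisibility agree on $\DS_n$ is a special symmetry of the dual braid monoid, and it is precisely the content of the two-sided factorization statement in Brady's Theorem~3.7/Lemma~3.9 (recorded here as Proposition~\ref{prop:relations}).  Your argument needs to cite that result directly (or redo the combinatorial subdisk factorization symmetrically, producing both $\delta_\tau=\delta_\sigma\delta_\rho$ and $\delta_\tau=\delta_{\rho'}\delta_\sigma$ with $\rho,\rho'\in\NC_n$) rather than appeal to conjugation by $\delta$.
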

\begin{proof}
This can be seen from \cite{birmankolee98} but it is easier to reference from \cite{brady01}. Our dual simple braids are ``(braids corresponding to) allowable elements'' in \cite{brady01}. That the order on $\DS_n$ is left-divisibility follows from \cite[Lemma~3.10]{brady01}. Consequently taking the transitive closure is the same as taking an element to be $\ge 1$ if and only if it is generated by dual simple braids. That this defines a left-invariant partial order follows from \cite[Lemma~5.6]{brady01}. That $\DS_n$ is the interval $[1,\delta]$ follows from the injectivity statement \cite[Theorem~5.7]{brady01}.
\end{proof}
There is a third poset, isomorphic to both $\NC_n$ and $\DS_n$, which
provides another useful perspective on the combinatorics of noncrossing
partitions.

\begin{defn}[Noncrossing Permutations]\label{def:nc-perm}
	As described in Definition~\ref{def:dual-simple-braids}, the
	poset of dual simple braids is obtained via an injection of
	$\NC_n$ into $\braid_n$. The composition of this injection with
	the $\perm$ map is also injective and we refer to the image of the
	composition $\NC_n \hookrightarrow \sym_n$ as the set of \emph{noncrossing
	permutations}, denoted $\NP_n$. Refer to the noncrossing permutation
	corresponding to $\pi\in\NC_n$ as $\sigma_\pi$. With the partial order induced by 
	the noncrossing partition lattice, $\NP_n$ is isomorphic to both
	$\DS_n$ and $\NC_n$.
\end{defn}
To help the reader keep track of the notation adherent to $\NC_n$ and its counterparts within $\braid_n$ and $\sym_n$ we provide a dictionary in Table~\ref{tab:dic}.

\begin{table}
\centering
\begin{tabular}{ccccc}
 & & $\sym_n$ & & $\braid_n$\\
&& \rotatebox{90}{$\subseteq$} && \rotatebox{90}{$\subseteq$}\\
$\NC_n$ &$\cong$& $\NP_n$ &$\cong$& $\DS_n$\\
\clap{\parbox{3cm}{\center(noncrossing\\partitions)}} &\rule{1cm}{0pt}& \clap{\parbox{3cm}{\center(noncrossing\\permutations)}} &\rule{1cm}{0pt}& \clap{\parbox{3cm}{\center(dual simple\\braids)}}
\end{tabular}
\medskip
\caption{The names of noncrossing partitions as subsets of the symmetric group and the braid group.}
\label{tab:dic}
\end{table}
\tikzstyle{disk}=[thick,shape=circle,draw,color=black,fill=yellow!10]
\begin{figure}
  \begin{center}
  \begin{tikzpicture}
    \begin{scope}[shift={(-2.5,0)},BluePoly]
      \node () [disk,minimum size=4.2cm] {};
      \foreach \n in {1,...,9} {
        \coordinate (\n) at (\n*40:1.5cm);
        \node[black] () at (\n*40:1.8cm) {\n};
      }
      \filldraw (1)--(2)--(6)--(9)--cycle;
      \draw (3)--(5) (7)--(8);
      \foreach \n in {1,...,9} {
        \draw (\n) node [dot] {};
      }
    \end{scope}
    \begin{scope}[shift={(2.5,0)},RedPoly]
      \node () [disk,minimum size=4.2cm] {};
      \foreach \n in {1,...,9} {
      	\coordinate (\n) at (\n*40:1.5cm);
      	\node[black] () at (\n*40:1.8cm) {\n};
      }
      \draw[->, shorten >= 6pt] (1)--(2);
      \draw[->, shorten >= 6pt] (2)--(6);
      \draw[->, shorten >= 6pt] (6)--(9);
      \draw[->, shorten >= 6pt] (9)--(1);
      \draw[->, shorten >= 6pt] (3) to [bend right=15] (5);
      \draw[->, shorten >= 6pt] (5) to [bend right=15] (3);
      \draw[->, shorten >= 5pt] (8) to [bend right=40] (7);
      \draw[->, shorten >= 6pt] (7)--(8);
      \foreach \n in {1,...,9} {
        \draw (\n) node [dot] {};
      }
    \end{scope}
  \end{tikzpicture}
  \caption{The noncrossing partition $\pi$ with blocks $A_1 =
    \{1,2,6,9\}$, $A_2 = \{3,5\}$, $A_3 = \{4\}$ and $A_4 = \{7,8\}$
    on the left corresponds to the braid $\delta_\pi = \delta_{A_1}
    \delta_{A_2} \delta_{A_4}$ on the right.\label{fig:nc-perm}}
   \end{center}
\end{figure}
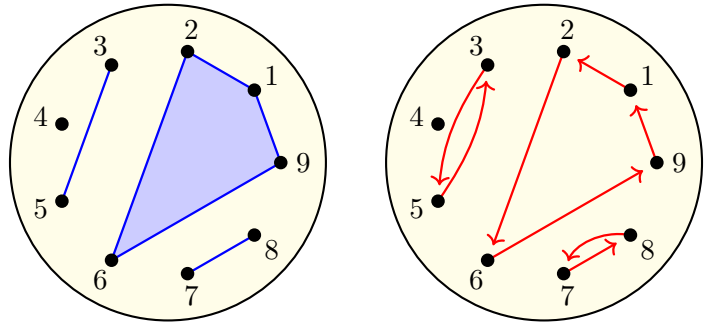
The following proposition records standard facts about factorizations
of dual simple braids into dual simple braids.

\begin{prop}[Relations]\label{prop:relations}
  If $\pi, \pi' \in \NC_n$ are noncrossing partitions
  with $\pi \le \pi'$, then there exist unique $\pi_1, \pi_2 \in
  \NC_n$ such that $\delta_{\pi_1} \delta_{\pi} =
  \delta_{\pi}\delta_{\pi_2} = \delta_{\pi'}$ in $\braid_n$.
  Conversely, if $\pi_1, \pi_2,\pi_3 \in \NC_n$ are
  noncrossing partitions such that $\delta_{\pi_1} \delta_{\pi_2} =
  \delta_{\pi_3}$ in $\braid_n$, then $\pi_1 \le \pi_3$, $\pi_2 \le \pi_3$
  and $\pi_3$ is the join of $\pi_1$ and $\pi_2$ in $\NC_n$.
\end{prop}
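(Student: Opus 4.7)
The plan is to deduce the existence and uniqueness claims directly from Proposition~\ref{prop:dual-simple-order}, and then to handle the join assertion by restricting $\delta_{\pi_1}\delta_{\pi_2} = \delta_{\pi_3}$ to each block of $\pi_3$ and analyzing the resulting permutations.

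For the forward direction, given $\pi \leq \pi'$, Proposition~\ref{prop:dual-simple-order} gives that both $\delta_{\pi'}\delta_\pi^{-1}$ and $\delta_\pi^{-1}\delta_{\pi'}$ are dual simple. Since $\pi \mapsto \delta_\pi$ is a bijection $\NC_n \to \DS_n$, these two braids equal $\delta_{\pi_1}$ and $\delta_{\pi_2}$ for unique $\pi_1,\pi_2 \in \NC_n$, which satisfy the stated equations. Uniqueness is immediate: in the group $\braid_n$ the equation $\delta_{\pi_1}\delta_\pi = \delta_{\pi'}$ forces $\delta_{\pi_1} = \delta_{\pi'}\delta_\pi^{-1}$, and then $\pi_1$ is pinned down by $\NC_n \cong \DS_n$; the argument for $\pi_2$ is symmetric.

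For the converse, suppose $\delta_{\pi_1}\delta_{\pi_2} = \delta_{\pi_3}$. Rewriting as $\delta_{\pi_1} = \delta_{\pi_3}\delta_{\pi_2}^{-1}$ and $\delta_{\pi_2} = \delta_{\pi_1}^{-1}\delta_{\pi_3}$, both right-hand sides are dual simple, so Proposition~\ref{prop:dual-simple-order} yields $\pi_2 \leq \pi_3$ and $\pi_1 \leq \pi_3$. Consequently $\pi_1 \vee \pi_2 \leq \pi_3$ in $\NC_n$.

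The main obstacle is the reverse inequality $\pi_3 \leq \pi_1 \vee \pi_2$, and the plan is to derive a contradiction from assuming that $\pi_4 \defeq \pi_1 \vee \pi_2$ strictly refines $\pi_3$. Under this assumption, some block $C$ of $\pi_3$ has restriction $\pi_4|_C$ consisting of at least two blocks, and both $\pi_1|_C$ and $\pi_2|_C$ refine $\pi_4|_C$. Because rotation braids on disjoint sets commute (Definition~\ref{def:dual-simple-braids}), the equation $\delta_{\pi_1}\delta_{\pi_2} = \delta_{\pi_3}$ splits over the blocks of $\pi_3$, and on $C$ it becomes
\[
\delta_{\pi_1|_C}\,\delta_{\pi_2|_C} \;=\; \delta_C
\]
inside the subdisk $D_C$. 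Applying $\perm$, the cycles of $\perm(\delta_{\pi_1|_C})$ and $\perm(\delta_{\pi_2|_C})$ are the blocks of $\pi_1|_C$ and $\pi_2|_C$, each contained in some block of $\pi_4|_C$. Hence both of these permutations preserve $\pi_4|_C$ setwise, and so does their product. But $\perm(\delta_C)$ is the full $\card{C}$-cycle on $P_C$, which preserves no partition of $C$ with more than one block, yielding the desired contradiction.
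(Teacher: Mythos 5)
Your proof is correct but takes a genuinely different route from the paper, which simply cites Theorem~3.7, Lemma~3.9, and Theorem~4.8 of Brady's paper for the whole statement. You instead deduce everything from Proposition~\ref{prop:dual-simple-order} plus elementary combinatorics, which is a nice gain in self-containedness. The existence and uniqueness of $\pi_1,\pi_2$ and the two ``only if'' inequalities $\pi_1,\pi_2 \le \pi_3$ fall out cleanly from Proposition~\ref{prop:dual-simple-order} together with the injectivity of $\pi\mapsto\delta_\pi$. The new content is your treatment of the join, and that part is sound: the splitting of the equation $\delta_{\pi_1}\delta_{\pi_2}=\delta_{\pi_3}$ over the blocks of $\pi_3$ is justified by $\pi_1,\pi_2\le\pi_3$ together with the internal direct product structure $\braid_{\pi_3} = \prod_C \braid_C$ from Definition~\ref{def:dual-para}, and the permutation argument on a single block $C$ is correct because a $\card{C}$-cycle stabilizes no nonempty proper subset of $C$, whereas a product of permutations whose cycles each sit inside a block of $\pi_4\vert_C$ necessarily stabilizes every such block. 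Two small points worth tightening in the write-up: you should say explicitly that $\pi_1,\pi_2\le\pi_3$ (just established) is what licenses the blockwise splitting, since this is used before you restate it; and ``the cycles of $\perm(\delta_{\pi_1\vert_C})$ are the blocks of $\pi_1\vert_C$'' should read ``the nontrivial cycles are the blocks of size at least $2$, with singleton blocks giving fixed points'' to match the paper's description of $\perm(\delta_A)$, though of course this does not affect the stabilization argument.
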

\begin{proof}
Follows from Theorem~3.7, Lemma~3.9, and Theorem~4.8 of \cite{brady01}.
\end{proof}
These relations are used to define the dual presentation of the braid
group.

\begin{defn}[Dual presentation]\label{def:dual-presentation}
  Let $S = \{s_\pi \mid \pi \in \NC_n^*\}$ be a set indexed by the
  non-trivial noncrossing partitions and let $R$ be the set of
  relations of the form $s_{\pi_1} s_{\pi_2} = s_{\pi_3}$ where such a
  relation is in $R$ if and only if $\delta_{\pi_1} \delta_{\pi_2} =
  \delta_{\pi_3}$ holds in $\braid_n$.  The finite presentation
  $\langle\ S \mid R\ \rangle$ is called the \emph{dual presentation
    of the $n$-strand braid group}.
\end{defn}
The name reflects the following fact established by Tom Brady in
\cite{brady01}.

\begin{thm}[Dual presentation]\label{thm:dual-presentation}
  The abstract group $G$ defined by the dual presentation of the
  $n$-strand braid group is isomorphic to the $n$-strand braid group.
  Concretely, the function that sends $s_\pi \in S$ to the dual simple
  braid $\delta_\pi \in \braid_n$ extends to a group isomorphism
  between $G$ and $\braid_n$.\qed
\end{thm}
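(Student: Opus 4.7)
The plan is to define a homomorphism $\phi\colon G\to\braid_n$ by $s_\pi\mapsto\delta_\pi$, show it is surjective, and then construct a two-sided inverse to establish injectivity. The map $\phi$ is well defined because every relation $s_{\pi_1}s_{\pi_2}=s_{\pi_3}$ is declared to be in $R$ precisely when $\delta_{\pi_1}\delta_{\pi_2}=\delta_{\pi_3}$ holds in $\braid_n$. Surjectivity is immediate: the classical Artin generator $\sigma_i$ that swaps the adjacent boundary points $p_i$ and $p_{i+1}$ equals the rotation braid $\delta_{\{i,i+1\}}$, so the image of $\phi$ contains the full Artin generating set of $\braid_n$.

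For injectivity, I would build $\psi\colon\braid_n\to G$ from the classical Artin presentation, sending $\sigma_i\mapsto s_{\pi_{\{i,i+1\}}}$, and verify the two families of Artin relations in $G$. The commutation relation $\sigma_i\sigma_j=\sigma_j\sigma_i$ for $|i-j|\ge 2$ is essentially automatic: the rotation braids $\delta_{\{i,i+1\}}$ and $\delta_{\{j,j+1\}}$ commute in $\braid_n$ because they act on disjoint boundary bigons, and by Definition~\ref{def:dual-simple-braids} their product in either order equals the dual simple braid $\delta_\pi$ for $\pi$ the partition with blocks $\{i,i+1\}$ and $\{j,j+1\}$; the resulting two entries of $R$ pin down the commutation in $G$. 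The braid relation $\sigma_i\sigma_{i+1}\sigma_i=\sigma_{i+1}\sigma_i\sigma_{i+1}$ is more delicate since its common value is not itself a dual simple braid. Proposition~\ref{prop:relations} applied to the dual simple braid $\delta_{\pi_{\{i,i+1,i+2\}}}$ yields three factorizations into pairs of rank-one rotations, one for each cyclic orientation of the triangle $D_{\{i,i+1,i+2\}}$, and a short substitute-and-regroup calculation with the corresponding three relations of $R$ rewrites $s_{\pi_{\{i,i+1\}}}s_{\pi_{\{i+1,i+2\}}}s_{\pi_{\{i,i+1\}}}$ into $s_{\pi_{\{i+1,i+2\}}}s_{\pi_{\{i,i+1\}}}s_{\pi_{\{i+1,i+2\}}}$.

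The principal obstacle is then verifying $\psi\circ\phi=\id_G$, i.e.\ that $\psi(\delta_\pi)=s_\pi$ in $G$ for every $\pi\in\NC_n^*$. Here the plan is to exploit the lattice structure of $\NC_n$ guaranteed by Proposition~\ref{prop:dual-simple-order}: each $\delta_\pi$ lies in the interval $[1,\delta]$ and admits a maximal chain of covers back to the identity whose covers are rank-one rotations $\delta_{\{i,j\}}$. Each such rotation is a conjugate of an adjacent-transposition rotation by an Artin word, so its image under $\psi$ is a word in the $s_{\pi_{\{k,k+1\}}}$ which, by iterated applications of the relations in $R$ at every covering, collapses back to $s_\pi$. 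A cleaner but more technical alternative is the Garside-theoretic route: prove that the positive monoid $M=\langle S\mid R\rangle^+$ is a Garside monoid with Garside element $s_{\pi_{[n]}}$, using Propositions~\ref{prop:dual-simple-order} and~\ref{prop:relations} to supply the lattice and divisibility axioms, and then derive the isomorphism $G\cong\braid_n$ from standard Garside normal-form theorems.
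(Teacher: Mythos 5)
The paper itself does not prove this theorem --- it is quoted from Brady \cite{brady01}, as the sentence preceding the statement and the terminating box indicate --- so there is no in-paper argument to compare against; you are reconstructing a proof from scratch. The broad outline is sound. The map $\phi$ is well defined by construction of $R$ and surjective since it hits the Artin generators $\delta_{\{i,i+1\}}$. The commutation relation in $G$ follows from the two entries of $R$ coming from $\delta_{\{i,i+1\}}\delta_{\{j,j+1\}} = \delta_{\{j,j+1\}}\delta_{\{i,i+1\}}$, and the braid relation in $G$ does fall out of the three factorizations of $s_{\pi_{\{i,i+1,i+2\}}}$, via
\[
s_{\pi_{\{i,i+1\}}}s_{\pi_{\{i+1,i+2\}}}s_{\pi_{\{i,i+1\}}}
= s_{\pi_{\{i+1,i+2\}}}s_{\pi_{\{i,i+2\}}}s_{\pi_{\{i,i+1\}}}
= s_{\pi_{\{i+1,i+2\}}}s_{\pi_{\{i,i+1\}}}s_{\pi_{\{i+1,i+2\}}}\text{.}
\]

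The genuine gap is exactly where you flag it: the assertion that $\psi(\delta_\pi)$ ``collapses back to $s_\pi$ by iterated applications of the relations in $R$'' names the conclusion rather than proving it, and this is the step that carries the entire weight of the theorem, since $\phi\circ\psi = \id_{\braid_n}$ is automatic from the definitions and only re-proves surjectivity of $\phi$, while $\psi\circ\phi = \id_G$ is what gives injectivity. A way to close the gap along the lines you sketch: prove $\psi(\delta_{\{i,j\}}) = s_{\pi_{\{i,j\}}}$ for each atom by induction on $j-i$, the base case $j=i+1$ being the definition of $\psi$, and the inductive step using the identity $\delta_{\{i,j\}} = {\delta_{\{j-1,j\}}}^{-1}\delta_{\{i,j-1\}}\delta_{\{j-1,j\}}$ in $\braid_n$ together with the matching identity $s_{\pi_{\{i,j\}}} = {s_{\pi_{\{j-1,j\}}}}^{-1}s_{\pi_{\{i,j-1\}}}s_{\pi_{\{j-1,j\}}}$ in $G$, the latter a consequence of the two relations in $R$ setting $s_{\pi_{\{i,j-1\}}}s_{\pi_{\{j-1,j\}}}$ and $s_{\pi_{\{j-1,j\}}}s_{\pi_{\{i,j\}}}$ both equal to $s_{\pi_{\{i,j-1,j\}}}$. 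Then, for arbitrary $\pi$, thread any maximal chain from the discrete partition to $\pi$ in $\NC_n$, applying one relation of $R$ per cover, to conclude $\psi(\delta_\pi) = s_\pi$. Your Garside-theoretic alternative is closer in spirit to what Brady actually does in \cite{brady01}, where the lattice and divisibility structure behind Propositions~\ref{prop:dual-simple-order} and~\ref{prop:relations} is established first and the isomorphism is deduced from it, but that framework is not materially shorter to set up than the direct induction above.
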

%
\section{Parabolic Subgroups}\label{sec:parabolic}

This section establishes properties of subgroups of $\braid_n$ indexed
by noncrossing partitions of $[n]$.  We begin by showing that two
different configuration spaces have isomorphic fundamental groups.

\begin{lem}[Isomorphic groups]\label{lem:iso-groups}
  For a subset $A \subset [n]$ of size $k$, let $B = [n] - A$ and
  $D^B = D - P_B$.  The natural inclusion map $D_A \into D^B$ extends
  to an inclusion map $h\colon \uconf_k(D_A) \into \uconf_k(D^B)$ and
  the induced map $h_*\colon \pi_1(\uconf_k(D_A),P_A) \to
  \pi_1(\uconf_k(D^B),P_A)$ is an isomorphism.
\end{lem}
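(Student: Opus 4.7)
The plan is to exhibit a strong deformation retraction of $\uconf_k(D^B)$ onto the subspace $h(\uconf_k(D_A))$. Since both configuration spaces are aspherical (being configuration spaces of contractible $2$-manifolds), such a retraction immediately yields the isomorphism $h_*$ on fundamental groups.

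First I would decompose $D^B$ as $D_A \cup \bigcup_i V_i$, where the $V_i$ are the bounded components of $D^B \setminus D_A$. Indexing $A$ in cyclic order $a_1,\ldots,a_k$ along $\partial D$, the region $V_i$ is bounded by the edge $e_i \defeq e_{a_i,a_{i+1}}$ of $D_A$ and by the polygonal arc of $\partial D$ from $p_{a_i}$ to $p_{a_{i+1}}$ with the intermediate vertices of $P_B$ removed. Each closure $\overline{V_i}$ inside $D^B$ is a topological half-disk missing some boundary points and hence deformation retracts onto $e_i$. Gluing these retractions along their common boundaries with the identity on $D_A$ gives a continuous base-level deformation retraction $\rho \colon D^B \times [0,1] \to D^B$ onto $D_A$.

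The main step is lifting $\rho$ to a deformation retraction of $\uconf_k(D^B)$. Distinct configuration points lying in different pieces of the decomposition stay distinct under $\rho_t$, so the only obstruction is that multiple points in the same $V_i$ could collide when $\rho_1$ collapses the two-dimensional region $V_i$ to the one-dimensional edge $e_i$. I would handle this by choosing $\rho$ so that its restriction to each $V_i$ is a fibered retraction along $e_i$---parameterizing $V_i$ by a tangential coordinate along $e_i$ and a transverse coordinate, and letting $\rho$ decrease only the transverse one---and then precomposing with a correction isotopy of $\uconf_k(D^B)$ that slightly displaces any points sharing a tangential coordinate. The two-dimensionality of each $V_i$ lets such a correction be constructed continuously on the configuration space via a partition of unity. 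The main obstacle I expect is establishing this continuity globally on $\uconf_k(D^B)$; once it is in place, the composite deformation retraction realizes $h$ as a homotopy equivalence and yields the claimed isomorphism.
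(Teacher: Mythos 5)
Your plan is genuinely different from the paper's and, as stated, it has a real gap exactly where you anticipate one. The paper does \emph{not} construct a global deformation retraction of $\uconf_k(D^B)$ onto $\uconf_k(D_A)$; instead it argues at the level of individual loops and homotopies. Given $[f]\in\pi_1(\uconf_k(D^B),P_A)$, one conjugates $f$ by a radial-shrinking path: first the $k$ points shrink radially toward a center in $D_A$ until the rescaled copy of $D^B$ lies inside $D_A$, then the (rescaled) loop $f$ runs, then the points expand back out. Because the shrinking is a homothety at each time, distinct points remain distinct, so there are no collisions and the modified loop stays in $\uconf_k(D_A)$; this gives surjectivity of $h_*$, and the identical trick applied to a homotopy between two loops gives injectivity.

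By contrast, your approach requires lifting a retraction $D^B\to D_A$ to the configuration space, and this is where the argument fails to close. Collapsing a two-dimensional region $V_i$ onto the one-dimensional edge $e_i$ forces collisions between configuration points sharing a tangential coordinate, and your ``correction isotopy via partition of unity'' is not a construction but a placeholder. Making such a correction continuous on all of $\uconf_k(D^B)$---uniformly over configurations with arbitrarily many points crowded into one $V_i$, with arbitrarily close tangential coordinates, and compatibly with the unordered quotient---is itself a nontrivial theorem in the spirit of the graph configuration-space literature, and is considerably harder than the lemma you are trying to prove. The paper's radial trick sidesteps the entire issue by keeping every intermediate configuration an injective rescaling of the original. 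One minor point: the appeal to asphericity is unnecessary even in your framework, since any deformation retraction is a homotopy equivalence and therefore induces an isomorphism on $\pi_1$ regardless of higher homotopy groups. If you want to keep a retraction-based strategy, the cleanest fix is to replace the piecewise collapse of the $V_i$'s by a single ambient isotopy of $D^B$ into $D_A$ built from a radial contraction (as in the paper's Proposition~\ref{prop:braid-D}), which is injective at every time and hence lifts to $\uconf_k$ with no collision issue to repair.
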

\begin{proof}
  When $k=1$ both groups are trivial and there is nothing to prove.
  For each element $[f]$ in $\pi_1(\uconf_k(D^B),P_A)$, the path $f$
  can be homotoped so that it never leaves the subdisk $D_A$.  One
  can, for example, modify $f$ so that the configurations first
  radially shrink towards a point in the interior of $D_A$, followed
  by the original representative $f$ on a rescaled version of $D^B$
  strictly contained in $D_A$, followed by a radial expansion back to
  the starting position.  This shows that $h_*$ is onto.  Suppose
  $[f]$ and $[g]$ are elements in $\pi_1(\uconf_k(D_A),P_A)$ such that
  $f$ and $g$ are homotopic based paths in the bigger space
  $\uconf_k(D^B)$.  A very similar modification that can be done here
  so that the entire homotopy between $f$ and $g$ takes place inside
  the subdisk $D_A$, and this shows that $h_*$ is injective.
\end{proof}
We are interested in the images of these isomorphic groups inside the
$n$-strand braid group.

\begin{defn}[Subgroups]\label{def:A-subgroups}
  Let $A$ be a nonempty subset of $[n]$ of size $k$, let $B = [n]-A$ and let
  $D^B = D - P_B$.  For each such $A$, we define a map from $\braid_k$
  to $\braid_n$ whose image is a subgroup we call $\braid_A$.  When
  $k=1$, $\braid_k$ is trivial, the only possible map is the trivial
  map and $\braid_A$ is the trivial subgroup of $\braid_n$.  For
  $k>1$, the subspace $D_A$ is a disk, by Corollary~\ref{cor:braid-P}
  the group $\pi_1(\uconf_k(D_A),P_A)$ is isomorphic to $\braid_k$,
  and by Lemma~\ref{lem:iso-groups} $\pi_1(\uconf_k(D^B),P_A)$ is also
  isomorphic to $\braid_k$.  Let $g\colon \uconf_k(D^B) \into
  \uconf_n(P)$ be the natural embedding that sends a set $U \in
  \uconf_k(D^B)$ to $g(U) = U \cup P_B \in \uconf_n(P)$ and note that
  $g(P_A) = P$.  The group $\braid_A$ is the subgroup
  $g_*(\pi_1(\uconf_k(D^B),P_A))$.
\end{defn}
Note that for every $A \subset [n]$, the rotation braid $\delta_A$ is
an element of the subgroup $\braid_A$.  We are also interested in the
braids that fix a subset of vertices in $V$.

\begin{defn}[Fixing vertices]\label{def:fixed-strands}
  Let $\alpha$ be a braid in $\braid_n$ represented by $f$.  We say
  that \emph{$f$ fixes the vertex $p_i \in P$} if the strand that
  starts at $p_i$ is a constant path, i.e. $f^i(t) = p_i$ for all
  $t\in [0,1]$. Similarly, \emph{$f$ fixes $P_B \subset P$} if it fixes each
  $p_i\in P_B$ and a braid \emph{$\alpha$ fixes $P_B$} if it has some
  representative $f$ that fixes $P_B$.  Let $\fix_n(B) = \{ \alpha \in
  \braid_n \mid \alpha \textrm{ fixes } P_B\}$.  Since
  the special representatives can be concatenated and inverted while
  remaining special, $\fix_n(B)$ is a subgroup of $\braid_n$.
\end{defn}
The two constructions describe the same set of subgroups.

\begin{lem}[$\fix_n(B) = \braid_A$]\label{lem:fix=para}
  If $A$ and $B$ are nonempty sets that partition $[n]$, then the fixed
  subgroup $\fix_n(B)$ is equal to the subgroup $\braid_A$.
\end{lem}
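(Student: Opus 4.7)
The plan is to prove set equality by double containment, using the definition of $\braid_A$ as $g_*\bigl(\pi_1(\uconf_k(D^B),P_A)\bigr)$ and the characterization of $\fix_n(B)$ in terms of representatives whose strands through $P_B$ are constant.

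First I would show $\braid_A \subseteq \fix_n(B)$. Any element of $\braid_A$ has the form $g_*[\gamma]$ for a loop $\gamma\colon[0,1]\to\uconf_k(D^B)$ based at $P_A$. By the definition of $g$, the composition $f \defeq g\circ\gamma$ satisfies $f(t)=\gamma(t)\cup P_B$ for all $t\in[0,1]$, so every configuration along $f$ contains $P_B$. Lifting $f$ through the covering $\set\colon\conf_n(D)\to\uconf_n(D)$ with the last $n-k$ coordinates ordered as the fixed points of $P_B$, the $(i,i)$-strand for each $i\in B$ is the constant path at $p_i$. Thus $f$ fixes $P_B$ and $[f]\in\fix_n(B)$.

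For the reverse inclusion $\fix_n(B)\subseteq\braid_A$, let $\alpha\in\fix_n(B)$ be represented by a loop $f$ fixing $P_B$. Because $P_B\subseteq f(t)$ for every $t$, I would define $\tilde f(t)\defeq f(t)\setminus P_B$, which takes values in $\uconf_k(D^B)$ and satisfies $\tilde f(0)=\tilde f(1)=P\setminus P_B=P_A$. Continuity of $\tilde f$ follows from the strand machinery of Section~\ref{sec:individual}: fixing $P_B$ means that the ordered lift of $f$ to $\conf_n(D)$ (with the last $n-k$ coordinates at the points of $P_B$) has its last $n-k$ coordinates constant, so its first $k$ coordinates form a continuous path in $\conf_k(D^B)$ whose image under $\set$ is exactly $\tilde f$. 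By construction $g\circ\tilde f=f$, hence $\alpha=[f]=g_*[\tilde f]\in\braid_A$.

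The only subtlety, and the place I would be most careful, is the continuity of $\tilde f$ — more abstractly, that $g$ is a topological embedding onto the subspace of $\uconf_n(D)$ consisting of configurations containing $P_B$. This is not a genuine obstacle but does need the observation above: local identification of the $p_i$-strands for $i\in B$ is possible precisely because each $p_i\in P_B$ remains isolated from the other points of the configuration throughout the loop, so ``removing $P_B$'' is a well-defined continuous operation on the relevant subspace. Once continuity of $\tilde f$ is in hand, both containments are immediate from the definitions.
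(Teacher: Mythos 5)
Your proof is correct and takes essentially the same approach as the paper's: both inclusions are obtained by unwinding the definition of $\braid_A$ as $g_*\bigl(\pi_1(\uconf_k(D^B),P_A)\bigr)$ and observing that a representative fixing $P_B$ restricts, via the ordered lift, to a loop in $\uconf_k(D^B)$. The continuity point you flag for $\tilde f$ is a genuine step that the paper simply asserts in the phrase ``$f$ restricted to the strands that start in $P_A$ is a loop in $\uconf_k(D^B)$,'' and your justification through the lift to $\conf_n(D)$ is exactly the intended argument.
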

\begin{proof}
  The map $g$ described in Definition~\ref{def:A-subgroups} shows that
  every braid in $\braid_A$ has a representative that fixes $P_B$.
  Thus $\braid_A \subset \fix_n(B)$.  Conversely, let $\alpha$ be a
  braid in $\fix_n(B)$ and let $f$ be a representative of $\alpha$ that
  fixes $P_B$.  Since the vertices in $P_B$ are always occupied, $f$
  restricted to the strands that start in $P_A$ is a loop in the space
  $\uconf_k(D^B)$.  Thus $\alpha$ is in the subgroup
  $g_*(\pi_1(\uconf_k(D^B),P_A)) = \braid_A$, which means that
  $\fix_n(B) \subset \braid_A$ and the two groups are equal.
\end{proof}
The subgroups of the form $\braid_A = \fix_n(B)$ are used to construct
the \emph{dual parabolic subgroups} of $\braid_n$.

\begin{defn}[Dual parabolic subgroups]\label{def:dual-para}
  Let $\pi = \{A_1,A_2,\ldots,A_k\} \in \NC_n$ be a noncrossing partition.
  We define the subgroup $\braid_\pi \subset
  \braid_n$ to be the internal direct product $\braid_\pi =
  \braid_{A_1} \times \ldots \times \braid_{A_k}$.  These are pairwise
  commuting subgroups that intersect trivially because they are moving
  points around in disjoint standard subdisks $D_{A_i}$.  We call
  $\braid_\pi$ a \emph{dual parabolic subgroup}.  The subgroup
  $\braid_A$ is an \emph{irreducible dual parabolic} because it
  corresponds to the irreducible noncrossing partition $\pi_A$.  And
  when $A = [n]-\{i\}$ we call $\braid_A$ a \emph{maximal irreducible
    dual parabolic}.
\end{defn}
The adjective ``dual'' is used to distinguish them from the standard
parabolic subgroups associated with the standard presentation of
$\braid_n$, but the two collections of subgroups are closely related.
To make the connection between them precise, we pause to discuss the
stardard presentation of $\braid_n$ and the standard parabolic
subgroups derived from this presentation.  We begin by recalling some
of the basic relations satisfied by a pair of rotations indexed by
edges in the disk $D$.

\begin{defn}[Basic relations]\label{def:basic-rels}
  Let $e$ and $e'$ be two edges in $D$.  Since they are straight line
  segments connecting vertices of the convex polygonal disk $D$, $e$
  and $e'$ are either disjoint, share a commmon vertex, or they cross
  at some point in the interior of each edge.  When $e$ and $e'$ are
  disjoint, the rotations $\delta_e$ and $\delta_{e'}$ \emph{commute},
  i.e. $\delta_e \delta_{e'} = \delta_{e'} \delta_e$.  When $e$ and
  $e'$ share a common vertex, $\delta_e$ and $\delta_{e'}$
  \emph{braid}, i.e. $\delta_e\delta_{e'}\delta_e =
  \delta_{e'}\delta_e\delta_{e'}$.  We call these commuting and
  braiding relations the \emph{basic relations} of $\braid_n$.  When
  $e$ and $e'$ cross, no basic relation between $\delta_e$ and
  $\delta_{e'}$ is defined.
\end{defn}
Artin showed that a small set of rotations indexed by edges in $D$ is
sufficient to generate $\braid_n$ and that the basic relations between
them are sufficient to complete a presentation of $\braid_n$.

\begin{defn}[Standard presentation]\label{def:std-pres}
  Consider the abstract group
	\begin{equation}\label{eq:std-pres}
	G = \left< 
      s_1,\ldots,s_{n-1} \left|
      \begin{array}{cl}
        s_i s_j = s_i s_j &\textrm{ if } \card{i-j}>1 \\
        s_i s_j s_i = s_j s_i s_j &\textrm{ if } \card{i-j}=1
      \end{array}
    \right.\right>
	\end{equation}
  This is the \emph{standard presentation of the $n$-strand braid
    group} and $S = \{s_1,s_2,\ldots,s_{n-1}\}$ is its \emph{standard
    generating set}.
\end{defn}
\begin{thm}[{\cite{artin25}}]
The abstract group $G$ defined by the standard presentation of the $n$-strand braid group is isomorphic to the $n$-strand braid group. Concretely, the function that sends $s_i$ to $\delta_e \in \braid_n$ where $e$ is the edge connecting $p_i$ and $p_{i+1}$ extends to an isomorphism between $G$ and $\braid_n$.\qed
\end{thm}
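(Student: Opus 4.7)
The plan is to show the assignment $\varphi\colon s_i \mapsto \delta_{e_{i,i+1}}$ extends to an isomorphism $G \to \braid_n$ in three stages, treating surjectivity and injectivity separately.

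First, I would verify that $\varphi$ is a well-defined homomorphism. The relations in~(\ref{eq:std-pres}) are precisely instances of the basic relations from Definition~\ref{def:basic-rels}: when $|i-j|>1$, the edges $e_{i,i+1}$ and $e_{j,j+1}$ share no vertex and are in fact disjoint, which gives the commuting relation $\delta_{e_{i,i+1}}\delta_{e_{j,j+1}} = \delta_{e_{j,j+1}}\delta_{e_{i,i+1}}$; when $|i-j|=1$, the edges share a common vertex, which gives the braid relation. Both basic relations can be checked by drawing explicit representatives of each side and observing that they are homotopic loops in $\uconf_n(D)$.

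Second, I would establish surjectivity using the dual presentation (Theorem~\ref{thm:dual-presentation}), which tells us that $\braid_n$ is generated by the dual simple braids $\delta_\pi$ for $\pi \in \NC_n^*$. Since $\delta_\pi$ factors as a product of rotation braids $\delta_{A_j}$ over the non-singleton blocks of $\pi$, it suffices to express each $\delta_A$ as a word in the $\delta_{e_{i,i+1}}$. For $A = \{i_1 < \cdots < i_k\}$ one can use Proposition~\ref{prop:relations} inside $\NC_n$ to factor $\delta_A$ as a product of rotations along the boundary edges of $D_A$, i.e.\ as a product of the $\delta_{e_{i_r i_{r+1}}}$. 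Each such non-adjacent edge rotation $\delta_{e_{ij}}$ can in turn be written as a conjugate of an adjacent edge rotation $\delta_{e_{k,k+1}}$ by a product of adjacent edge rotations, using repeated braid relations between edges that share a vertex. Iterating, every $\delta_\pi$ lies in the image of $\varphi$.

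Third, injectivity is the main obstacle and the deepest step. The classical argument, originally due to Artin, is to let $\braid_n$ act by automorphisms on a free group $F_n$ of rank $n$ (thought of as the fundamental group of the punctured disk), to check that this action is faithful, and to verify on the nose that the images of the standard generators in $\operatorname{Aut}(F_n)$ satisfy no relations beyond those already present in~(\ref{eq:std-pres}); this forces the kernel of $\varphi$ to be trivial. Since this is a classical result with several published proofs, I would simply cite \cite{artin25} for injectivity rather than reproduce the combinatorial analysis of reduced words in $\operatorname{Aut}(F_n)$.
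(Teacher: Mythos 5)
The paper does not prove this theorem: it is stated as a classical result with a \verb|\qed| in the theorem environment, citing \cite{artin25} directly and nothing more. Your proposal is therefore not competing against a proof in the paper; you are sketching an argument the authors chose to omit.

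Your sketch is structurally reasonable, but it contains a logical circularity worth flagging. You derive surjectivity from Theorem~\ref{thm:dual-presentation} (Brady's dual presentation). Brady's theorem, however, is itself established in \cite{brady01} by comparison with the standard (Artin) presentation --- the standard presentation is the known anchor and the dual one is proved equivalent to it. So invoking the dual presentation to establish that the map $G \to \braid_n$ is onto, in the course of proving that the standard presentation presents $\braid_n$, runs backwards through the dependency chain. Within the paper's expository ordering this may look harmless (Theorem~\ref{thm:dual-presentation} appears first), but as mathematics it is circular. Surjectivity has a direct and elementary proof that avoids this: any loop in $\uconf_n(D)$ based at $P$ is homotopic (by general position and a compactness/subdivision argument) to a concatenation of elementary moves that each swap two adjacent marked points along a short arc, and each such elementary move is exactly $\delta_{e_{i,i+1}}^{\pm 1}$. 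That is the route one should take if one wants an argument independent of the dual theory. For injectivity, deferring to Artin's action on the free group $F_n$ is the standard move and matches what the paper implicitly does; you are simply making explicit the citation that the paper leaves implicit in its \verb|\qed|.
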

Standard parabolic subgroups are generated by subsets of $S$.

\begin{defn}[Standard parabolic subgroups]\label{def:std-para}
  Let $S = \{s_1,s_2,\ldots,s_{n-1}\}$ be the standard generating set
  for the abstract group $G \cong \braid_n$.  For any subset $S'
  \subset S$, the subgroup $\langle S' \rangle \subset G$ generated by
  $S'$ is called a \emph{standard parabolic subgroup}.  The subsets of
  the form $S_{[i,j]} = \{ s_\ell \mid i \leq \ell < j\}$ generate the
  \emph{irreducible standard parabolic subgroups} of $G$.  These
  subsets correspond to sets of edges forming a connected subgraph in
  the boundary of $D$.
\end{defn}
We record two standard facts about the irreducible standard parabolic
subgroups of the braid groups: (1) they are isomorphic to braid groups
and (2) they are closed under intersection.

\begin{prop}[Isomorphisms]\label{prop:std-isom}
	Let $i,j\in [n]$ with $i<j$. Then the irreducible subgroup generated
	by $S_{[i,j]} = \{ s_\ell \mid i \leq \ell < j\}$ is isomorphic to 
	$\braid_k$, where $k = j-i+1$.
\end{prop}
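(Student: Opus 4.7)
The plan is to identify $\langle S_{[i,j]} \rangle$ with the irreducible dual parabolic subgroup $\braid_A \subseteq \braid_n$, where $A = \{i, i+1, \ldots, j\}$, and then invoke the isomorphism $\braid_A \cong \braid_k$ already built up in this section.

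First I would set $A = \{i, i+1, \ldots, j\}$, so that $\card{A} = k = j-i+1$, and set $B = [n] - A$. By Definition~\ref{def:A-subgroups} together with Lemma~\ref{lem:iso-groups} and Corollary~\ref{cor:braid-P}, the subgroup $\braid_A$ fits into a chain of canonical isomorphisms
\[
\braid_k \;\cong\; \pi_1(\uconf_k(D_A),P_A) \;\cong\; \pi_1(\uconf_k(D^B),P_A) \;\cong\; \braid_A,
\]
where the first isomorphism is induced by any homeomorphism from the standard disk for $\braid_k$ (with basepoints the $k$-th roots of unity $q_1,\ldots,q_k$) onto the subdisk $D_A$. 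I would choose this homeomorphism so that $q_\ell \mapsto p_{i+\ell-1}$ for each $\ell$, so that the $k-1$ consecutive boundary edges $q_1q_2,\ldots,q_{k-1}q_k$ of the standard $k$-gon are sent to the edges $e_{i,i+1},\ldots,e_{j-1,j}$ of $D_A$, while the remaining boundary edge $q_kq_1$ is sent to the chord $e_{j,i}$.

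Next I would prove both inclusions $\langle S_{[i,j]} \rangle = \braid_A$. For $\langle S_{[i,j]} \rangle \subseteq \braid_A$: under the standard Artin isomorphism, each $s_\ell \in S_{[i,j]}$ with $i \leq \ell < j$ is the rotation braid $\delta_{e_{\ell,\ell+1}}$, and its standard representative fixes every vertex $p_m$ with $m \in B$; hence $s_\ell \in \fix_n(B) = \braid_A$ by Lemma~\ref{lem:fix=para}. For the reverse inclusion I would trace the Artin generators $s'_1,\ldots,s'_{k-1}$ of $\braid_k$ through the chain of isomorphisms above: $s'_\ell$ becomes the rotation along $q_\ell q_{\ell+1}$, then the rotation along $p_{i+\ell-1}p_{i+\ell}$ in $\pi_1(\uconf_k(D_A),P_A)$, and finally the element $\delta_{e_{i+\ell-1,\,i+\ell}} = s_{i+\ell-1}$ of $\braid_n$. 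Since $s'_1,\ldots,s'_{k-1}$ generate $\braid_k$, their images $s_i,\ldots,s_{j-1}$ generate $\braid_A$, and the composite map restricts to an isomorphism $\braid_k \cong \braid_A = \langle S_{[i,j]}\rangle$.

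The only subtle point is the alignment of generators under the composite isomorphism; in particular, one must choose the homeomorphism from the standard $k$-gon onto $D_A$ so that the unique boundary edge of $D_A$ missed by the path $p_i,p_{i+1},\ldots,p_j$, namely the chord $e_{j,i}$, corresponds to the boundary edge $q_kq_1$ of the standard disk that is missed by the Artin generators. Once that combinatorial choice is made, the Artin relations between the $s'_\ell$ pull back to the Artin relations between the $s_\ell$ with $i \le \ell < j$, and the proposition follows.
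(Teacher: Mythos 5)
Your approach has a genuine gap, and it is worth pinpointing precisely. You write that $\braid_A$ "fits into a chain of canonical isomorphisms $\braid_k \cong \pi_1(\uconf_k(D_A),P_A) \cong \pi_1(\uconf_k(D^B),P_A) \cong \braid_A$." The first two isomorphisms are established by Corollary~\ref{cor:braid-P} and Lemma~\ref{lem:iso-groups}, but the third is not: Definition~\ref{def:A-subgroups} \emph{defines} $\braid_A$ as the image $g_*(\pi_1(\uconf_k(D^B),P_A))$, so all we know a priori is that $g_*$ surjects onto $\braid_A$. Nothing in the text up to this point establishes injectivity of $g_*$. Injectivity is precisely the content at stake: the proposition is asking you to show that the natural map $\braid_k \to \braid_n$ with image $\langle S_{[i,j]}\rangle$ has trivial kernel, so assuming $\braid_A \cong \braid_k$ at the outset is circular. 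Everything else you do — the choice of homeomorphism $q_\ell \mapsto p_{i+\ell-1}$, the forward inclusion $\langle S_{[i,j]}\rangle \subseteq \fix_n(B) = \braid_A$ via Lemma~\ref{lem:fix=para}, and the tracking of Artin generators to conclude $\braid_A = \langle S_{[i,j]}\rangle$ — is correct and shows that $\braid_k$ maps \emph{onto} $\langle S_{[i,j]}\rangle$. But that already follows more cheaply from the presentations, and it is only the easy half.

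The paper handles exactly this: it observes that surjectivity of $\braid_k \to \langle S_{[i,j]}\rangle$ is immediate from the standard presentation (the relations among $s_i,\ldots,s_{j-1}$ in $\braid_n$ are exactly the relations of $\braid_k$), and then invokes Artin's solution to the word problem \cite[§3]{artin25} for injectivity. To salvage your topological route you would need an independent argument that $g_*$ is injective — for instance a strand-forgetting fibration or a retraction — but none of the cited lemmas supplies one, and constructing it amounts to redoing the hard part of the proposition.
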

\begin{proof}
It is immediate from the standard presentation that $\braid_k$
maps onto $S_{[i,j]}$. That this map is injective follows from the
solution of the word problem, \cite[§3]{artin25}.
\end{proof}

\begin{prop}[Intersections]\label{prop:std-inter}  
  For all subsets $S', S'' \subset S$, the intersection $\langle S'
  \rangle \cap \langle S'' \rangle$ is equal to the standard parabolic
  subgroup $\langle S' \cap S'' \rangle$ .  Moreover, when both
  $\langle S' \rangle$ and $\langle S'' \rangle$ are irreducible
  subgroups, so is $\langle S' \cap S'' \rangle$.
\end{prop}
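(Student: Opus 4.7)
\emph{Plan.} The containment $\langle S'\cap S''\rangle\subseteq\langle S'\rangle\cap\langle S''\rangle$ is immediate. For the reverse containment I would split the problem via the short exact sequence $\purebraid_n\into\braid_n\xrightarrow{\perm}\sym_n$ and handle the symmetric-group and pure-braid parts separately.

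First, reduce to the irreducible case. Each $S'\subseteq S$ decomposes uniquely as a disjoint union of maximal consecutive intervals $S'=S_{[a_1,b_1]}\sqcup\cdots\sqcup S_{[a_m,b_m]}$, and because generators in different intervals commute, Proposition~\ref{prop:std-isom} realizes $\langle S'\rangle$ as an internal direct product $\prod_r\langle S_{[a_r,b_r]}\rangle$. Decomposing $S''$ the same way, any element of $\langle S'\rangle\cap\langle S''\rangle$ decomposes compatibly through both product structures, so the general statement reduces to the case $S'=S_{[i,j]}$ and $S''=S_{[i',j']}$ of two irreducible subsets.

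For the irreducible case, project via $\perm$: the image of $\langle S_{[i,j]}\rangle$ in $\sym_n$ is the Coxeter standard parabolic $\sym_{\{i,\ldots,j\}}$, and Tits' theorem for Coxeter groups gives $\sym_{\{i,\ldots,j\}}\cap\sym_{\{i',\ldots,j'\}}=\sym_{\{\max(i,i'),\ldots,\min(j,j')\}}$. Given $\alpha\in\langle S_{[i,j]}\rangle\cap\langle S_{[i',j']}\rangle$, choose a preimage $\alpha_0\in\langle S_{[\max(i,i'),\min(j,j')]}\rangle$ of $\perm(\alpha)$. Then $\alpha\alpha_0^{-1}$ is a pure braid still lying in $\langle S_{[i,j]}\rangle\cap\langle S_{[i',j']}\rangle$, and it suffices to prove the pure-braid intersection $\purebraid_{[i,j]}\cap\purebraid_{[i',j']}=\purebraid_{[\max(i,i'),\min(j,j')]}$, where $\purebraid_{[i,j]}\subseteq\purebraid_n$ denotes the pure braid subgroup on strands $\{i,\ldots,j\}$.

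For the pure-braid statement I would use the splittings of the Fadell--Neuwirth fibrations. Each ``forget the strands outside $[i,j]$'' operation admits a section and yields a retraction $r_{[i,j]}\colon\purebraid_n\to\purebraid_{[i,j]}$ fixing $\purebraid_{[i,j]}$ pointwise. Because the forget operations commute, $r_{[i,j]}\circ r_{[i',j']}=r_{[\max(i,i'),\min(j,j')]}$. Then for $\alpha\in\purebraid_{[i,j]}\cap\purebraid_{[i',j']}$,
\[
\alpha=r_{[i,j]}(\alpha)=r_{[i,j]}(r_{[i',j']}(\alpha))=r_{[\max(i,i'),\min(j,j')]}(\alpha)\in\purebraid_{[\max(i,i'),\min(j,j')]}\text{.}
\]
The ``moreover'' clause is immediate since the intersection of two intervals is an interval (possibly empty). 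The main obstacle is verifying the existence and compatibility of the retractions $r_{[i,j]}$; this is essentially built into the splittings of the Fadell--Neuwirth fibrations for ordered configuration spaces of disks, but requires some care to state precisely in the present setup.
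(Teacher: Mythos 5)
Your proof takes a genuinely different---and far more explicit---route than the paper, which dismisses the statement as ``immediate from Proposition~\ref{prop:std-isom}'' (essentially an appeal to the injectivity of the parabolic inclusion $\braid_k\into\braid_n$ and well-known normal-form facts without spelling them out). Your irreducible case is carried out correctly: projecting via $\perm$ and invoking the Coxeter-group intersection theorem $\sym_{\{i,\ldots,j\}}\cap\sym_{\{i',\ldots,j'\}}=\sym_{\{\max(i,i'),\ldots,\min(j,j')\}}$, dividing $\alpha$ by a preimage $\alpha_0$ chosen in the intersection-interval parabolic to reduce to a pure braid, and then exploiting the commuting Fadell--Neuwirth retractions via $r_{[i,j]}\circ r_{[i',j']}=r_{[\max(i,i'),\min(j,j')]}$ is exactly the kind of argument that makes the parabolic-intersection theorem much easier for braid groups than for general Artin groups. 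You are right that the retractions and their compatibility deserve a careful statement, but that is a matter of bookkeeping, not a missing idea.

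The reduction from arbitrary $S',S''$ to the irreducible case is, however, a genuine gap. You assert that any $\alpha\in\langle S'\rangle\cap\langle S''\rangle$ ``decomposes compatibly through both product structures,'' but this is not a formal consequence of having internal direct product decompositions of the two ambient groups: a subgroup of $G_1\times G_2$ need not split as the product of its intersections with the factors. Concretely, take $n=5$, $S'=\{s_1,s_4\}$ and $S''=\{s_2,s_3\}$. An element $s_1^a s_4^b\in\langle S'\rangle$ that happens to lie in $\langle S''\rangle$ has neither factor visibly in $\langle S''\rangle$, so knowing $\langle s_1\rangle\cap\langle s_2,s_3\rangle$ and $\langle s_4\rangle\cap\langle s_2,s_3\rangle$ are trivial does not immediately force $a=b=0$; one ends up invoking the forgetful maps again, i.e.\ another instance of the fact being proved. (For the same reason, establishing that the factors in the internal direct product decomposition of $\langle S'\rangle$ really do intersect trivially already secretly uses these retractions.)

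The clean repair is to skip the reduction entirely. Fadell--Neuwirth retractions $r_A$ exist for arbitrary $A\subseteq[n]$, not just intervals, with image the pure braids supported on $A$ and with $r_A\circ r_{A'}=r_{A\cap A'}$. Combining this with the Young-subgroup computation in $\sym_n$ (which likewise holds for arbitrary parabolic subsets, not just irreducible ones) gives the proposition in full generality in one step. It is also worth noting that the paper only ever applies this proposition in the irreducible setting, in the proof of Lemma~\ref{lem:para-1}, so your irreducible-case argument already suffices for everything downstream.
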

\begin{proof}
Immediate from Proposition~\ref{prop:std-isom}.
\end{proof}
For later use we record the following fact.

\begin{lem}\label{lem:abelian}
The map $\braid_n \to \Z$ that takes $\delta_\pi$ to $\rk(\pi)$ is the abelianization of $\braid_n$.
\end{lem}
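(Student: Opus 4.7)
The plan is to identify the abelianization of $\braid_n$ with $\Z$ via Artin's standard presentation, and then check that the resulting abelianization map takes the asserted value on every dual simple braid.

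From the standard presentation in Definition~\ref{def:std-pres}, both sides of every commutation relation and every braid relation have equal exponent sums, so the assignment $s_i \mapsto 1$ extends to a surjective homomorphism $\phi\colon \braid_n \to \Z$. Conversely, abelianizing the braid relation $s_i s_{i+1} s_i = s_{i+1} s_i s_{i+1}$ forces $s_i = s_{i+1}$ in $\braid_n^{\mathrm{ab}}$, so the abelianization is cyclic, generated by the image of $s_1$. Combined with the surjectivity of $\phi$, this identifies $\braid_n^{\mathrm{ab}}$ with $\Z$ via $\phi$.

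Next I show $\phi(\delta_e) = 1$ for the rotation braid of every edge $e$ of $D$. For a boundary edge $e = e_{i,i+1}$ this is immediate, since under the isomorphism between Artin's presentation and $\braid_n$ one has $\delta_e = s_i$. For arbitrary edges $e, e'$ sharing a vertex, the basic braid relation $\delta_e \delta_{e'} \delta_e = \delta_{e'} \delta_e \delta_{e'}$ from Definition~\ref{def:basic-rels} passes through $\phi$ to force $2\phi(\delta_e) + \phi(\delta_{e'}) = \phi(\delta_e) + 2\phi(\delta_{e'})$, i.e.\ $\phi(\delta_e) = \phi(\delta_{e'})$. Since the graph on edges of $D$ with the share-a-vertex adjacency is connected, every edge rotation has $\phi$-value equal to $\phi(s_1) = 1$.

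To finish, it suffices to verify $\phi(\delta_A) = |A|-1$ for each non-singleton block $A$, since $\delta_\pi$ is the commuting product of the $\delta_{A_i}$ over the non-singleton blocks of $\pi$ and $\rk(\pi) = \sum(|A_i|-1)$. Writing $A = \{i_1 < \cdots < i_k\}$, I induct on $k$, with the base case $k=2$ handled by the previous paragraph. For $k \geq 3$, set $A' = A \setminus \{i_k\}$ and $\pi' = \pi_{A'}$. Then $\pi' \leq \pi_A$ in $\NC_n$, so Proposition~\ref{prop:relations} produces a unique $\pi''$ with $\delta_{\pi'} \delta_{\pi''} = \delta_{\pi_A}$. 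Applying $\perm$ and using the isomorphism $\NC_n \cong \NP_n$ identifies $\sigma_{\pi''} = \sigma_{\pi'}^{-1}\sigma_{\pi_A}$ as a single transposition, so $\pi''$ is the rank-one noncrossing partition corresponding to some edge; by the previous step $\phi(\delta_{\pi''}) = 1$. Combined with the inductive hypothesis $\phi(\delta_{\pi'}) = k-2$, this yields $\phi(\delta_A) = k-1$. The main obstacle is this inductive step, where one must combine the uniqueness statement of Proposition~\ref{prop:relations} with the $\NC_n \cong \NP_n$ bijection to recognize the complementary factor as a single edge rotation.
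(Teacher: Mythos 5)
Your proof is correct, and it reaches the same conclusion via a slightly different logical direction. The paper defines the map $\delta_\pi \mapsto \rk(\pi)$ on the generators of the dual presentation, cites Proposition~\ref{prop:relations} to argue that it respects the defining relations $\delta_{\pi_1}\delta_{\pi_2}=\delta_{\pi_3}$ (using rank additivity in $\NC_n$), and then observes that the resulting homomorphism must be the abelianization because it agrees on the Artin generators $s_i = \delta_{e_{i,i+1}}$ with the exponent-sum map. You instead start from the exponent-sum map $\phi$ (showing directly from the standard presentation that it is the abelianization) and then compute $\phi(\delta_\pi)=\rk(\pi)$ by an induction on block size, again ultimately relying on Proposition~\ref{prop:relations} to produce the complementary factor $\pi''$ and on the $\NC_n\cong\NP_n$ identification to recognize $\pi''$ as a single edge rotation. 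Your version trades the paper's terse appeal to rank additivity (not stated explicitly in Proposition~\ref{prop:relations}, but a standard consequence of it) for a self-contained inductive computation; the paper's version is shorter because it builds the map from the dual side where the values are given by hypothesis. Both hinge on the same two ingredients: the standard presentation to identify $\braid_n^{\mathrm{ab}}\cong\Z$, and the dual-simple factorization combinatorics to propagate the value to all of $\DS_n$.
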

\begin{proof}
The map is well-defined by Proposition~\ref{prop:relations} because
if $\delta_{\pi_1}\delta_{\pi_2} = \delta_{\pi_3}$ then the rank
function on $\braid_n$ satisfies $\rk \pi_1 + \rk \pi_2 = \rk \pi_3$.
The fact that it is the full abelianization is immediate from the
standard presentation \eqref{eq:std-pres}.
\end{proof}
With this we end our digression on standard 
presentations and return to dual structure.

\begin{lem}[Maximal dual parabolics]\label{lem:para-1}
  The intersection of two maximal irreducible dual parabolic subgroups
  is an irreducible dual parabolic subgroup.  In particular, for all
  $n>0$ and for all $i,j \in [n]$,
	\[
    \fix_n(\{i,j\}) = \fix_n(\{i\}) \cap \fix_n(\{j\}).
	\]
\end{lem}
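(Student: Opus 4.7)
The inclusion $\fix_n(\{i,j\}) \subseteq \fix_n(\{i\}) \cap \fix_n(\{j\})$ is immediate: a representative pointwise fixing both $p_i$ and $p_j$ simultaneously witnesses membership in each $\fix_n(\{k\})$. All the work lies in the reverse inclusion, which amounts to producing, from representatives $f$ of $\alpha$ fixing $p_i$ and $g$ of $\alpha$ fixing $p_j$, a single representative that fixes both.

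The plan is to reformulate the problem topologically. Let $X_i = \{U \in \uconf_n(D) \mid p_i \in U\}$ and define $X_j$ and $X_i \cap X_j$ analogously. Combining Lemma~\ref{lem:iso-groups}, Definition~\ref{def:A-subgroups}, and Lemma~\ref{lem:fix=para} identifies the inclusion-induced maps as isomorphisms $\pi_1(X_i, P) \cong \fix_n(\{i\})$, $\pi_1(X_j, P) \cong \fix_n(\{j\})$, and $\pi_1(X_i \cap X_j, P) \cong \fix_n(\{i,j\})$ onto subgroups of $\pi_1(\uconf_n(D), P) = \braid_n$. Under these identifications the lemma reduces to the topological assertion
\[
\pi_1(X_i, P) \cap \pi_1(X_j, P) = \pi_1(X_i \cap X_j, P)
\]
as subgroups of $\pi_1(\uconf_n(D), P)$.

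To prove this assertion I would exploit the local product structure of $\uconf_n(D)$ near $X_i \cap X_j$: a configuration containing both $p_i$ and $p_j$ has a neighborhood of the form $B_\varepsilon(p_i) \times B_\varepsilon(p_j) \times N$ in which $X_i$, $X_j$, and $X_i \cap X_j$ appear as coordinate slices. Starting from the representative $g$ (whose $p_j$-strand is already constant), the task is to shrink the $p_i$-strand of $g$---a loop in $D$ based at $p_i$, since $\alpha$ is an $(i,i)$-braid---to the constant path while keeping the $p_j$-strand constant. The existence of $f$ guarantees a null-homotopy of this loop in the appropriate configuration space, and the product structure (applied in a tubular neighborhood of $X_j$) allows the null-homotopy to be realized without disturbing the $p_j$-strand.

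The main obstacle is carrying out this straightening globally rather than just locally: the null-homotopy of the $p_i$-strand must be compatible with all the other moving strands of $g$ and never collide with the fixed $p_j$-strand, which requires a careful interleaving argument keeping track of the tubular-neighborhood structure of $X_j$ throughout. Once the intersection formula is established, the first assertion of the lemma follows immediately, since each maximal irreducible dual parabolic has the form $\fix_n(\{i\}) = \braid_{[n]-\{i\}}$, and $\fix_n(\{i\}) \cap \fix_n(\{j\}) = \fix_n(\{i,j\}) = \braid_{[n]-\{i,j\}}$ is the irreducible dual parabolic corresponding to the noncrossing partition $\pi_{[n]-\{i,j\}}$.
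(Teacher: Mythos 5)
Your approach is genuinely different from the paper's and, as written, has a real gap precisely at the point where the mathematical content lives. You correctly identify that the hard inclusion is $\fix_n(\{i\}) \cap \fix_n(\{j\}) \subseteq \fix_n(\{i,j\})$, and you reformulate it as $\pi_1(X_i,P)\cap\pi_1(X_j,P) = \pi_1(X_i\cap X_j,P)$. But the argument you give for this does not go through. The sentence ``the existence of $f$ guarantees a null-homotopy of this loop in the appropriate configuration space'' is the crux: if ``the appropriate configuration space'' is the one in which $p_j$ is constrained to remain present, then this is exactly the statement to be proved, and nothing has been gained; if it is unconstrained $\uconf_n(D)$, then the resulting null-homotopy may move far from $X_j$, and the local product structure in a tubular neighborhood of $X_j$ is of no use. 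You flag this yourself --- ``which requires a careful interleaving argument'' --- but that interleaving argument \emph{is} the proof, and it is not supplied. Note also that the intersection formula $\pi_1(A\cap B) = \pi_1(A)\cap\pi_1(B)$ is false for general subspaces $A,B$ of a space, so some global structural input specific to this situation is indispensable, not just a technical nuisance.

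The paper avoids the topological analysis entirely with an algebraic reduction: choose a sequence of edges $(e_1,\ldots,e_{n-1})$ forming an embedded Hamiltonian path from $p_i$ to $p_j$ in $D$. The corresponding rotations $\delta_{e_\ell}$ satisfy the Artin relations, giving an isomorphism from the standardly-presented group $G$ to $\braid_n$ under which both $\fix_n(\{i\})$ and $\fix_n(\{j\})$ become standard parabolic subgroups $\langle S_{[2,n]}\rangle$ and $\langle S_{[1,n-1]}\rangle$, and $\fix_n(\{i,j\})$ becomes $\langle S_{[2,n-1]}\rangle$. The desired intersection then follows from the known intersection property of standard parabolics in Artin groups (Proposition~\ref{prop:std-inter}). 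This change-of-generators trick is what makes the lemma short; it converts a statement about two ``non-adjacent'' fixed points into one about nested standard parabolics, and it is the idea your proposal is missing.
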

\begin{proof}
  For every pair of vertices $p_i$ and $p_j$ one can select a sequence
  $E =(e_1, \ldots,e_{n-1})$ of edges in $D$ so that together, in this
  order, they form an embedded path through all vertices of $D$
  starting at $p_i$ and ending at $p_j$.  Because the rotations
  $\delta_e$ for $e \in E$ satisfy the necessary basic relations
  (Definition~\ref{def:basic-rels}), the function that sends $s_\ell
  \in S$ to the rotation $\delta_{e_\ell}$ extends to group
  homomorphism from $g\colon G \to \braid_n$.  In fact $g$ is a group
  isomorphism since up to homeomophism of $\C$ this is just the usual
  isomorphism between the abstract group $G$ and the braid group
  $\braid_n$.  Under this isomorphism the subgroup $\langle S_{[2,n]}
  \rangle$ is sent to the subgroup $\fix_n(\{i\})$, the subgroup
  $\langle S_{[1,n-1]} \rangle$ is sent to the subgroup
  $\fix_n(\{j\})$, and the subgroup $\langle S_{[2,n]} \rangle$ is
  sent to the subgroup $\fix_n(\{i,j\})$.
  Proposition~\ref{prop:std-inter} completes the proof.
\end{proof}
\begin{lem}[Relative maximal dual parabolics]\label{lem:para-ell}
  The intersection of two irreducible dual parabolic subgroups that
  are both maximal in a third irreducible dual parabolic subgroup is
  again an irreducible dual parabolic subgroup.  In other words, for
  all $n>0$ and for all $\{i\},\{j\},C \subset [n]$,
	\[
    \fix_n(C \cup \{i,j\}) = \fix_n(C \cup \{i\}) \cap \fix_n(C
  \cup \{j\}).
	\]
\end{lem}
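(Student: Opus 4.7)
The plan is to reduce Lemma~\ref{lem:para-ell} to the already-proven Lemma~\ref{lem:para-1} by passing to a smaller braid group. First I would observe that every subgroup appearing in the statement is contained in $\fix_n(C)$, since fixing a superset of $C$ in particular fixes $C$. The only non-trivial inclusion is therefore
\[
\fix_n(C\cup\{i\}) \cap \fix_n(C\cup\{j\}) \subseteq \fix_n(C\cup\{i,j\})\text{,}
\]
and the whole problem takes place inside $\fix_n(C)$.

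Next I would identify $\fix_n(C)$ with a smaller braid group. Let $A = [n] - C$ and $k = \card{A}$. By Lemma~\ref{lem:fix=para}, $\fix_n(C) = \braid_A$, and by Definition~\ref{def:A-subgroups} combined with Lemma~\ref{lem:iso-groups} and Corollary~\ref{cor:braid-P}, there is a canonical isomorphism $\braid_A \cong \braid_k$ induced by the embedding $g\colon \uconf_k(D^C) \hookrightarrow \uconf_n(D)$ sending a configuration $U$ to $U \cup P_C$. Under this isomorphism, a braid in $\braid_A$ fixes $p_i$ (for $i \in A$) in the sense of Definition~\ref{def:fixed-strands} if and only if its preimage in $\braid_k$ fixes the corresponding vertex of the standard basepoint. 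Thus, letting $i',j' \in [k]$ be the positions of $i,j$ in $A$, the isomorphism restricts to bijections
\[
\fix_n(C\cup\{i\}) \leftrightarrow \fix_k(\{i'\}), \quad \fix_n(C\cup\{j\}) \leftrightarrow \fix_k(\{j'\}), \quad \fix_n(C\cup\{i,j\}) \leftrightarrow \fix_k(\{i',j'\})\text{.}
\]

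Finally, applying Lemma~\ref{lem:para-1} inside $\braid_k$ gives
\[
\fix_k(\{i',j'\}) = \fix_k(\{i'\}) \cap \fix_k(\{j'\})\text{,}
\]
and transporting this equality across the isomorphism of the previous paragraph yields the required identity in $\braid_n$. There is no real obstacle here beyond bookkeeping; the main point that needs verification is the naturality of fix subgroups under the embedding $g$, which is transparent from the construction of $g$ since strands that are constant before the embedding remain constant after, and strands added by $g$ at the points of $P_C$ are constant by definition.
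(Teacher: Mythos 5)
Your proposal is correct and follows essentially the same approach as the paper: both proofs reduce to Lemma~\ref{lem:para-1} by identifying $\fix_n(C)$ with a smaller braid group $\braid_k$ (the paper via a vertex-preserving homeomorphism from $D_A$ to the regular $k$-gon and Lemma~\ref{lem:homeos}; you via the explicit isomorphism chain through Lemma~\ref{lem:iso-groups} and Corollary~\ref{cor:braid-P}) and then transporting the result back. You supply somewhat more explicit bookkeeping about why fix subgroups correspond to fix subgroups across the isomorphism, but the underlying argument is the same.
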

\begin{proof}	
  When $C$ is empty, the statement is just Lemma~\ref{lem:para-1} and
  when $C$ is $[n]$ there is nothing to prove.  When $C$ is proper and
  non-empty, all three groups are contained in $\fix_n(C) = \braid_A
  \cong \braid_k$ where $k$ is the size of $A= [n]-C$.  There is a
  homeomorphism from $D_A$ to the regular $k$-gon that sends vertices
  to vertices, so Lemma~\ref{lem:homeos}, shows that the assertion
  now follows by applying Lemma~\ref{lem:para-1} to this $k$-gon.
\end{proof}
\begin{prop}[Arbitrary dual parabolics]\label{prop:para-induct}
  Every proper irreducible dual parabolic subgroup of $\braid_n$ is
  equal to the intersection of the maximal irreducible dual parabolic
  subgroups that contain it and, as a consequence, the collection of
  irreducible dual parabolics is closed under intersection.  In
  other words, for all $n>0$ and for every non-empty $B \subset [n]$,
	\[
    \fix_n(B) = \bigcap_{i \in B} \fix_n(\{i\})
	\]
  and, as a consequence, for all non-empty $C, D \subset B$, 
	\[
    \fix_n(C \cup D) = \fix_n(C) \cap \fix_n(D).
	\]
\end{prop}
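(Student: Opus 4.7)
The plan is to prove the first equality by induction on $\card{B}$, using Lemma~\ref{lem:para-1} as the base case and Lemma~\ref{lem:para-ell} for the inductive step, and then derive the second equality as a formal consequence.

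More precisely, I would argue as follows. For $\card{B}=1$ the first identity is tautological, and for $\card{B}=2$ it is exactly the content of Lemma~\ref{lem:para-1}. For $\card{B} \ge 3$, choose any two distinct elements $i,j \in B$ and let $C = B \setminus \{i,j\}$, which is non-empty. Lemma~\ref{lem:para-ell} applies and yields
\[
\fix_n(B) = \fix_n(C \cup \{i\}) \cap \fix_n(C \cup \{j\})\text{.}
\]
Both $C \cup \{i\}$ and $C \cup \{j\}$ have cardinality strictly less than $\card{B}$, so the inductive hypothesis lets me rewrite each factor as $\bigcap_{k \in C \cup \{i\}} \fix_n(\{k\})$ and $\bigcap_{k \in C \cup \{j\}} \fix_n(\{k\})$ respectively. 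Intersecting these two families gives $\bigcap_{k \in C \cup \{i,j\}} \fix_n(\{k\}) = \bigcap_{k \in B} \fix_n(\{k\})$, completing the induction.

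For the second assertion, given non-empty $C, D \subseteq [n]$ with $C \cup D = B$, the first part gives
\[
\fix_n(C \cup D) = \bigcap_{i \in C \cup D} \fix_n(\{i\}) = \Bigl(\bigcap_{i \in C} \fix_n(\{i\})\Bigr) \cap \Bigl(\bigcap_{i \in D} \fix_n(\{i\})\Bigr) = \fix_n(C) \cap \fix_n(D)\text{,}
\]
where the last equality uses the first identity again applied separately to $C$ and to $D$. This also shows the collection of irreducible dual parabolics is closed under intersection, since every $\fix_n(B)$ with $B \neq \emptyset$ is itself an irreducible dual parabolic (by Lemma~\ref{lem:fix=para} it equals $\braid_A$ for $A = [n]-B$).

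There is no real obstacle here; the only thing to double-check is that the application of Lemma~\ref{lem:para-ell} in the inductive step is legitimate, i.e. that $C$ can be chosen non-empty, which requires $\card{B} \ge 3$ — hence the separate treatment of the $\card{B} \le 2$ cases. Everything else is a bookkeeping exercise in intersecting indexed families.
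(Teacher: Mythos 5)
Your proof is correct and follows essentially the same strategy as the paper: induction on $\card{B}$ with Lemma~\ref{lem:para-1} as the base case and Lemma~\ref{lem:para-ell} driving the inductive step. The only (cosmetic) difference is that you prove the first identity alone by induction and then derive the second as a formal consequence, whereas the paper runs a simultaneous induction on both claims; your worry about needing $C$ non-empty is actually moot, since Lemma~\ref{lem:para-ell} already covers the $C = \emptyset$ case by reducing to Lemma~\ref{lem:para-1}.
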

\begin{proof}
  When $B$ is a singleton, the result is trivial and when $B$ has size
  $2$ both claims are true by Lemma~\ref{lem:para-1}, so suppose that
  both claims hold for all subsets of size at most $k$ with $k>1$ and
  let $B$ be a subset of size $k+1$.  If $i$ and $j$ are elements in
  $B$, and $C = B-\{i,j\}$, then $\fix_n(B) = \fix_n(C \cup \{i,j\})$
  which is equal to $\fix_n(C \cup \{i\}) \cap \fix_n(C \cup \{j\})$
  by Lemma~\ref{lem:para-ell}.  By applying the second inductive claim
  to the sets $C \cup \{i\}$ and $C \cup \{j\}$ and simplifying
  slightly we can rewrite this as $\fix_n(C) \cap \fix_n(\{i\}) \cap
  \fix_n(\{j\})$.  Applying the first inductive claim to the set $C$
  shows that first claim holds for $B$ and the second claim for $B$
  follows as an immediate consequence. This completes the induction
  and the proof.
\end{proof}
%

\part{Complexes}\label{part:complexes}

In this part, we study complexes built out of ordered simplices, specifically
how they can be equipped with an orthoscheme metric.

\section{Ordered Simplices}\label{sec:ord-simp}

An ordered simplex is a simplex with a fixed linear ordering of its
vertex set. Complexes built out of ordered simplices are often used
as explicit models.
Eilenberg and Steenrod, for example, use ordered simplicial
complexes \cite{eilenbergsteenrod52}. We follow Hatcher in using the
more flexible $\Delta$-complexes \cite{hatcher02}.

\begin{defn}[Ordered simplices]\label{def:ordered-simplex}
  A \emph{$k$-simplex} is the convex hull of $k+1$ points $p_0, p_1,
  \ldots, p_k$ in general position in a sufficiently high-dimensional real
  vector space $E$.  An \emph{ordered $k$-simplex} is a $k$-simplex
  together with a fixed linear ordering of its $k+1$ vertices.
  We write
  $\sigma = [p_0,p_1,\ldots,p_k]$
  for an ordered $k$-simplex $\sigma$ with vertex set
  $\{p_0,p_1,\ldots,p_k\} \subset E$ where the vertices are ordered left-to-right:
  $p_i < p_j$ in the linear order
  if and only if $i < j$ in the natural numbers.
  An isomorphism of ordered simplices is an affine bijection
  $[p_0,\ldots,p_k] \to [p_0',\ldots,p_k']$ that takes $p_i$ to $p_i'$.
\end{defn}
Let $E$ be a vector space containing an ordered $k$-simplex $\sigma$.
To facilitate computations, we establish a standard coordinate system
on the smallest affine subspace of $E$ containing $\sigma$ which both
identifies this subspace with $\R^k$ and also reflects the linear
ordering of its vertices.

\begin{defn}[Standard coordinates]\label{def:std-coord}
  Let $\sigma = [p_0,p_1,\ldots,p_k]$ be an ordered $k$-simplex. We
  take the ambient vector space $E$ to have origin $p_0$ and to be
  spanned by $p_1,\ldots,p_k$. For each $i \in [k]$, let
  $\vv_i = p_i - p_{i-1}$ be the vector from $p_{i-1}$ to $p_i$ so that
  that $\mathcal{B} = (\vv_1,\vv_2,\ldots,\vv_k)$ is an ordered
  basis for $E$.  We call $\mathcal{B}$ the \emph{standard ordered
    basis of $\sigma$}.  In this basis $p_j = \sum_{i=1}^j \vv_i$ and
  \begin{align*}
  x &= (x_1,x_2,\ldots,x_k)_{\mathcal{B}} = x_1 \vv_1 + \ldots + x_k \vv_k\\
  &= (-x_1)p_0
  + (x_1-x_2)p_1 + \cdots + (x_{k-1}-x_k)p_{k-1} + (x_k)p_k\\
  &=(1-x_1)p_0 + (x_1-x_2)p_1 + \cdots + (x_{k-1}-x_k)p_{k-1}
  + (x_k)p_k
  \end{align*}
  since $p_0$ is the origin.  Since the coefficients in the last equation
  are barycentric coordinates on $E$, we see that
  $(x_1,x_2,\ldots,x_k)_{\mathcal{B}}$ is in $\sigma$ if and only if
  $1 \geq x_1 \geq x_2 \geq \cdots \geq x_k \geq 0$.  In particular,
  the facets of $\sigma$ determine the $k+1$ hyperplanes given by the
  equations $x_1 = 1$, $x_i = x_{i+1}$ for $i \in [k-1]$ and $x_k=0$,
  respectively.
  If $\sigma$ and $\sigma'$ are ordered simplices with oriented
  bases $(\vv_1,\ldots,\vv_k)$ and $(\vv_1',\ldots,\vv_k')$, the unique
  isomorphism $\sigma \to \sigma'$ takes $\sum_i \alpha_i \vv_i$ to
  $\sum_i \alpha_i \vv_i'$.
\end{defn}
\begin{figure}
  \begin{tikzpicture}[scale=1]
    \begin{scope}[x={(3.2cm,0)},y={(1.7cm,1.2cm)},z={(0cm,3.8cm)}]
      \def\r{.6}
      \begin{scope}[thick,GreenPoly]
        \coordinate (t0) at (0,0,0);
        \coordinate (t1) at (1,0,0);
        \coordinate (t2) at (1,1,0);
        \coordinate (t3) at (1,1,1);
        \coordinate (e1) at (.5,0,0);
        \coordinate (e2) at (1,.5,0);
        \coordinate (e3) at (1,1,.5);
        \filldraw (t0)--(t1)--(t2)--(t3)--cycle;
        \draw[fill=green!50!black,-latex] (t0)--(\r,\r,\r);
        \draw[dashed] (t0)--(t2); 
        \draw[fill=green!50!black,dashed,-latex] (t0)--(\r,\r,0);

        \draw (t1)--(t3); 
        \draw[fill=green!50!black,-latex] (t1)--(1,\r,\r);
      \end{scope}
      \begin{scope}[very thick,color=black]
        \draw (t0)--(t1)--(t2)--(t3);
        \draw[-latex] (t0)--(\r,0,0);
        \draw[-latex] (t1)--(1,\r,0);
        \draw[-latex] (t2)--(1,1,\r);
        \draw (t0) node[anchor=north]{$p_0$};
        \draw (t1) node[anchor=north]{$p_1$};
        \draw (t2) node[anchor=west]{$p_2$};
        \draw (t3) node[anchor=west]{$p_3$};
        \draw (e1) node[anchor=north]{$\vv_1$};
        \draw (e2) node[anchor=north west]{$\vv_2$};
        \draw (e3) node[anchor=west]{$\vv_3$};
        \foreach \n in {t0,t1,t2,t3} {\filldraw (\n) circle (.5mm);}
      \end{scope}
    \end{scope}
  \end{tikzpicture}
  \caption{An ordered $3$-simplex.  In standard coordinates $p_0$ is
    the origin, $p_1 = (1,0,0)$, $p_2 = (1,1,0)$ and $p_3 = (1,1,1)$
    with respect to the ordered basis $\mathcal{B} =
    \{\vv_1,\vv_2,\vv_3\}$.\label{fig:ordered-3-simplex}}
\end{figure}
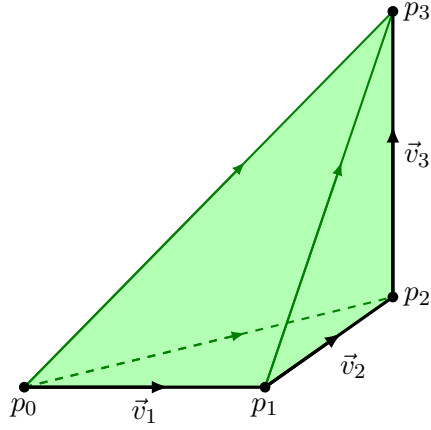
\begin{exmp}[Standard Coordinates]\label{ex:std-coord}
  Figure~\ref{fig:ordered-3-simplex} shows an ordered $3$-simplex
  $\sigma$. The vector $\vv_1$ is from $p_0$ to $p_1$, the vector $\vv_2$
  is from $p_1$ to $p_2$ and the vector $\vv_3$ is from $p_2$ to $p_3$.
  With respect to the ordered basis $\mathcal{B} = \{\vv_1,\vv_2,\vv_3\}$
  with $p_0$ located at the origin, $p_1 = (1,0,0)$, $p_2 = (1,1,0)$
  and $p_3 = (1,1,1)$.
\end{exmp}
Faces of ordered simplices are ordered by restriction. As such they have standard coordinates which can be described as follows.

\begin{lem}[Facets]\label{rem:subsimplices}
Let $\sigma = [p_0,\ldots,p_k]$ be an ordered simplex with ordered basis $\mathcal{B} = (\vv_1,\ldots,\vv_k)$. The ordered basis $\mathcal{B}'$ of the facet $\tau = [p_0,\ldots,p_{i-1},p_{i+1},\ldots,p_k]$ is
\[
\mathcal{B}' = \left\{\begin{array}{ll}
(\vv_2,\ldots,\vv_k)&\text{if }0 = i\\
(\vv_1,\ldots,\vv_{i-1},\vv_i + \vv_{i+1},\vv_{i+2},\ldots,\vv_k)&\text{if }0 < i < k\\
(\vv_1,\ldots,\vv_{k-1})&\text{if }i = k\text{.}
\end{array}\right.
\]
\par\vspace{-\baselineskip}\qed
\end{lem}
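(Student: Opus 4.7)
The proof is essentially bookkeeping from the definition in Definition~\ref{def:std-coord}: the ordered basis of an ordered simplex is the sequence of consecutive-vertex difference vectors, and the facet $\tau$ inherits its ordering from $\sigma$, so one just computes these differences for the relabeled vertex list of $\tau$.

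My plan is to split into the three cases corresponding to which vertex is removed. If $i = 0$, then $\tau$ has vertices $p_1 < p_2 < \cdots < p_k$, so for $j \in [k-1]$ the $j$-th basis vector of $\tau$ is $p_{j+1} - p_j = \vv_{j+1}$, yielding $(\vv_2,\ldots,\vv_k)$. If $i = k$, then $\tau$ has vertices $p_0 < \cdots < p_{k-1}$ and the first $k-1$ of the original difference vectors $\vv_1,\ldots,\vv_{k-1}$ are exactly the consecutive differences of $\tau$.

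The one non-trivial case is $0 < i < k$. Here $\tau = [p_0,\ldots,p_{i-1},p_{i+1},\ldots,p_k]$. For indices $j < i$, the consecutive vertices of $\tau$ coincide with those of $\sigma$, giving basis vectors $\vv_j$. For indices $j > i$, the same is true after reindexing, giving $\vv_{j+1}$. At the ``gap'' index $i$, the relevant difference is
\[
p_{i+1} - p_{i-1} = (p_{i+1} - p_i) + (p_i - p_{i-1}) = \vv_{i+1} + \vv_i,
\]
which is the $i$-th entry of $\mathcal{B}'$. Assembling these gives the stated sequence.

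There is no real obstacle here: the lemma only records how $\mathcal{B}$ restricts to facets, and the computation is a telescoping identity combined with a reindexing. The only thing worth flagging is that the resulting tuple $\mathcal{B}'$ is indeed an ordered basis of the affine span of $\tau$, which follows because its elements are consecutive-vertex differences of $\tau$'s linearly ordered vertex list, so by Definition~\ref{def:std-coord} applied to $\tau$ they form its standard ordered basis.
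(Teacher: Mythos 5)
Your proof is correct and is the obvious direct verification from Definition~\ref{def:std-coord}; the paper itself places a \qed in the statement and gives no written argument, treating the lemma as immediate, which matches your computation exactly. The telescoping identity $p_{i+1} - p_{i-1} = \vv_i + \vv_{i+1}$ in the middle case is the only non-cosmetic step and you handle it correctly.
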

In anticipation of Definition~\ref{def:ortho-cplx} the following
definition is modeled on \cite[Definition~I.7.2]{brihae}.

\begin{defn}[$\Delta$-complex]\label{def:delta-complex}
  Let $(\sigma_\lambda)_{\lambda \in \Lambda}$ be a family of
  ordered simplices with disjoint union
  $X = \bigcup (\sigma_\lambda \times \{\lambda\})$.
  Let $\sim$ be an equivalence relation on $X$ and let
  $K = X/\sim$.
  Let $p \colon X \to K$ be the quotient map and
  $p_\lambda \colon \sigma_\lambda \to K, x \mapsto p(x,\lambda)$
  its restriction to $\sigma_\lambda$.

  We say that $K$ is a \emph{$\Delta$-complex} if:
  \begin{enumerate}
  \item the restriction of $p_\lambda$ to the interior of
  $\sigma_\lambda$ is injective;
  \item for $\lambda \in \Lambda$ and every face  $\tau$
  of $\sigma_\lambda$ there is a $\lambda' \in \Lambda$
  and an isomorphism of ordered simplices
  $h \colon \tau \to \sigma_{\lambda'}$
  such that $p_\lambda|_\tau = p_{\lambda'} \circ h$;\label{item:delt_face}
  \item if $\lambda,\lambda' \in \Lambda$ and interior points
  $x \in \sigma_\lambda$ and $x' \in \sigma_{\lambda'}$ are such
  that $p_\lambda(x) = p_\lambda(x')$ then there is an isomorphism
  of ordered simplices $h \colon \sigma_\lambda \to \sigma_{\lambda'}$
  such that $p_{\lambda'}(h(x)) = p_\lambda(x)$ for
  $x \in \sigma_{\lambda}$.
  \end{enumerate}
  
  We will usually regard $\Delta$-complexes as equipped with
  a structure as above and refer to the simplices $\sigma_\lambda$
  as simplices of $K$. We will also make the identification in
  \eqref{item:delt_face} implicit and regard faces of $\sigma_\lambda$
  as simplices of $K$.
\end{defn}
Turning a simplicial complex into a $\Delta$-complex means to orient the
edges in a consistent way. A setting where a natural orientation exists
is the following.

\begin{prop}[Cayley graphs and $\Delta$-complexes]\label{prop:cayley-delta}
  Let $G$ be a group and let $f\colon G \to (\R,+)$ be a group homomorphism.
  Let $S \subset G$ be a set of generators such that $f(s) > 0$
  for every $s\in S$.
  The right Cayley graph $\Gamma = \cay(G,S)$ is a simplicial graph
  whose flag complex $X = \flag(\Gamma)$ can be turned into
  a $\Delta$-complex.
\end{prop}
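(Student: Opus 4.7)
The plan is to exploit $f$ as follows: orient each edge of $\Gamma$ consistently by increasing $f$-value, show that this orientation induces a canonical linear order on the vertex set of every simplex of $X = \flag(\Gamma)$, and then package this data into a $\Delta$-complex structure on $X$.

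First I verify that $\Gamma$ is simplicial. There are no loops because $f(s) > 0$ forces $s \neq e$, and there are no multi-edges because if $h = gs = gs'$ for $s, s' \in S$ then $s = s'$, while if $h = gs$ and $g = hs'$ for $s, s' \in S$ then $s' = s^{-1}$, giving $f(s') = -f(s) < 0$ in contradiction with the hypothesis. Next I orient the undirected edge $\{g, gs\}$ from $g$ to $gs$; since $f(gs) = f(g) + f(s) > f(g)$, this is unambiguously the orientation from the endpoint of smaller $f$-value to the endpoint of larger $f$-value, so it depends only on the pair $\{g, gs\}$ and not on which generator $s$ is used to present it.

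The crucial observation is that the vertices of any clique $\{g_0, \ldots, g_k\}$ in $\Gamma$ have pairwise distinct $f$-values: any two of them are joined by an edge of $\Gamma$, and $f$ changes by a nonzero amount $\pm f(s)$ across each edge. Consequently $f$ induces a strict linear order on the vertex set of every simplex of $X$, and this order is automatically compatible with the orientation of each of its edges. To turn $X$ into a $\Delta$-complex in the sense of Definition~\ref{def:delta-complex}, let $\Lambda$ be the set of simplices of $X$; for each $k$-simplex $\sigma \in \Lambda$ take $\sigma_\lambda = [q_0, \ldots, q_k]$ to be an abstract ordered $k$-simplex and define $p_\lambda \colon \sigma_\lambda \to X$ to be the affine map sending $q_i$ to the $i$-th vertex of $\sigma$ in the $f$-order. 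Let $\sim$ on the disjoint union $\bigsqcup_\lambda (\sigma_\lambda \times \{\lambda\})$ be generated by identifying each face of $\sigma_\lambda$ with the $\sigma_{\lambda'}$ indexed by the corresponding sub-clique via the unique order-preserving isomorphism. Axiom~(1) holds because distinct simplices of $X$ have disjoint relative interiors; axiom~(2) holds because a face of $\sigma_\lambda$ corresponds to a sub-clique whose $f$-order is the restriction of the $f$-order on $\sigma$; and for axiom~(3), two interior points with $p_\lambda(x) = p_{\lambda'}(x')$ must both lie in the relative interior of a unique simplex of $X$, forcing $\lambda = \lambda'$ and then $x = x'$ by injectivity on the interior, so the identity serves as $h$.

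The one point requiring real care is the intrinsic nature of the $f$-order on a clique: it must not depend on a choice of edge labels by elements of $S$, and the induced orderings on sub-cliques must agree with restrictions. Both facts follow directly from the monotonicity of $f$ along edges, so the three $\Delta$-complex axioms are formal consequences of the clique observation above; the proof otherwise involves no substantial geometric or combinatorial difficulty.
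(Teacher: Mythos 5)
Your proof is correct and takes essentially the same approach as the paper's: use $f$ to obtain a consistent linear order on the vertices of each clique (after checking that $\Gamma$ has no loops or doubled edges). The paper packages this by defining a global partial order on $G$ as the reflexive-transitive closure of $g \le gs$, with $f$ ensuring antisymmetry, whereas you order each clique directly by $f$-value; on cliques these orderings coincide, so the difference is merely one of presentation, with your write-up verifying the $\Delta$-complex axioms in more explicit detail than the paper does.
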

\begin{proof}
  The right Cayley graph has no doubled edges because
  $f(g^{-1}) = -f(g)$ for $g \in G$ so that at most one of $g$
  and $g^{-1}$ is in $S$. It has no loops because $f(1) = 0$
  so that $1 \not\in S$.
  
  We define a relation $\le$ on $G$ by declaring that $g \le gs$
  for $s \in S$
  and taking the reflexive transitive closure. The homomorphism $f$
  guarantees that this is a partial order on $G$. Any two adjacent
  vertices are comparable so the restriction to a simplex is a total
  order.
\end{proof}
%

\section{Orthoschemes}\label{sec:orthoschemes}

The goal of this section is to equip certain $\Delta$-complexes
with a piecewise Euclidean metric.

\begin{defn}[Orthoscheme]\label{def:ortho-new}
  Let $E$ be a Euclidean vector space and let $\sigma \subseteq E$
  be a simplex. Then $\sigma$ with the induced metric is called a
  \emph{Euclidean simplex}. If $\sigma$ is an ordered simplex and
  the associated ordered basis is orthogonal then $\sigma$ is
  an \emph{orthoscheme}. If it is an orthonormal basis then
  $\sigma$ is a \emph{standard orthoscheme}.
\end{defn}
\begin{defn}[Orthoscheme complex]\label{def:ortho-cplx}
  An \emph{orthoscheme complex} is a $\Delta$-complex
	where each simplex has been given the metric of an
	orthoscheme in such a way that the isomorphisms in the definition
	of a $\Delta$-complex are isometries.
  It is equipped with the length pseudometric assigning to two points
  the infimal length of a piecewise affine path. 
\end{defn}
\begin{rem}
Orthoscheme complexes are $M_0$-simplicial complexes in the sense
of \cite[I.7.1]{brihae} so we will not discuss the metric subtleties in
detail. Our main interest concerning the metric is the behavior
with respect to products, which is not among the subtleties.
\end{rem}
\begin{lem}[Orthoscheme complex structures and edge norms]\label{lem:edge-norms}
Let $X$ be a $\Delta$-complex. There is a one-to-one correspondence
between orthoscheme complex structures
on $X$ and maps
$\nrm \colon \Edges(X) \to \R_{>0}$ that satisfy
\begin{equation}\label{eq:norm_additive}
\nrm([p_0,p_1]) + \nrm([p_1,p_2]) = \nrm([p_0,p_2])
\end{equation}
for every $2$-simplex $[p_0,p_1,p_2]$ of $X$.
\end{lem}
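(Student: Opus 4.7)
The plan is to identify $\nrm(e)$ with the squared Euclidean length of the edge $e$ in the orthoscheme metric. The motivation comes from the Pythagorean theorem: on an orthoscheme $2$-simplex $[p_0,p_1,p_2]$ with orthogonal ordered basis $(\vv_1,\vv_2)$, where $\vv_1 = p_1 - p_0$ and $\vv_2 = p_2 - p_1$, one has $\|\vv_1 + \vv_2\|^2 = \|\vv_1\|^2 + \|\vv_2\|^2$, which is precisely the additivity condition \eqref{eq:norm_additive} for the assignment $e \mapsto (\text{length of }e)^2$. This already pins down what the bijection must be.

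For the forward direction, given an orthoscheme complex structure I would simply define $\nrm(e)$ to be the squared Euclidean length of $e$. The additivity on each $2$-simplex is the Pythagorean identity above. Well-definedness on $\Edges(X)$, rather than on edges of individual simplices, follows because any two $1$-simplices identified by the $\Delta$-complex structure are linked by an isomorphism of ordered simplices, and by Definition~\ref{def:ortho-cplx} such isomorphisms are isometries, so their squared lengths coincide.

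For the reverse direction, given $\nrm$, I would equip each simplex $\sigma = [p_0,\ldots,p_k]$ of $X$ with the inner product on its standard ordered basis $(\vv_1,\ldots,\vv_k)$ defined by
\[
\vv_i \cdot \vv_j \defeq \delta_{ij}\, \nrm([p_{i-1},p_i])\text{.}
\]
By construction this gives $\sigma$ the metric of an orthoscheme. The main point to verify is that the isomorphisms of ordered simplices appearing in Definition~\ref{def:delta-complex} are isometries; by iteration this reduces to checking a single facet inclusion. Removing $p_0$ or $p_k$ is immediate since the facet's ordered basis, as read off from Lemma~\ref{rem:subsimplices}, is a sub-basis of $(\vv_1,\ldots,\vv_k)$. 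The only substantive case is removing an interior vertex $p_i$ with $0 < i < k$: Lemma~\ref{rem:subsimplices} tells us the facet's basis vector at position $i$ is $\vv_i + \vv_{i+1}$, and in the inner product above its squared norm is $\|\vv_i\|^2 + \|\vv_{i+1}\|^2 = \nrm([p_{i-1},p_i]) + \nrm([p_i,p_{i+1}])$, which equals $\nrm([p_{i-1},p_{i+1}])$ by \eqref{eq:norm_additive} applied to the $2$-simplex $[p_{i-1},p_i,p_{i+1}]$. Orthogonality of $\vv_i + \vv_{i+1}$ to the remaining facet basis vectors $\vv_j$ with $j\notin\{i,i+1\}$ is inherited from $\sigma$. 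This facet computation is the heart of the argument and the only place where \eqref{eq:norm_additive} is essential; everything else is a matter of unwinding definitions, and the two constructions are manifestly mutually inverse.
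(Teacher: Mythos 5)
Your proposal is correct and follows the same strategy as the paper's own (very terse) proof: in both, the correspondence is realized by taking $\nrm(e)$ to be the squared edge length, with the Pythagorean identity giving additivity in one direction and the construction of an orthogonal inner product giving the other. The added value in your write-up is that you make explicit the one point the paper elides, namely why the face-inclusion isomorphisms of Definition~\ref{def:delta-complex}\eqref{item:delt_face} are isometries for the metrics you build from $\nrm$; reducing to a single facet via Lemma~\ref{rem:subsimplices} and checking the three cases (drop $p_0$, drop $p_k$, drop interior $p_i$) is exactly the computation that justifies the paper's phrase ``this is the only requirement for a well-defined assignment,'' and you correctly identify the interior-vertex case as the only one that genuinely uses \eqref{eq:norm_additive}. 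One small thing worth noting, which you fold into ``unwinding definitions'': the isometry claim for the identification maps in Definition~\ref{def:delta-complex}(3) is also needed, but it is immediate since such a map is an isomorphism of ordered simplices inducing the identity on edge labels in $\Edges(X)$, hence preserving the inner product by construction.
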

\begin{proof}
If $X$ is equipped with a orthoscheme complex-structure,
defining $\nrm([p,q]) = \norm{q-p}^2$ gives a map
satisfying condition \eqref{eq:norm_additive} (corresponding
to the right angle in $p_1$).

Conversely, the squares of edge lengths of an orthoscheme
need to satisfy \eqref{eq:norm_additive} and this is the only
requirement for a well-defined assignment. The unique
isomorphism of ordered simplicial complexes between
orthoschemes with same edge lengths is an isometry.
Hence equipping the simplices of a $\Delta$-complex
with a orthoscheme metric satisfying \eqref{eq:norm_additive}
gives rise to an orthoscheme complex.
\end{proof}
We can use this characterization of orthoscheme complexes to extend
Proposition~\ref{prop:cayley-delta}.

\begin{prop}[Cayley graphs and orthoschemes]\label{prop:cayley-orth}
  Let $G$ be a group, $f\colon G \to (\R,+)$ be a group homomorphism,
  and let $S$ be a generating set of $G$ with $f(s) > 0$ for every $s \in S$.
  The right Cayley graph $\Gamma
  = \cay(G,S)$ is a simplicial graph whose flag complex $X =
  \flag(\Gamma)$ can be turned into an orthoscheme complex using $f$.
\end{prop}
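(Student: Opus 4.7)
The plan is to combine Proposition~\ref{prop:cayley-delta} with the characterization of orthoscheme complex structures given in Lemma~\ref{lem:edge-norms}. From Proposition~\ref{prop:cayley-delta} we already know that $X = \flag(\cay(G,S))$ is a $\Delta$-complex: the edges of $\Gamma$ are oriented by the rule $g \le gs$ for $s \in S$, and $f$ being a homomorphism into $(\R,+)$ with $f(S) \subset \R_{>0}$ guarantees that this extends to a partial order on $G$ whose restriction to any simplex is a total order. So only the metric refinement needs to be supplied.

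The natural choice of edge norm is to set
\[
\nrm([g,h]) \defeq f(g^{-1} h)
\]
for every directed edge $[g,h]$ of $X$, equivalently $\nrm([g,gs]) = f(s)$ for $s\in S$. Any edge of $X$ is an edge of the Cayley graph $\Gamma$, so $g^{-1}h \in S \cup S^{-1}$, and the ordering convention ensures that $g^{-1}h \in S$; thus $\nrm([g,h]) = f(g^{-1}h) > 0$, so $\nrm$ lands in $\R_{>0}$ as required.

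It remains to verify the additivity condition \eqref{eq:norm_additive} of Lemma~\ref{lem:edge-norms}. A $2$-simplex of $X$ is a triple $[p_0,p_1,p_2]$ of vertices that are pairwise adjacent in $\Gamma$, ordered so that $p_0 < p_1 < p_2$. Write $s_1 = p_0^{-1}p_1$ and $s_2 = p_1^{-1}p_2$, both of which lie in $S$ by the ordering, and note that $p_0^{-1}p_2 = s_1 s_2$ also lies in $S$ because $p_0$ and $p_2$ are adjacent in $\Gamma$ with $p_0 < p_2$. Since $f$ is a group homomorphism,
\[
\nrm([p_0,p_2]) = f(s_1 s_2) = f(s_1) + f(s_2) = \nrm([p_0,p_1]) + \nrm([p_1,p_2])\text{,}
\]
which is exactly \eqref{eq:norm_additive}. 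Applying Lemma~\ref{lem:edge-norms} then produces the desired orthoscheme complex structure on $X$.

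The argument is essentially a bookkeeping exercise once the two prior results are in place; the only real point to check is that the edge $[p_0,p_2]$ appearing in a $2$-simplex is again a Cayley-graph edge, so that $\nrm([p_0,p_2])$ is defined by the same formula and additivity follows from $f$ being a homomorphism rather than merely a map on generators. There are no obstacles of substance.
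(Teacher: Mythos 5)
Your proof is correct and follows essentially the same route as the paper: invoke Proposition~\ref{prop:cayley-delta} for the $\Delta$-complex structure, set $\nrm([g,gs]) = f(s)$, and verify condition \eqref{eq:norm_additive} via Lemma~\ref{lem:edge-norms} using that $f$ is a homomorphism. You are slightly more explicit than the paper in noting that the long edge $[p_0,p_2]$ of a $2$-simplex is itself a Cayley-graph edge (so $s_1 s_2 \in S$), a point the paper leaves implicit.
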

\begin{proof}
  By Proposition~\ref{prop:cayley-delta} $X$ is a $\Delta$-complex and
  we claim that
  \[
  \nrm([g,gs]) \defeq f(gs) - f(g) = f(s) > 0
  \]
  satisfies 
  \eqref{eq:norm_additive}.
  
  Indeed
  \begin{align*}
  \nrm([g,gss']) &= f(gss') - f(g)\\
  &= (f(gss') - f(gs)) + (f(gs) - f(g))\\
  &= \nrm([gss',gs]) + \nrm([gs,g])\text{.}
  \end{align*}
  since $f$ is a homomorphism.
\end{proof}

\begin{defn}[Dual braid complex]\label{def:dual-braid-cplx}
  Let $S = \DS_n^*$ be the set of non-trivial dual simple braids in the braid group
  $\braid_n$. By Theorem~\ref{thm:dual-presentation} the set $S
  \subset \braid_n^*$ generates the group and by
  Lemma~\ref{lem:abelian} the abelianization map $f\colon \braid_n
  \to \Z$ sends the non-trivial dual simple braid $\delta_\pi \in S$ to the
  positive integer $f(\delta_\pi) = \rk(\pi)$.  By
  Proposition~\ref{prop:cayley-orth}, the flag complex $X =
  \flag(\Gamma)$ of the simplicial graph $\Gamma = \cay(\braid_n,S)$
  can be turned into an orthoscheme complex using $f$ to compute the
  norm of each edge.  The resulting orthoscheme complex is called the
  \emph{dual braid $n$-complex} and denoted $\comp(\braid_n)$.
  Note that every edge of $\comp(\braid_n)$ is naturally labeled
  by an element of $S = \DS_n^*$ or, equivalently, by a non-trivial
  noncrossing
  partition. More generally, every simplex is naturally labeled by
  a chain of $\NC_n$.
\end{defn}
It is clear from the construction that $\braid_n$ acts freely on $\comp(\braid_n)$
and that $\comp(\braid_n)$ is covered by translates of the full subcomplex supported
on $\DS_n$, which is therefore a fundamental domain. The key feature,
implying that $\braid_n \backslash \comp(\braid_n)$ is a classifying space for
$\braid_n$, is:

\begin{thm}[\cite{brady01}]\label{thm:cpt-quot}
  The complex $\comp(\braid_n)$ is contractible.
\end{thm}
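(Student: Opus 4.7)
The plan is to exploit the dual Garside structure of $\braid_n$ and apply the nerve lemma to a cover of $\comp(\braid_n)$ by translates of a contractible fundamental domain.

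First, I would observe that the full subcomplex $F \subset \comp(\braid_n)$ spanned by $\DS_n = [1,\delta]$ is a fundamental domain for the free left $\braid_n$-action, as the excerpt notes just before the theorem. Combinatorially $F$ is the order complex of the lattice $\NC_n$, and since this poset has a maximum element, $F$ is a cone and is contractible. Second, I would cover $\comp(\braid_n)$ by the translates $\{gF : g \in \braid_n\}$: every simplex of $\comp(\braid_n)$ is a chain $g_0 < g_1 < \cdots < g_k$ in the left-divisibility order with $g_0^{-1}g_k$ dual simple, and therefore lies in the interval $[g_0,g_0\delta]$, so the cover is genuine.

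The key step is to apply the nerve lemma, which requires every nonempty intersection $g_1F \cap \cdots \cap g_rF$ to be contractible. By the lattice property from Proposition~\ref{prop:dual-simple-order}, this intersection is the full subcomplex on the interval $\bigl[\bigvee_i g_i,\ \bigwedge_i (g_i\delta)\bigr]$ in $\braid_n$; being a bounded sub-interval of a lattice, it has a maximum element, so its order complex is again a cone and is contractible. After this, one must show that the nerve of the cover is itself contractible, which I would do by a direct deformation retraction based on the greedy dual Garside normal form $\beta = \delta^{-k}\delta_{\pi_1}\cdots\delta_{\pi_\ell}$: repeatedly absorbing the leftmost syllable shrinks the nerve onto the vertex $1$.

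The main obstacle I anticipate is contractibility of the nerve itself, together with the bookkeeping needed to identify intersections of translates correctly with sub-intervals of the lattice. An attractive alternative I would pursue in parallel is to construct a discrete Morse function on $\comp(\braid_n)$ directly, pairing each simplex with a neighbor determined by the first Garside syllable of a chosen lift and leaving $\{1\}$ as the unique critical cell; verifying acyclicity of that matching, namely the absence of closed $V$-paths, would replace the nerve-lemma bookkeeping and is the most delicate technical point of that approach.
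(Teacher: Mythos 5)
The paper does not prove this theorem; it is cited verbatim from Brady \cite{brady01}, so there is no in-paper argument to compare against. Your preliminary observations are correct: $F$ is the order complex of the bounded lattice $\NC_n$ and hence a cone, every simplex lies in some interval $[g_0,g_0\delta]$ so the translates $\{gF\}$ do cover, and since $\leq$ is a left-invariant lattice order a nonempty intersection $g_1F\cap\cdots\cap g_rF$ is the full subcomplex on the bounded interval $\left[\bigvee_i g_i,\ \bigwedge_i g_i\delta\right]$, which is the order complex of a bounded poset and therefore contractible.

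The genuine gap is contractibility of the nerve, and it is not bookkeeping: it is where the entire content of the theorem lives. The nerve has vertex set $\braid_n$ with $\{g,h\}$ an edge precisely when $g^{-1}h\in[\delta^{-1},\delta]$, and this complex is not visibly simpler than $\comp(\braid_n)$ itself, so the nerve lemma has relocated the difficulty rather than reduced it. The proposed retraction ``by absorbing the leftmost syllable of the greedy normal form'' does not describe a well-defined simplicial map on the nerve, does not say what happens to higher-dimensional simplices, and does not explain why the resulting homotopies are compatible. The discrete Morse alternative has the same character: the matching is gestured at rather than defined, and the acyclicity of the matching --- the single hard step in any collapsibility proof of this kind --- is explicitly deferred. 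As written, the proposal is a plan rather than a proof; carrying out either the nerve-contractibility step or the Morse acyclicity step in full would amount to essentially the whole argument, and neither is done.
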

In fact, it is shown in \cite{bradymccammond10} $\comp(\braid_n)$ is
$\cat(0)$ when $n < 6$ and in \cite{haettelkielakschwer16} this was
extended to the case $n=6$.

\section{Products}\label{sec:products}

The main advantage of working with ordered simplices
and $\Delta$-complexes is
that they admit well-behaved products.

\begin{exmp}[Products of simplices]
  The product of two $1$-simplices is a quadrangle.
  It can be subdivided into two triangles in two ways
  but neither of these is distinguished. More generally,
  the product of two (positive-dimensional)
  simplices is not a simplex nor does it have a canonical
  simplicial subdivision.
\end{exmp}
In contrast, we will see that the product of two simplices
whose vertices are totally
ordered admits a canonical subdivision into chains. We start
by looking at finite products of edges first, i.e.\ cubes.

\begin{exmp}[Subdivided cubes]\label{exmp:cubes}
  Let $\R^k$ be a $k$-dimensional real vector space with a fixed
  ordered basis $\mathcal{B} = \{\vv_1,\ldots,\vv_k\}$.  The
  \emph{unit $k$-cube $\cube_k$} in $\R^k$ is the set of vectors where
  each coordinate is in the interval $[0,1]$ and its vertices are the
  points where every coordinate is either $0$ or $1$.  There is a
  natural bijection between the vertex set of $\cube_k$ and the set of
  all subsets of $[k]$: simply send each vertex to the set of indices
  of the coordinates where the value is $1$.  If we partially order
  the subsets of $[k]$ by inclusion (to form the Boolean lattice
  $\bool_k$), this partially orders the vertices of $\cube_k$, 
  and by sending $B\subset [k]$ to the vector $\one_B = \sum_{i\in B} \vec{v}_i$, 
  we obtain a convenient labeling for the vertices.
  At the extremes, we write $\one = \one_{[k]} = (1,1,\ldots,1)$ and 
  $\zero = \one_{\emptyset} = (0,0,\ldots,0)$.

  Let $\mathcal{H}$ be the collection of hyperplanes $H_{ij}$ in
  $\R^k$ defined by the equations $x_i = x_j$ for $i \neq j\in [k]$.
  There is a minimal cellular subdivision of $\cube_k$ for which
  $\cube_k \cap H_{ij}$ is a subcomplex for all $i \neq j \in [k]$ and
  it is a simplicial subdivision.  The subdivision has $k!$
  top-dimensional simplices and the partial order on the vertices of
  $\cube_k$ is a linear order when restricted to each simplex.  In
  particular, this subdivided $k$-cube is a $\Delta$-complex.
\end{exmp}
\begin{rem}\label{rem:prod-euclid}
	When our selected basis $\mathcal{B}$ is orthonormal, $\cube_k$ is a
	regular Euclidean unit cube and the top-dimensional simplices
	are orthoschemes. In other words, the $k!$ simplices in the 
	simplicial structure for $\cube_k$ correspond to the $k!$ ways to take
	$k$ steps from $\zero$ to $\one$ in the coordinate directions.
	A $3$-orthoscheme from the simplicial structure on $\cube_3$ is shown in
	Figure~\ref{fig:orthoscheme}.
\end{rem}
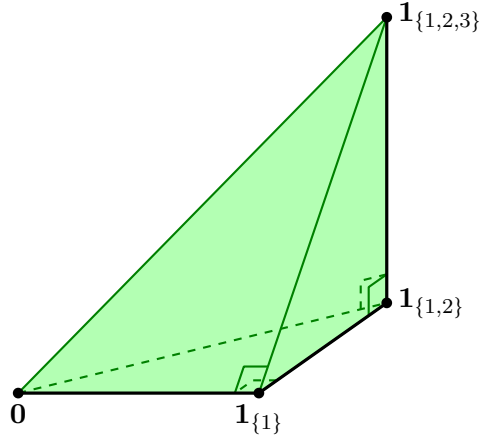
\begin{figure}
  \begin{tikzpicture}[scale=1]
    \begin{scope}[thick,GreenPoly,x={(3.2cm,0)},y={(1.7cm,1.2cm)},z={(0cm,3.8cm)}]
      \filldraw (0,0,0) coordinate(t0)--(1,0,0) coordinate(t1) --
      (1,1,0) coordinate(t2)--(1,1,1) coordinate(t3)--cycle;
      \draw[dashed] (t0)--(t2);
      \drawAngleD{(t1)}{(0,.1414,0)}{(-.1,0)};
      \drawAngleD{(t2)}{(-.0707,-.0707,0)}{(0,0,.1)};
      \drawAngle{(t1)}{(0,.0707,.0707)}{(-.1,0,0)};
      \drawAngle{(t2)}{(0,-.1414,0)}{(0,0,.1)}; \draw (t1)--(t3);
    \end{scope}
    \begin{scope}[very thick,color=black]
      \draw (t0)--(t1)--(t2)--(t3);
      \draw (t0) node[anchor=north]{$\zero$};
      \draw (t1) node[anchor=north]{$\one_{\{1\}}$};
      \draw (t2) node[anchor=west]{$\one_{\{1,2\}}$};
      \draw (t3) node[anchor=west]{$\one_{\{1,2,3\}}$};
      \foreach \n in {t0,t1,t2,t3} {\filldraw (\n) circle (.5mm);}
    \end{scope}
  \end{tikzpicture}
  \caption{A $3$-orthoscheme from $\zero$ to $\one = \one_{\{1,2,3\}}$
    inside $\cube_3$.  The edges of the piecewise geodesic path are
    thicker and darker than the others.\label{fig:orthoscheme}}
\end{figure}
\begin{exmp}[Products of subdivided cubes]
	The product of $\cube_k$ and $\cube_\ell$ is naturally identified 
	the unit cube $\cube_{k+\ell}$.	Using the simplicial subdivision 
	given in Example~\ref{exmp:cubes}, we obtain a canonical 
	$\Delta$-complex structure for the product $\cube_{k+\ell}$. 
	Selecting top-dimensional simplices $\sigma$ in $\cube_k$ and
	$\tau$ in $\cube_\ell$ corresponds to a product of simplices
	$\sigma\times\tau$ in $\cube_{k+\ell}$, which then inherits a
	simplicial subdivision from that of $\cube_{k+\ell}$.
\end{exmp}
Since any ordered simplex can be considered as a top-dimensional
simplex in the subdivision from Example~\ref{exmp:cubes}, we
can use the simplicial structure for $\cube_{k+\ell}$ to describe the
product of two ordered simplices as a $\Delta$-complex.

\begin{exmp}[Product of ordered simplices]\label{exmp:prod-ord-simp}
  Let $\sigma$ and $\tau$ be ordered simplices of dimension $k$ and
  $\ell$, respectively, with $\sigma = [v_0,v_1,\ldots,v_k]$ and $\tau
  = [u_0,u_1,\ldots,u_\ell]$.  In standard coordinates $\sigma
  \subset \R^k$ is the set of points $x = (x_1,x_2,\ldots,x_k) \in
  \R^k$ satisfying the inequalities $1 \geq x_1 \geq x_2 \geq \cdots
  \geq x_k \geq 0$.  Similarly, $\tau \subset \R^\ell$ is the set of
  points $y = (y_1,y_2,\ldots,y_\ell) \in \R^\ell$ satisfying the
  inequalities $1 \geq y_1 \geq y_2 \geq \cdots \geq y_\ell \geq 0$.
  The product $\sigma \times \tau$ is the set of points
  $(x_1,\ldots,x_k,y_1,\ldots,y_\ell) \in \R^k \times \R^\ell$
  satisfying both sets of inequalities.  Let $\mathcal{H}$ be the
  collection of $k\cdot \ell$ hyperplanes $H_{ij}$ defined by the
  equations $x_i = y_j$, with $i \in [k]$ and $j \in [\ell]$.  When we
  minimally subdivide the polytope $\sigma \times \tau$ so that for
  every $i \in [k]$ and every $j \in [\ell]$, $H_{ij} \cap (\sigma
  \times \tau)$ is a subcomplex of the new cell structure, then the
  new cell structure is a simplicial complex which contains
  $\binom{k+\ell}{k}$ simplices of dimension $k+\ell$.  The points in
  the interiors of these top-dimensional simplices correspond to
  points $(x_1,\ldots,x_k,y_1,\ldots,y_\ell)$ where all $k+\ell$
  coordinates are distinct and the simplex containing this point is
  determined by the $xy$-pattern of the coordinates when arranged
  in decreasing linear order. For example, if $k=2$, $\ell=1$ and 
  $x_0 > x_1 > y_0 > x_2 > y_1$ then its pattern is $xxyxy$ and all 
  generic points with this pattern belong to the same top-dimensional 
  simplex.  The natural partial
  order on the vertices of $\sigma \times \tau$ is given by the rule
  $(v_{i_1},u_{j_1}) \leq (v_{i_2},u_{j_2})$ if and only if $i_1 \leq
  i_2$ and $j_1 \leq j_2$.  This restricts to a linear order on each
  simplex in the new simplicial structure, which turns the result into
  an ordered simplicial complex.
\end{exmp}
\begin{defn}[Product of ordered simplices]\label{def:prod-ord-simp}
Let $\sigma$ and $\tau$ be ordered simplices. The decomposition
of $\sigma \times \tau$ described in
Example~\ref{exmp:prod-ord-simp} is the \emph{canonical
decomposition}. We write $\sigma \oprod \tau$ to denote the
$\Delta$-complex that is $\sigma \times \tau$ with the canonical
decomposition.
\end{defn}
The construction described in Definition~\ref{def:prod-ord-simp} is
the natural generalization of partitioning the unit square in the
first quadrant by the diagonal line where the two coordinates are
equal.  It readily generalizes to finite products of ordered
simplices.

\begin{exmp}[Finite products]\label{exmp:finite-products} 
  Let $\sigma_1, \sigma_2, \ldots, \sigma_m$ be ordered simplices of
  dimension $k_1, k_2,\ldots, k_m$, respectively, and view $\sigma_1
  \times \sigma_2 \times \cdots \times \sigma_m$ as a subset of
  $\R^{k_1+\cdots+k_m}$ with coordinates given by concatenating the
  standard ordered bases.  Let $\mathcal{H}$ be the finite collection
  of hyperplanes defined by an equation setting a canonical coordinate
  in one factor equal to a canonical coordinate in different factor.
  The minimal subdivision of the product cell complex $X = \sigma_1
  \times \sigma_2 \times \cdots \times \sigma_m$ so that for every
  hyperplane $H \in \mathcal{H}$, $H \cap X$ is a subcomplex in the
  new cell structure is a simplicial complex with $N$ simplices of
  dimension $k_1+k_2+\cdots+k_m$, where $N$ is the multinomial
  coefficient $\binom{k_1+k_2+\cdots+k_m}{k_1,k_2,\ldots,k_m}$.
  This illustrates that $\oprod$ is associative.
\end{exmp}
The canonical subdivision of products of ordered simplices also
readily extends to the product of $\Delta$-complexes.

\begin{defn}[Products of $\Delta$-complexes]\label{def:prod-delta}
  Let $X$ and $Y$ be $\Delta$-complexes.  The product complex $X
  \times Y$ carries a canonical $\Delta$-complex structure which
  can be described as follows. Let $p_\sigma \colon \sigma \to X$ and
  $p_\tau \colon \tau \to Y$ be simplices of $X$ and $Y$.
  Then every simplex $\rho$ in the canonical subdivision of
  $\sigma \times \tau$ is a simplex of $X \times Y$ via
  $p_\rho = (p_\sigma \times p_\tau)|_{\rho}$. We denote $X \times Y$
  with this $\Delta$-complex structure by $X \oprod Y$.
\end{defn}
Note that when $\sigma$ and $\tau$ are Euclidean simplices, their
product $\sigma \times \tau$ inherits a Euclidean metric from the
metric product of the Euclidean spaces containing them and when they
are ordered Euclidean simplices, the ordered simplicial complex
$\sigma \oprod \tau$ is constructed out of ordered Euclidean
simplices. 

The product construct described in Definition~\ref{def:prod-ord-simp}
is well-behaved when the factors are orthoschemes or standard
orthoschemes.

\begin{lem}[Products of orthoschemes]\label{lem:prod-ortho}
  If $\sigma$ and $\tau$ are orthoschemes, then $\sigma \oprod \tau$
  is an orthoscheme complex isometric to the metric product $\sigma
  \times \tau$.  Moreover, when $\sigma$ and $\tau$ are standard
  orthoschemes, then every top-dimensional simplex in $\sigma \oprod
  \tau$ is a standard orthoscheme.
\end{lem}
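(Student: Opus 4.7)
The plan is to realize $\sigma \times \tau$ explicitly inside the product Euclidean space $\R^k \times \R^\ell$ and check directly that each top-dimensional simplex of the canonical subdivision (Definition~\ref{def:prod-ord-simp}) is an orthoscheme. Let $(\vv_1,\ldots,\vv_k)$ denote the orthogonal ordered basis of $\sigma$ and $(\vec{u}_1,\ldots,\vec{u}_\ell)$ that of $\tau$. I would equip $\R^k \times \R^\ell$ with the product inner product, so that $\sigma \times \tau \subset \R^k \times \R^\ell$ carries the metric product structure. Each top-dimensional simplex $\rho$ of $\sigma \oprod \tau$ corresponds to a monotone lattice path from $(v_0,u_0)$ to $(v_k,u_\ell)$, equivalently to a shuffle of the two sequences $(\vv_i)_i$ and $(\vec{u}_j)_j$; reading the edge vectors of this path in order produces an ordered basis of $\rho$ whose entries are of the form $(\vv_i,0)$ or $(0,\vec{u}_j)$, placed in the order prescribed by the shuffle.

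The first step is to verify that this ordered basis is orthogonal in the product inner product. The pairs $(\vv_i,0),(\vv_j,0)$ with $i\ne j$ are orthogonal because $\sigma$ is an orthoscheme, analogously for $(0,\vec{u}_i),(0,\vec{u}_j)$, and any $(\vv_i,0)$ is orthogonal to any $(0,\vec{u}_j)$ by the product structure. So every top-dimensional simplex of $\sigma \oprod \tau$ is an orthoscheme. Iterating Lemma~\ref{rem:subsimplices} then shows that every face is an orthoscheme as well: dropping a basis vector preserves orthogonality, and the replacement $\vec{w}_i,\vec{w}_{i+1} \rightsquigarrow \vec{w}_i + \vec{w}_{i+1}$ leaves the remaining basis vectors orthogonal to the sum, since each was orthogonal to both summands. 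Next I would observe that the face identifications in $\sigma \oprod \tau$ are restrictions of the identity map on $\R^k \times \R^\ell$ and are therefore isometries; together with the previous step this exhibits $\sigma \oprod \tau$ as an orthoscheme complex in the sense of Definition~\ref{def:ortho-cplx}.

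For the isometry with the metric product, I would invoke convexity: any two points of $\sigma \times \tau$ are joined by a straight line segment in $\R^k \times \R^\ell$ that stays inside $\sigma \times \tau$ and is piecewise affine with respect to the canonical subdivision, hence is an admissible competitor in the definition of the length pseudometric. This forces the length metric on $\sigma \oprod \tau$ to agree with the ambient Euclidean metric on $\sigma \times \tau$, which is precisely the metric product of the metrics on $\sigma$ and $\tau$. For the moreover clause, when both bases are orthonormal the interleaved basis vectors $(\vv_i,0)$ and $(0,\vec{u}_j)$ remain of unit length under the product inner product, so every top-dimensional simplex is a standard orthoscheme. The only mildly delicate point is identifying the length pseudometric with the product metric, which hinges on convexity of $\sigma \times \tau$ in the ambient Euclidean space; the orthogonality check and the face-closure under Lemma~\ref{rem:subsimplices} are essentially bookkeeping.
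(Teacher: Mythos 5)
Your proof is correct and takes essentially the same approach as the paper: both place $\sigma \times \tau$ in a product Euclidean space and observe that each top-dimensional simplex of the canonical subdivision has, as its ordered basis, an interleaving of the two orthogonal bases, hence is an orthoscheme. You spell out the convexity argument for the isometry and the face-closure via Lemma~\ref{rem:subsimplices} a bit more explicitly, but this is the same reasoning the paper packages by viewing $\sigma \oprod \tau$ as a subcomplex of the orthoscheme-subdivided box $\cube_{k+\ell}$ from Examples~\ref{exmp:prod-ord-simp} and~\ref{exmp:cubes}.
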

\begin{proof}
  Let $\sigma$ and $\tau$ be ordered Euclidean simplices of dimension
  $k$ and $\ell$, respectively and let $\sigma \oprod \tau$ be the
  simplicial decomposition of the Euclidean polytope $\sigma \times
  \tau$ into Euclidean simplices. If $\mathcal{B}_1$ and $\mathcal{B}_2$
  are the standard ordered bases associated to $\sigma$ and $\tau$, then by  
  Example~\ref{exmp:prod-ord-simp}, the $\Delta$-complex 
  $\sigma\oprod\tau$ is a subcomplex of the unit cube $\cube_{k+\ell}$
  in the ordered basis obtained by concatenating 
  $\mathcal{B}_1$ and $\mathcal{B}_2$. When $\mathcal{B}_1$ and
  $\mathcal{B}_2$ are both orthogonal, $\sigma$ and $\tau$ are orthoschemes
  and the concatenated ordered bases produces a metric Euclidean cube
  for which the simplicial subdivision in Example~\ref{exmp:cubes}
  makes $\cube_{k+\ell}$ into an orthoscheme complex. Hence, the subcomplex
  $\sigma \oprod \tau$ is an orthoscheme complex as well. The analogous result
  for standard orthoschemes follows by considering the case 
  when $\mathcal{B}_1$ and $\mathcal{B}_2$ are both orthonormal.
\end{proof}
As a consequence we get:

\begin{prop}[Products of orthoscheme complexes]\label{prop:prod-oc}
  If $X$ and $Y$ are orthoscheme complexes then $X \oprod Y$ is an
  orthoscheme complex isometric to the metric direct product $X \times
  Y$.\qed
\end{prop}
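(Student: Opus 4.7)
The plan is to establish the statement in two steps: first that $X \oprod Y$ carries an orthoscheme complex structure, and second that the induced length metric agrees with the metric direct product of $X$ and $Y$.

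For the first step, the $\Delta$-complex structure on $X \oprod Y$ is already provided by Definition~\ref{def:prod-delta}: each top-dimensional simplex arises inside the canonical subdivision $\sigma \oprod \tau$ for some simplex $\sigma$ of $X$ and $\tau$ of $Y$. Since $X$ and $Y$ are orthoscheme complexes, $\sigma$ and $\tau$ come equipped with orthoscheme metrics, and Lemma~\ref{lem:prod-ortho} then asserts that every top-dimensional simplex of $\sigma \oprod \tau$ is itself an orthoscheme. To complete the verification I would check that the $\Delta$-structure isomorphisms of $X \oprod Y$ are isometries: gluings of simplices lying within one product $\sigma \oprod \tau$ are isometries by Lemma~\ref{lem:prod-ortho}, whereas gluings between distinct products $\sigma \oprod \tau$ and $\sigma' \oprod \tau'$ are induced coordinatewise by the gluings of $\sigma$ to $\sigma'$ in $X$ and of $\tau$ to $\tau'$ in $Y$, which are isometries by hypothesis.

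For the second step, I would compare the orthoscheme length pseudometric on $X \oprod Y$ with the product metric on $X \times Y$ given by $d((x,y),(x',y'))^2 = d_X(x,x')^2 + d_Y(y,y')^2$. Any piecewise affine path in $X \oprod Y$ can be subdivided so that each piece lies in a single product $\sigma \times \tau$ of simplices of $X$ and $Y$. By Lemma~\ref{lem:prod-ortho}, the orthoscheme metric on $\sigma \oprod \tau$ coincides with the Euclidean product metric on $\sigma \times \tau$, so each piece projects to piecewise affine paths in $\sigma$ and in $\tau$ whose squared lengths add to the squared length of the piece. Summing and infimizing over all such paths yields one inequality between the length metric on $X \oprod Y$ and the product metric. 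For the reverse inequality, pairs of piecewise affine paths in $X$ and $Y$ realizing their factor distances lift to paths in $X \times Y$ whose images, after refinement along the hyperplanes in the canonical subdivisions, become piecewise affine paths in $X \oprod Y$ of the correct length.

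The main obstacle is the bookkeeping in the second step, since optimal paths for the product metric need not synchronize across the simplex decompositions of $X$ and $Y$, and one must confirm that piecewise affine paths in $X \oprod Y$ suffice to approximate the product distance. All genuine geometric content has been absorbed into Lemma~\ref{lem:prod-ortho}, so what remains is an exercise in passing between length metrics on a $\Delta$-complex and on its underlying product of pieces, reminiscent of the standard metric-product arguments in \cite[I.5.3]{brihae}.
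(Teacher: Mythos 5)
Your argument is correct and matches what the paper intends: Proposition~\ref{prop:prod-oc} is stated with only a \qed\ because the authors regard it as an immediate consequence of Lemma~\ref{lem:prod-ortho} (simplexwise the orthoscheme metrics agree with the Euclidean product metrics, and the gluing maps are coordinatewise products of isometries), which is precisely the route you take. Your added care about the length-metric comparison — subdividing piecewise affine paths so each piece lies in a single $\sigma\times\tau$, applying the additivity of length and Minkowski's inequality in one direction, and reparametrizing factor paths proportionally in the other — fills in the bookkeeping the paper silently absorbs into its remark that product behavior is ``not among the subtleties.''
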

%

\section{Columns}\label{sec:columns}

In this section we describe a particularly useful type of 
orthoscheme complex, initially defined in \cite{bradymccammond10}.

\begin{exmp}[Orthoschemes and $\R^k$]\label{exmp:ortho-Rn}
  Regard $\R$ as an infinite linear graph with vertex set $\Z$ and
  edges from $i$ to $i+1$. Then $\R^k$ is isometric to the $k$-fold
  product $\R \oprod \cdots \oprod \R$. This complex has vertex set
  $\Z^k$ with simplices on vertices $\vec{x} \le \vec{x} + \one_{B_1}
  \le \ldots \le \vec{x} + \one_{B_\ell}$ for $\emptyset \subsetneq
  B_1 \subseteq \ldots \subseteq B_\ell \subseteq [k]$. We call this
  the standard \emph{orthoscheme tiling of $\R^k$}.  It can also be
  viewed as the standard cubing of $\R^k$ in which each $k$-cube has
  been given the simplicial subdivision described in
  Example~\ref{exmp:cubes}.
\end{exmp}
Alternatively, the orthoscheme tiling of $\R^k$ can be viewed as the
cell structure of a simplicial hyperplane arrangement.

\begin{defn}[Types of hyperplanes]\label{def:hyperplanes}
  Consider the hyperplane arrangement consisting of two types of
  hyperplanes.  The \emph{first type} are those defined by the
  equations $x_i = \ell$ for all $i \in [k]$ and all $\ell \in \Z$.
  The \emph{second type} are those defined by the equations $x_i - x_j
  = \ell$ for all $i\neq j \in [k]$ and all $\ell \in \Z$. When both
  types of hyperplanes are used, the resulting hyperplane arrangement
  partitions $\R^k$ into its standard orthoscheme tiling.
\end{defn}
The hyperplanes of the first type define the standard cubing of $\R^k$
and the hyperplanes of the second type are closely related to the
Coxeter complex of the affine symmetric group.

\begin{defn}[Affine symmetric group]\label{def:aff-sym-gp}
  The Euclidean Coxeter group of type $\widetilde A_{k-1}$ is also
  called the \emph{affine symmetric group} $\affsym_k$.  It is
  generated by orthogonal reflection in the hyperplanes of the
  second kind.
\end{defn}
\begin{rem}
Note that the spherical Coxeter group of type $A_{k-1}$, the
symmetric group, is generated by reflections in hyperplanes of
the second type for which $\ell = 0$. Since the \emph{roots}
$e_i - e_j$ are perpendicular to the vector $\one$, both the
symmetric group and the affine symmetric group act on the
$(k-1)$-dimensional space $\one^\perp$.
\end{rem}
\begin{defn}[Coxeter shapes and columns]\label{def:cox-simplex}
  The hyperplane arrangement that consists solely of the hyperplanes
  of the second type restricted
  to any hyperplane $H$ defined by the equation $\langle x, \one
  \rangle = r$ for some $r \in \R$ partitions $H \cong \R^{k-1}$ into
  a reflection tiling by Euclidean simplices whose shape is encoded in
  the extended Dynkin diagram of the type $\widetilde A_{k-1}$. We
  call the isometry type of this Euclidean simplex the \emph{Coxeter
    shape} or \emph{Coxeter simplex} of type $\widetilde A_{k-1}$ and
  when the subscript is clear from context it is often omitted to
  improve clarity.  When this hyperplane arrangement is not restricted
  to a hyperplane orthogonal to the vector $\one$, the closure of a
  connected component of the complementary region is an unbounded
  infinite column that is a metric product $\sigma \times \R$ where
  $\sigma$ is a Coxeter simplex of type $\widetilde A$ and $\R$ is the
  real line.  We call these the \emph{columns of $\R^k$}.
\end{defn}
One consequence of this column structure is that the standard
orthoscheme tiling of $\R^k$ partitions the columns of $\R^k$ into a
sequence of orthoschemes.  We begin with an explicit example.

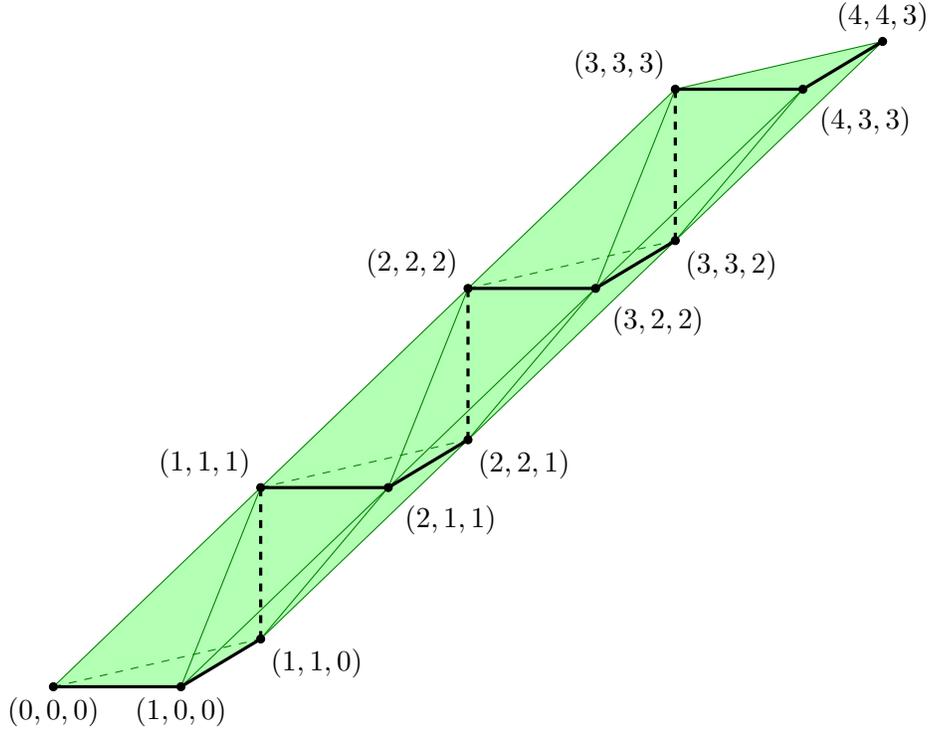
\begin{figure}
  \begin{tikzpicture}[scale=.53]
    \begin{scope}[x={(3.2cm,0)},y={(2cm,1.2cm)},z={(0cm,3.8cm)}]
      \begin{scope}[GreenPoly,thin]
        \filldraw (0,0,0)--(3,3,3)--(4,4,3)--(1,1,0)--(1,0,0)--cycle;
        \foreach \n in {0,1,2} {\draw[dashed] (\n,\n,\n)-- ++(1,1,0);}
        \foreach \n in {1,2,3} {\draw (\n,\n,\n)-- ++(0,-1,-1);}
        \foreach \n in {1,2,3} {\draw (\n,\n,\n-1)-- ++(1,0,1);}
        \draw (1,0,0)--(4,3,3);
      \end{scope}
      \begin{scope}[very thick,color=black]
        \foreach \n in {1,2,3} {\draw[dashed] (\n,\n,\n-1)-- ++(0,0,1);}
        \foreach \n in {0,1,2,3} {\draw (\n,\n,\n)-- ++(1,0,0);}
        \foreach \n in {0,1,2,3} {\draw (\n+1,\n,\n)-- ++(0,1,0);}
      \end{scope}
      \foreach \n in {1,2,3} {\draw (\n,\n,\n) node[anchor=south
          east]{$(\n,\n,\n)$};}
      \draw (0,0,0) node[anchor=north]{$(0,0,0)$};
      \draw (1,1,0) node[anchor=north west]{$(1,1,0)$};
      \draw (2,2,1) node[anchor=north west]{$(2,2,1)$};
      \draw (2,1,1) ++(.05,0,-.05) node[anchor=north west]{$(2,1,1)$};
      \draw (3,2,2) ++(.05,0,-.05) node[anchor=north west]{$(3,2,2)$};
      \draw (4,3,3) ++(.05,0,-.05) node[anchor=north west]{$(4,3,3)$};
      \draw (3,3,2) node[anchor=north west]{$(3,3,2)$};
      \draw (4,4,3) node[anchor=south]{$(4,4,3)$};
      \draw (1,0,0) node[anchor=north] {$(1,0,0)$};
      \foreach \n in {0,1,2,3} {\filldraw (\n,\n,\n) circle (1mm);}
      \foreach \n in {0,1,2,3} {\filldraw (\n+1,\n,\n) circle (1mm);}
      \foreach \n in {0,1,2,3} {\filldraw (\n+1,\n+1,\n) circle (1mm);}
    \end{scope}
  \end{tikzpicture}
  \caption{A portion of the column in $\R^3$ that contains the
    orthoscheme shown in Figure~\ref{fig:orthoscheme}. The edges of
    the spiral are thicker and darker than the
    others - see Example~\ref{ex:column}.\label{fig:column}}
\end{figure}
\begin{exmp}[Column in $\R^3$]\label{ex:column}
  Let $\mathcal{C}$ be the unique column of $\R^3$ that contains the
  $3$-simplex shown in Figure~\ref{fig:orthoscheme}.  The column
  $\mathcal{C}$ is defined by the inequalities $x_1 \geq x_2 \geq x_3
  \geq x_1-1$ and its sides are the hyperplanes defined by the
  equations $x_1 - x_2 = 0$, $x_2 - x_3 = 0$ and $x_1-x_3 = 1$.  The
  vertices of $\Z^3$ contained in this column form a sequence
  $\{v_\ell\}_{\ell\in \Z}$ where the order of the sequence is
  determined by the inner product of these points with the special
  vector $\one = (1^3) = (1,1,1)$.  Concretely, the vertex $v_\ell$ is
  the unique point in $\Z^3 \cap \mathcal{C}$ such that $\langle
  v_\ell, \one \rangle = \ell \in \Z$.  The vectors in this case are
  $v_{-1} = (0,0,-1)$, $v_0 = (0,0,0)$, $v_1 = (1,0,0)$, $v_2 =
  (1,1,0)$, $v_3 = (1,1,1)$, $v_4 = (2,1,1)$ and so on.  Successive
  points in this list are connected by unit length edges in coordinate
  directions and this turns the full list into a spiral of edges.
  Traveling up the spiral, the edges cycle through the possible
  directions in a predictable order.  In this case they travel one unit
  step in the positive $x$-direction, $y$-direction, $z$-direction,
  $x$-direction, $y$-direction, $z$-direction and so on.  Any $3$
  consecutive edges in the spiral have a standard $3$-orthoscheme as
  its convex hull and the union of these individual orthoschemes is
  the convex hull of the full spiral, which is also the full column
  $\mathcal{C}$.  See Figure~\ref{fig:column}.  Metrically
  $\mathcal{C}$ is $\sigma \times \R$ where $\sigma$ is an equilateral
  triangle, i.e.\ the Coxeter simplex of type $\widetilde A_2$.
\end{exmp}
Columns in $\R^k$ have many of the same properties.

\begin{defn}[Columns in $\R^k$]\label{def:columns}
  A column $\mathcal{C}$ of $\R^k$ can be defined by
  inequalities of the form 
  \begin{equation}\label{eq:column}
    x_{\pi_1}+a_{\pi_1} \geq x_{\pi_2}+a_{\pi_2} \geq \cdots \geq
    x_{\pi_k} +a_{\pi_k} \geq x_{\pi_1} + a_{\pi_1}-1
  \end{equation} 
  where $(\pi_1,\pi_2,\ldots,\pi_k)$ is a permutation of integers
  $(1,2,\ldots,k)$ and $a = (a_1,a_2,\ldots,a_k)$ is a point in
  $\Z^k$.  The vertices of $\Z^k$ contained in $\mathcal{C}$ form a
  sequence $\{v_\ell\}_{\ell\in \Z}$ where the order of the sequence is
  determined by the inner product of these points with the
  vector $\one = (1,1,\ldots,1)$.  Concretely the vertex $v_\ell$
  is the unique point in $\Z^k \cap \mathcal{C}$ such that $\langle
  v_\ell, \one \rangle = \ell \in \Z$.  Successive points in this list are
  connected by unit length edges in coordinate directions and this
  turns the full list into a spiral of edges.  Traveling up the
  spiral, the edges cycle through the possible directions in a
  predictable order based on the list $(\pi_1,\pi_2,\ldots,\pi_k)$.
  Any $k$ consecutive edges in the spiral have a standard
  $k$-orthoscheme as its convex hull and the union of these individual
  orthoschemes is the convex hull of the full spiral, which is also
  the full column $\mathcal{C}$.  Metrically, $\mathcal{C}$ is
  $\sigma \times \R$ where $\sigma$ is a Coxeter simplex of type
  $\widetilde A_{k-1}$.  Since the full column is a convex subset of
  $\R^k$, it is a $\cat(0)$ space.
\end{defn}
\begin{defn}[Dilated columns]\label{def:dilated-cols}
  If the $-1$ in the final inequality of Equation~\ref{eq:column}
  defining a column in $\R^k$ is replaced by a $-\ell$ for some positive
  integer $\ell$, then the shape described is a \emph{dilated column},
  i.e.\ a dilated version of a single column.  As a metric space, a
  dilated column is a metric direct product of the real line and a
  Coxeter shape of type $\widetilde A$ dilated by a factor of $\ell$
  and is also a $\cat(0)$ space.
  As a cell complex, a dilated column is the union of $\ell^{k-1}$
  ordinary columns of $\R^k$ tiled by orthoschemes.
\end{defn}
Some of these dilated columns are of particular interest.

\begin{defn}[$(k,n)$-dilated columns]\label{def:nk-dil-cols}
  Let $n > k > 0$ be positive integers and let $\mathcal{C}$ be the
  full subcomplex of the orthoscheme tiling of $\R^k$ restricted to
  the vertices of $\Z^k$ that satisfy the strict inequalities
  \[x_1 < x_2 < \cdots < x_k < x_1 + n\text{.}\]
  We call $\mathcal{C}$ the
  \emph{$(k,n)$-dilated column in $\R^k$}.  A point $x \in \Z^k$ is in
  $\mathcal{C}$ if and only if its coordinates are strictly increasing
  in value from left to right and the gap between the first and the
  last coordinate is strictly less than $n$.  To see that the subspace
  $\mathcal{C}$ really is a dilated column of $\R^k$, note that it is
  defined by the weak inequalities 
  \[x_1-1 \leq x_2-2 \leq \cdots \leq x_k-k \leq x_1-(k+1)+n.\] 
  There is a natural bijection between the
  sets of integer vectors satisfying these two sets of inequalities
  that uses the usual combinatorial trick for converting between
  statements about strictly increasing integer sequences and
  statements about weakly increasing ones.  From the weak inequalities
  we see that the $(k,n)$-dilated column $\mathcal{C}$ is a $(n-k)$
  dilation of an ordinary column and thus a union of $(n-k)^{k-1}$
  ordinary columns.
\end{defn}
\begin{exmp}[$(2,6)$-dilated column]\label{ex:26-dil-col}
  When $k=2$ and $n=6$, the defining inequalities are $x < y < x+6$
  and a portion of the $(2,6)$-dilated column $\mathcal{C}$ is shown
  in Figure~\ref{fig:2-pt-hex}. The meaning of the vertex labels used
  in the figure are explained in Example~\ref{ex:2-pt-hex-lab}. Note
  that $\mathcal{C}$ is metrically an ordinary column dilated by a
  factor of $4$, its cell structure is a union of $(6-2)^{2-1} = 4$
  ordinary columns, and it is defined by the weak inequalities $x+1
  \leq y$ and $y \leq x+5$ or, equivalently, $x-1 \leq y-2 \leq
  x-3+6$.
\end{exmp}
%

\part{Boundary Braids}\label{part:theorems}

We now come to our main topic of study: boundary braids. 
This part begins by introducing orthoscheme configuration spaces
and describing the specific case of an oriented $n$-cycle.
We then prove the fact that if several strands are individually boundary parallel
then they are simultaneously boundary parallel.
Finally, we study dual simple boundary braids in detail and use our findings
there to prove the main theorems.

\section{Configuration Spaces}\label{sec:conf-sp}

In this section we introduce a new combinatorial model for the 
configuration space of $k$ points in a directed graph and, more generally,
$k$ points in an orthoscheme complex. In contrast to the 
configuration spaces for graphs used by Abrams and Ghrist, 
which are cubical \cite{abrams00, ghrist01}, our models are simplicial.

\begin{defn}[Products of graphs]\label{def:orth-prod-sp}
  Let $\Gamma$ be a metric simplicial graph with oriented edges of
  unit length. Note that $\Gamma$ can be regarded either as an ordered
  simplicial complex or as a cubical complex and we can form direct
  products of several copies of $\Gamma$ in either context. The
  resulting spaces will be naturally isometric but their cell
  structures differ. We denote by $\Prod_k(\Gamma,\Orth)$ respectively
  $\Prod_k(\Gamma,\Cube)$
  the orthoscheme product respectively cubical product of $k$ copies
  of $\Gamma$.
\end{defn}
\begin{exmp}[Orthoscheme product spaces]\label{ex:orth-prod-sp}
  If $\Gamma$ is an oriented edge of unit length then $\Prod_k(\Gamma,\Cube)$
  is a unit $k$-cube while $\Prod_k(\Gamma,\Orth)$ is the simplicial
  subdivision of the $k$-cube described in Example~\ref{exmp:cubes}.
  If $\Gamma$ is $\mathbb{R}$ subdivided in edges of unit length then 
  $\Prod_k(\Gamma,\Cube)$ is the standard cubing of $\R^k$ while 
  $\Prod_k(\Gamma,\Orth)$ is the
  standard orthoscheme tiling of $\R^k$ described in Example~\ref{exmp:ortho-Rn}.
\end{exmp}
Recall from Definition~\ref{def:conf-sp} that the (topological) configuration space of $k$ points in $\Gamma$ is $\Gamma^k - \diag_k(\Gamma)$ where $\diag_k(\Gamma)$ is the thick diagonal. To obtain a combinatorial configuration space, we take the full subcomplex supported on this subset with respect to either of the above cell structures. For the cubical structure this was first done by Abrams \cite{abrams00}. With the simplicial cell structure in place, our definition is completely analogous.

\begin{defn}[Orthoscheme configuration spaces]\label{def:orth-conf-sp}
  Let $\Gamma$ be a metric simplicial graph with oriented edges of
  unit length. The
  \emph{orthoscheme configuration space of $k$ labeled points in an
    oriented graph $\Gamma$} is the full subcomplex $\conf_k(\Gamma,\Orth)$ of $\Prod_k(\Gamma,\Orth)$ supported on $\Prod_k(\Gamma,\Orth) -
  \diag_k(\Gamma)$. Thus, a closed orthoscheme of $\Prod_k(\Gamma,\Orth)$ lies in $\conf_k(\Gamma,\Orth)$ if and only if it is disjoint form $\diag_k(\Gamma)$.
    The \emph{orthoscheme configuration space of $k$ unlabeled points} is $\uconf_k(\Gamma,\Orth) = \conf_k(\Gamma,\Orth)/\sym_k$.
\end{defn}
  
\begin{rem}[Open questions]\label{rem:conf-sp-quest}
  Since the simplicial structure for the product $\Prod_k(\Gamma,\Orth)$ is 
  a refinement of the cubical structure of $\Prod_k(\Gamma,\Cube)$, the orthoscheme configuration space $\conf_k(\Gamma,\Orth)$ lies between the topological configuration space $\conf_k(\Gamma)$ and the cubical one $\conf_k(\Gamma,\Cube)$. It is therefore interesting to compare it to either, specifically, to determine under which conditions two of them are homotopy equivalent. In Example~\ref{ex:conf-spaces} we will see that the orthoscheme configuration and the cubical configuration space are generally not homotopy equivalent.
  Cubical configuration spaces are known to be non-positively curved \cite{abrams00}. We do not know whether the same is true of orthoscheme configuration spaces. However, in the next section we will see that they are in the most basic case where $\Gamma$ is a single oriented cycle.
\end{rem}
%
\section{Points on a Cycle}\label{sec:points_on_cycle}

For the purposes of this article, we are primarily interested in the
orthoscheme configuration spaces of a
single oriented cycle. In this section, we treat this case in detail.

\begin{defn}[Oriented cycles]\label{def:oriented-cycles}
  An \emph{oriented $n$-cycle} is a directed graph $\Gamma_n$ with
  vertices indexed by the elements of $\Z/n\Z$ and a directed unit-length edge
  from $i$ to $i+1$ for each $i \in \Z/n\Z$.  In illustrations we draw
  an oriented $n$-cycle so that it is the boundary cycle of a regular
  $n$-gon in the plane with its edges oriented counter-clockwise. The graph
  $\Gamma_n$ can be viewed as $\R/n\Z$. Similarly, the
  orthoscheme product space $\Prod_k(\Gamma_n,\Orth)$ is an $k$-torus
  $\R^k/(n\Z)^k$ where $\R^k$ carries the orthoscheme structure described in Example~\ref{exmp:ortho-Rn}.
\end{defn} 
For the rest of the section $\Gamma_n$ will denote an oriented $n$-cycle.

\begin{exmp}[Cubical vs.\ orthoscheme]\label{ex:conf-spaces}
  In both the cubical and orthoscheme cell structure of $(\Gamma_n)^n$ the only
  vertices not on the thick diagonal $\diag_n(\Gamma_n)$ are the $n$-tuples where each entry is a distinct
  vertex of $\Gamma_n$. These form a single $\sym_n$-orbit.
  In the cubical structure no edge avoids $\diag_n(\Gamma_n)$, so  
  $\conf_n(\Gamma_n,\Cube)$ is a discrete space consisting of
  $n!$ points and $\uconf_n(\Gamma_n,\Cube)$ is a single point.

  In the orthoscheme structure, there are edges that are disjoint from $\diag_n(\Gamma_n)$.
  These correspond to the motion where all $n$ points rotate
  around the $n$-cycle simultaneously in the same oriented direction
  and they are longest edges in the top-dimensional orthoschemes.  No
  other simplices avoid $\diag_n(\Gamma_n)$.  Thus
  $\conf_n(\Gamma_n,\Orth)$ has $(n-1)!$ connected
  components each of which is an oriented $n$-cycle. The unordered configuration space $\uconf_n(\Gamma_n,\Orth)$ is a circle consisting of a single vertex and a single edge.
  
  This illustrates that the cubical and the orthoscheme configuration spaces are generally not homotopy equivalent. Note that in this example, the topological configuration spaces are homotopy equivalent to the orthoscheme versions. However, reversing the orientation of a single edge makes the orthoscheme configuration spaces equal to the cubical ones and therefore not homotopy equivalent to the topological ones.
\end{exmp}
The purpose of the present section is the following result.

\begin{prop}[Points on a cycle and curvature]\label{prop:pt-curv}
  Each component of the universal cover of $\conf_k(\Gamma_n)$ is isomorphic, as an orthoscheme complex, to the $(k,n)$-dilated column and therefore $\cat(0)$.
  In particular, $\conf_k(\Gamma_n)$ and $\uconf_k(\Gamma_n)$ are non-positively curved.
\end{prop}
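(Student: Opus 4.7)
The plan is to pass to the universal cover $\R^k \to \R^k/(n\Z)^k = \Prod_k(\Gamma_n,\Orth)$ of the ambient complex, endowed with the standard orthoscheme tiling of Example~\ref{exmp:ortho-Rn}. The preimage $\widetilde Y \subset \R^k$ of $\conf_k(\Gamma_n,\Orth)$ is the full subcomplex on those $\vec x \in \Z^k$ whose coordinates are pairwise distinct modulo $n$; equivalently, it is the orthoscheme tiling with the forbidden hyperplane arrangement $\mathcal H = \{x_i - x_j = mn : i \neq j,\ m \in \Z\}$ removed. The central claim is that every connected component of $\widetilde Y$ is isomorphic, as an orthoscheme complex, to the $(k,n)$-dilated column $\mathcal D$ of Definition~\ref{def:nk-dil-cols}.

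Fix a component $C$ of $\widetilde Y$. Because the hyperplanes $x_i = x_j$ belong to $\mathcal H$ and cannot be crossed, there is a permutation $\sigma \in \sym_k$ with $x_{\sigma(1)} < \cdots < x_{\sigma(k)}$ throughout $C$; similarly, the forbidden hyperplanes $x_i - x_j = mn$ force the integer $\lfloor (x_{\sigma(k)} - x_{\sigma(1)})/n \rfloor$ to be constant on $C$. Since the tiling of Example~\ref{exmp:ortho-Rn} is $\sym_k$-invariant, coordinate permutation is a tiling isomorphism of $\R^k$, and translation by any element of $(n\Z)^k$ is one as well (being a deck transformation). Given any $\vec x^0 \in C$, I compose the permutation by $\sigma$ with a translation chosen so that each coordinate of the image $\vec y^0$ is reduced to its residue modulo $n$ in $[0,n)$. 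Distinctness of residues then gives $0 \leq y^0_1 < y^0_2 < \cdots < y^0_k < n$, placing $\vec y^0$ in $\mathcal D$.

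It remains to identify the component $C'$ of $\widetilde Y$ containing $\vec y^0$ with $\mathcal D$; then $C \cong C' = \mathcal D$ via the composition. By Definition~\ref{def:nk-dil-cols}, $\mathcal D$ admits an equivalent description by weak inequalities and is therefore a closed convex region of $\R^k$, hence a connected subcomplex of $\widetilde Y$ meeting $C'$ at $\vec y^0$, so $\mathcal D \subseteq C'$. Conversely, the ordering and width invariants of the previous paragraph applied to $C'$ yield $y_1 < \cdots < y_k < y_1 + n$ at every vertex of $C'$, placing each such vertex in $\mathcal D$; since $C'$ and $\mathcal D$ are both full subcomplexes of the tiling on the same vertex set, $C' = \mathcal D$.

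By Definition~\ref{def:nk-dil-cols}, $\mathcal D$ is metrically $\R$ times a dilated Coxeter simplex of type $\widetilde A_{k-1}$, a convex Euclidean region and hence $\cat(0)$ and simply connected. Consequently each component of $\widetilde Y$ is the universal cover of its image component in $\conf_k(\Gamma_n,\Orth)$ and is $\cat(0)$, so $\conf_k(\Gamma_n,\Orth)$ is non-positively curved, and the free, orthoscheme-preserving action of $\sym_k$ transfers the conclusion to $\uconf_k(\Gamma_n,\Orth)$. The main obstacle is the construction in the second paragraph: coordinating the permutation and the deck translation so that the composition carries $C$ precisely onto $\mathcal D$, with distinctness of residues producing the strict inequalities that land $\vec y^0$ in the open region underlying $\mathcal D$.
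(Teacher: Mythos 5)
Your approach is essentially the same as the paper's: lift to the universal cover $\R^k$ of $\Prod_k(\Gamma_n,\Orth)$, remove the hyperplanes $x_i - x_j = mn$, identify each component with a dilated column, and then descend by the free $(n\Z)^k$- and $\sym_k$-actions. You actually spell out the ``each component is isomorphic to $\mathcal{D}$'' step more fully than the paper does (the paper just says $\sym_k$ permutes the components and concludes each is a dilated column), so the extra care is welcome. Your treatment of the invariants on $C'$ and the full-subcomplex comparison is also correct.

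There is one concrete slip in the second paragraph. You fix $\sigma \in \sym_k$ so that $x_{\sigma(1)} < \cdots < x_{\sigma(k)}$ throughout $C$, then apply $\sigma$ to $\vec{x}^0$ and translate by $(n\Z)^k$ to push each coordinate into $[0,n)$. The resulting $\vec{y}^0 = (x^0_{\sigma(j)} \bmod n)_j$ has distinct coordinates, but distinctness alone does not give you the chain $y^0_1 < \cdots < y^0_k$: the order of $x_i$'s does not determine the order of $x_i \bmod n$. Concretely, for $k = 2$, $n = 6$, $\vec{x}^0 = (3,8)$, your recipe gives $\sigma = \mathrm{id}$ and $\vec{y}^0 = (3,2) \notin \mathcal{D}$. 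The fix is to pick the permutation that sorts the residues $x^0_i \bmod n$ rather than the values $x^0_i$ themselves (equivalently, reduce first, then sort); this permutation depends on the chosen $\vec{x}^0$ and in general is not the $\sigma$ determined by the ordering on $C$ (the residue ordering genuinely varies over $C$). With this adjustment $\vec{y}^0 \in \mathcal{D}$ as desired and the rest of your argument goes through unchanged.
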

\begin{proof}
First recall that $\Prod_k(\widetilde{\Gamma}_n,\Orth) \cong \widetilde{\Prod}_k(\Gamma_n,\Orth)$ is $\R^k$ with the structure described in Example~\ref{exmp:ortho-Rn} (where the tilde denotes the universal cover on both sides). Let $C$ be the subcomplex obtained by removing the hyperplanes of the form $x_i - x_j = \ell$ with $i\neq j \in [k]$ and $\ell \in n\Z$ and taking the full subcomplex. Since these hyperplanes descend to the thick diagonal, we see that $\conf_k(\Gamma_n) \cong C/(n\Z)^k$.

Notice that these hyperplanes include the ones used to define the $(k,n)$-dilated column in $\R^k$ (Definition~\ref{def:nk-dil-cols}).  Thus one connected component of $C$ is a $(k,n)$-dilated column in $\R^k$. Since $\sym_k$ permutes the connected components, each component is a dilated column. Thus each component of $C$ is $\cat(0)$ and, in particular, is simply connected.

Since both $(n\Z)^k$ and $\sym_k$ act freely on $C$, both 
$C \to \conf_k(\Gamma_n)$ and $C \to \uconf_k(\Gamma_n)$ are covering maps and 
thus the configuration spaces $\conf_k(\Gamma_n)$ and $\uconf_k(\Gamma_n)$ 
are both non-positively curved.
\end{proof}
We now give two examples which illustrate Proposition~\ref{prop:pt-curv}.

\begin{figure}
  	\begin{tikzpicture}[scale=1.1]
	  \newcommand{\hexcondark}[2]{
	    \begin{scope}[shift={(#1,#2)}]
	      \node () [disk,fill=yellow!60,minimum size=0.65cm] {};
	      \foreach \n in {1,...,6} {
	        \coordinate (\n) at  (\n*60:0.2cm);
	      }
	      \draw[fill=blue!30!white] (1)--(2)--(3)--(4)--(5)--(6)--cycle;
	      \draw (#1*60:0.2cm) node [dot] {};
	      \draw (#2*60:0.2cm) node [opendot] {};
	    \end{scope}
	  }
	  \newcommand{\hexconlight}[2]{
	    \begin{scope}[shift={(#1,#2)}]
	      \node () [disk,color=gray,fill=white,minimum size=0.65cm] {};
	      \foreach \n in {1,...,6} {
	        \coordinate (\n) at  (\n*60:0.2cm);
	      }
	      \draw[color=gray,fill=blue!10!white] (1)--(2)--(3)--(4)--(5)--(6)--cycle;
	      \draw (#1*60:0.2cm) node [dot,fill=gray] {};
	      \draw (#2*60:0.2cm) node [opendot] {};
	    \end{scope}
	  }
	  \draw[thick,->] (-5,0)--(5.3,0);
	  \draw[thick,->] (0,-.3)--(0,6.3);
	  \begin{scope}[GreenPoly,very thick]
	    \filldraw[color=green!60!white] (0,1)--(0,5)--(1,6)--(5,6)--cycle;
	    \filldraw[thick] (0,1)--(0,5)--(-5,0)--(-1,0)--cycle;
	    \draw[color=white] (1,6)--(5,6);
	    \foreach \x in {1,...,4} {\draw[dashed,thick] (\x,6)--(\x+1,6);}
	    \foreach \x in {1,...,5} {\draw (\x,6)--(0,6-\x);}
	    \foreach \x in {1,...,5} {\draw[thick] (-\x,0)--(0,\x);}
	    \foreach \y in {1,...,4} {\draw[thick] (\y-5,\y)--(0,\y);}
	    \foreach \y in {2,...,5} {\draw[thick] (0,\y)--(\y-1,\y);}
	    \foreach \z in {1,...,4} {\draw (\z,\z+1)--(\z,6);}
	    \foreach \z in {-4,...,-1} {\draw[thick] (\z,0)--(\z,\z+5);}
	    \draw (0,1)--(0,5);
	  \end{scope}
	  \draw[dashed] (-.3,-.3)--(5.3,5.3);
	  \draw[dashed] (-5.3,.7)--(.3,6.3);
	  \foreach \x in {-5,...,-1} {\hexconlight{\x}{0}}
	  \foreach \x in {-4,...,-1} {\hexconlight{\x}{1}}
	  \foreach \x in {-3,...,-1} {\hexconlight{\x}{2}}
	  \foreach \x in {-2,...,-1} {\hexconlight{\x}{3}}
	  \foreach \x in {-1,...,-1} {\hexconlight{\x}{4}}
	  \foreach \x in {0,...,0} {\hexcondark{\x}{1}}
	  \foreach \x in {0,...,1} {\hexcondark{\x}{2}}
	  \foreach \x in {0,...,2} {\hexcondark{\x}{3}}
	  \foreach \x in {0,...,3} {\hexcondark{\x}{4}}
	  \foreach \x in {0,...,4} {\hexcondark{\x}{5}}
	  \foreach \x in {1,...,5} {\hexconlight{\x}{6}}
	\end{tikzpicture}
  \caption{A portion of the $(2,6)$-dilated column, i.e.\ the full
    subcomplex of the orthoscheme complex of $\R^2$ on the vertices
    satisfying the strict inequalities $x < y < x+6$. The vertex
    labels and the shaded regions are used to construct simplicial
    configuration spaces for $2$ labeled points in a $6$-cycle and for
    $2$ unlabeled points in a $6$-cycle.\label{fig:2-pt-hex}}
\end{figure}
\begin{exmp}[$2$ labeled points in a $6$-cycle]\label{ex:2-pt-hex-lab}
  Figure~\ref{fig:2-pt-hex} shows a portion of the infinite strip that
  is the $(2,6)$-dilated column in $\R^2$.  When this strip is
  quotiented by the portion of the $(6\Z)^2$-action on $\R^2$ that
  stabilizes this strip, its vertices can be labeled by two labeled
  points in a hexagon.  The black dot indicates the value of its
  $x$-coordinate mod $6$ and the white dot indicates the value of its
  $y$-coordinate mod $6$.  The rightmost vertex of the hexagon corresponds
  to $0 \mod 6$ and the residue classes proceed in a counterclockwise
  fashion.  The five hexagons on the $y$-axis, for example, have
  $x$-coordinate equal to $0\mod 6$ and $y$-coordinate ranging from
  $1$ to $5\mod 6$.  One component of the labeled orthoscheme
  configuration space $\conf_2(X,\Orth)$ is an annulus formed by
  identifying the top and bottom edges of the region shown according
  to their vertex labels.  Actually, in this case there is only
  $(2-1)! = 1$ component, so the annulus is the full labeled
  orthoscheme configuration space.
\end{exmp}
\begin{exmp}[$2$ unlabeled points in a $6$-cycle]\label{ex:2-pt-hex-unlab}
  The unlabeled orthoscheme configuration space is formed by further
  quotienting the labeled orthoscheme configuration space to remove
  the distinction between black and white dots.  In particular the $5$
  vertices shown on the horizontal line $y=6$ are identified with the
  $5$ vertices on the vertical line $x=0$.  This identification can be
  realized by the glide reflection sending $(x,y)$ to $(y,x+2)$, a map
  which also generates the unlabeled stabilizer of the $(2,6)$-dilated
  column.  The heavily shaded region is a fundamental domain for this
  $\Z$-action and the unlabeled orthoscheme configuration space is the
  formed by identifying its horizontal and vertical edges with a
  half-twist forming a M\"obius strip.  The heavily shaded labels are
  the preferred representatives of the vertices in the quotient.
\end{exmp}
%
\section{Boundary Braids}\label{sec:boundary_braids}

We now come to our main object of study, boundary braids. The goal of this section is a key technical result saying that if certain strands of a braid can individually be realized as boundary-parallel strands then they can be realized as boundary parallel strands simultaneously.

\begin{lem}[Boundary parallel rotation Braids]\label{lem:rotation}
	Let $A\subseteq [n]$ not be a singleton and define
	\[
	B = \{b\mid b\not\in A \text{ or } b+1 \in A\}
	\]
	Then $\delta_A$ is a $(B,\cdot)$-boundary braid but not a
	$(b,\cdot)$-boundary braid for any $b \in [n]-B$.
\end{lem}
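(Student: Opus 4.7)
My plan is to tackle the two assertions separately: the positive claim by direct inspection of the standard representative, and the negative claim by a linking-type obstruction computed inside a 2-point configuration space.

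For the positive claim, I would argue that the standard representative of $\delta_A$ already realizes it as a $(B,\cdot)$-boundary braid simultaneously. If $b \notin A$ then the $b$-strand is constant at $p_b \in \partial D$. If $b \in A$ with $b+1 \in A$, the $b$-strand traces the counter-clockwise edge of $\partial D_A$ from $p_b$ to $p_{b+1}$; since $b$ and $b+1$ are consecutive indices in $[n]$, this edge coincides with the polygon edge $e_{b,b+1}$ of $D$ and therefore lies in $\partial D$. Hence every strand indexed by $b \in B$ is boundary-parallel in this single representative.

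For the negative claim, I would fix $b \in A$ with $b+1 \notin A$, write $b'$ for the next element of $A$ after $b$ cyclically, and suppose for contradiction that some representative $f$ of $\delta_A$ has $f^b \subset \partial D$. Setting $c = b+1 \notin A$, the $c$-strand $f^c$ is necessarily a loop based at $p_c$. I would then project $f$ to the labeled 2-point configuration space $\conf_2(D)$ by recording only the $b$- and $c$-strands, producing a path from $(p_b, p_c)$ to $(p_{b'}, p_c)$ whose homotopy class rel endpoints is determined by $\delta_A$. Since $\conf_2(D) \simeq S^1$, such classes form a torsor over $\pi_1(\conf_2(D)) \cong \Z$. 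I would compute this class twice: from the standard representative, where $f^b$ follows the chord from $p_b$ to $p_{b'}$ inside $D_A$ and $f^c$ is constant at $p_c$, so the vector $f^c - f^b$ rotates monotonically through the inscribed angle at $p_c$; and from the hypothesized boundary representative, where, if $f^b$ takes the counter-clockwise arc through the position $p_c$, the concatenation of that arc with the reverse chord bounds a cap region of $D$ containing $p_c$, forcing the vector $f^c - f^b$ to accumulate an extra $\pm 2\pi$ of winding irrespective of how $f^c$ dodges inward. The two computed classes disagree, contradicting the determination by $\delta_A$.

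The main obstacle is to make this winding obstruction apply uniformly over all possible routings of $f^b$ in $\partial D$. The counter-clockwise short arc is handled above with $c = b+1$. For the clockwise short arc I would replay the argument with any $c' \notin A$ lying on the clockwise arc; such a $c'$ always exists except in the extreme case $A = [n] \setminus \{b+1, \ldots, b'-1\}$, where every clockwise-arc vertex belongs to $A$. That case calls for a slightly more elaborate projection using a pair $(b, a)$ with $a \in A \setminus \{b, b'\}$, where I would track both the extra twists contributed by $f^b$ passing $p_a$ and the pre-existing rotation coming from the standard chord motion of $f^a$. Extra full loops of $f^b$ around $\partial D$ only add further $\pm 2\pi$ contributions of the same sign, so no displacement reconciles the two computations, and the contradiction holds in every case.
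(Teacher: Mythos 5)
The positive assertion is proved exactly as in the paper, by inspecting the standard representative. For the negative assertion you take a genuinely different route: you project to the two strands indexed by $b$ and a witness $c$, pass to $\conf_2(D)\simeq S^1$, and seek a winding-number mismatch. The paper instead builds a single disk $U\subset I\times D$ whose boundary runs along the $(b-1)$- and $(b+1)$-strands of the standard representative together with two chords at the ends, observes that the $b$-strand crosses $U$ transversely an odd number of times, and concludes by parity invariance. The two arguments are cousins, but the paper's disk encodes information from \emph{both} neighbours $b-1$ and $b+1$ simultaneously, which is what makes it uniform; your version consults one witness strand at a time and therefore requires a case analysis that you do not complete.

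Here is a concrete gap. Take $A=\{b-1,b\}$, so $b'=b-1$ and $A$ is an interval of size two. Suppose the hypothesized boundary strand $g^b$ simply traverses the edge $e_{b-1,b}$ (wrapping number $-1$). Your witness $c=b+1$ then yields no contradiction: the cap bounded by that edge and the chord contains no point $p_c$ with $c\notin A$, so the winding of $g^c-g^b$ agrees with that of the standard representative. Your first fallback, a $c'\notin A$ on the clockwise arc, does not exist, since the only clockwise-arc vertices are $b$ and $b-1$, both in $A$. Your second fallback, a pair $(b,a)$ with $a\in A\setminus\{b,b'\}$, is vacuous since $A=\{b,b'\}$. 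The conclusion is nonetheless true: once $g^b$ lies in $\partial D$ its graph is an arc on the boundary sphere of the $3$-ball $I\times D$, whose complement is simply connected, so the winding of $g^{b-1}-g^b$ is forced to be $-\pi$ rather than the $+\pi$ required of $\delta_{\{b-1,b\}}$. But the witness that does the work is $a=b'=b-1$, which your scheme explicitly excludes; more generally, the obstruction may live entirely in the linking with the clockwise neighbour, which your bookkeeping never consults unless it lies on the arc $g^b$ actually traverses. Separately, the claim that $f^c-f^b$ accumulates an extra $\pm 2\pi$ ``irrespective of how $f^c$ dodges,'' and the treatment of the moving strand $f^a$ in the extremal interval case, are asserted rather than argued, so even with the size-two gap patched the write-up would need substantial additional work; the paper's disk-crossing argument avoids all of this.
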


\begin{proof}
  For $A = \emptyset$ and $A = [n]$ the statement is clear so we assume $2 \le \card{A} < n$ from now on.

  The first statement is straightforward by considering the standard representative
  of $\delta_A$, given by constant-speed parametrization of each strand along the boundary
  of the subdisk $D_A$. 
  
  For the second statement, let $b\in [n]-B$. Then $b\in A$ but $b+1\not\in A$. Fix the standard representative $f$ of $\delta_A$.
  Let $c = (b-1)\cdot \perm(\delta_A)$, meaning that the strand $f^{b-1}$ ends in $c$. Thus $c \in \{b-1,b\}$.
  We consider a disc $U \subseteq I\times D_n$ that is bounded by the following four paths in $\partial (I \times D_n)$: the strand $f^{b-1}_c$, the
  strand $f^{b+1}_{b+1}$, the straight line in $\{0\}\times D_n$ connecting $(0,p_{b-1})$ and $(0,p_{b+1})$, and the straight line in $\{1\}\times D_n$ connecting $(1,p_c)$ and $(1,p_{b+1})$.
  Now note that since $b \in A$, the strand $f^b$ does not end in $p_b$ and therefore not in the set $\{p_c,\ldots,p_{b+1}\}$ which is either $\{p_b,p_{b+1}\}$ or $\{p_{b-1},p_b,p_{b+1}\}$, depending on whether or not $b-1 \in A$. As a consequence the strand $f^b$ starts on one side of the disk and 
  ends on the other side, and thus it transversely intersects the disk
  an odd number of times.  
  Since the parity of the number of transverse intersections is
  preserved under homotopy of strands and strands which remain
  in the boundary have no such intersections, we may conclude 
  that the $(b,\cdot)$-strand is not boundary parallel in any representative
  for $\delta_A$.
\end{proof}
\begin{defn}[Wrapping number]\label{def:wrapping}
	Let $\beta$ be a $(b,\cdot)$-boundary braid and let $f$ be a
	representative for which the image of $f^b$ lies in the boundary of $D_n$. If we view
	the boundary of $D_n$ as an $n$-fold cover $\pi \colon \partial D_n \to \mathbb{S}^1$
	of the standard cell structure for $\mathbb{S}^1$ with one vertex and one edge, then boundary paths
	in $\partial D_n$ that start and end at vertices of $D_n$ may be
	considered as lifts of loops in $\mathbb{S}^1$. More concretely, let
	$\varphi \colon \mathbb{R} \to \partial D_n$ be a covering map such 
	that $\varphi(i) = p_i$ for each $i \in \mathbb{Z}$. Let 
	$\tilde{f}^b$ be any lift of $f^b$ via this covering and define the
	\emph{wrapping number} of the $(b,c)$-strand of $f$ to be
	$w_f(b,c) = \tilde{f}^b(1) - \tilde{f}^b(0)$.
	
	A slightly different description is as follows. Consider the diagram
	\begin{center}
	\begin{tikzpicture}[xscale=2,yscale=-2]
	\node (i) at (0,0) {$[0,1]$};
	\node (bdry) at (1,0) {$\partial D_n$};
	\node (quot_sphere) at (0,1) {$\mathbb{S}^1$};
	\node (cov_sphere) at (1,1) {$\mathbb{S}^1$};
	\path (i) edge[->] node[anchor=south] {$f^b$} (bdry);
	\path (i) edge[->] node[anchor=east] {$/\scriptstyle 0 \sim 1$} (quot_sphere);
	\path (bdry) edge[->] node[anchor=west] {$\pi$} (cov_sphere);
	\path (quot_sphere) edge[->] node[anchor=south] {$f^b_*$} (cov_sphere);
	\end{tikzpicture}
	\end{center}
	where the left map is the quotient map that identifies $0$ and $1$. The map $f^b_*$ is defined by commutativity of the diagram. Then the wrapping number of $f^b$ is the winding number of $f^b_*$.
\end{defn}
Notice that the wrapping number is $n$ times what one would 
reasonably define as the winding number.

\begin{lem}\label{lem:wrapping_well-defined}
  The wrapping number is well-defined.
\end{lem}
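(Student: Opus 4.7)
The only ambiguity in the formula $w_f(b, c) = \tilde{f}^b(1) - \tilde{f}^b(0)$ is the choice of lift $\tilde{f}^b$, so well-definedness reduces to showing that this endpoint difference is independent of that choice. The plan is to read this off directly from the covering-space theory of $\varphi \colon \R \to \partial D_n$.

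Since $\partial D_n$ is a topological circle and $\varphi$ is a covering map normalized so that $\varphi(i) = p_i$ for every $i \in \Z$, the deck group of $\varphi$ is the infinite cyclic subgroup of $\mathrm{Homeo}(\R)$ generated by the translation $t \mapsto t + n$. If $\tilde{f}^b$ and $\tilde{f}^{b\prime}$ are two continuous lifts of $f^b$ through $\varphi$, then by the uniqueness clause of the path-lifting property, the pointwise difference $\tilde{f}^{b\prime} - \tilde{f}^b$ is a continuous map $[0, 1] \to n\Z$. Since $[0,1]$ is connected and $n\Z$ is discrete, this map is constant, equal to some $kn$ with $k \in \Z$, and therefore
\[
\tilde{f}^{b\prime}(1) - \tilde{f}^{b\prime}(0) = \bigl(\tilde{f}^b(1) + kn\bigr) - \bigl(\tilde{f}^b(0) + kn\bigr) = \tilde{f}^b(1) - \tilde{f}^b(0)\text{.}
\]
Thus $w_f(b, c)$ depends only on the path $f^b$ viewed as a map into $\partial D_n$, not on the lift chosen to compute it.

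There is essentially no obstacle in this argument; it is a direct appeal to the uniqueness clause of the path-lifting property. The separate question of whether $w_f(b, c)$ depends only on the braid $\beta$ (and not on the particular boundary representative $f$) is a genuinely deeper statement not claimed by this lemma. Settling it affirmatively would require upgrading a configuration-space homotopy between two boundary representatives to one in which the $b$-strand remains in $\partial D_n$ throughout, so that the two lifted $b$-strands are homotopic rel endpoints as paths in $\partial D_n$ and their lifts through $\varphi$ share the same endpoint difference.
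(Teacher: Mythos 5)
There is a genuine gap: you have misread what the lemma is asserting. The ambiguity you resolve---independence of the endpoint difference from the choice of lift $\tilde{f}^b$---is indeed an immediate consequence of the uniqueness of path lifting, and your argument for it is correct. But that is not the content of the lemma. Looking at Definition~\ref{def:wrapping}, the wrapping number is provisionally written $w_f(b,c)$, depending on the chosen representative $f$, while every subsequent statement in the paper (Lemmas~\ref{lem:wrapping_add}, \ref{lem:wrapping_0}, \ref{lem:wrapping_inequalities}, etc.) writes $w_\beta(b,\cdot)$, a function of the braid $\beta$ alone. The point of Lemma~\ref{lem:wrapping_well-defined} is precisely to justify this notational shift: two different boundary-strand representatives $f$ and $f'$ of the same braid $\beta$ give the same wrapping number. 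You explicitly acknowledge this question at the end of your write-up and then declare it ``not claimed by this lemma,'' but it is exactly what the lemma claims.

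The paper's proof of representative-independence is not a trivial appeal to path-lifting. It first reduces to showing that every boundary strand of every representative of the identity braid has wrapping number zero (by considering $f' \cdot f^{-1}$ and using additivity). It then forgets down to a two-strand pure braid $\beta' \in \purebraid_2$; since $\purebraid_2$ is the infinite cyclic group generated by $\delta_2^2$, wrapping numbers there can be compared via $w_1(b,b) = \tfrac{n}{2}\, w_{\beta'}(b,b)$, and forgetting strands from a trivial braid yields a trivial braid, whose wrapping numbers are zero. You would need an argument of this kind (or the homotopy-upgrading argument you gesture at but do not carry out) to actually establish the lemma.
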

\begin{proof}
  Let $f$ be a representative of the $(b,\cdot)$-boundary braid
  $\beta$ for which $f^b$ lies in the boundary; we temporarily
  denote the wrapping number by $w_f(b,\cdot)$ to indicate the 
  presumed dependence on our choice of representative. If $f$ and
  $f^\prime$ both represent $\beta$, then $f^\prime \cdot f^{-1}$ is a
  representative for the trivial braid with $w_{f^\prime \cdot f^{-1}}(b,b) =
  w_{f^\prime}(b,\cdot) - w_f(b,\cdot)$. It therefore suffices to show that each
  strand in every representative of the trivial braid has wrapping
  number zero.

  Now, let $f$ be a representative of the trivial braid $1$ for which
  $f^b$ lies in the boundary, and suppose that the wrapping number
  $w_f(b,b) \neq 0$. If $f^{b^\prime}$ is another strand in $f$,
  then we may obtain a map to the pure braid group $\purebraid_2$
  by forgetting all strands except $f^b$ and $f^{b^\prime}$.
  The image $\beta^\prime$ of the trivial braid under this map 
  may be written as an even power of $\delta_2$ since 
  ${\delta_2}^2$ generates $\purebraid_2$, and the wrapping numbers
  can then be related as 
  $w_1(b,b) = \frac{n}{2}w_{\beta^\prime}(b,b)$.
  However, it is clear from the procedure of forgetting strands
  that the resulting braid in $\purebraid_2$ is trivial,
  and since every braid in $\purebraid_2$ has both strands boundary parallel,
  we know that $w_{\beta^\prime}(b,b)$ is zero, and thus so is $w_1(b,b)$.
  Therefore, every representative for the trivial braid has trivial wrapping
  numbers, and we are done.
\end{proof}
\begin{lem}\label{lem:wrapping_add}
  If $\beta$ and $\gamma$ are braids in $\braid_n$ such that $\beta$
  is a $(b,b^\prime)$-boundary braid and $\gamma$ is a
  $(b^\prime,\cdot)$-boundary braid, then $\gamma\beta$ is a
  $(b,\cdot)$-boundary braid with wrapping number $w_{\beta\gamma}(b)
  = w_\beta(b) + w_\gamma(b^\prime)$.
\end{lem}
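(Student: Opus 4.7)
The plan is to construct a boundary-parallel representative of the product explicitly by concatenating boundary-parallel representatives of the factors, and then to read off wrapping numbers from compatible lifts of the two strands at the vertex where they meet.

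First I would choose a representative $f$ of $\beta$ whose $(b,b')$-strand $f^b$ lies in $\partial D$ and a representative $g$ of $\gamma$ whose $(b',\cdot)$-strand $g^{b'}$ lies in $\partial D$; such $f$ and $g$ exist by the definition of a boundary braid. Following Definition~\ref{def:multn}, the concatenation $f.g$ represents $\beta\cdot\gamma$, and its $(b,\cdot)$-strand is the juxtaposition of $f^b$ followed by $g^{b'}$. Because both pieces lie in $\partial D$ and agree at the shared endpoint $p_{b'}$, the whole strand lies in $\partial D$, so $\beta\gamma$ is a $(b,\cdot)$-boundary braid.

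For the wrapping number I would use the covering $\varphi \colon \R \to \partial D$ from Definition~\ref{def:wrapping}, which satisfies $\varphi(i) = p_i$ for each $i \in \Z$. Pick any lift $\tilde{f}^b \colon [0,1]\to \R$ of $f^b$, and then take the unique lift $\tilde{g}^{b'}$ of $g^{b'}$ that starts at $\tilde{f}^b(1)$; this lift is well defined and starts at an integer, because $\varphi(\tilde{f}^b(1)) = f^b(1) = p_{b'} = g^{b'}(0)$. The concatenation of these two lifts is then a lift of the $(b,\cdot)$-strand of $f.g$, and a one-line telescope
\[
\tilde{g}^{b'}(1) - \tilde{f}^b(0) = \bigl(\tilde{f}^b(1) - \tilde{f}^b(0)\bigr) + \bigl(\tilde{g}^{b'}(1) - \tilde{g}^{b'}(0)\bigr) = w_\beta(b) + w_\gamma(b')
\]
delivers the identity.

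I do not anticipate any genuine obstacle here. The only subtle point is that the identity is asserted as a statement about braids rather than representatives, so one must first invoke Lemma~\ref{lem:wrapping_well-defined} to know that each wrapping number depends only on the underlying braid; after that, the computation above carried out for any admissible pair $(f,g)$ proves the equality. The rest is simply bookkeeping about concatenation of paths and path-lifting to the universal cover of $\partial D$.
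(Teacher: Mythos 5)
Your proof is correct and follows exactly the paper's approach: choose boundary representatives $f,g$, observe that $f.g$ represents $\beta\gamma$ with the $(b,\cdot)$-strand staying in $\partial D$, and read off additivity of wrapping numbers from the concatenated lift. The paper states the wrapping-number additivity as "clear" for this representative; you merely spell out the telescoping computation (and sensibly flag the need for Lemma~\ref{lem:wrapping_well-defined} to pass from the representative-level identity to the braid-level one).
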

\begin{proof}
If $f$ and $g$ are representatives of $\beta$ and $\gamma$ respectively such that $f^b_{b'}$ and $g^{b'}$ are boundary strands then $fg$ is a representative of $\beta\gamma$ such that $(fg)^b$ is a boundary strand. For this representative it is clear that the wrapping numbers add up in the described way.
\end{proof}
\begin{lem}\label{lem:wrapping_0}
  Let $B\subseteq [n]$ and $\beta \in \braid_n$. Then $\beta \in
  \fix_n(B)$ if and only if $w_\beta(b) = 0$ for all $b\in B$.
\end{lem}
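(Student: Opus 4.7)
The forward direction is immediate: a representative $f$ of $\beta$ that fixes $P_B$ has each $b$-strand ($b\in B$) equal to the constant path at $p_b\in\partial D_n$, whose lift to $\R$ is constantly $b$, so $w_\beta(b)=0$. For the converse, the plan is to reduce to the single-strand case $\card{B}=1$ using Proposition~\ref{prop:para-induct}, which yields $\fix_n(B)=\bigcap_{b\in B}\fix_n(\{b\})$. It therefore suffices to prove the following single-strand statement: if $\beta$ is a $(b,\cdot)$-boundary braid with $w_\beta(b)=0$, then $\beta\in\fix_n(\{b\})$.

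To prove that single-strand statement, fix a representative $f$ of $\beta$ with $f^b\subset\partial D_n$. I first carry out a preliminary homotopy of $f$ pushing every other strand $f^i$ ($i\neq b$) into the open disk $\operatorname{int}(D_n)$ for times $t\in(0,1)$, while keeping $f^i(0)=f^i(1)=p_i$; this is a routine deformation preserving the braid class. After this move, $f^b$ still lies in $\partial D_n$ and the remaining strands occupy the disjoint region $\operatorname{int}(D_n)$ for $t\in(0,1)$. The vanishing of $w_\beta(b)$ forces $\beta$ to be a $(b,b)$-boundary braid, and it makes $f^b$ a loop in $\partial D_n$ based at $p_b$ of winding zero, hence null-homotopic in $\partial D_n\cong\sph^1$. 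Choose any null-homotopy $H\colon[0,1]\times[0,1]\to\partial D_n$ with $H(\cdot,0)=f^b$, $H(\cdot,1)\equiv p_b$ and $H(0,s)=H(1,s)=p_b$ for all $s$. For each $s\in[0,1]$, the family $(H(\cdot,s),(f^i)_{i\neq b})$ is collision-free: for $t\in(0,1)$ the value $H(t,s)\in\partial D_n$ is disjoint from the other strand positions in $\operatorname{int}(D_n)$, and for $t\in\{0,1\}$ the configuration is exactly the basepoint $P$. This provides a homotopy through valid braid representatives from $f$ to one whose $b$-strand is the constant path at $p_b$, so $\beta\in\fix_n(\{b\})$, as required.

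The main obstacle is the preliminary interior push of the non-$b$ strands: it must be performed without introducing collisions and without changing the braid class of $\beta$. This reduces to a standard compactness argument in the labeled configuration space $\conf_n(D_n)$, exploiting that the other strands begin and end at boundary points distinct from $p_b$ and so can be radially contracted off of $\partial D_n$ on a compactly supported time interval. With that technical point in hand, the rest of the plan is purely formal: the null-homotopy $H$ in $\partial D_n$ combines with the now-interior strands to produce the desired fix braid representative.
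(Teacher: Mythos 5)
Your proof is correct and takes essentially the same route as the paper's: both reduce to the single-strand case via Proposition~\ref{prop:para-induct}, push the non-$b$ strands into the interior of $D_n$ away from $\partial D_n$, and then use the vanishing of the winding number to null-homotope the $(b,b)$-strand within $\partial D_n$ without creating collisions. You spell out the collision-free verification (via the null-homotopy $H$ staying in the boundary, disjoint from the interiorized strands) in a bit more detail than the published argument, but the decomposition, key lemma, and order of steps all match.
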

\begin{proof}
	If $\beta\in\fix_n(B)$, then there is a representative $f$ of $\beta$
	in which each $(b,\cdot)$-strand is fixed and thus $w_\beta(b,\cdot) = 0$.
	
	For the other direction, we begin with the case that $B = \{b\}$.
	Let $f$ be a representative of $\beta\in \braid_n(B,\cdot)$
	with the $(b,\cdot)$ strand in the boundary of $D_n$, and suppose
	that $w_\beta(b,\cdot) = 0$. Then the strand $f^b$ begins and ends at
	the vertex $p_b$, and there is a homotopy $f(t)$ of $f$ which 
	moves every other strand off the boundary without changing $f^b$.
	That is, $f^{b^\prime}(t) \not\in \partial D_n$ whenever $b^\prime \in [n] - \{b\}$
	and $0 < t < 1$. After performing this homotopy, we note
	that $f(1)$ is a representative of $\beta$ in which the $(b,b)$-strand
	has wrapping number $0$ and there are no other braids in the boundary.
	Thus, there is a homotopy of this strand to the constant path, and therefore
	$\beta\in \fix_n(\{b\})$.
	
	More generally, if $B \subseteq [n]$, then the set of braids 
	$\beta\in \braid_n(B,\cdot)$ with $w_\beta(b,\cdot) = 0$
	for all $b\in B$ are those which lie in the intersection of the
	fixed subgroups $\fix_n(\{b\})$. By Proposition~\ref{prop:para-induct},
	this is equal to $\fix_n(B)$ and we are done.
\end{proof}

\begin{lem}\label{lem:wrapping_inequalities}
  Let $b_1,\ldots,b_k$ be integers satisfying 
  $0 < b_1< \cdots< b_k \leq n$ and suppose that $\beta \in \braid_n$ 
  is a $(b_i,\cdot)$-boundary braid for every $i$. Then
  \[
  b_1 + w_\beta(b_1,\cdot) < b_2 + w_\beta(b_2,\cdot) < \cdots < 
  b_k + w_\beta(b_k,\cdot) < b_1 + w_\beta(b_1,\cdot) + n\text{.}
  \]
\end{lem}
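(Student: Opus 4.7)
I will reduce to the case $k=2$ and deduce the general chain by pairwise application.

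For $k=2$ with $0 < b_1 < b_2 \leq n$, the target inequality is $b_1 + w_\beta(b_1,\cdot) < b_2 + w_\beta(b_2,\cdot) < b_1 + w_\beta(b_1,\cdot) + n$. I would proceed in three steps. First, forget all strands except those starting at $p_{b_1}$ and $p_{b_2}$; this gives a path $\beta'$ in $\uconf_2(D)$ from $\{p_{b_1},p_{b_2}\}$ to $\{p_{c_1},p_{c_2}\}$, where $c_i \equiv b_i + w_\beta(b_i,\cdot) \pmod n$. Forgetting preserves the wrapping numbers, because any representative of $\beta$ whose $(b_i,\cdot)$-strand lies in $\partial D$ descends to such a representative of $\beta'$. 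Second, homotope $\beta'$ rel endpoints into $\uconf_2(\partial D)$ via the strong deformation retract $\conf_2(D) \to \conf_2(\partial D)$ defined as follows: for $(x,y) \in \conf_2(D)$ let $m = (x+y)/2$ and let $x^\ast, y^\ast \in \partial D$ be the points where the rays from $m$ through $x$ and through $y$ exit $D$; the straight-line homotopy $((x,y),t) \mapsto ((1-t)x + tx^\ast,(1-t)y + ty^\ast)$ keeps the two points on opposite rays from $m$ (hence distinct), fixes $\conf_2(\partial D)$ pointwise, and is $\sym_2$-equivariant so descends to $\uconf_2(D) \to \uconf_2(\partial D)$. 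Call the resulting path $\beta''$. Third, lift $\beta''$ through $\conf_2(\partial D)$ to its universal cover, starting at $(b_1,b_2)$. By Proposition~\ref{prop:pt-curv}, the component containing $(b_1,b_2)$ is the $(2,n)$-dilated column $\{(x_1,x_2) \in \R^2 : x_1 < x_2 < x_1 + n\}$. Continuity keeps the terminus of the lift in the same component, and because each coordinate of the lift tracks the $\R$-lift of one boundary strand in the sense of Definition~\ref{def:wrapping}, the terminus is $(b_1 + w_\beta(b_1,\cdot), b_2 + w_\beta(b_2,\cdot))$. The defining inequalities of the column at this point give the claim.

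For general $k$, apply the $k=2$ case to the pair $(b_i, b_{i+1})$ for each $i = 1,\ldots,k-1$ to obtain the intermediate strict inequalities, and to the pair $(b_1,b_k)$ to obtain the wrap-around $b_k + w_\beta(b_k,\cdot) < b_1 + w_\beta(b_1,\cdot) + n$. Concatenation yields the full chain.

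The main obstacle is step three of the $k=2$ case: one must verify that the two coordinates of the lift of $\beta''$ in the universal cover of $\conf_2(\partial D)$ coincide with the two $\R$-lifts used to define the wrapping numbers, so that the terminus is forced to be $(b_1 + w_\beta(b_1,\cdot), b_2 + w_\beta(b_2,\cdot))$ rather than some other vertex lying over $(p_{c_1},p_{c_2})$. This is a naturality statement for covering-space lifts with respect to the two coordinate projections $\conf_2(\partial D) \to \partial D$, and it is what forces the otherwise independently-defined wrapping numbers $w_\beta(b_1,\cdot)$ and $w_\beta(b_2,\cdot)$ to be compatible with the cyclic order of $b_1, b_2$.
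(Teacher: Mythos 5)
Your approach is genuinely different from the paper's and geometrically appealing: you forget to two strands, retract into the boundary circle, and lift to the universal-cover strip, whereas the paper normalizes the wrapping numbers algebraically by multiplying $\beta$ by powers of $\delta$ and of $\delta_{[n]-\{b_i\}}$ to reduce to the two-strand pure braid group. The reduction to consecutive pairs plus the wrap-around pair is correct and slightly more economical than the paper's reduction to arbitrary pairs.

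However, the ``main obstacle'' you name at the end is a real gap, and calling it a naturality statement does not close it. Choosing a representative $f$ with the $(b_1,\cdot)$-strand on $\partial D$, a retraction fixing $\conf_2(\partial D)$ pointwise does make the first coordinate of the lifted endpoint equal to $b_1 + w_\beta(b_1,\cdot)$. But the second coordinate path of $\beta''$ is the retraction of an \emph{interior} strand of $f$; it is not the $(b_2,\cdot)$-boundary strand of any representative of $\beta$, so its wrapping number is a priori only congruent to $w_\beta(b_2,\cdot)$ mod $n$. Since the strip contains exactly one representative of each residue class mod $n$ once the first coordinate is fixed, the inequality you read off is \emph{equivalent to}, not a proof of, the missing equality $w(\beta''_2) = w_\beta(b_2,\cdot)$ --- so as written the argument is circular. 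One way to actually prove the equality: run the retraction twice, once from a representative with the $(b_1,\cdot)$-strand on the boundary and once with the $(b_2,\cdot)$-strand, obtaining two paths in $\conf_2(\partial D)$ that are homotopic rel endpoints in $\conf_2(D)$; then use $\pi_1$-injectivity of $\conf_2(\partial D) \hookrightarrow \conf_2(D)$ (provable by contracting one coordinate to the center of the disk and computing winding numbers) to conclude they are homotopic rel endpoints in $\conf_2(\partial D)$ and hence share the lifted endpoint $(b_1 + w_\beta(b_1,\cdot), b_2 + w_\beta(b_2,\cdot))$. Separately, your retraction is not well-defined on the polygonal disk $D$: if $x$ and $y$ lie on a common edge then $m = (x+y)/2$ lies on $\partial D$ and the ray through $x$ exits at the edge's endpoint rather than at $x$; this is fixed by working in the round disk $\D$, which changes nothing else.
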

\begin{proof}
  Note that it suffices to prove that 
  \[b_i + w_\beta(b_i,\cdot) < b_j + w_\beta(b_j,\cdot) < 
  b_i + w_\beta(b_i,\cdot) + n\] 
  whenever $i < j$ or, in other words, that 
  \[
  w_\beta(b_j,\cdot)-w_\beta(b_i,\cdot) \in 
  \{b_i - b_j+1, \ldots, b_i - b_j + n-1\}\text{.}
  \] 
  As a first case, suppose both $w_\beta(b_i,\cdot)$ and
  $w_\beta(b_j,\cdot)$ are divisible by $n$. Then forgetting all but the
  $(b_i,b_i)$- and $(b_j,b_j)$-strands of $\beta$ yields a pure braid
  $\beta^\prime \in \purebraid_2$ which can be expressed as
  $\beta^\prime = {\delta_{2}}^{2\ell}$ for some $\ell\in \mathbb{Z}$
  since $\purebraid_2 = \langle {\delta_{2}}^2 \rangle$. Then
  \[w_\beta(b_h,b_h) = \frac{n}{2} w_{\beta^\prime}(b_h,b_h) = n\ell\] 
  for each $h\in \{i,j\}$ and since every two-strand braid has simultaneously
  boundary parallel strands with equal wrapping numbers, we conclude
  that $w_\beta(b_i,b_i) = w_\beta(b_j,b_j)$. Therefore, $w_\beta(b_j,b_j) -
  w_\beta(b_i,b_i) = 0$, which satisfies the inequalities above.
	
  For the general case, define 
  \[\gamma = \beta{\delta_{n}}^{-w_\beta(b_i,\cdot)}\] 
  and observe that $w_\gamma(b_i,\cdot) = 0$. Note that $w_\gamma(b_j,\cdot)$ 
  is not congruent to
  $b_i-b_j$ mod $n$; if it were, then the $(b_i,\cdot)$- and
  $(b_j,\cdot)$-strands of $\gamma$ would terminate in the same
  vertex. Let $e$ then be the representative of $w_\gamma(b_j,\cdot)$ modulo
  $n$ that lies in the interval $\{b_i - b_j+1, \ldots, b_i - b_j + n-1\}$. Then 
    \[
    \alpha = \gamma{\delta_{[n]-\{b_i\}}}^{-e}
    \] 
  has both its $(b_i,\cdot)$- and its $(b_j,\cdot)$-strand
  boundary parallel with wrapping numbers
  \[
  w_\alpha(b_i,\cdot) = 0\quad\text{and}\quad
  w_\alpha(b_j,\cdot) \equiv 0 \mathrel{\text{mod}} n\text{.}
  \]
  It follows from the case initially considered that the congruence on the right is actually an equality.
  
  Tracing back we see that $w_\gamma(b_j,\cdot) = e$ and 
  \[
  w_\beta(b_j,\cdot) - w_\beta(b_i,\cdot) = e \in \{b_i - b_j+1, \ldots, b_i-b_j+n-1\}
  \] 
  as claimed.
\end{proof}
In what follows we will see that the inequalities given above are sharp in the sense that any tuple of numbers
satisfying the hypotheses for Lemma \ref{lem:wrapping_inequalities}
can be realized as the wrapping numbers for a braid.

\begin{lem}\label{lem:wrapping_example}
  Let $b_1,\ldots,b_k$ and $w_1,\ldots,w_k$ be integers satisfying 
  $0 < b_1 < \cdots < b_k \leq n$ and
  \[
  b_1 + w_1 < b_2 + w_2 < \ldots < b_k + w_k < b_1 + w_1 + n\text{.}
  \]
  There is a braid $\beta \in \braid_n$ such that $\beta$ is a $(B,\cdot)$-boundary braid
  for $B = \{b_1,\ldots,b_k\}$ with $w_\beta(b_i,\cdot) = w_i$ for each $i$.
\end{lem}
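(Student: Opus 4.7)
The plan is to construct $\beta$ explicitly as a loop $f\colon [0,1]\to\uconf_n(D)$ based at $P$, in three phases, using the covering $\varphi\colon\R\to\partial D$ with $\varphi(j)=p_j$ fixed in Definition~\ref{def:wrapping}. In phase~1, on $t\in[0,\tfrac{1}{3}]$, I hold each $p_{b_i}$ fixed and slide the remaining points $p_j$, $j\notin B$, radially inward to interior positions $q_j=\tfrac{1}{2}p_j$; radial segments emanating from distinct vertices of $P$ are pairwise disjoint, so this is a valid path in configuration space. In phase~3, on $t\in[\tfrac{2}{3},1]$, I will analogously slide the $q_j$ back to the free boundary vertices. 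The substantive step is the middle phase.

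In phase~2, on $t\in[\tfrac{1}{3},\tfrac{2}{3}]$, I keep the $q_j$ fixed in the interior and drive each boundary strand along the lift $\tilde f^{b_i}(s)=b_i+sw_i$, setting $f^{b_i}(t)=\varphi\bigl(b_i+3(t-\tfrac{1}{3})w_i\bigr)$. The key claim is that this is indeed a valid path in configuration space, i.e.\ the points $\varphi(\tilde f^{b_i}(s))$ are pairwise distinct on $\partial D$ for every $s\in[0,1]$. For this it suffices to show
\[
\tilde f^{b_1}(s) < \tilde f^{b_2}(s) < \cdots < \tilde f^{b_k}(s) < \tilde f^{b_1}(s)+n
\]
throughout $[0,1]$. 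Each consecutive gap $\tilde f^{b_{i+1}}(s)-\tilde f^{b_i}(s)=(b_{i+1}-b_i)+s(w_{i+1}-w_i)$ is affine in $s$ and positive at both $s=0$ (since $b_i<b_{i+1}$) and $s=1$ (by hypothesis), hence positive throughout; the wraparound gap $\tilde f^{b_1}(s)+n-\tilde f^{b_k}(s)$ is handled identically using $b_1>0$, $b_k\le n$, and the final given inequality. Collisions between boundary and interior strands cannot occur since the $q_j$ sit in the interior while the $f^{b_i}(t)$ remain on $\partial D$.

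In phase~3, the strict inequalities guarantee that $\{b_i+w_i \bmod n : 1\le i\le k\}$ consists of $k$ distinct elements of $[n]$, so its complement has size $n-k$, matching $[n]\setminus B$. Fixing any bijection $\tau$ between these two complementary sets, I move each $q_j$ to $p_{\tau(j)}$ along any path in $D$ avoiding the now-fixed boundary vertices; such paths exist because the unordered configuration space of $n-k$ points in the disk with $k$ punctures is path-connected. Concatenating the three phases yields a loop $f$ in $\uconf_n(D)$ based at $P$ and hence a braid $\beta\in\braid_n$; by construction each $(b_i,\cdot)$-strand remains in $\partial D$, and the piecewise-affine lift above has net displacement $\tilde f^{b_i}(1)-\tilde f^{b_i}(0)=w_i$, giving $w_\beta(b_i,\cdot)=w_i$ as required.

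The only genuinely nontrivial point in this plan is the no-collision check in phase~2; the given inequalities are tailored so that the relevant affine-in-$s$ functions remain positive on the entire interval $[0,1]$, not merely at its endpoints. Everything else is routine continuous deformation in the disk.
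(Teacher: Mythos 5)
Your proof is correct, and it takes a genuinely different route from the paper's. The paper's argument is algebraic and inductive: after reducing to $\min_i w_i = 0$ by multiplying by a power of $\delta$, it inducts on $\max_i w_i$, at each step peeling off a single rotation braid $\delta_A$ (with $A = B_{\ge 1} \cup \{b+1 \mid b \in B_{\ge 1}\}$ where $B_{\ge 1}$ is the set of $b_i$ with $w_i \ge 1$) that advances all strands of positive remaining wrapping number by one. Your argument instead builds a representative path in $\uconf_n(D)$ directly: retract the non-boundary points inward, slide the boundary points along $\partial D$ at constant speed to the lifts dictated by the $w_i$, and then push the interior points out to the remaining boundary vertices. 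The heart of your proof — that the affine gaps $(b_{i+1}-b_i)+s(w_{i+1}-w_i)$ and the wraparound gap stay positive for all $s\in[0,1]$ because they are positive at both endpoints — is exactly where the hypothesis enters, and it is cleanly stated and correct; the verification that $\{b_i + w_i \bmod n\}$ has $k$ distinct elements is likewise sound. What the paper's approach buys is an explicit expression of $\beta$ as a product of dual generators $\delta_A$, which plugs naturally into the dual Garside machinery used elsewhere in the article. What your approach buys is transparency: it makes geometrically evident \emph{why} the chain of strict inequalities is precisely the condition needed, namely that the boundary points can slide linearly to their targets without colliding along the way. One minor stylistic point: in Phase~3 you invoke connectivity of the unordered configuration space of a punctured disk, but since you have fixed a bijection $\tau$ you actually want connectivity of the \emph{labeled} configuration space; that is equally standard, so the gap is cosmetic rather than substantive.
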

\begin{proof}
First let $w = \min \{w_1,\ldots,w_k\}$ and note that for any boundary braid $\beta' \in \braid_n(B,\cdot)$ the braid $\beta =\beta'\delta^w$ has wrapping numbers $w_{\beta}(b,\cdot) = w_{\beta'}(b,\cdot) + w$. It therefore suffices to show the claim in the case where some $w_i$ is $0$ and thus all $w_i$ are in $\{0,\ldots,n-1\}$. We assume this from now on.

Now the proof is by induction on $\max \{w_1,\ldots,w_k\}$, the case 
$v = 0$ being trivial. Let $B_0 = \{b_i \mid w_i = 0\}$ and 
$B_{\ge 1} = \{b_i \mid w_i \ge 1\}$. We claim that there is no 
$b \in B_0$ with $b-1 \in B_1$. 
If there were, then necessarily $b = b_i$ and $b-1 = b_{i-1}$ for some index $i$
(with the understanding that $b_0 = b_k$ and $b_{-1} = b_{k-1}$), 
but then
\[
b_{i-1} + w_{i-1} \ge b_{i-1} + 1 = b_i = b_i + w_i
\]
and this violates the assumption.

Let $\beta'$ be a braid satisfying the claim for
\[
b_i' = \left\{
\begin{array}{ll}
b_i & w_i = 0\\
b_i+1 & w_i \ge 1
\end{array}
\right.
\]
and
$w_i' = \min\{w_i-1,0\}$, where we note that
such a braid exists by the induction hypothesis. Let 
$A = B_{\ge 1} \cup \{b+1 \mid b \in B_{\ge 1}\}$.
We claim that $\beta = \delta_A\beta'$ is as needed. Indeed, $\beta$ is a $(B,\cdot)$-boundary braid by Lemma~\ref{lem:rotation}, and it has
the following wrapping numbers:
\[
w_{\beta}(b,\cdot) = \left\{
\begin{array}{ll}
w_{\beta'}(b+1,\cdot) + 1 & b \in B_{\ge 1}\\
w_{\beta'}(b,\cdot) & b \in B_{0}\text{.}
\end{array}
\right.
\]
Thus $w_{\beta}(b_i,\cdot) = w_i$ for every $i$.
\end{proof}
\begin{prop}\label{prop:simultaneously_boundary_parallel}
  Let $\beta \in \braid_n$. All boundary parallel strands of $\beta$
  are simultaneously boundary parallel. That is, if $\beta$ is a $(b,\cdot)$-boundary braid for every $b \in B$, then it is a $(B,\cdot)$-boundary braid.
\end{prop}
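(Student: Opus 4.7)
The plan is to subtract off a model $(B,\cdot)$-boundary braid that matches $\beta$'s wrapping numbers and argue the residue lies in $\fix_n(B)$. First I would record the wrapping numbers $w_b \defeq w_\beta(b,\cdot)$ for every $b \in B$; these are well defined because $\beta$ is individually a $(b,\cdot)$-boundary braid. Listing $B = \{b_1 < \cdots < b_k\}$, Lemma~\ref{lem:wrapping_inequalities} ensures that the integers $b_i + w_{b_i}$ satisfy exactly the strict cyclic inequalities required as input by Lemma~\ref{lem:wrapping_example}, and the latter produces a braid $\gamma$ that is simultaneously a $(B,\cdot)$-boundary braid with $w_\gamma(b,\cdot) = w_b$ for each $b \in B$.

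Next I would observe that the endpoint of a boundary-parallel strand is determined by its starting vertex and its wrapping number modulo $n$: lifting through the covering $\varphi\colon \R \to \partial D$ shows that a $(b,\cdot)$-strand in $\partial D$ with wrapping number $w$ must terminate at $p_{(b+w)\bmod n}$. Hence for each $b \in B$ both $\beta$ and $\gamma$ are $(b,c_b)$-boundary braids with the same target $c_b = (b+w_b)\bmod n$. The inverse $\gamma^{-1}$ is therefore a $(c_b,b)$-boundary braid with wrapping number $-w_b$, and applying Lemma~\ref{lem:wrapping_add} to the composition shows that $\beta\gamma^{-1}$ is a $(b,b)$-boundary braid with wrapping number zero for every $b \in B$. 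By Lemma~\ref{lem:wrapping_0} this forces $\beta\gamma^{-1} \in \fix_n(B)$.

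To finish I would assemble a simultaneous representative for $\beta$. Choose a representative $h$ of $\beta\gamma^{-1}$ that fixes every point of $P_B$ (so its $(b,\cdot)$-strands are constant paths, trivially boundary parallel all at once), and choose a representative $g$ of $\gamma$ in which the strands starting in $P_B$ simultaneously lie in $\partial D$. The concatenation $h\,g$ represents $\beta=(\beta\gamma^{-1})\gamma$ and has all $P_B$-strands contained in $\partial D$ throughout, exhibiting $\beta$ as a $(B,\cdot)$-boundary braid.

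The combinatorial heart of the argument, and the step I expect to be the main obstacle, is the construction of the model braid $\gamma$, which is handled by the explicit inductive recipe of Lemma~\ref{lem:wrapping_example} together with the sharp inequalities of Lemma~\ref{lem:wrapping_inequalities}. Once $\gamma$ is available, the remainder of the proof is careful bookkeeping: remembering the direction of composition (Definition~\ref{def:multn}), using that inverting a boundary-parallel strand negates its wrapping number, and noting that the concatenation of a representative fixing $P_B$ with one keeping $P_B$-strands in $\partial D$ still keeps all $P_B$-strands in $\partial D$.
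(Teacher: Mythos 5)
Your proposal is correct and follows the paper's proof essentially verbatim: both invoke Lemma~\ref{lem:wrapping_inequalities} to show the wrapping numbers of $\beta$ lie in the admissible range, Lemma~\ref{lem:wrapping_example} to produce a model $(B,\cdot)$-boundary braid $\gamma$ with the same wrapping numbers, Lemma~\ref{lem:wrapping_add} to show $\beta\gamma^{-1}$ has all $B$-indexed wrapping numbers zero, and Lemma~\ref{lem:wrapping_0} to conclude $\beta\gamma^{-1} \in \fix_n(B)$. Your closing paragraph spells out the concatenation of representatives that the paper leaves implicit, but the argument is the same.
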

\begin{proof}
  Let $B\subseteq [n]$ and suppose $\beta \in \braid_n$ is a
  $(b,\cdot)$-boundary braid for each $b\in B$. 
  If we write $B = \{b_1,\ldots,b_k\}$ with $0 < b_1 < b_2 < \cdots < b_k \leq n$,
  then the wrapping numbers $w_\beta(b_i,\cdot)$ satisfy
  the inequalities given by Lemma~\ref{lem:wrapping_inequalities}. Therefore
  by Lemma~\ref{lem:wrapping_example} there is a $(B,\cdot)$-boundary braid
  $\gamma \in \braid_n$ with the same wrapping numbers as $\beta$.  By
  Lemma~\ref{lem:wrapping_add}, $\beta\gamma^{-1}$ is a
  $(b,b)$-boundary braid with $w_{\beta\gamma^{-1}}(b,b) = 0$ for
  each $b\in B$. Applying Lemma~\ref{lem:wrapping_0}, we see that
  $\beta\gamma^{-1} \in \fix_n(B)$ and, in particular, it is a $(B,\cdot)$-boundary braid.
  It follows that $\beta = (\beta\gamma^{-1})\gamma$ is a $(B,\cdot)$-boundary braid, as well.
\end{proof}
As a consequence of Proposition~\ref{prop:simultaneously_boundary_parallel} we 
obtain the following proposition which we state in analogy with Proposition
\ref{prop:para-induct}.

\begin{cor}\label{cor:intersections}
  Intersections of sets of boundary braids are sets of boundary
  braids.  Concretely, if $B\subseteq [n]$, then
  \[
  \braid_n(B,\cdot) = \bigcap_{b\in B} \braid_n(\{b\},\cdot)
  \]
  and equivalently,
  \[
  \braid_n(C\cup D,\cdot) = \braid_n(C,\cdot) \cap \braid_n(D,\cdot)
  \]
  for any $C,D \subseteq [n]$.
\end{cor}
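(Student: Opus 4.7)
The plan is to deduce this corollary essentially for free from Proposition~\ref{prop:simultaneously_boundary_parallel}, which is the substantive content. The two displayed equalities are logically equivalent, so I would first focus on the first one and then derive the second.

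For the first equality, I would prove both inclusions. The inclusion $\braid_n(B,\cdot) \subseteq \bigcap_{b\in B}\braid_n(\{b\},\cdot)$ is immediate from Definition~\ref{def:boundary-braids}: if $f$ is a representative of $\beta$ for which every strand starting in $P_B$ remains in $\partial D$, then for each individual $b \in B$ the same $f$ witnesses $\beta$ as a $(b,\cdot)$-boundary braid. The reverse inclusion is exactly the statement of Proposition~\ref{prop:simultaneously_boundary_parallel}: if $\beta$ lies in $\braid_n(\{b\},\cdot)$ for every $b \in B$, then there are (a priori different) representatives $f_b$ each making the $(b,\cdot)$-strand boundary parallel, and the proposition packages these into a single representative in which all the strands indexed by $B$ simultaneously remain in $\partial D$, i.e.\ $\beta \in \braid_n(B,\cdot)$.

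For the second equality, I would simply take $B = C \cup D$ in the first equality and observe
\[
\braid_n(C \cup D,\cdot) = \bigcap_{b \in C \cup D}\braid_n(\{b\},\cdot) = \Bigl(\bigcap_{b \in C}\braid_n(\{b\},\cdot)\Bigr) \cap \Bigl(\bigcap_{b \in D}\braid_n(\{b\},\cdot)\Bigr) = \braid_n(C,\cdot) \cap \braid_n(D,\cdot),
\]
where the outer equalities apply the first statement of the corollary to $C \cup D$, to $C$, and to $D$ respectively.

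There is no real obstacle here since all the technical work has already been done in Proposition~\ref{prop:simultaneously_boundary_parallel}; the corollary is a clean restatement in the language of intersections, mirroring Proposition~\ref{prop:para-induct} for fix subgroups. The only thing worth being careful about is that ``is a $(b,\cdot)$-boundary braid'' and ``belongs to $\braid_n(\{b\},\cdot)$'' agree as stated in Definition~\ref{def:boundary-braids}, which is tautological.
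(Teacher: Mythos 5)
Your proposal is correct and matches the paper's intent exactly: the paper states the corollary with no written proof, presenting it as an immediate consequence of Proposition~\ref{prop:simultaneously_boundary_parallel}, and your argument (easy inclusion from the definition, hard inclusion from the proposition, second display formally from the first) is precisely that derivation spelled out.
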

%

\section{Dual Simple Boundary Braids}\label{sec:boundary_perms}

We start studying boundary braids in more detail by exploring the poset of boundary braids that are also dual simple.

\begin{defn}[Boundary braids]\label{def:boundary-dsb}
	Let $B\subseteq [n]$. We denote the subposet of $\DS_n$ consisting
	of $(B,\cdot)$-boundary braids by
	$\DS_n(B,\cdot)$. Notice that if $\beta\in \DS_n$, then the wrapping numbers
	satisfy $w_\beta(b,\cdot) \in \{0,1\}$ for each $b\in B$.
\end{defn}
\begin{defn}[Boundary partitions]\label{def:boundary-part}
	Let $B\subseteq [n]$. We say that a noncrossing partition $\pi\in\NC_n$ is a \emph{$(B,\cdot)$-boundary partition} if
	each $b\in B$ either shares a block with $b+1$ (modulo 
	$n$) or forms a singleton block $\{b\} \in \pi$.
	We denote by $\NC_n(B,\cdot)$ the poset of all 
	$(B,\cdot)$-boundary partitions.
\end{defn}
\begin{defn}[Boundary permutations]\label{def:boundary-perm}
	Let $B\subseteq [n]$. We say that $\pi\in\NC_n$ is a \emph{$(B,\cdot)$-boundary 
	permutation} if for all $b\in B$, $b\cdot \sigma_\pi \in \{b,b+1\}$
	(modulo $n$).
	We denote the sets of $(B,\cdot)$-boundary permutations by $\NP_n(B,\cdot)$.
\end{defn}
These definitions fit together in the expected way.

\begin{prop}\label{prop:boundary-iso}
	Let $B\subseteq [n]$. The natural identifications
	between $\NC_n$, $\DS_n$, and $\NP_n$ restrict to isomorphisms
	\[
	\DS_n(B,\cdot) \cong \NC_n(B,\cdot) \cong  \NP_n(B,\cdot)\text{.}
	\]
\end{prop}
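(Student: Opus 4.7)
The plan is to verify that the natural bijections $\NC_n \leftrightarrow \NP_n$ and $\NC_n \leftrightarrow \DS_n$ restrict to bijections on the $(B,\cdot)$-subposets. Order preservation is automatic, because each of the three subposets carries the order inherited from $\NC_n$, so only the underlying bijections of sets need to be checked.

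For $\NC_n(B,\cdot) \leftrightarrow \NP_n(B,\cdot)$, recall that $\sigma_\pi$ acts on each block $A = \{i_1 < \cdots < i_k\}$ of $\pi$ as the cycle $(i_1, \ldots, i_k)$. Hence $b \cdot \sigma_\pi = b$ precisely when $\{b\}$ is a singleton block of $\pi$. Moreover, since no integer lies strictly between $b$ and $b+1 \pmod n$, whenever $b$ and $b+1 \pmod n$ belong to a common block, they are cyclic neighbors within that block, and so $b \cdot \sigma_\pi = b+1 \pmod n$. Matching these observations with the definitions of $\NC_n(B,\cdot)$ and $\NP_n(B,\cdot)$ yields the bijection.

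For $\NC_n(B,\cdot) \leftrightarrow \DS_n(B,\cdot)$, Proposition~\ref{prop:simultaneously_boundary_parallel} reduces the question to a single index $b$: one must show that $\delta_\pi$ is a $(b,\cdot)$-boundary braid if and only if $\{b\} \in \pi$ or $b$ and $b+1 \pmod n$ share a block. For the easy direction, use the standard representative of $\delta_\pi = \prod_i \delta_{A_i}$ obtained by running the standard representatives of the block rotations simultaneously. If $\{b\}$ is a singleton of $\pi$, then the $(b,\cdot)$-strand is constant at $p_b$. If $b$ and $b+1 \pmod n$ share a block $A$, then because no vertex of $P$ lies strictly between $p_b$ and $p_{b+1 \pmod n}$ along $\partial D$, the boundary edge of $D_A$ joining them coincides with the boundary edge of $D$ joining them, and the $(b,\cdot)$-strand traces this edge inside $\partial D$.

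The reverse direction is the main obstacle. Suppose $b$ lies in a block $A$ with $\card{A} \geq 2$ and $b+1 \pmod n \notin A$; the goal is to derive a contradiction from $\delta_\pi$ being a $(b,\cdot)$-boundary braid. Factor $\delta_\pi = \delta_A \cdot \gamma'$ where $\gamma' = \prod_{A_i \neq A} \delta_{A_i}$. Since $\pi$ is noncrossing, the subdisks $D_{A_i}$ for $A_i \neq A$ are disjoint from $D_A$, so $\gamma' \in \fix_n(A)$. Let $b'$ be the cyclic successor of $b$ in $A$, so the $(b,\cdot)$-strand of $\delta_\pi$ ends at $p_{b'}$. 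Choose a representative $f$ of $\delta_\pi$ with $f^b \subset \partial D$ together with a representative $h$ of $(\gamma')^{-1}$ whose $b'$-strand is constant at $p_{b'}$. Then $f \cdot h$ represents $\delta_\pi \cdot (\gamma')^{-1} = \delta_A$, and its $(b,\cdot)$-strand, obtained by following $f^b$ and then the constant path at $p_{b'}$, lies entirely in $\partial D$. This contradicts Lemma~\ref{lem:rotation}, which asserts precisely that $\delta_A$ fails to be a $(b,\cdot)$-boundary braid when $b \in A$ and $b+1 \pmod n \notin A$. The key insight is that the noncrossing structure of $\pi$ lets us cancel off all other block rotations without disturbing the $(b,\cdot)$-strand, thereby reducing the question to a single rotation braid and Lemma~\ref{lem:rotation}.
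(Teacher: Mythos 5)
Your proof is correct and takes essentially the same route as the paper: reduce to the blockwise factorization $\delta_\pi = \prod_A \delta_A$ and invoke Lemma~\ref{lem:rotation}, together with a direct check on permutations for $\NP_n(B,\cdot)$. The paper compresses the reduction step (that $\delta_\pi$ is a $(B,\cdot)$-boundary braid if and only if each $\delta_A$ is) into the phrase ``it is clear,'' whereas you actually supply the nontrivial half of that equivalence by concatenating a boundary-parallel representative of $\delta_\pi$ with a representative of $(\gamma')^{-1}$ that fixes $p_{b'}$, which is a correct and worthwhile elaboration.
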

\begin{proof}
	Fix $B\subseteq [n]$. Let $\pi \in \NC_n(B,\cdot)$ and consider 
	the corresponding dual simple braid $\delta_\pi \in \DS_n$.
	It is
	\[
	\delta_\pi = \prod_{\substack{A \in \pi\\\card{A} \ge 2}}\delta_A\text{.}
	\]
	It is clear that $\delta_\pi$ is a $(B,\cdot)$-boundary braid if and only if each
	$\delta_A$ is. By Lemma~\ref{lem:rotation} this is the case if $b \in A$ implies
	$b + 1 \in A$ for every $b \in B$ and every (non-singleton) $A$. This matches
	the definition for $\pi$ to be a $(B,\cdot)$-boundary partition.
	
	Now let $\sigma_\pi$ be the permutation corresponding to $\pi$. Note that
	$b \cdot \sigma_\pi = b$ if and only if $\{b\}$ is a block of $\pi$ and that
	$b \cdot \sigma_\pi = b+1$ if and only if $b$ and $b+1$ lie in the same block.
	From this it is clear that $\sigma_\pi$ is a $(B,\cdot)$-boundary permutation if and only
	if $\pi$ is a $(B,\cdot)$-boundary partition.
\end{proof}
\begin{figure}
	\begin{center}
	\begin{tikzpicture}
		\node[disk,minimum size=1.5cm] (top) at (0,0) {};
		\foreach \i in {1,...,3} \node[disk,minimum size=1.5cm] (a\i) at (-2+\i*2,-2.25) {};
		\foreach \i in {1,...,4} \node[disk,minimum size=1.5cm] (b\i) at (-2+\i*2,-4.5) {};
		\foreach \i in {1,...,3} \node[disk,minimum size=1.5cm] (c\i) at (\i*2,-6.75) {};
		\node[disk,minimum size=1.5cm] (bottom) at (6,-9) {};
		
		\begin{scope}[GrayPoly]
			\begin{scope}[shift={(0,0)}] 
				\makepent 
				\filldraw (1) -- (2) -- (3) -- (4) -- (5) -- cycle; 
				\drawpentb 
			\end{scope}
			
			\begin{scope}[shift={(0,-2.25)}] \makepent \filldraw 
				(1) -- (2) -- (3) -- (5) -- cycle; \drawpentb \end{scope}
			\begin{scope}[shift={(2,-2.25)}] \makepent \filldraw 
				(1) -- (3) -- (4) -- (5) -- cycle; \drawpentb \end{scope}
			\begin{scope}[shift={(4,-2.25)}] \makepent \filldraw 
				(1) -- (4) -- (5) -- cycle; \draw (2) -- (3); \drawpentb \end{scope}
			
			\begin{scope}[shift={(0,-4.5)}] \makepent \filldraw 
				(1) -- (2) -- (3) -- cycle; \drawpentb \end{scope}
			\begin{scope}[shift={(2,-4.5)}] \makepent \filldraw 
				(1) -- (3) -- (5) -- cycle; \drawpentb \end{scope}
			\begin{scope}[shift={(4,-4.5)}] \makepent \filldraw 
				(1) -- (5); \draw (2) -- (3); \drawpentb \end{scope}
			\begin{scope}[shift={(6,-4.5)}] \makepent \filldraw 
				(1) -- (4) -- (5) -- cycle; \drawpentb \end{scope}
			
			\begin{scope}[shift={(2,-6.75)}] \makepent \draw 
				(1) -- (3); \drawpentb \end{scope}
			\begin{scope}[shift={(4,-6.75)}] \makepent \draw 
				(2) -- (3); \drawpentb \end{scope}
			\begin{scope}[shift={(6,-6.75)}] \makepent \draw 
				(1) -- (5); \drawpentb \end{scope}
			
			\begin{scope}[shift={(6,-9)}] 
				\makepent
				\drawpentb 
			\end{scope}
		\end{scope}
		
		\begin{scope}[BlueLine]
			\draw (top) -- (a1) -- (b1) -- (c1) -- (b2) -- (a2) -- (top);
			\draw (a1) -- (b2);
			\draw (a3) -- (b3) -- (c2) -- (bottom) -- (c3) -- (b4) -- (a3);
			\draw (b3) -- (c3);	
		\end{scope}
		
		\begin{scope}[RedLine]
			\draw (top) -- (a3);
			\draw (a1) -- (b3);
			\draw (a2) -- (b4);
			\draw (b1) -- (c2);
			\draw (b2) -- (c3);
			\draw (c1) -- (bottom);
		\end{scope}
	\end{tikzpicture}	
	\end{center}
	\caption{The poset of boundary partitions $\NC_5(B,\cdot)$,
	where the upper-right vertex of each noncrossing partition is labeled 
	by $1$ and elements of $B=\{2,4,5\}$ are labeled by a white dot. 
	The blue and red edge colors serve to illustrate the direct product 
	structure described in Proposition~\ref{prop:nc-decomposition}.}
	\label{fig:nc-decomposition}
\end{figure}
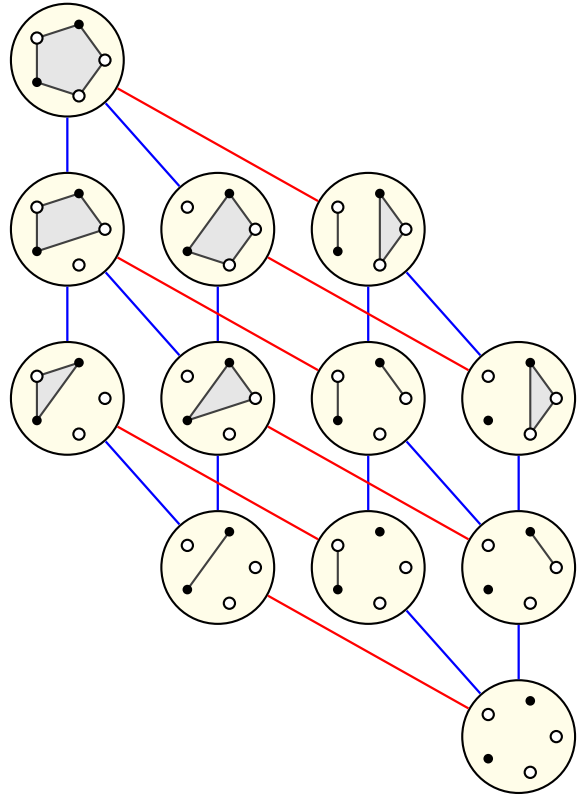
\begin{exmp}\label{ex:boundary-perm}
	Let $B = \{2,4,5\} \subseteq [5]$. Then $\NC_5(B,\cdot)$ is a subposet of 
	$\NC_5$ with $12$ elements, depicted in Figure \ref{fig:nc-decomposition}.
\end{exmp}

We now define two maps on the posets described above. The first map
takes a braid to one fixing $B$ while the second takes it to a canonical
braid with the same behavior
on the specified boundary strands.

\begin{defn}[$\fix^B$]\label{def:fix}
	For $B \subseteq [n]$ we define the map
	\begin{align*}
	\fix^B \colon \NC_n&\to\NC_n(B,\cdot)\\
	\pi &\mapsto \{A - B \mid A \in \pi\} \cup \{\{b\} \mid b \in B\}\text{.}
	\end{align*}
	Thus $\fix^B(\pi)$ is obtained from $\pi$ by making each $b \in B$
	a singleton block. We also denote by $\fix^B$ the corresponding
	maps $\NP_n \to \NP_n(B,\cdot)$ and $\DS_n \to \DS_n(B,\cdot)$.
	We call an element in the image of $\fix^B$ a \emph{$B$-fix}
	partition, braid, or permutation, and we refer to the entire image
	by the shorthand notation $\fix(\NC_n(B))$. We adopt similar notations
	for the analogous settings of $\NP_n(B,\cdot)$ and $\DS_n(B,\cdot)$.
\end{defn}

\begin{lem}\label{lem:fix_order}
Let $B \subseteq [n]$.
\begin{enumerate}
\item for $\pi \in \NC_n(B)$, $\fix^B(\pi)$ is the maximal $B$-fix element below $\pi$;\label{item:fix_char}
\item $\fix^B$ preserves order;
\item $\fix^B(\pi) \le \pi$ for all $\pi \in \NC_n$;
\item $\fix^B$ is idempotent, i.e. $(\fix^B)^2 = \fix^B$;\label{item:fix_idempotent}
\item if $\alpha \in \DS_n$ is $B$-fix and $\alpha\beta \in \DS_n$ then $\fix^B(\alpha\beta) = \alpha\fix^B(\beta)$;\label{item:fix_skips_fixed}
\end{enumerate}
\end{lem}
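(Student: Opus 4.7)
The plan is to prove parts (1)--(4) by direct set-theoretic arguments from the definition of $\fix^B$ and to devote the main effort to part (5) via the correspondence with noncrossing permutations.

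For (3), each block of $\fix^B(\pi)$ is either $A - B$ (contained in the $\pi$-block $A$) or a singleton $\{b\}$ (contained in the $\pi$-block of $b$), so $\fix^B(\pi) \le \pi$. Part (2) follows by the same kind of bookkeeping: a containment $\pi \le \pi'$ induces containments $A - B \subseteq A' - B$ for matching blocks (the block $A' - B$ is nonempty whenever $A - B$ is, since any witness in $A - B$ lies in $A' \setminus B$), and the $B$-singletons agree on both sides. Part (4) holds because every $b \in B$ is already a singleton of $\fix^B(\pi)$, so a second application is the identity. For (1), combining (3) and (4) shows that $\fix^B(\pi)$ is $B$-fix and at most $\pi$; maximality follows from applying (2) to any $B$-fix $\rho \le \pi$, giving $\rho = \fix^B(\rho) \le \fix^B(\pi)$.

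For (5), let $\pi$, $\sigma$, $\tau$ be the noncrossing partitions of $\alpha$, $\beta$, $\alpha\beta$. Proposition~\ref{prop:relations} gives $\tau = \pi \vee \sigma$ in $\NC_n$, and in particular $\pi \le \tau$. Applying (2) and using $\pi = \fix^B(\pi)$, we obtain $\pi \le \fix^B(\tau)$, so Proposition~\ref{prop:relations}(1) yields a unique $\rho \in \NC_n$ with $\delta_\pi \delta_\rho = \delta_{\fix^B(\tau)}$. The target identity $\delta_\pi \delta_{\fix^B(\sigma)} = \delta_{\fix^B(\tau)}$ is therefore equivalent to the partition identity $\rho = \fix^B(\sigma)$.

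To establish this last equality I would work in $\NP_n$ via the isomorphism $\NC_n \cong \NP_n$ of Definition~\ref{def:nc-perm}: it suffices to check $\sigma_\rho = \sigma_{\fix^B(\sigma)}$, or equivalently $\sigma_{\fix^B(\tau)} = \sigma_\pi \, \sigma_{\fix^B(\sigma)}$ in $\sym_n$. Both permutations fix $B$ pointwise, so the check reduces to $[n]-B$, where for any noncrossing partition $\rho'$ the permutation $\sigma_{\fix^B(\rho')}(a)$ equals $\sigma_{\rho'}^{k}(a)$ for the smallest positive $k$ with $\sigma_{\rho'}^{k}(a) \in [n]-B$ (one advances along the $\rho'$-cycle of $a$, skipping entries in $B$). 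Using $\sigma_\tau = \sigma_\pi \sigma_\sigma$ (Proposition~\ref{prop:relations}) together with the fact that $\sigma_\pi$ restricts to the identity on $B$, a straightforward induction shows $(\sigma_\pi \sigma_\sigma)^{j}(a) = \sigma_\sigma^{j}(a)$ as long as $\sigma_\sigma^{j}(a) \in B$, and at the first return $\sigma_\sigma^{k}(a) \in [n]-B$ one obtains $\sigma_\tau^{k}(a) = \sigma_\pi(\sigma_\sigma^{k}(a)) = \sigma_\pi\, \sigma_{\fix^B(\sigma)}(a)$. The main obstacle is exactly this cycle-tracing step: the $B$-fix hypothesis on $\pi$ is what permits $\sigma_\pi$ to be commuted past successive applications of $\sigma_\sigma$ so long as the trajectory stays in $B$, and then emerges as a single factor of $\sigma_\pi$ at the moment of return to $[n]-B$.
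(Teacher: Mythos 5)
Your proof is correct. Parts (1)--(4) are handled the same way the paper does (direct unwinding of the definition of $\fix^B$); the only cosmetic difference is that you derive (1) from (2)--(4) while the paper takes (1) as the observation from which (2)--(4) follow, but in both cases all four are immediate.

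For part (5) you take a genuinely different route. The paper's argument is purely order-theoretic and leans on (1): it observes that $\alpha\fix^B(\beta)$ is a $B$-fix dual simple braid with $\alpha\fix^B(\beta) \le \alpha\beta$, hence $\le \fix^B(\alpha\beta)$; and conversely that $\alpha^{-1}\fix^B(\alpha\beta)$ is a $B$-fix dual simple braid with $\alpha^{-1}\fix^B(\alpha\beta) \le \beta$, hence $\le \fix^B(\beta)$. (The paper's printed proof has a typo, writing $\alpha^{-1}\fix^B(\beta)$ where $\alpha^{-1}\fix^B(\alpha\beta)$ is meant, but the intended sandwich is clear.) Both directions quietly use the fact that a product of two $B$-fix dual simple braids that is again dual simple is $B$-fix, which is essentially Lemma~\ref{lem:fix_image} in disguise. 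Your argument instead descends to $\NP_n$ and establishes the factorization directly by cycle-tracing: $\fix^B$ acts on a noncrossing permutation by ``advancing along the cycle while skipping entries of $B$,'' and left-composition with the $B$-fixing $\sigma_\pi$ can be commuted past the skipped steps and collected as a single factor at the first return to $[n]-B$. This is longer but makes the combinatorial mechanism explicit, whereas the paper's version is shorter and stays at the level of the lattice order. Both are valid; the paper's is the slicker bookkeeping, yours is the more transparent computation.
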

\begin{proof}
	The first statement is clear from the definition and the second,
	third and fourth statement follow from it.

	In the fifth statement $\alpha\fix^B(\beta) \le \alpha\beta$ is
	$B$-fix so it is $\le \fix^B(\alpha\beta)$ by \eqref{item:fix_char}.
	Conversely, $\alpha^{-1}\fix^B(\beta) \le \beta$ is $B$-fix so
	it is $\le \fix^B(\beta)$ by \eqref{item:fix_char}.
\end{proof}
\begin{lem}\label{lem:fix_image}	$\fix(\NC_n(B)) = \DS_n \cap \fix_n(B)$.
\end{lem}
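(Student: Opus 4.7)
The plan is to work via the bijection $\NC_n \cong \DS_n$ and first to unpack $\fix(\NC_n(B))$ concretely as the collection of noncrossing partitions in which every $b \in B$ already forms a singleton block. This reduction holds because Lemma~\ref{lem:fix_order}(\ref{item:fix_idempotent}) says $\fix^B$ is idempotent, so its image equals its fixed-point set, and directly from the definition of $\fix^B$ a partition $\pi$ is fixed by $\fix^B$ exactly when each $b \in B$ is a singleton of $\pi$. With this reformulation, the claim reduces to: for $\pi \in \NC_n$, the dual simple braid $\delta_\pi$ lies in $\fix_n(B)$ if and only if each $b \in B$ is a singleton block of $\pi$.

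For the inclusion $\fix(\NC_n(B)) \subseteq \DS_n \cap \fix_n(B)$, assume each $b \in B$ is a singleton block of $\pi$. Then the product $\delta_\pi = \prod_{A \in \pi,\ \card{A} \ge 2} \delta_A$ only involves blocks $A$ disjoint from $B$. The standard representative of each such $\delta_A$ fixes every vertex in $P - P_A$, in particular all of $P_B$; since the subdisks $D_A$ are pairwise disjoint, these standard representatives combine (simultaneously in time) into a single representative of $\delta_\pi$ which fixes $P_B$ pointwise, witnessing $\delta_\pi \in \fix_n(B)$.

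For the reverse inclusion, suppose $\delta_\pi \in \fix_n(B)$ but, toward a contradiction, some $b \in B$ lies in a block $A \in \pi$ with $\card{A} = k \ge 2$. Writing $A = \{i_1 < \cdots < i_k\}$, the permutation $\perm(\delta_A)$ is the $k$-cycle $(i_1\ \ldots\ i_k)$ and therefore moves $b$; since the non-singleton blocks of $\pi$ are pairwise disjoint, this cycle appears as a disjoint cycle of $\perm(\delta_\pi)$, which consequently also moves $b$. But if $\delta_\pi$ fixes $p_b$ then it has a representative whose $b$-strand is constant, forcing $\perm(\delta_\pi)$ to fix $b$ — a contradiction. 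I do not anticipate any serious obstacle: the argument is a short translation between the combinatorics of $\pi$ and the action of $\delta_\pi$ on the basepoint, and the only point requiring care is the initial identification of the image of $\fix^B$ with those partitions in which $B$ consists of singletons.
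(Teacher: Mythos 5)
Your proof is correct and takes essentially the same approach as the paper, which gives a single-sentence justification asserting the two equivalences ($\delta_\pi \in \fix(\NC_n(B))$ iff every $b \in B$ is a singleton block of $\pi$ iff $\delta_\pi \in \fix_n(B)$) that you spell out in detail. The only thing worth noting is that you slightly over-engineer the first reduction: identifying the image of $\fix^B$ with the partitions where every $b \in B$ is a singleton can be read directly off the formula in Definition~\ref{def:fix} (every output of $\fix^B$ has all of $B$ as singletons, and such a $\pi$ is its own image), so invoking idempotency is correct but not strictly needed.
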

\begin{proof}
	A braid $\delta_\pi \in \DS_n$ lies in $\fix(\NC_n(B))$ if and only 
	if every $b \in B$ is a singleton block of $\pi$ if and only if 
	$\delta_\pi \in \fix_n(B)$.
\end{proof}
\begin{defn}[$\move^B$]\label{def:move}
	For $B \subseteq [n]$ we define the map
	\[
	\move^B \colon \DS_n(B,\cdot) \to \DS_n(B,\cdot)
	\]
	by the equation
	$\delta_\pi = \fix^B(\delta_\pi)\move^B(\delta_\pi)\text{.}$
	We also denote by $\move^B$ the corresponding maps
	$\NC_n(B,\cdot) \to \NC_n(B,\cdot)$ and
	$\NP_n(B,\cdot) \to \NP_n(B,\cdot)$. We refer to the
	image of $\move^B$ by the shorthand $\move(\NC_n(B,\cdot))$, with
	analogous notations for $\NP_n(B,\cdot)$ and $\DS_n(B,\cdot)$.
\end{defn}
\begin{lem}\label{lem:move_well-def}
The map $\move^B$ is well-defined, i.e. $\fix^B(\delta_\pi)^{-1}\delta_\pi$ 
is a dual simple braid for each $\pi\in \NC_n(B,\cdot)$.
\end{lem}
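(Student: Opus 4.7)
The plan is to reduce the claim to a direct application of Proposition~\ref{prop:dual-simple-order}, which characterizes exactly when a product of the form $\delta_\sigma^{-1}\delta_\tau$ is again a dual simple braid.

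First I would unwind the definitions. By Definition~\ref{def:fix}, the map $\fix^B$ on $\DS_n$ is induced from the corresponding map on noncrossing partitions, so $\fix^B(\delta_\pi) = \delta_{\fix^B(\pi)}$ for every $\pi \in \NC_n(B,\cdot)$. Consequently the element in question can be rewritten as
\[
\fix^B(\delta_\pi)^{-1}\delta_\pi = \delta_{\fix^B(\pi)}^{-1}\delta_\pi\text{.}
\]

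Next, I would invoke Lemma~\ref{lem:fix_order}(3), which gives the order comparison $\fix^B(\pi) \le \pi$ in the noncrossing partition lattice $\NC_n$. Combining this with Proposition~\ref{prop:dual-simple-order} — which says that for $\sigma,\tau \in \NC_n$ with $\sigma \le \tau$ the element $\delta_\sigma^{-1}\delta_\tau$ is a dual simple braid — yields the desired conclusion immediately.

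No serious obstacle is expected: this lemma is essentially a bookkeeping corollary of the dual Garside machinery from Section~\ref{sec:dual-simple} together with the already established properties of $\fix^B$. It is stated separately only because it is what justifies the defining equation in Definition~\ref{def:move} actually producing an element of $\DS_n$.
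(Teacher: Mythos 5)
Your proof is correct and takes essentially the same approach as the paper: both start from $\fix^B(\pi) \le \pi$ (Lemma~\ref{lem:fix_order}) and then invoke a divisibility fact for dual simple braids; you cite Proposition~\ref{prop:dual-simple-order} while the paper cites Proposition~\ref{prop:relations}, but these are interchangeable here and yield the conclusion in one step either way.
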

\begin{proof}
We know from Lemma~\ref{lem:fix_order} that $\fix^B(\pi) \le \pi$. Thus by Proposition~\ref{prop:relations} there is a $\pi'$ such that $\fix^B(\delta_\pi)\delta_{\pi'} = \delta_\pi$. Then $\move^B(\delta_\pi) = \delta_{\pi'}$.
\end{proof}
\begin{figure}\label{fig:fix-move}
  \begin{center}
    \begin{tikzpicture}
      \begin{scope}[shift={(-4,0)},BluePoly]
	\node () [disk,minimum size=3.5cm] {};
	\foreach \n in {1,...,9} {
	  \coordinate (\n) at (\n*40:1.2cm);
	  \node[black] () at (\n*40:1.5cm) {\n};
	}
	\filldraw (1)--(2)--(3)--(4)--
	(5)--(6)--cycle;
	\filldraw (7)--(8)--(9)--cycle;
	\foreach \n in {1,3,6,8,9} { \draw (\n) node [dot] {};}
	\foreach \n in {2,4,5,7} { \draw (\n) node [opendot] {};}
      \end{scope}
      
      \begin{scope}[shift={(0,0)},BluePoly]
	\node () [disk,minimum size=3.5cm] {};
	\foreach \n in {1,...,9} {
	  \coordinate (\n) at (\n*40:1.2cm);
	  \node[black] () at (\n*40:1.5cm) {\n};
	}
	\filldraw (1)--(3)--(6)--cycle;
	\draw (8)--(9);
	\foreach \n in {1,3,6,8,9} { \draw (\n) node [dot] {};}
	\foreach \n in {2,4,5,7} { \draw (\n) node [opendot] {};}
      \end{scope}
      
      \begin{scope}[shift={(4,0)},BluePoly]
	\node () [disk,minimum size=3.5cm] {};
	\foreach \n in {1,...,9} {
	  \coordinate (\n) at (\n*40:1.2cm);
	  \node[black] () at (\n*40:1.5cm) {\n};
	}
	\draw (2)--(3);
	\filldraw (4)--(5)--(6)--cycle;
	\draw (7)--(8);
	\foreach \n in {1,3,6,8,9} { \draw (\n) node [dot] {};}
	\foreach \n in {2,4,5,7} { \draw (\n) node [opendot] {};}
      \end{scope}
      
      \node () at (-4,-2.3) {$\sigma$};
      \node () at (0,-2.3) {$\fix(\sigma)$};
      \node () at (4,-2.3) {$\move(\sigma)$};
    \end{tikzpicture}
  \end{center}
  \caption{The noncrossing partitions corresponding to $\sigma$,
    $\fix^B(\sigma)$, and $\move^B(\sigma)$ as described in Example
    \ref{ex:fix-move}. The white dots form the set $B$.}
\end{figure}
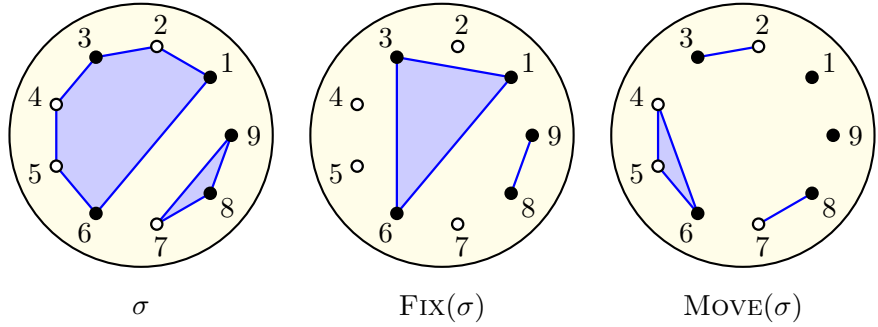
\begin{exmp}\label{ex:fix-move}
	Let $\sigma = (1\ 2\ 3\ 4\ 5\ 6)(7\ 8\ 9)$ and 
	$B = \{2,4,5,7\}$. Then we have $\fix^B(\sigma) = (1\ 3\ 6)(8\ 9)$
	and $\move^B(\sigma) = (2\ 3)(4\ 5\ 6)(7\ 8)$. See Figure
	\ref{fig:fix-move}.
\end{exmp}
Although the output of $\move^B$ is less easily described than that of 
$\fix^B$, both maps satisfy many similar properties. Mirroring
Lemma~\ref{lem:fix_order}, we now describe several properties of the
$\move^B$ map. 

\begin{lem}\label{lem:move_order}
	Let $B\subseteq [n]$.
	\begin{enumerate}
	\item for each $\pi\in \NC_n(B)$, 
	$\move^B(\pi)$ is the minimal $\pi' \le \pi$ such that
	$w_{\delta_{\pi'}}(b,\cdot) = w_{\delta_\pi}(b,\cdot)$
	for each $b \in B$;\label{item:move_char}
	\item $\move^B$ preserves order;\label{item:move_order}
	\item $\move^B(\pi) \le \pi$ for all $\pi \in \NC_n$;
	\item $\move^B$ is idempotent, i.e.\ $(\move^B)^2 = \move^B$.
	\end{enumerate}
\end{lem}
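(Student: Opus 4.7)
Plan: I would prove (3) and the first half of (1) directly from definitions, then establish an explicit combinatorial description of $\move^B(\pi)$ that powers the minimality argument in (1) and yields (2) and (4) as well.

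Part (3) is immediate: by Lemma~\ref{lem:move_well-def}, $\delta_{\fix^B(\pi)}\delta_{\move^B(\pi)} = \delta_\pi$ with all factors dual simple, so Proposition~\ref{prop:relations} gives $\move^B(\pi) \le \pi$. The first half of (1) follows by applying Lemma~\ref{lem:wrapping_add} to this factorization: since $\delta_{\fix^B(\pi)} \in \fix_n(B)$ fixes each $b \in B$ pointwise and has zero $B$-wrapping by Lemma~\ref{lem:wrapping_0}, additivity gives $w_{\delta_\pi}(b,\cdot) = w_{\delta_{\move^B(\pi)}}(b,\cdot)$ for every $b \in B$.

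The central technical step is the following explicit description of $\move^B(\pi)$: its non-singleton blocks are the equivalence classes on $[n]$ under the relation generated by $b \sim b+1$ (with $b+1$ taken modulo $n$) for each $b \in B$ with $b$ and $b+1$ lying in a common block of $\pi$, and all remaining elements of $[n]$ are singletons. Let $\tau$ be the candidate partition built this way. One checks directly that $\tau$ is noncrossing and that $\tau \le \pi$, so by uniqueness in Proposition~\ref{prop:relations} it suffices to verify $\delta_{\fix^B(\pi)}\delta_\tau = \delta_\pi$. Since both $\fix^B(\pi)$ and $\tau$ respect the block decomposition of $\pi$, this reduces to the block-level identity $\delta_{A\setminus B} \cdot \prod_{C}\delta_C = \delta_A$ within each non-singleton block $A$ of $\pi$, where the product ranges over the $\tau$-chains inside $A$. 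Verifying this identity — the main technical obstacle — is a direct permutation computation: each chain $C$ is a maximal run of $[n]$-consecutive vertices in $A$ terminating at some $m \in A \setminus B$, the chain rotations pairwise commute (their vertex sets being disjoint), and composing them in any order with $\delta_{A\setminus B}$ on the left recovers the full cyclic rotation $\delta_A$.

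With the combinatorial description in hand, the remaining claims follow readily. For the minimality in (1), let $\pi' \le \pi$ in $\NC_n(B,\cdot)$ have the same $B$-wrappings as $\pi$. Since wrapping numbers of dual simple boundary braids lie in $\{0,1\}$, for each $b \in B$ the elements $b$ and $b+1$ are in a common block of $\pi'$ if and only if they are in $\pi$; hence every $\tau$-chain is contained in a single block of $\pi'$, so $\move^B(\pi) \le \pi'$. For (2), $\pi_1 \le \pi_2$ forces the generating relation for $\move^B(\pi_1)$ to be contained in that for $\move^B(\pi_2)$, so each chain of $\move^B(\pi_1)$ is contained in a chain of $\move^B(\pi_2)$, together with the observation that each singleton of $\move^B(\pi_1)$ is contained in its block of $\pi_2 \ge \pi_1$. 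For (4), the generating relation on $[n]$ for $\move^B(\move^B(\pi))$ coincides with the one for $\move^B(\pi)$, since $b$ and $b+1$ lie in a common block of $\move^B(\pi)$ precisely when they lie in a common $\tau$-chain of $\pi$; hence $\move^B(\move^B(\pi)) = \move^B(\pi)$.
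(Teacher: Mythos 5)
Your proposal is correct, but it takes a genuinely different and more combinatorial route than the paper. The paper's proof is purely order-theoretic: given $\pi' \le \pi$ with the same $B$-wrapping numbers, it observes that $\delta_\pi\delta_{\pi'}^{-1}$ is dual simple (Proposition~\ref{prop:dual-simple-order}) and $B$-fix, hence $\le \fix^B(\delta_\pi)$ by Lemma~\ref{lem:fix_order}\eqref{item:fix_char}, and rearranges to get $\move^B(\delta_\pi) \le \delta_{\pi'}$; parts (2)--(4) are then dispatched in one line each by monotonicity of wrapping numbers and part (1). You instead compute $\move^B(\pi)$ explicitly as the partition generated by $b \sim b+1$ over those $b \in B$ sharing a block with $b+1$ in $\pi$, and read off all four claims from this description. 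Your route is longer but yields a concrete picture of $\move^B(\pi)$ that the paper's argument never needs and never produces, and your minimality and monotonicity arguments become transparent set-theoretic statements once the description is in hand.

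One step needs tightening. You reduce the factorization $\delta_{\fix^B(\pi)}\delta_\tau = \delta_\pi$ to ``a direct permutation computation,'' but a permutation identity does not by itself imply the corresponding braid identity. The fix is short: Lemma~\ref{lem:move_well-def} already gives the braid identity $\delta_{\fix^B(\pi)}\delta_{\move^B(\pi)} = \delta_\pi$ for a unique $\move^B(\pi) \in \NC_n$ (uniqueness from Proposition~\ref{prop:relations}); applying $\perm$ to both sides shows $\sigma_{\move^B(\pi)} = \sigma_{\fix^B(\pi)}^{-1}\sigma_\pi$, which your computation identifies with $\sigma_\tau$; injectivity of $\NC_n \hookrightarrow \sym_n$ (the bijection $\NC_n \cong \NP_n$) then forces $\move^B(\pi) = \tau$. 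With that sentence inserted, the argument is complete. You should also note explicitly that $\tau$, being a refinement of $\pi$ whose blocks are arcs of $[n]$-consecutive vertices inside the blocks of $\pi$, is visibly noncrossing and lies in $\NC_n(B,\cdot)$, so that the comparison with $\move^B(\pi)$ is legitimate.
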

\begin{proof}
	Suppose that $\delta_{\pi'}$ is a dual simple braid with 
	$\pi' \leq \pi$ and wrapping numbers which satisfy
	$w_{\delta_\pi}(b,\cdot) = w_{\delta_{\pi'}}(b,\cdot)$
	for all $b\in B$. Then $\delta_{\pi}\delta_{\pi'}^{-1}$ is a dual
	simple braid by Proposition~\ref{prop:dual-simple-order} and is 
	$B$-fix by Lemma~\ref{lem:fix_image}. Hence, by
	Lemma~\ref{lem:fix_order}, 
	$\delta_\pi\delta_{\pi'}^{-1} \leq \fix^B(\delta_\pi)$ and by 
	rearranging, we have that 
	$\fix^B(\delta_\pi)^{-1}\delta_\pi \leq \delta_{\pi'}$
	and thus
	$\move^B(\delta_\pi) \leq \delta_{\pi'}$.
	
	The second statement follows since for each $b\in B$, 
	the wrapping number $w_\beta(b,\cdot)$ 
	is monotone with respect to $\NC_n(B,\cdot)$. 
	The third and fourth statement are immediate from the first.
\end{proof}
The map $\move^B$ is multiplicative in the following sense.

\begin{lem}\label{lem:move_multiplicative}
Let $B \subseteq [n]$. Let $\beta \in \DS_n(B,B')$ and $\beta' \in \DS_n(B',\cdot)$ be such that $\beta \beta' \in \DS_n(B,\cdot)$. Then $\move^B(\beta\beta') = \move^B(\beta)\move^{B'}(\beta')$.
\end{lem}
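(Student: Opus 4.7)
The plan is to verify that $\gamma := \move^B(\beta)\move^{B'}(\beta')$ coincides with $\move^B(\beta\beta')$ by exhibiting the canonical fix-move factorization of $\beta\beta'$ with $\gamma$ as the move part. Substituting the decompositions $\beta = \fix^B(\beta)\move^B(\beta)$ and $\beta' = \fix^{B'}(\beta')\move^{B'}(\beta')$ and commuting the middle factor past $\move^B(\beta)^{-1}$ immediately yields the identity
\[
\beta\beta' = \alpha \cdot \gamma, \qquad \alpha := \fix^B(\beta)\cdot\kappa, \qquad \kappa := \move^B(\beta)\,\fix^{B'}(\beta')\,\move^B(\beta)^{-1}.
\]
The braid $\alpha$ fixes $P_B$ pointwise, because $\fix^B(\beta) \in \fix_n(B)$, the braid $\move^B(\beta)$ carries $P_B$ setwise onto $P_{B'}$, and $\fix^{B'}(\beta') \in \fix_n(B')$. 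For the wrapping numbers, setting $b' := b\cdot\perm(\move^B(\beta))$ for each $b \in B$---which equals $b\cdot\perm(\beta)\in B'$ by Lemma~\ref{lem:move_order}(\ref{item:move_char})---Lemma~\ref{lem:wrapping_add} together with the characterizations of $\move^B(\beta)$ and $\move^{B'}(\beta')$ yields $w_\gamma(b,\cdot)=w_\beta(b,\cdot)+w_{\beta'}(b',\cdot)=w_{\beta\beta'}(b,\cdot)$.

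The main obstacle is showing that $\alpha$, equivalently $\kappa$, is dual simple. I plan to read off the noncrossing partition of $\kappa$ directly from that of $\fix^{B'}(\beta')$. The noncrossing partition of $\move^B(\beta)$ is a disjoint union of blocks $A$, each a run of consecutive integers $\{b, b+1, \ldots, b+k\}$ with $b, \ldots, b+k-1 \in B$ and $b+k \not\in B$, so that $B \cap A = \{b, \ldots, b+k-1\}$ and $B' \cap A = \{b+1, \ldots, b+k\}$; each $\delta_A$ cyclically rotates the vertices of the disjoint convex subdisk $D_A$. Conjugation of $\fix^{B'}(\beta')$ by $\move^B(\beta)$ relabels its noncrossing partition according to $\perm(\move^B(\beta))^{-1}$, which within each $A$ sends $a_i$ to $a_{i-1}$; the $B'$-singletons at $\{b+1, \ldots, b+k\}$ are therefore relabeled precisely to $B$-singletons at $\{b, \ldots, b+k-1\}$, while non-singleton blocks are shifted by cyclic rotations within the pairwise disjoint convex subdisks $D_A$, which preserves noncrossingness. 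The resulting noncrossing partition avoids $B$ and defines $\kappa \in \fix(\NC_n(B))$. Since the partitions of $\fix^B(\beta)$ and $\kappa$ are both refinements of the noncrossing partition of the dual simple braid $\beta\beta'$, they are mutually noncrossing, so $\alpha$ is also dual simple.

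Once $\alpha \in \DS_n$, Proposition~\ref{prop:dual-simple-order} yields $\gamma \in \DS_n$ with $\gamma \le \beta\beta'$, and Lemma~\ref{lem:fix_order}(\ref{item:fix_char}) gives $\alpha \le \fix^B(\beta\beta')$. To upgrade to equality, I will verify $\fix^B(\gamma) = 1$ via the following combinatorial observation: every non-singleton block of the join $\pi_\gamma = \pi_{\move^B(\beta)} \vee \pi_{\move^{B'}(\beta')}$ contains some $b \in B$, because any block contributed solely by $\pi_{\move^{B'}(\beta')}$ must contain some $b' \in B'$, and this $b'$ is forced to lie in a non-singleton block of $\pi_{\move^B(\beta)}$ containing some $b \in B$ (since $\move^B(\beta)$ is $(B,B')$-boundary), so the blocks merge. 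Two applications of Lemma~\ref{lem:fix_order}(\ref{item:fix_skips_fixed}), first to $\beta\beta' = \fix^B(\beta)\cdot\move^B(\beta)\beta'$ and then to $\move^B(\beta)\beta' = \kappa\gamma$, then yield $\fix^B(\beta\beta') = \fix^B(\beta)\kappa = \alpha$, whence $\move^B(\beta\beta') = \alpha^{-1}\beta\beta' = \gamma$ as required.
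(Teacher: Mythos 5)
Your approach is structurally different from the paper's and contains gaps that, as written, are not filled. The paper proves the lemma by a short two-sided comparison: it shows $\move^B(\beta)\move^{B'}(\beta') \leq \move^B(\beta\beta')$ by noting that $\move^B(\beta)^{-1}\move^B(\beta\beta')$ is a dual simple braid whose $B'$-indexed wrapping numbers agree with those of $\move^{B'}(\beta')$ and invoking the minimality characterization in Lemma~\ref{lem:move_order}(\ref{item:move_char}), and then it reverses the inequality by the same characterization applied with respect to~$B$. Your strategy instead attempts to exhibit the fix-move decomposition of $\beta\beta'$ directly. This is legitimate in principle, but three of its load-bearing steps are asserted rather than proved.

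First, the claim that $\kappa = \move^B(\beta)\fix^{B'}(\beta')\move^B(\beta)^{-1}$ is dual simple with noncrossing partition obtained by ``relabeling'' via $\perm(\move^B(\beta))^{-1}$ is a genuine theorem, not a triviality. Conjugation of a dual simple braid by another dual simple braid is in general \emph{not} dual simple (e.g.\ $\delta_{\{1,2\}}\delta_{\{2,3\}}\delta_{\{1,2\}}^{-1}$ is not dual simple in $\braid_3$), and when it is, the resulting partition is not simply a relabeling. The blocks of $\fix^{B'}(\beta')$ are not confined to the subdisks $D_A$ supporting $\move^B(\beta)$, so the informal picture of ``cyclic rotations within the pairwise disjoint convex subdisks'' does not track what happens to the arcs defining $\fix^{B'}(\beta')$. (One correct route to showing $\kappa \in \DS_n$: since $\fix^{B'}(\beta')\le\beta'$ and $\move^B(\beta)\beta' = \fix^B(\beta)^{-1}\beta\beta'\in\DS_n$, one has $1\le\move^B(\beta)\fix^{B'}(\beta')\le\delta$, and then $\kappa$ is the left complement of $\move^B(\beta)$ in $\move^B(\beta)\fix^{B'}(\beta')$ guaranteed by Proposition~\ref{prop:relations}. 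That $\kappa\in\fix_n(B)$ then follows from the normality lemma in Section~\ref{sec:groupoid}. But none of this is what you wrote.)

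Second, the assertion that $\alpha=\fix^B(\beta)\kappa$ is dual simple ``because the two factors' partitions are refinements of $\pi_{\beta\beta'}$ and hence mutually noncrossing'' does not hold as reasoning: two refinements of a common noncrossing partition need not multiply to a dual simple braid, and the order matters (again $\delta_{\{1,3\}}\delta_{\{1,2\}}$ is not dual simple in $\braid_3$ even though both factors refine $\{1,2,3\}$).

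Third, and most seriously, the combinatorial observation you use to establish $\fix^B(\gamma)=1$ is not sufficient. You argue that every non-singleton block of $\pi_\gamma$ contains some $b\in B$. But $\fix^B(\gamma)=1$ requires that \emph{every} block $A$ of $\pi_\gamma$ satisfy $\card{A\setminus B}\le 1$, which is a much stronger condition. A block $A=\{1,2,3\}$ with $B=\{1\}$ contains $1\in B$ yet has $\fix^B(\pi_\gamma)$ retaining the block $\{2,3\}$. What actually rules this out is the hypothesis $\beta\beta'\in\DS_n$: if some $b\in B$ had $w_\beta(b)=1$ and $w_{\beta'}(b+1)=1$, then $w_{\beta\beta'}(b)=2$ by Lemma~\ref{lem:wrapping_add}, contradicting that dual simple braids have wrapping numbers in $\{0,1\}$. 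This is the crucial step, and it is absent from your argument. Because all three of these gaps need to be filled, the proposal would require substantial reworking; the paper's two-sided-inequality argument is both shorter and avoids the conjugation analysis entirely.
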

\begin{proof}
	By Lemma~\ref{lem:move_order}\eqref{item:move_order}, we know that 
	$\move^B(\beta) \le \move^B(\beta\beta')$ and thus by 
	Proposition~\ref{prop:dual-simple-order}, $\move^B(\beta)^{-1}\move^B(\beta\beta')$
	is a dual simple braid. By Lemma~\ref{lem:wrapping_add}, this braid has
	$B'$-indexed wrapping numbers which are equal to those of $\move^{B'}(\beta')$.
	Hence, by Lemma~\ref{lem:move_order}, we know that 
	\[\move^{B'}(\beta') \le \move^B(\beta)^{-1}\move^B(\beta\beta').\]
	Equivalently, we have
	\[\move^B(\beta)\move^{B'}(\beta') \le \move^B(\beta\beta')\]
	in the partial order on $\braid_n$, and thus $\move^B(\beta)\move^{B'}(\beta')$
	is a dual simple braid. Since this braid has the same $B$-indexed wrapping numbers
	as $\move^B(\beta\beta')$, another application of Lemma~\ref{lem:move_order}\eqref{item:move_char}
	tells us that
	\[\move^B(\beta\beta') \le \move^B(\beta)\move^{B'}(\beta').\]
	Combining the above inequalities, we have 
	\[\move^B(\beta)\move^{B'}(\beta') \le \move^B(\beta\beta') \le 
	\move^B(\beta)\move^{B'}(\beta')\]
	and therefore $\move^B(\beta\beta') = \move^B(\beta)\move^{B'}(\beta')$.
\end{proof}
We now study at the structure of $\fix(\NC_n(B))$ and $\move(\NC_n(B,\cdot))$ inside $\NC_n(B,\cdot)$.

\begin{rem}[Minima and maxima]\label{rem:min-max}
	The identity braid is clearly the minimal element for
	both $\fix(\DS_n(B))$ and $\move(\DS_n(B,\cdot))$.
	Since $\fix^B$ and $\move^B$ are order-preserving maps, the maximal 
	elements are $\fix^B(\delta)$ and $\move^B(\delta)$, respectively.
\end{rem}
\begin{lem}\label{lem:fix-move-intersect}
	Let $B\subseteq [n]$. Then $\fix^B(\move^B(\delta_\pi))$ is the identity
	braid for all $\pi \in \NC_n(B,\cdot)$. In particular, the intersection of
	$\fix(\NC_n(B))$ and $\move(\NC_n(B,\cdot))$
	contains only the discrete partition.
\end{lem}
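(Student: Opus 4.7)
My plan is to derive the first assertion directly from the defining identity for $\move^B$ together with the multiplicativity property for $\fix^B$ on products involving a $B$-fix factor (Lemma~\ref{lem:fix_order}\eqref{item:fix_skips_fixed}), and then deduce the intersection statement as an immediate consequence.

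Fix $\pi \in \NC_n(B,\cdot)$. By Definition~\ref{def:move} we have
\[
\delta_\pi = \fix^B(\delta_\pi)\cdot\move^B(\delta_\pi)
\]
in $\DS_n$. Since $\fix^B(\delta_\pi)$ is $B$-fix (by Lemma~\ref{lem:fix_order}\eqref{item:fix_char}) and the product equals $\delta_\pi \in \DS_n$, the hypotheses of Lemma~\ref{lem:fix_order}\eqref{item:fix_skips_fixed} apply with $\alpha = \fix^B(\delta_\pi)$ and $\beta = \move^B(\delta_\pi)$, yielding
\[
\fix^B(\delta_\pi) = \fix^B\bigl(\fix^B(\delta_\pi)\cdot\move^B(\delta_\pi)\bigr) = \fix^B(\delta_\pi)\cdot\fix^B(\move^B(\delta_\pi))\text{,}
\]
where the first equality uses idempotence (Lemma~\ref{lem:fix_order}\eqref{item:fix_idempotent}). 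Left-cancelling $\fix^B(\delta_\pi)$ in $\braid_n$ gives $\fix^B(\move^B(\delta_\pi)) = 1$, which is the first claim.

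For the intersection statement, suppose $\delta_\pi$ lies in both $\fix(\DS_n(B))$ and $\move(\DS_n(B,\cdot))$. Write $\delta_\pi = \fix^B(\delta_\sigma) = \move^B(\delta_\tau)$. Applying $\fix^B$ to the rightmost expression gives $\fix^B(\delta_\pi) = \fix^B(\move^B(\delta_\tau)) = 1$ by what we just proved, while applying $\fix^B$ to the middle expression together with idempotence gives $\fix^B(\delta_\pi) = \fix^B(\fix^B(\delta_\sigma)) = \fix^B(\delta_\sigma) = \delta_\pi$. Therefore $\delta_\pi = 1$, so $\pi$ is the discrete partition.

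No real obstacle is expected here: the only subtlety is verifying that the hypothesis of Lemma~\ref{lem:fix_order}\eqref{item:fix_skips_fixed} is met, namely that the product $\fix^B(\delta_\pi)\cdot\move^B(\delta_\pi)$ actually lies in $\DS_n$, which is immediate from the well-definedness of $\move^B$ established in Lemma~\ref{lem:move_well-def}.
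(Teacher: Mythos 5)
Your proof is correct and its first half is essentially the paper's argument, just stated more carefully: the paper writes $\fix^B(\move^B(\delta_\pi)) = \fix^B(\fix^B(\delta_\pi)^{-1}\delta_\pi) = \fix^B(\delta_\pi)^{-1}\fix^B(\delta_\pi)$ and cites Lemma~\ref{lem:fix_order}\eqref{item:fix_skips_fixed}, but that lemma as stated requires the $B$-fix factor $\alpha$ to lie in $\DS_n$, and $\fix^B(\delta_\pi)^{-1}$ generally does not. Applying the lemma with $\alpha = \fix^B(\delta_\pi)$ and $\beta = \move^B(\delta_\pi)$, as you do, and then cancelling $\alpha$, is the correct way to extract the same conclusion. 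One small citation slip: the first equality in your display, $\fix^B(\delta_\pi) = \fix^B(\fix^B(\delta_\pi)\move^B(\delta_\pi))$, follows from the defining equation of $\move^B$ (Definition~\ref{def:move}), not from idempotence; idempotence is never actually needed for that step.

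For the intersection claim, you take a genuinely different route. The paper appeals to wrapping-number characterizations and to Lemma~\ref{lem:move_unique}, which in fact appears later in the paper (a harmless but awkward forward reference, since Lemma~\ref{lem:move_unique} does not depend on this one). Your argument deduces the intersection claim directly from the first claim plus idempotence of $\fix^B$: if $\delta_\pi = \fix^B(\delta_\sigma) = \move^B(\delta_\tau)$, then $\fix^B(\delta_\pi)$ equals both $1$ and $\delta_\pi$. This is more self-contained and arguably preferable, since it relies only on material already established at this point in the paper.
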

\begin{proof}
	Let $\pi \in \NC_n(B,\cdot)$ be arbitrary. Then
	\[
	\fix^B(\move^B(\delta_\pi)) = \fix^B(\fix^B(\delta_\pi)^{-1}\delta_\pi) = \fix^B(\delta_\pi)^{-1}\fix^B(\delta_\pi) = 1
	\]
	by Lemma~\ref{lem:fix_order}\eqref{item:fix_skips_fixed}.
	The second claim follows directly from the wrapping number 
	characterizations of $\fix(\NC_n(B))$ and $\move(\NC_n(B,\cdot))$
	given in Definition~\ref{def:fix} and Lemma~\ref{lem:move_unique}.
\end{proof}
We now prove the main result of this section.

\begin{prop} \label{prop:nc-decomposition}
	Let $B \subseteq [n]$. Then $\NC_n(B,\cdot)$ is isomorphic to the direct
	product of the subposets $\fix(\NC_n(B))$ and $\move(\NC_n(B,\cdot))$.
\end{prop}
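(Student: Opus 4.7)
My plan is to exhibit an order-preserving bijection $\Phi\colon \NC_n(B,\cdot)\to\fix(\NC_n(B))\times\move(\NC_n(B,\cdot))$ with an order-preserving inverse. The natural candidate is $\Phi(\pi)=(\fix^B(\pi),\move^B(\pi))$, which is order-preserving in each coordinate by Lemmas~\ref{lem:fix_order} and \ref{lem:move_order}. Injectivity is immediate from the defining identity $\delta_\pi=\fix^B(\delta_\pi)\move^B(\delta_\pi)$ in Definition~\ref{def:move}: this factorization recovers $\delta_\pi$, and hence $\pi$, from the pair $\Phi(\pi)$.

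For the inverse I would take $\Psi(\pi_1,\pi_2)=\pi_1\vee\pi_2$, the join in the noncrossing partition lattice $\NC_n$. First I need to check that $\pi_1\vee\pi_2$ actually lies in $\NC_n(B,\cdot)$: for each $b\in B$, either $b$ is a singleton in both $\pi_1$ and $\pi_2$ and hence in the join, or $b$ shares a block with $b+1$ in $\pi_2$ and therefore in the join. Order-preservation of $\Psi$ is then automatic from monotonicity of joins.

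The main work is the verification $\Phi\circ\Psi=\id$, that is $\fix^B(\pi_1\vee\pi_2)=\pi_1$ and $\move^B(\pi_1\vee\pi_2)=\pi_2$. The first identity follows from a direct block analysis: every non-singleton block of $\pi_2\in\move(\NC_n(B,\cdot))$ has the form $\{b,b+1,\ldots,c\}$ with $b,\ldots,c-1\in B$ and $c\notin B$ the unique non-$B$ element (by the at-most-one-non-$B$-element constraint combined with the $(B,\cdot)$-boundary condition), so in $\pi_1\vee\pi_2$ each such chain is absorbed into the block of $\pi_1$ containing its endpoint $c$; deleting the $B$-elements then recovers precisely $\pi_1$. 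For the second identity I would compare wrapping numbers: since $\delta_{\pi_1}$ is $B$-fix and therefore contributes zero wrapping at each $b\in B$ (Lemma~\ref{lem:wrapping_add}), $\move^B(\delta_{\pi_1\vee\pi_2})$ has the same wrapping numbers at $B$ as $\delta_{\pi_1\vee\pi_2}$, and the block analysis shows these match those of $\delta_{\pi_2}$, since a $B$-element is in a non-singleton block of the join exactly when it lies in a chain of $\pi_2$.

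The main obstacle I foresee is the final step: concluding $\move^B(\pi_1\vee\pi_2)=\pi_2$ from coincidence of wrapping numbers. This requires that the wrapping numbers form a complete invariant on $\move(\NC_n(B,\cdot))$, which I would establish by observing that within such a partition any $b\in B$ with wrapping $1$ shares a block with $b+1$, which (if $b+1\in B$) must itself then have wrapping $1$, forcing the chain to propagate unambiguously until it reaches the first non-$B$ cyclic successor; all $B$-elements with wrapping $0$ and all remaining non-$B$ elements are singletons. Once this rigidity of move partitions is in place, the two partitions coincide, and the combination with the order-preserving $\Psi$ completes the proof of the claimed poset isomorphism.
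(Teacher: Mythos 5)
Your proposal takes the same structural route as the paper -- the forward map $\Phi(\pi) = (\fix^B(\pi),\move^B(\pi))$ is the paper's map, and your inverse $\Psi(\pi_1,\pi_2) = \pi_1 \vee \pi_2$ corresponds exactly to the paper's construction $\delta_{\pi_1}\delta_{\pi_2}$ via Proposition~\ref{prop:relations} (which identifies such a product, when it is a dual simple braid, with the join partition). Where you diverge is in the verification of $\Phi\circ\Psi = \id$: the paper works algebraically with dual simple braids, whereas you attempt a combinatorial block analysis of $\pi_1\vee\pi_2$, and this is where a genuine gap appears.

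The gap is that your block analysis presumes the noncrossing join $\pi_1 \vee \pi_2$ equals the na\"ive transitive-closure merge (each $\pi_2$-arc absorbed into the $\pi_1$-block of its non-$B$ endpoint, and nothing more). This is exactly the statement that the transitive closure of $\pi_1$ and $\pi_2$ is already noncrossing, which is equivalent to $\delta_{\pi_1}\delta_{\pi_2}$ being a dual simple braid -- but you never establish it. In general the $\NC_n$ join can be strictly coarser than the transitive closure (e.g.\ $\{\{1,3\}\} \vee \{\{2,4\}\}$ in $\NC_4$), and if additional coarsening occurred in your setting, your key claims would fail: $b \in B$ a singleton in both factors need not stay a singleton in the join (step (a)), and ``deleting the $B$-elements recovers $\pi_1$'' would not hold (step (b)). Both your first identity and your subsequent wrapping-number argument are therefore conditional on an unproved noncrossing claim. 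The paper avoids this entirely: it shows $\delta_{\pi_1}\delta_{\pi_2} \le \fix^B(\delta_n)\move^B(\delta_n) = \delta_n$ to get that the product is dual simple, after which $\fix^B(\delta_{\pi_1}\delta_{\pi_2}) = \delta_{\pi_1}\fix^B(\delta_{\pi_2}) = \delta_{\pi_1}\fix^B(\move^B(\delta_{\pi_2})) = \delta_{\pi_1}$ follows at once from Lemma~\ref{lem:fix_order}\eqref{item:fix_skips_fixed}, Lemma~\ref{lem:fix-move-intersect}, and idempotence -- no block analysis and no wrapping numbers needed. Your separate wrapping-number argument for $\move^B(\pi_1\vee\pi_2) = \pi_2$ essentially re-derives Lemma~\ref{lem:move_unique}, which is redundant once the first identity and Proposition~\ref{prop:relations} are in hand: $\move^B(\delta_{\pi_1\vee\pi_2}) = \fix^B(\delta_{\pi_1\vee\pi_2})^{-1}\delta_{\pi_1\vee\pi_2} = \delta_{\pi_1}^{-1}\delta_{\pi_1}\delta_{\pi_2} = \delta_{\pi_2}$. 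To repair your proposal, either supply a direct proof that the transitive closure is noncrossing (plausible but not obvious), or insert the paper's inequality $\delta_{\pi_1}\delta_{\pi_2} \le \delta_n$ as the first step and then let the algebraic lemmas finish the job.
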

\begin{proof}
	Since $\fix^B$ and $\move^B$ are order-preserving maps on $\NC_n(B,\cdot)$,
	the map which sends $\pi$ to the element
	\[
	(\fix^B(\pi),\move^B(\pi))\in \fix(\NC_n(B))\times\move(\NC_n(B,\cdot))
	\]
	is order-preserving as well. 
	
	Suppose that $\fix^B(\pi) = \fix^B(\pi')$ and 
	$\move^B(\pi) = \move^B(\pi')$. Then by definition of $\move^B$ we have
	\[
	\fix^B(\delta_\pi)^{-1}\delta_\pi =  \fix^B(\delta_{\pi'})^{-1}\delta_{\pi'}
	\]
	and thus $\delta_\pi = \delta_{\pi'}$, so the map is injective.
		
	To see surjectivity, let $\pi \in \fix(\NC_n(B))$ and
	$\pi' \in \move(\NC_n(B,\cdot))$ be arbitrary and let
	$\beta = \delta_\pi\delta_{\pi'}$.
	Note that in the partial order on $\braid_n$ obtained by extending that of
	$\NC_n$,
	\begin{align*}
		\beta &= \fix^B(\delta_\pi)\move^B(\delta_{\pi'}) \\ 
		&\le \fix^B(\delta_n)\move^B(\delta_n) \\
		&\le \delta_n
	\end{align*}
	and thus $\beta$ is a dual simple braid. Then
	\[
	\fix^B(\beta) = \fix^B(\delta_\pi)\fix^B(\delta_{\pi'}) = \fix^B(\delta_\pi) = \delta_\pi
	\]
	by Lemma~\ref{lem:fix_order}\eqref{item:fix_skips_fixed},
	Lemma~\ref{lem:fix-move-intersect} and Lemma~\ref{lem:fix_order}\eqref{item:fix_idempotent},
	showing also that $\move^B(\beta) = \delta_{\pi'}$.
	
	It remains to see that incomparable elements are mapped to incomparable
	elements. So suppose $\pi$ and $\pi'$ have the property that
	$\fix^B(\pi) \le \fix^B(\pi')$ and $\move^B(\pi) \le \move^B(\pi')$. Then
	\begin{align*}
	\delta_\pi &= \fix^B(\delta_\pi)\move^B(\delta_\pi) \\
	&\le \fix^B(\delta_{\pi'})\move^B(\delta_{\pi}) \\
	&\le \fix^B(\delta_{\pi'})\move^B(\delta_{\pi'}) \\ 
	&\le \delta_{\pi'}
	\end{align*}
	by Proposition~\ref{prop:relations}, so $\pi$ and $\pi'$ are
	comparable as well.
\end{proof}
We close by proving the following extension of Lemma~\ref{lem:fix-move-intersect}.

\begin{lem}\label{lem:move_unique}
	Let $B \subsetneq [n]$. 
	An element $\beta \in \move(\DS_n(B,\cdot))$ is uniquely determined by
	the tuple	$(w_b(\beta))_{b \in B}$.
	In particular, $\move(\DS_n(B,B'))$ contains at most one element.
\end{lem}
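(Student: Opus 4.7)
My plan is to give an explicit structural description of the noncrossing partitions corresponding to elements of $\move(\DS_n(B,\cdot))$, read off the wrapping numbers from that description, and invert the correspondence.

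Let $\beta = \delta_\pi \in \move(\DS_n(B,\cdot))$. By Lemma~\ref{lem:fix-move-intersect}, $\fix^B(\pi)$ is the discrete partition, equivalently every block of $\pi$ has at most one element outside $B$. Combined with the $(B,\cdot)$-boundary condition, I claim that each non-singleton block $A$ of $\pi$ has the cyclic-interval form
\[
A = \{c - m, c - m + 1, \ldots, c - 1, c\} \pmod n
\]
for some $c \in [n] - B$ and $m \ge 1$, with $c - 1, c - 2, \ldots, c - m \in B$. Indeed, since $A$ has at least two elements but at most one lies outside $B$, it contains some $b \in B$; iterating the boundary condition forward produces a consecutive chain inside $B$ that must terminate at some $c \notin B$ (full wrap-around before hitting $[n] - B$ would force $A = [n]$ and $\card{[n]-B} = 1$, and this case still has the stated form). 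The uniqueness of $c$ inside $A$ then pins down the backward extension as well. For such a $\beta$, if $b \in B$ is a singleton of $\pi$ then $\beta$ fixes $p_b$ and $w_b(\beta) = 0$; otherwise $b$ lies in a block $\{c - m, \ldots, c\}$ and the associated rotation sends $p_b$ to $p_{b+1}$ along a single counter-clockwise edge of $\partial D$, giving $w_b(\beta) = 1$.

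The set $S_\beta \defeq \{b \in B : w_b(\beta) = 1\}$ therefore picks out exactly those $B$-elements appearing in non-singleton blocks of $\pi$, and $\pi$ is reconstructed uniquely from $S_\beta$: for each $c \in [n] - B$, the block containing $c$ is $\{c - m_c, \ldots, c\}$ where $m_c \ge 0$ is the largest integer with $c - 1, \ldots, c - m_c \in S_\beta$, and every remaining element of $B$ is a singleton. This establishes the main uniqueness statement. For the \emph{in particular} clause, if $\beta \in \move(\DS_n(B, B'))$ then $b \cdot \perm(\beta) = b + w_b(\beta) \pmod n$ with $w_b(\beta) \in \{0, 1\}$, so $B' = (B \setminus S_\beta) \sqcup (S_\beta + 1)$ as a disjoint union. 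Within any maximal cyclic run $R = \{b_0 + 1, \ldots, b_0 + r\}$ of consecutive $B$-elements (with $b_0, b_0 + r + 1 \in [n] - B$), the structural claim forces $S_\beta \cap R$ to be a suffix $\{b_0 + r - k + 1, \ldots, b_0 + r\}$ of some length $k \in \{0, \ldots, r\}$, and a direct count shows that $B' \cap \{b_0 + 1, \ldots, b_0 + r + 1\}$ omits exactly the single element $b_0 + r - k + 1$. Hence $k$ is recovered run-by-run from $B'$, pinning down $S_\beta$ and thus $\beta$. The main technical step I expect is verifying the cyclic-interval shape of the non-singleton blocks in the structural claim, especially handling the degenerate wrap-around case.
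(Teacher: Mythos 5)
Your proof is correct, but it takes a genuinely different route from the paper's.

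The paper's argument is short and lattice-theoretic: given $\delta_\pi, \delta_{\pi'} \in \DS_n(B,\cdot)$ with the same $B$-indexed wrapping numbers, it observes that the boundary-partition condition characterizing each wrapping number (for each $b \in B$, either $\{b\}$ is a singleton or $b, b{+}1$ share a block) is preserved under passing to the common refinement $\pi \wedge \pi'$, so $\delta_{\pi\wedge\pi'}$ again has the same $B$-wrapping numbers. Then Lemma~\ref{lem:move_order}(\ref{item:move_char}) (the characterization of $\move^B(\pi)$ as the minimal $\pi' \le \pi$ with the same wrapping numbers) together with monotonicity forces $\move^B(\delta_\pi) = \move^B(\delta_{\pi\wedge\pi'}) = \move^B(\delta_{\pi'})$; idempotence then gives the lemma. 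Your proof instead gives an explicit structural classification of the partitions underlying elements of $\move(\DS_n(B,\cdot))$: using the first half of Lemma~\ref{lem:fix-move-intersect} (so no circularity, since that half only depends on Lemma~\ref{lem:fix_order}), every non-singleton block is a cyclic interval $\{c-m,\ldots,c\}$ ending at the unique element $c \notin B$ of the block, from which the wrapping numbers and their inverse are read off directly. What the paper's approach buys is brevity and reuse of the already-established properties of $\move^B$; what yours buys is a concrete normal form for move partitions, and notably a careful treatment of the \emph{in particular} clause (recovering the run-by-run suffix length $k$ from $B'$), which the paper leaves implicit even though it is not entirely immediate from the main uniqueness statement.

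One minor point worth tightening if you were to write this up: the sentence ``The uniqueness of $c$ inside $A$ then pins down the backward extension as well'' deserves a line of justification. The cleanest version: since $c$ is the only element of $A$ outside $B$, for every $a \in A$ the forward chain $a, a{+}1, \ldots, c$ lies in $A$; taking $m$ to be the maximum cyclic distance $c - a \pmod n$ over $a \in A$ then gives $\{c-m,\ldots,c\} \subseteq A$ by the chain from the maximizer, and $A \subseteq \{c-m,\ldots,c\}$ by maximality.
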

\begin{proof}
	Suppose $\delta_\pi$ and $\delta_{\pi'}$ are dual simple braids
	in $\DS_n(B,\cdot)$ with the property that 
	$w_{\delta_\pi}(b,\cdot) = w_{\delta_{\pi'}}(b,\cdot)$ for each 
	$b\in B$. Each wrapping number is either $0$ or $1$, and these
	can be characterized within $\pi$ and $\pi'$ by the fact that for each
	$b\in B$, either $\{b\}$ is a singleton or $b$ and $b+1$ (modulo $n$) 
	share a block. This property is preserved under common refinement, so 
	$\delta_{\pi \wedge \pi'}$ has the same $B$-indexed wrapping numbers 
	as $\delta_\pi$ and $\delta_{\pi'}$. By definition,
	$\delta_{\pi \wedge \pi'} \le \delta_\pi$ and
	$\delta_{\pi \wedge \pi'} \le \delta_{\pi'}$, 
	and we know by Lemma~\ref{lem:move_order}\eqref{item:move_order} that 
	$\move^B(\delta_{\pi \wedge \pi'}) \le \move^B(\delta_{\pi})$
	and $\move^B(\delta_{\pi \wedge \pi'}) \le \move^B(\delta_{\pi'})$.
	Finally, since all three of these braids have the same $B$-indexed wrapping 
	numbers, we may conclude by Lemma~\ref{lem:move_order}\eqref{item:move_char} that
	\[
	\move^B(\delta_{\pi})=\move^B(\delta_{\pi \wedge \pi'})=\move^B(\delta_{\pi'})
	\]
	and we are done.
\end{proof}

\section{The Complex of Boundary Braids}\label{sec:boundary_subcomplex}

In this section, we describe the subcomplex of the dual braid complex which
is determined by the set of boundary braids.

\begin{defn}[Complex of boundary braids]
Let $B \subseteq [n]$. The \emph{complex of $(B,\cdot)$-boundary braids}, denoted 
$\comp(\braid_n(B,\cdot))$, is the full subcomplex of $\comp(\braid_n)$ 
supported on $\braid_n(B,\cdot)$.
\end{defn}
The boundary strands of a $(B,\cdot)$-boundary braid 
define a path in the configuration space of 
$\card{B}$ points in $\partial P$. The following lemma is the combinatorial version of 
this statement. Recall from Definition~\ref{def:dual-braid-cplx} that we regard the 
edges of $\comp(\braid_n(B,\cdot))$ as labeled by elements of $\NC_n^*$.
Note also that the edge from $\beta \in \braid_n(B,B')$ to 
$\beta' \in \braid_n(B,B'')$ carries a label in $\NC_n(B',B'')$.

\begin{lem}\label{lem:bundle}
Let $B \subseteq [n]$. There is a surjective map
\[
\bdry^B \colon \comp(\braid_n(B,\cdot)) \to \uconf_{\card{B}}(\Gamma_n,\Orth)
\]
that takes $\beta \in \braid_n(B,B')$ to $B'$.
\end{lem}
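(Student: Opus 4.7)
The plan is to define $\bdry^B$ on vertices by $\beta \in \braid_n(B,B') \mapsto B'$ and to extend it affinely on each simplex, verifying well-definedness by working in the universal cover of $\uconf_{\card{B}}(\Gamma_n,\Orth)$, which by Proposition~\ref{prop:pt-curv} is the $(\card{B},n)$-dilated column inside $\R^{\card{B}}$.

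Write $B = \{b_1 < \cdots < b_m\}$ and associate to each $\beta \in \braid_n(B,\cdot)$ the canonical lift $\vec{x}(\beta) \in \Z^m$ given by $\vec{x}(\beta)_j = b_j + w_\beta(b_j,\cdot)$; Lemma~\ref{lem:wrapping_inequalities} places $\vec{x}(\beta)$ in the dilated column. For a simplex $\sigma = (\beta_0 \le \cdots \le \beta_k)$ of $\comp(\braid_n(B,\cdot))$ the flag condition forces each pairwise product $\beta_i^{-1}\beta_j$ (with $i < j$) to be a dual simple braid, hence a $(B_i,B_j)$-boundary braid whose wrapping numbers all lie in $\{0,1\}$ by Definition~\ref{def:boundary-dsb}. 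Combined with the additivity of wrapping numbers (Lemma~\ref{lem:wrapping_add}) this shows $\vec{x}(\beta_j) - \vec{x}(\beta_i) \in \{0,1\}^m$ for every $i \le j$, so $\vec{x}(\beta_i) = \vec{x}(\beta_0) + \one_{A_i}$ for a chain $\emptyset = A_0 \subseteq A_1 \subseteq \cdots \subseteq A_k \subseteq [m]$. The images therefore span a (possibly degenerate) face of a top-dimensional orthoscheme in a single unit cube of the dilated column. Extending $\bdry^B$ affinely on $\sigma$ into the corresponding simplex of $\uconf_m(\Gamma_n,\Orth)$ and descending through the covering from the dilated column yields a continuous cellular map; consistency across shared faces is automatic since the map on any simplex is determined by its vertex values.

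Vertex-level surjectivity is immediate from Lemma~\ref{lem:wrapping_example}, which supplies, for every $\card{B}$-subset $B' \subseteq [n]$, a $(B,B')$-boundary braid mapping to $B'$. For higher simplices, given a chain $\vec{x}_0 \le \vec{x}_0 + \one_{A_1} \le \cdots \le \vec{x}_0 + \one_{A_\ell}$ representing a simplex of $\uconf_m(\Gamma_n,\Orth)$ in the dilated column, pick $\beta_0$ realizing the wrapping profile $\vec{x}_0$ via Lemma~\ref{lem:wrapping_example} and assemble a dual simple boundary braid $\gamma \in \DS_n(B_0,\cdot)$ of wrapping support $A_\ell$ as a product of commuting rotations $\delta_C$, one per maximal run of wrapping positions together with its next-in-cycle target (which is unoccupied by the validity of the configuration $\vec{x}_0 + \one_{A_\ell}$). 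Each $A_i \subseteq A_\ell$ then selects a refinement of the noncrossing partition underlying $\gamma$, giving a dual simple braid $\gamma_i \le \gamma$ of wrapping $\one_{A_i}$, so the chain $(\beta_0 \le \beta_0\gamma_1 \le \cdots \le \beta_0\gamma_\ell)$ is a simplex of $\comp(\braid_n(B,\cdot))$ mapping onto the prescribed one.

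The main technical point will be the $\{0,1\}$-increment property in step two; this rests on combining additivity of wrapping numbers (Lemma~\ref{lem:wrapping_add}) with the interval description $\DS_n = [1,\delta]$ from Proposition~\ref{prop:dual-simple-order}, which forces the cumulative wrapping $w_{\beta_0^{-1}\beta_k}$ into $\{0,1\}$ on every boundary strand. Once that is established, the surjectivity construction is essentially bookkeeping: identifying the correct noncrossing partition underlying $\gamma$ and the refinements corresponding to the chain $A_1 \subseteq \cdots \subseteq A_\ell$.
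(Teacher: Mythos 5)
Your proof is correct but takes a genuinely different route from the paper's. The paper defines $\bdry^B$ directly and combinatorially on the $1$-skeleton: an edge out of $\beta$ labeled by $\pi \in \NC_n(B',\cdot)$ is sent to the edge out of $B'$ that fixes $b$ if $\{b\}$ is a singleton block of $\pi$ and sends $b$ to $b+1$ otherwise; surjectivity is verified by showing that for each edge of $\uconf_{\card{B}}(\Gamma_n,\Orth)$ starting at $B'$ and each $\beta$ over $B'$, there is a $\pi \in \NC_n(B',B'')$ (built from intervals of $B'$ that shift) labeling a lift of that edge. You instead go through the universal cover: you assign to each $\beta$ the integer vector of wrapping-number coordinates $\vec{x}(\beta)_j = b_j + w_\beta(b_j,\cdot)$, place it in the dilated column via Lemma~\ref{lem:wrapping_inequalities}, and then establish cellularity by showing that along a simplex the coordinates increase by a nested $\{0,1\}$-increment (which uses that the edge labels are dual simple boundary braids, hence have wrapping numbers in $\{0,1\}$). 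Your surjectivity argument is also more explicit at higher dimensions: you lift a simplex of the target to a chain $\vec{x}_0 + \one_{A_1} \le \cdots \le \vec{x}_0 + \one_{A_\ell}$, realize $\vec{x}_0$ by a braid via Lemma~\ref{lem:wrapping_example}, and build a nested chain of dual simple boundary braids (products of commuting interval rotations) realizing the $A_i$'s. This approach is less elementary than the paper's edge-by-edge verification but has the advantage of making the connection to the dilated column and to the later splitting map $\splt^B$ of Lemma~\ref{lem:splitting} completely visible: your $\vec{x}$-coordinates are exactly the coordinates that reappear there. One small improvement you could make: the $\{0,1\}$-increment property you flag as the main technical point is not really in doubt — it is already recorded in Definition~\ref{def:boundary-dsb} — so you may cite that directly rather than re-deriving it from Proposition~\ref{prop:dual-simple-order}.
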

\begin{proof}
Let $\beta \in \braid_n(B,B')$. An edge out of $\beta$ labeled $\pi \in \NC_n(B',\cdot)$ is taken to the edge of $\uconf_{\card{B}}(\Gamma_n,\Orth)$ out of $B'$ that keeps $b \in B'$ fixed if it forms a singleton block of $\pi$ and that moves it to $b+1$ if it does not. The (boundary partition) condition that $b$ and $b+1$ share a block of $\pi$ for every $b \in B'$ ensures that this is compatible with how vertices are mapped. It also ensures that if $b+1 \in B$ as well, then $b+1$ also moves, and so the edge actually exists in $\uconf_{\card{B}}(\Gamma_n,\Orth)$.

To verify surjectivity we show that for any edge $e$ from $B'$ to $B''$ in $\uconf_{\abs{B}}(\Gamma_n,\Orth)$ and any $\beta \in \braid_n(B,B')$ there does in fact exist a $\pi \in \NC_n(B',B'')$ such that the edge out of $\beta$ labeled $\pi$ is taken to $e$. If $B = B' = [n]$ this is achieved by the maximal element $\pi = \{[n]\}$. Otherwise the fact that the edge $e$ exists means that for every interval $\{i,\ldots,j\}$ (modulo $n$) of $B$ either that same interval or the interval $\{i+1,\ldots,j+1\}$ is in $B'$. The needed partition $\pi$ is the one whose non-singleton blocks are the intervals $\{i,\ldots,j+1\}$ where the second possibility happens. 
\end{proof}

Our goal is to show that $\bdry^B$ is in fact a trivial bundle. To do so, we use 
the local decomposition results from Section~\ref{sec:boundary_perms} to obtain a 
splitting. More precisely, we want to construct a map $\splt^B$ that makes the 
diagram

\begin{equation}
\label{eq:split}
\begin{tikzpicture}[xscale=3,yscale=-2,anchor=base,baseline,shift={(0,-.5cm)}]
\node (univ) at (0,0) {$\widetilde{\uconf}_{\card{B}}(\Gamma_n,\Orth)$};
\node (braid) at (2,0) {$\comp(\braid_n(B,\cdot))$};
\node (conf) at (1,1) {$\uconf_{\card{B}}(\Gamma_n,\Orth)$};
\path (braid) edge[->] node[anchor=north west] {$\bdry^B$} (conf);
\path (univ) edge[dashed,->] node[anchor=south] {$\splt^B$} (braid);
\path (univ) edge[->] node[anchor=north east] {$\cov$} (conf);
\end{tikzpicture}
\end{equation}
commute (where $\cov$ denotes the covering map).

\begin{lem}\label{lem:splitting}
The map $\splt^B$ in \eqref{eq:split} exists. It is characterized (modulo deck transformations) by the property that if an edge in its image is labeled by $\pi \in \NC_n(B',\cdot)$ then $\move^{B'}(\pi) = \pi$.
\end{lem}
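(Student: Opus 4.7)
The plan is to construct $\splt^B$ as a simplicial map, building it up skeleton by skeleton from a chosen basepoint. Fix a lift $\widetilde{B}$ of $B$ in the universal cover and set $\splt^B(\widetilde{B}) = 1 \in \braid_n$. The core local step is the following: for every edge $e$ of $\uconf_{\card{B}}(\Gamma_n,\Orth)$ out of a vertex $B'$, there is exactly one $\mu_e \in \move(\NC_n(B',\cdot))$ with $\bdry^B(\mu_e) = e$. Existence follows by choosing any preimage $\pi \in \NC_n(B',\cdot)$ of $e$ via the surjectivity in Lemma~\ref{lem:bundle} and replacing it with $\move^{B'}(\pi)$, which preserves wrapping numbers by Lemma~\ref{lem:move_order}; uniqueness is Lemma~\ref{lem:move_unique}. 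I then extend $\splt^B$ along edges: for $\vec{v} \to \vec{v}'$ covering $e$, set $\splt^B(\vec{v}') := \splt^B(\vec{v})\cdot \delta_{\mu_e}$.

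The main obstacle is showing that this prescription is path-independent and that the resulting vertex map extends to a simplicial map. Since the universal cover is the $(k,n)$-dilated column, hence $\cat(0)$ and in particular simply connected by Proposition~\ref{prop:pt-curv}, well-definedness reduces to consistency around every $2$-simplex $\vec{x}_0 < \vec{x}_0+\one_{C_1} < \vec{x}_0+\one_{C_2}$ with $C_1 \subsetneq C_2$. Writing $\mu_1,\mu_2,\mu_{12}$ for the move partitions attached to the three edges, the task is to show $\delta_{\mu_1}\delta_{\mu_2} = \delta_{\mu_{12}}$. My approach is to identify each $\mu_{C_i}$ explicitly as the noncrossing partition whose non-singleton blocks are obtained, for each maximal $[n]$-arc of $B'$ with a non-empty moving suffix under $C_i$, by adjoining to that suffix the next vertex of $[n]$. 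From this description it is immediate that $\mu_1 \le \mu_{12}$ in $\NC_n$, so Proposition~\ref{prop:relations} produces a unique dual simple $\pi$ with $\delta_{\mu_1}\delta_\pi = \delta_{\mu_{12}}$. By Lemma~\ref{lem:wrapping_add} this $\pi$ has the wrapping numbers of $\mu_2$ at $B'' = \bdry^B(\splt^B(\vec{x}_0)\delta_{\mu_1})$, and the decomposition of Proposition~\ref{prop:nc-decomposition} combined with the combinatorial description forces $\pi$ to itself be a move partition; Lemma~\ref{lem:move_unique} then yields $\pi = \mu_2$.

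Given well-definedness on vertices, extension to higher-dimensional simplices is automatic: a chain $\vec{x}_0 < \vec{x}_0+\one_{C_1} < \cdots < \vec{x}_0+\one_{C_\ell}$ maps to $\splt^B(\vec{x}_0) < \splt^B(\vec{x}_0)\delta_{\mu_{C_1}} < \cdots < \splt^B(\vec{x}_0)\delta_{\mu_{C_\ell}}$, which is a simplex of $\comp(\braid_n(B,\cdot))$ by left-invariance of the dual order (Proposition~\ref{prop:dual-simple-order}), the chain $\mu_{C_1} \le \cdots \le \mu_{C_\ell}$ coming from $C_1 \subsetneq \cdots \subsetneq C_\ell$, and Corollary~\ref{cor:intersections} with Lemma~\ref{lem:wrapping_add} to verify each vertex is a $(B,\cdot)$-boundary braid. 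The relation $\bdry^B \circ \splt^B = \cov$ is built into the construction, and the characterization $\move^{B'}(\pi) = \pi$ on edge labels in the image is the defining selection criterion; conversely, this characterization together with Lemma~\ref{lem:move_unique} determines $\splt^B$ uniquely once $\splt^B(\widetilde{B})$ is chosen, with different choices of the lift yielding splittings that differ by a deck transformation of the universal cover.
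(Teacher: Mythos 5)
Your overall strategy coincides with the paper's: fix a lift, extend $\splt^B$ edge-by-edge using the uniqueness of move partitions (Lemma~\ref{lem:move_unique}), and reduce well-definedness to $2$-simplices via simple connectivity of the dilated column (Proposition~\ref{prop:pt-curv}). The construction and the uniqueness-modulo-deck-transformations argument at the end are both fine, and setting things up through Proposition~\ref{prop:relations} (so that $\pi$ is dual simple by construction) is a reasonable variant of the paper's phrasing of the $2$-simplex condition.

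The genuine gap is the sentence ``the decomposition of Proposition~\ref{prop:nc-decomposition} combined with the combinatorial description forces $\pi$ to itself be a move partition.'' Nothing in Proposition~\ref{prop:nc-decomposition} says that the Kreweras-type complement $\pi$ of $\mu_1$ inside $\mu_{12}$ (produced by Proposition~\ref{prop:relations}) lies in the $\move$ factor for $B''$; that is precisely what needs proving, and the explicit block description you sketch for $\mu_1,\mu_2,\mu_{12}$ is not applied to $\pi$ at all. This claim \emph{is} the content of Lemma~\ref{lem:move_multiplicative}: writing $\delta_{\mu_1}\delta_\pi = \delta_{\mu_{12}}$, that lemma gives $\move^{B'}(\delta_{\mu_1}\delta_\pi) = \delta_{\mu_1}\move^{B''}(\delta_\pi)$, and since $\mu_{12}$ is a move partition this forces $\move^{B''}(\delta_\pi) = \delta_\pi$. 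The paper's proof of the lemma you are trying to reprove uses exactly the ingredients you have on the table (Lemma~\ref{lem:move_order}, Proposition~\ref{prop:dual-simple-order}, Lemma~\ref{lem:wrapping_add}), but the chain of inequalities is a real argument, not an observation. Once you assert $\pi$ is a move partition you correctly finish with Lemma~\ref{lem:move_unique}, but as written the key step is asserted rather than proved; you should either cite Lemma~\ref{lem:move_multiplicative} directly (as the paper does) or reproduce its proof.
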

\begin{proof}
We will use the shorthands $\widetilde{\uconf}$, $\uconf$ and $\comp$. Suppose there is an edge from $V'$ to $V''$ in $\widetilde{\uconf}$. Under $\cov$ it maps to an edge from $B'$ to $B''$ in $\uconf$. If $\beta'$ is a vertex above $B'$ (with respect to $\bdry^B$) then any vertex $\beta'' = \beta'\delta_\pi$ with $\pi \in \NC_n(B',B'')$ has the property that $\bdry^B(\beta'') = B''$. Our characterization states that if $\beta' = \splt^B(V')$ then $\splt^B(V'')$ should be the $\beta''$ with 
$\pi \in \move^{B'}(\NC_n(B',B''))$, which is unique by Lemma~\ref{lem:move_unique}. 
Similarly, if $\splt^B(V'')$ has already been defined, this uniquely characterizes $\splt^B(V')$.

If we choose a base vertex $V_0 \in \widetilde{\uconf}$ above $B$, declare that 
$\splt^B(V_0)$ is the vertex labeled by the identity braid, and extend the definition according to the above rule, we get a map that is defined everywhere since $\widetilde{\uconf}$ is connected (by edge paths).

It remains to see that this map is well-defined, i.e.\ that extensions along different edge paths agree. Since $\widetilde{\uconf}$ is simply connected, it suffices to check this along $2$-simplices. This amounts to the requirement that if 
$\beta \in \braid_n(B,B')$, $\delta_{\pi'} \in \move^{B'}(\DS_n(B',B''))$ and 
$\delta_{\pi''} \in \move^{B''}(\DS_n(B'',\cdot))$ are such that 
$\delta_{\pi'}\delta_{\pi''} \in \DS_n(B',\cdot)$ then 
$\move^{B'}(\delta_{\pi'}\delta_{\pi''}) = \delta_{\pi'}\delta_{\pi''}$. 
This is true by Lemma~\ref{lem:move_multiplicative}.
\end{proof}
\begin{defn}[Move complex]\label{def:move-subcomplex}
	Let $B \subseteq [n]$. We denote the image of $\splt^B$ by $\comp(\move_n(B,\cdot))$ and call it the \emph{move complex} associated to $B$. Its vertex set is denoted 
	$\move_n(B,\cdot)$.
\end{defn}
\begin{cor}\label{cor:move-cat}
	Let $B\subseteq [n]$.
	The corestriction of $\splt$ to the move complex 
	$\comp(\move_n(B,\cdot))$ is an isomorphism.
	In particular, the move complex is a $\cat(0)$ subcomplex 
	of the dual braid complex.
\end{cor}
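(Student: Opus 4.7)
The plan is to show that $p \defeq \bdry^B|_{\comp(\move_n(B,\cdot))}$ is a covering map onto $\uconf_{\card{B}}(\Gamma_n,\Orth)$. Together with the commutativity $p \circ \splt^B = \cov$ in \eqref{eq:split} and the simple connectivity of $\widetilde{\uconf}_{\card{B}}(\Gamma_n,\Orth)$ (each component is a $(\card{B},n)$-dilated column by Proposition~\ref{prop:pt-curv}), this identifies $\splt^B$ as a lift of a universal cover through a covering map. Covering-space uniqueness then forces $\splt^B$ to be a bijection onto its image, which is $\comp(\move_n(B,\cdot))$ by Definition~\ref{def:move-subcomplex}, so the corestriction is a simplicial bijection (surjectivity being tautological from the definition of the move complex as the image).

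To see $p$ is a covering, I would check that it is a local isomorphism at every vertex $\beta \in \move_n(B,B')$. Any outgoing edge of $\uconf_{\card{B}}(\Gamma_n,\Orth)$ from $B'$ to some $B''$ is determined by its boundary-rotation pattern, and Lemma~\ref{lem:move_unique} produces the unique $\pi \in \move(\DS_n(B',B''))$ realizing it, yielding exactly one outgoing edge of $\comp(\move_n(B,\cdot))$ at $\beta$ above it. For higher-dimensional simplices in the star of $B'$ I would iterate Lemma~\ref{lem:move_multiplicative}: compositions of move partitions are again moves, so a chain of outgoing move edges at $\beta$ spans a simplex of $\comp(\move_n(B,\cdot))$ rather than merely an edge path. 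This yields the required link bijection, making $p$ a simplicial covering map.

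Finally I would verify that $\splt^B$ preserves edge norms, upgrading the bijection to an isometry of orthoscheme complexes. An edge of $\widetilde{\uconf}_{\card{B}}(\Gamma_n,\Orth)$ from $\vx$ to $\vx + \one_A$ has squared length $\card{A}$ by the orthoscheme tiling of Example~\ref{exmp:ortho-Rn}, while its image is a move edge whose label has rank $\card{A}$ (precisely the number of boundary strands that rotate along that edge), and this rank equals the squared length in $\comp$ by Definition~\ref{def:dual-braid-cplx}. Lemma~\ref{lem:edge-norms} then promotes this edge-norm preservation to a global isometry of orthoscheme complexes. Since $\widetilde{\uconf}_{\card{B}}(\Gamma_n,\Orth)$ is $\cat(0)$ by Proposition~\ref{prop:pt-curv}, so is $\comp(\move_n(B,\cdot))$. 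The main obstacle is the higher-dimensional link bijectivity in the second step: Lemma~\ref{lem:move_unique} settles the edge case directly, but extending bijectivity to stars of arbitrary dimension demands a careful induction using Lemma~\ref{lem:move_multiplicative} together with the compatibility of the dual-simple partial order with the move map.
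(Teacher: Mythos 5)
Your framework has a genuine gap at the step ``Covering-space uniqueness then forces $\splt^B$ to be a bijection onto its image.'' Uniqueness of lifts in covering-space theory tells you only that there is a \emph{unique} map $\splt^B$ with $p \circ \splt^B = \cov$ and $\splt^B(V_0) = 1$; it says nothing about injectivity of that map. If $p \colon \comp(\move_n(B,\cdot)) \to \uconf_{\card{B}}(\Gamma_n,\Orth)$ is a covering and $\cov$ is the universal covering, then $\splt^B$ is itself a covering (indeed the universal covering) of $\comp(\move_n(B,\cdot))$, and a universal covering is a homeomorphism if and only if the base is simply connected --- which is essentially the conclusion you are trying to reach, so the reasoning is circular. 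For a concrete illustration of the gap, note that the universal covering $\R \to \mathbb{S}^1$ factors through the double covering $\mathbb{S}^1 \to \mathbb{S}^1$, and the resulting lift $\R \to \mathbb{S}^1$ is unique given a basepoint yet not injective.

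The paper closes this hole by proving injectivity of $\splt^B$ directly, without appealing to covering-space theory for that step. By Proposition~\ref{prop:pt-curv}, $\widetilde{\uconf}_{\card{B}}(\Gamma_n,\Orth)$ is a $(\card{B},n)$-dilated column, and the observation is that the integer coordinates of a vertex of that column relative to a fixed base vertex record exactly the wrapping numbers $(w_\beta(b,\cdot))_{b\in B}$ of the braid $\beta = \splt^B(V)$ (the paper checks this one edge at a time, using that an edge of the column from $(b_1,\ldots,b_k)$ to $(b_1+\varepsilon_1,\ldots,b_k+\varepsilon_k)$ maps to a move braid with those $\varepsilon_i$ as wrapping numbers, and that wrapping numbers add along paths by Lemma~\ref{lem:wrapping_add}). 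Since distinct vertices of the dilated column have distinct coordinate tuples, the corresponding move braids have distinct wrapping-number tuples, hence are distinct; this is the vertex-level injectivity, which combined with local injectivity (your link bijection) gives the isomorphism. You should replace your covering-space-uniqueness step with this direct wrapping-number argument. The rest of your outline --- the local link bijections via Lemma~\ref{lem:move_unique} and Lemma~\ref{lem:move_multiplicative}, and the edge-norm bookkeeping giving an isometry --- is sound and close to what the paper does, though note the paper phrases the covering claim as being about $\splt^B$ rather than about $\bdry^B$ restricted to the move complex.
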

\begin{proof}
	The corestriction of $\splt$ to $\comp(\move_n(B,\cdot))$ is a covering map by Lemma~\ref{lem:splitting}. We need to show that it is injective. To see this, recall from Proposition~\ref{prop:pt-curv} that $\widetilde{\uconf}_{\card{B}}(\Gamma_n,\Orth)$ is isomorphic to a dilated column. Let $(b_1,\ldots,b_k)$ with $0 < b_1 < \ldots < b_k <n$ be a basepoint above $B$ in the dilated column. Note that the edge from $(b_1,\ldots,b_k)$ to $(b_1 +\varepsilon_1,\ldots,b_k+\varepsilon_k)$, with $(\varepsilon_i)_i \in \{0,1\}^k$ is taken by $\splt$ to a 
	$(B,\cdot)$-boundary braid with wrapping numbers $(\varepsilon_1,\ldots,\varepsilon_k)$. It follows more generally that coordinates of the dilated column relative to the basepoint correspond to wrapping numbers. In particular, $\splt$ is injective.
\end{proof}
If we develop the image of $\fix^B$ in a similar way to how we just 
developed $\move^B$, we encounter a familiar structure.

\begin{defn}[Fix complex]\label{def:fix_complex}
	The \emph{fix complex} $\comp(\fix_n(B))$ is the full subcomplex of 
	$\comp(\braid_n)$ supported on $\fix_n(B)$. 
\end{defn}
Note that $\fix_n(B)$ is a parabolic subgroup and $\comp(\fix_n(B))$ is an isomorphic copy of 
$\comp(\braid_{n - \abs{B}})$. The fix complex does indeed relate to the decomposition of $\NC_n(B,\cdot)$ in a similar way as the move complex:

\begin{lem}\label{lem:fix_complex}
	The edges out of $\beta \in \fix_n(B)$ that lie in $\comp(\fix_n(B))$ are precisely those labeled by elements of $\fix(\NC_n(B))$. In particular, the fiber of $\bdry^B$ over $B'$ is a union over translates of $\comp(\fix_n(B'))$.
\end{lem}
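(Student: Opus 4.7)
The plan is to first establish the edge-labeling claim as a direct group-theoretic computation, and then combine it with Lemma~\ref{lem:bundle} to recognize the fiber of $\bdry^B$ as a disjoint union of translates.

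For the first claim, every edge of $\comp(\braid_n)$ incident to a vertex $\beta$ has the form $(\beta, \beta\delta_\pi)$ for a unique $\pi \in \NC_n^*$, and by definition of a full subcomplex this edge lies in $\comp(\fix_n(B))$ iff $\beta\delta_\pi \in \fix_n(B)$, i.e.\ iff $\delta_\pi \in \fix_n(B) \cap \DS_n^*$. Lemma~\ref{lem:fix_image} identifies the latter with $\fix(\NC_n(B)) \setminus \{\id\}$. The same reasoning applies verbatim to higher simplices once one recalls that a simplex at $\beta$ is a chain $\beta < \beta\delta_{\pi_1} < \cdots < \beta\delta_{\pi_k}$ and lies in $\comp(\fix_n(B))$ iff every $\delta_{\pi_i}$ lies in $\fix_n(B)$.

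For the second claim, I would unwind Lemma~\ref{lem:bundle}. The map $\bdry^B$ is affine on simplices, so its fiber over the vertex $B'$ is a subcomplex consisting of those simplices whose entire image is $B'$. A vertex $\beta$ sits in the fiber iff $\beta \in \braid_n(B,B')$, and by the description of $\bdry^B$ on edges an edge $(\beta,\beta\delta_\pi)$ with $\beta \in \braid_n(B,B')$ sits in the fiber iff no point of $B'$ moves, iff each $b \in B'$ is a singleton block of $\pi$, iff $\pi \in \fix(\NC_n(B'))$. Thus the fiber is the full subcomplex of $\comp(\braid_n)$ on $\braid_n(B,B')$ whose edge labels belong to $\fix(\NC_n(B'))$.

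Finally, $\fix_n(B')$ acts freely on $\braid_n(B,B')$ by right multiplication, since post-composing a representative with a braid fixing $P_{B'}$ preserves both the endpoints and boundary-parallelism of the $B$-strands. Choose representatives $\{\beta_\alpha\}$ for the right cosets; then $\braid_n(B,B') = \bigsqcup_\alpha \beta_\alpha\fix_n(B')$. Left multiplication by $\beta_\alpha$ is a simplicial isomorphism of $\comp(\braid_n)$ by left-invariance of the Cayley graph, and by the first claim (applied with $B'$ in place of $B$) it carries $\comp(\fix_n(B'))$ isomorphically onto the subcomplex of the fiber supported on $\beta_\alpha\fix_n(B')$. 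Since different cosets are disjoint, the fiber equals $\bigsqcup_\alpha \beta_\alpha\comp(\fix_n(B'))$. The only genuine obstacle is bookkeeping: keeping straight the three descriptions of the fiber (as a preimage, as a labeled subcomplex, and as a disjoint union of cosets) and remembering to swap $B$ for $B'$ when reapplying the first claim; the substantive input, namely Lemma~\ref{lem:fix_image} and the combinatorics of $\bdry^B$ in Lemma~\ref{lem:bundle}, is already in place.
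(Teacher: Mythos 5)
Your proof is correct and follows essentially the same route as the paper's: reduce the first claim to Lemma~\ref{lem:fix_image}, identify the collapsed edges via the combinatorics of $\bdry^B$ from Lemma~\ref{lem:bundle}, and assemble the fiber from coset translates of $\comp(\fix_n(B'))$. One small caveat worth fixing: the phrase ``the full subcomplex of $\comp(\braid_n)$ on $\braid_n(B,B')$ whose edge labels belong to $\fix(\NC_n(B'))$'' is internally inconsistent, since the full subcomplex on $\braid_n(B,B')$ contains edges that $\bdry^B$ does \emph{not} collapse --- for example, with $B = B' = [n]$ the edge from $1$ to $\delta$ labeled by $\{[n]\}$ joins two fiber vertices but maps onto the entire loop $\uconf_n(\Gamma_n,\Orth)$ --- so what you mean, and what your coset argument correctly produces, is the (non-full) subcomplex with that vertex set and only those edges whose label lies in $\fix(\NC_n(B'))$.
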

\begin{proof}
	The first claim is just Lemma~\ref{lem:fix_image}. For the second claim note that the edges out of $\beta \in \braid_n(B,B')$ that are collapsed to a point are precisely those labeled by $\fix^{B'}(\NC_n(B',\cdot))$. Thus the fiber of $\bdry^B$ over $B'$ is the union over the $\beta \fix_n(B')$ for $\beta \in \braid_n(B,B')$.
\end{proof}
\begin{prop}\label{prop:fix-move-product}
Let $B, B' \subseteq [n]$ and let $\beta \in \braid_n(B,B')$ be arbitrary. There are unique braids $\fix^B(\beta) \in \fix_n(B)$ and $\move^B(\beta) \in \move_n(B,B')$ such that
\[
\beta = \fix^B(\beta)\move^B(\beta)\text{.}
\]
Moreover,
\begin{enumerate}
\item $\move^B(\move^B(\beta)) = \move^B(\beta)$\label{item:move_idempotent}
\item $\move^B(\beta)^{-1} = \move^{B'}(\beta^{-1})$\label{item:move_inverses}
\item if $\beta' \in \braid_n(B',\cdot)$ then
$\move_n(\beta\beta') = \move_n^B(\beta)\move_n^{B'}(\beta')$.\label{item:move_multiplicative}
\end{enumerate}
\end{prop}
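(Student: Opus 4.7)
The strategy is to extend the maps $\fix^B$ and $\move^B$ from dual simple braids (Section~\ref{sec:boundary_perms}) to arbitrary $(B,\cdot)$-boundary braids by characterizing the two factors via wrapping numbers, then to use additivity of wrapping numbers (Lemma~\ref{lem:wrapping_add}) together with uniqueness of move braids by their wrapping numbers (Corollary~\ref{cor:move-cat}) to establish the four statements. For existence, let $\beta \in \braid_n(B,B')$. By Lemma~\ref{lem:wrapping_inequalities} the tuple $(w_\beta(b,\cdot))_{b \in B}$ is the coordinate tuple of a vertex $V_\beta$ of the dilated column $\widetilde{\uconf}_{\card{B}}(\Gamma_n,\Orth)$ lying above $B'$. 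Since the corestriction of $\splt^B$ to $\comp(\move_n(B,\cdot))$ is an isomorphism that identifies these coordinates with wrapping numbers (Corollary~\ref{cor:move-cat}), I set $\move^B(\beta) := \splt^B(V_\beta)$; this is the unique element of $\move_n(B,B')$ whose wrapping numbers agree with those of $\beta$. Define $\fix^B(\beta) := \beta\move^B(\beta)^{-1}$. Then Lemma~\ref{lem:wrapping_add} gives $w_{\fix^B(\beta)}(b,\cdot) = 0$ for every $b \in B$, and Lemma~\ref{lem:wrapping_0} places $\fix^B(\beta)$ in $\fix_n(B)$. Uniqueness of the decomposition is immediate: given $\beta = \alpha\mu$ with $\alpha \in \fix_n(B)$ and $\mu \in \move_n(B,B')$, additivity forces $\mu$ to share the wrapping numbers of $\beta$ and hence to equal $\move^B(\beta)$ by the uniqueness of move braids by their wrapping numbers.

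Statement (1) is immediate from uniqueness applied to the trivial decomposition $\move^B(\beta) = \id\cdot\move^B(\beta)$. For (3), set $\mu := \move^B(\beta)$ and expand $\beta\beta' = \fix^B(\beta)\cdot\mu\fix^{B'}(\beta')\mu^{-1}\cdot\mu\move^{B'}(\beta')$. A direct wrapping-number calculation using Lemma~\ref{lem:wrapping_add} shows that $\mu\fix_n(B')\mu^{-1} \subseteq \fix_n(B)$, since each conjugate has wrapping $w_\mu(b,b') + 0 - w_\mu(b,b') = 0$ on $B$; hence the first two factors collapse to an element of $\fix_n(B)$. Uniqueness then forces (3), provided I verify that $\mu\move^{B'}(\beta') \in \move_n(B,\cdot)$. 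Property (2) follows from (3) applied to $\beta' = \beta^{-1}$, using $\move^B(\id) = \id$ (a trivial consequence of uniqueness) to deduce $\move^{B'}(\beta^{-1}) = \move^B(\beta)^{-1}$.

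The main obstacle is the closure statement $\mu\move^{B'}(\beta') \in \move_n(B,\cdot)$. I plan to address it by induction on the length of an edge-path in $\comp(\move_n(B',\cdot))$ from the identity to $\move^{B'}(\beta')$. Since $\comp(\move_n(B',\cdot))$ is isomorphic to a dilated column via $\splt^{B'}$, such a path exists, and by the characterization in Lemma~\ref{lem:splitting} its edges are all labeled by dual simple braids $\delta_{\pi_i}$ with $\move^{B_{i-1}}(\pi_i) = \pi_i$, where the $B_i$ are the intermediate endpoint sets. The base case is that if $\mu \in \move_n(B,B_0)$ and $\delta_\pi$ is a dual simple braid satisfying $\move^{B_0}(\pi) = \pi$, then the edge out of $\mu$ labeled $\pi$ in $\comp(\braid_n(B,\cdot))$ meets the same characterization of edges in the image of $\splt^B$, so its endpoint $\mu\delta_\pi$ lies in $\move_n(B,B_1)$. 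The inductive step multiplies one dual simple factor at a time; once the full factorization of $\move^{B'}(\beta')$ has been traversed, the conclusion $\mu\move^{B'}(\beta') \in \move_n(B,\cdot)$ follows.
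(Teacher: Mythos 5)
Your proof is correct and rests on the same foundation as the paper's: both define $\move^B(\beta)$ as the image under $\splt^B$ of the appropriate vertex of the dilated column $\widetilde{\uconf}_{\card{B}}(\Gamma_n,\Orth)$, set $\fix^B(\beta) := \beta\move^B(\beta)^{-1}$, and derive uniqueness from the fact that move braids are determined by their coordinates (equivalently, wrapping numbers) via Corollary~\ref{cor:move-cat}. The differences are in emphasis. Where the paper phrases the construction in terms of lifting an edge path $p$ from $1$ to $\beta$ and following $\bdry^B(p)$ through the covering, you phrase it directly in terms of wrapping numbers; these are identified by Corollary~\ref{cor:move-cat} and yield the same object. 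Where the paper verifies $\fix^B(\beta)\in\fix_n(B)$ by noting that $\move^B(\fix^B(\beta))=1$ forces $\fix^B(\beta)$ into the identity component of the fiber of $\bdry^B$ over $B$ (invoking Lemma~\ref{lem:fix_complex}), you argue more elementarily via Lemma~\ref{lem:wrapping_add} and Lemma~\ref{lem:wrapping_0}; this is cleaner and avoids routing through properties~(1) and~(2). The most substantive divergence is in~(3): the paper dispatches it with ``follows from the corresponding properties of paths,'' whereas you make the required closure statement $\move^B(\beta)\move^{B'}(\beta')\in\move_n(B,\cdot)$ explicit and prove it by induction on an edge path inside $\comp(\move_n(B',\cdot))$, using the edge-label characterization of Lemma~\ref{lem:splitting} together with Lemma~\ref{lem:move_unique}. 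The one thing to tighten is the inductive step: an arbitrary edge path in the dilated column traverses edges in both directions, so you need to handle backward edges as well (the label condition is symmetric, so the same reasoning applies, but it should be said). With that caveat, the argument is sound and essentially a rigorous unpacking of what the paper's one-line appeal to paths is taking for granted.
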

\begin{rem}
Note that by Lemma~\ref{lem:splitting} and Lemma~\ref{lem:fix_complex} the braids $\fix^B(\beta)$ and $\move^B(\beta)$ coincide with the definitions in Section~\ref{sec:boundary_perms} if $\beta \in \DS_n(B,\cdot)$.
\end{rem}

\begin{proof}
Uniqueness amounts to the statement that $\fix_n(B) \cap \move_n(B,\cdot) = \{1\}$, which follows from Corollary~\ref{cor:move-cat}.

Let $\beta \in \braid_n(B,B')$. To define $\move^B(\beta)$, consider the diagram \eqref{eq:split} and let $V \in \widetilde{\uconf}_{\card{B}}(\Gamma_n,\Orth)$ be the base vertex with $\splt^B(V) = 1$. Let $p$ be an edge path from $1$ to $\beta$ in $\comp(\braid_n(B,\cdot))$, let $q = \bdry^B(p)$ and let $\tilde{q}$ be the path starting in $V$ and covering $q$. We take $\move^B(\beta)$ to be the endpoint of $q$. The properties \eqref{item:move_inverses} and \eqref{item:move_multiplicative} follow from the corresponding properties of paths. Commutativity of \eqref{eq:split} shows that if we did the same construction with $\beta$ replaced by $\move^B(\beta)$, we would again end up at $\move^B(\beta)$, thus proving \eqref{item:move_idempotent}.

Putting $\fix^B(\beta) = \beta \move^B(\beta)^{-1}$ it remains to verify that $\fix^B(\beta) \in \fix_n(B)$. We compute
\begin{align*}
\move(\fix^B(\beta)) &= \move^B(\beta)\move^{B'}(\move^B(\beta)^{-1})\\
&= \move^B(\beta)\move^B(\move^B(\beta))^{-1}\\
&= \move^B(\beta)\move^B(\beta)^{-1}\\
&= 1\text{.}
\end{align*}
This means that a path from $1$ to $\fix^B(\beta)$ is mapped to a null-homotopic path in the complex $\uconf_{\abs{B}}(\Gamma_n,\Orth)$. Thus $\fix^B(\beta)$ lies in the same component of the fiber of $\bdry^B$ over $B$ as $1$, which by Lemma~\ref{lem:fix_complex} is $\fix_n(B)$.
\end{proof}

\begin{lem}\label{lem:fix-product}
  If $\beta \in \braid_n(B,B')$ and $\beta'\in \braid_n(B',\cdot)$ then
  \[
  \fix^B(\beta\beta') = \fix^B(\beta)\fix^{B^\prime}(\beta')^{\move^B(\beta)^{-1}}\text{,}
  \]
  where we use the shorthand $x^y$ to mean the conjugation $y^{-1}xy$.
\end{lem}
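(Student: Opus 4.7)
The plan is to combine the canonical factorizations from Proposition~\ref{prop:fix-move-product} with the multiplicativity of $\move^B$ (item (3) of that same proposition) and read off the formula by comparing two expressions for $\beta\beta'$. On the one hand, the canonical decomposition gives
\[
\beta\beta' = \fix^B(\beta\beta')\,\move^B(\beta\beta'),
\]
and by multiplicativity the second factor equals $\move^B(\beta)\move^{B'}(\beta')$. On the other hand, substituting $\beta = \fix^B(\beta)\move^B(\beta)$ and $\beta' = \fix^{B'}(\beta')\move^{B'}(\beta')$ yields
\[
\beta\beta' = \fix^B(\beta)\,\move^B(\beta)\,\fix^{B'}(\beta')\,\move^{B'}(\beta').
\]

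Setting the two right-hand sides equal and canceling $\move^{B'}(\beta')$ on the right and $\move^B(\beta)$ by multiplying by its inverse, I obtain
\[
\fix^B(\beta\beta') = \fix^B(\beta)\,\move^B(\beta)\,\fix^{B'}(\beta')\,\move^B(\beta)^{-1} = \fix^B(\beta)\,\fix^{B'}(\beta')^{\move^B(\beta)^{-1}},
\]
which is the claimed identity once we unfold the notation $x^y = y^{-1}xy$.

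The one point that deserves a sentence of justification rather than a calculation is that the right-hand side genuinely lies in $\fix_n(B)$ — a priori, conjugating $\fix^{B'}(\beta') \in \fix_n(B')$ by $\move^B(\beta)$, which is a $(B,B')$-braid, could produce something outside $\fix_n(B)$. This is handled for free by the uniqueness clause in Proposition~\ref{prop:fix-move-product}: I have exhibited a factorization of $\beta\beta'$ whose second factor is $\move^B(\beta)\move^{B'}(\beta') = \move^B(\beta\beta') \in \move_n(B,\cdot)$, so by uniqueness the first factor must coincide with $\fix^B(\beta\beta') \in \fix_n(B)$. There is no real obstacle to the proof; the lemma is a bookkeeping consequence of Proposition~\ref{prop:fix-move-product}, and the only thing to be careful about is the order of multiplication (since the braid product is written left-to-right) so that $\move^B(\beta)$ appears on the left of $\fix^{B'}(\beta')$ in the conjugation.
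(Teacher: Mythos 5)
Your proof is correct and is essentially the same as the paper's: both equate the two factorizations $\beta\beta' = \fix^B(\beta\beta')\move^B(\beta)\move^{B'}(\beta') = \fix^B(\beta)\move^B(\beta)\fix^{B'}(\beta')\move^{B'}(\beta')$ and solve for $\fix^B(\beta\beta')$. The extra paragraph you include about why the conjugate lands in $\fix_n(B)$ is a nice sanity check, though it is automatic once the left side is identified as $\fix^B(\beta\beta')$, which lies in $\fix_n(B)$ by definition.
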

\begin{proof}
  By Proposition~\ref{prop:fix-move-product} we have on one hand
  \[
  \beta\beta' = \fix^B(\beta\beta')\move^B(\beta\beta') = \fix^B(\beta\beta')\move^B(\beta)\move^{B'}(\beta')\text{,}
  \]
  and on the other hand
  \[
  \beta\beta' = \fix^B(\beta)\move^B(\beta)\fix^{B'}(\beta')\move^{B'}(\beta')\text{.}
  \]
  Solving for $\fix^B(\beta\beta')$ proves the claim.
\end{proof}
We are now ready to prove the main result of this article.

\begin{thm}\label{thm:boundary-product}
  Let $B\subseteq [n]$. The map 
  \begin{align*}
  \varphi:\braid_n(B,\cdot) &\to \fix_n(B)\times\move_n(B,\cdot)\\
  \beta & \mapsto (\fix^B(\beta),\move^B(\beta))
  \end{align*}
  induces an isomorphism of orthoscheme complexes
  \[
  \comp(\braid_n(B,\cdot)) \cong
  \comp(\fix_n(B)) \oprod \comp(\move_n(B,\cdot))\]
  which is, in particular, an isometry.
\end{thm}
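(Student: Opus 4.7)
The proof splits into three stages: a vertex bijection, an upgrade to a $\Delta$-complex isomorphism, and an isometry check. The vertex bijection is immediate from Proposition~\ref{prop:fix-move-product}: every $(B,\cdot)$-boundary braid decomposes uniquely as a fix braid times a move braid, so $\varphi$ is a bijection whose inverse is the multiplication map $(\phi,\mu)\mapsto \phi\mu$.

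For the simplicial structure, take a $k$-simplex of $\comp(\braid_n(B,\cdot))$, i.e.\ a chain $\beta_0 < \beta_1 < \cdots < \beta_k$ whose pairwise ratios are all dual simple. Setting $\phi_t = \fix^B(\beta_t)$, $\mu_t = \move^B(\beta_t)$, and $B_t = \bdry^B(\beta_t)$, Proposition~\ref{prop:fix-move-product}\eqref{item:move_multiplicative} yields $\mu_i^{-1}\mu_j = \move^{B_i}(\beta_i^{-1}\beta_j) \in \DS_n$, so the $\mu_t$ form a chain in $\comp(\move_n(B,\cdot))$. Lemma~\ref{lem:fix-product} gives $\phi_i^{-1}\phi_j = \mu_i\,\fix^{B_i}(\beta_i^{-1}\beta_j)\,\mu_i^{-1}$, and showing that this conjugate is again a dual simple element of $\fix_n(B)$ reduces to the key claim that conjugation by any move braid $\mu \in \move_n(B,B')$ carries $\DS_n \cap \fix_n(B')$ bijectively onto $\DS_n \cap \fix_n(B)$. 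Assuming this, the componentwise monotonicity of the sequence $(\phi_t,\mu_t)$ and the fact that all $\phi_t$ (respectively all $\mu_t$) lie in a common simplex of their respective complex places the chain of pairs inside a single simplex of the canonical subdivision of Example~\ref{exmp:prod-ord-simp}, hence inside a simplex of $\comp(\fix_n(B)) \oprod \comp(\move_n(B,\cdot))$. The reverse direction, starting from a chain of pairs and producing a chain of products $\phi_t\mu_t$ with dual simple ratios, follows by reading these identities backwards and again invoking the conjugation claim.

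For the metric, Lemma~\ref{lem:edge-norms} reduces matters to checking that edge norms agree. The edge in $\comp(\braid_n(B,\cdot))$ from $\beta$ to $\beta\delta_\pi$ has squared norm $\rk(\pi)$, while the corresponding edge in the product complex decomposes into a fix-direction component of squared length $\rk(\fix^{B'}(\pi))$ and a move-direction component of squared length $\rk(\move^{B'}(\pi))$ (where $B' = \bdry^B(\beta)$). Rank additivity $\rk(\pi) = \rk(\fix^{B'}(\pi)) + \rk(\move^{B'}(\pi))$ follows from the factorization $\delta_\pi = \fix^{B'}(\delta_\pi)\,\move^{B'}(\delta_\pi)$ together with Lemma~\ref{lem:abelian}. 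Combining with Proposition~\ref{prop:prod-oc}, which identifies $\comp(\fix_n(B)) \oprod \comp(\move_n(B,\cdot))$ with the metric direct product, then upgrades the $\Delta$-complex isomorphism to the claimed isometry.

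The main obstacle will be the conjugation claim above. Geometrically it is natural: a move braid rigidly rotates the boundary vertices and the induced automorphism of $\fix_n(B') \cong \braid_{n - \card{B'}}$ should preserve the combinatorics of noncrossing partitions, and hence dual simple braids. Making this precise will likely require either a direct partition-level argument based on the description of move braids developed in Section~\ref{sec:boundary_perms}, or an appeal to Lemma~\ref{lem:homeos} and Lemma~\ref{lem:fix=para} to transport the dual Garside structure from $D_{[n]-B'}$ to $D_{[n]-B}$ along the path traced out by $\mu$.
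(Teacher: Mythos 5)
Your overall approach coincides with the paper's: the vertex bijection comes from Proposition~\ref{prop:fix-move-product}, the compatibility with the simplicial structure from the multiplicativity of $\fix^B$ and $\move^B$ (Lemma~\ref{lem:fix-product} and Proposition~\ref{prop:fix-move-product}), and the isometry from Proposition~\ref{prop:prod-oc}. The paper's only real shortcut is to reduce to the $1$-skeleton (both complexes being flag), parametrizing the links of $\beta$ and of $\varphi(\beta)$ by $\NC_n(B',\cdot)$ and by $\fix^{B}(\NC_n(B,\cdot)) \times \move^{B'}(\NC_n(B',\cdot))$ respectively and comparing them via Proposition~\ref{prop:nc-decomposition}, rather than working with full chains as you do. Your rank-additivity check on edge norms via Lemma~\ref{lem:abelian} is correct and simply unpacks what Proposition~\ref{prop:prod-oc} packages.

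The conjugation claim you flag as ``the main obstacle''---that conjugation by $\mu \in \move_n(B,B')$ carries $\DS_n \cap \fix_n(B')$ bijectively onto $\DS_n \cap \fix_n(B)$---is indeed the crux, and as written your proposal leaves it unproved, so it is not self-contained. It is worth noting that the paper does not spell this point out either: the step asserting that $\varphi$ induces a poset isomorphism of links, ``the identity on the second factor and conjugation by $\move^B(\beta)$ on the first,'' presupposes precisely that this conjugation restricts to a bijection $\fix^{B'}(\NC_n(B',\cdot)) \to \fix^{B}(\NC_n(B,\cdot))$. Of the two strategies you sketch, the second is the one that works cleanly: by Corollary~\ref{cor:move-cat} it suffices to treat a dual simple move braid $\delta_\rho$, whose non-singleton blocks are cyclic intervals $\{i,\ldots,j+1\}$ with $\{i,\ldots,j\}\subseteq B$ and $j+1 \notin B$; each such block induces the cyclic-order-preserving bijection $[n]-B' \to [n]-B$ sending $i \mapsto j+1$ and fixing all other elements, and one checks that conjugation by $\delta_\rho$ acts on $\DS_n \cap \fix_n(B')$ as the induced isomorphism of noncrossing partition lattices. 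With that lemma in hand, your argument and the paper's go through identically.
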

\begin{proof}
	The map $\varphi$ is a bijection by Proposition~\ref{prop:fix-move-product}.
	
	For each $\beta \in \braid_n(B,B')$, we may restrict the domain of $\varphi$
	to the subcomplex of simplices in $\comp(\braid_n(B,\cdot))$ with minimum 
	vertex $\beta$  and the image of $\varphi$ to the subcomplex of simplices 
	in the orthoscheme product
	$\comp(\fix_n(B))\oprod \comp(\move_n(B,\cdot))$ with minimum vertex 
	labeled by
	$(\fix^B(\beta),\move^B(\beta))$. It suffices for us to show that
	this restriction of $\varphi$ is an isomorphism of orthoscheme complexes,
	and since both complexes are flag complexes, we need only check this on the
	$1$-skeleton.	
	
	The edges leaving $\beta$ are parametrized by $\NC_n(B',\cdot)$, which by
	Proposition~\ref{prop:nc-decomposition} is isomorphic to
	\begin{equation}\label{eq:left_link}
	\fix^{B'}(\NC_n(B',\cdot)) \times \move^{B'}(\NC_n(B',\cdot))\text{.}
	\end{equation}
	The edges out of $(\fix^B(\beta),\move^B(\beta))$ are parametrized by
	\begin{equation}\label{eq:right_link}
	\fix^{B}(\NC_n(B,\cdot)) \times \move^{B'}(\NC_n(B',\cdot))\text{.}
	\end{equation}
	Recall that for each $\pi \in \NC_n(B',\cdot)$, we have
	\[
	\move^B(\beta\delta_\pi) = \move^B(\beta)\move^{B'}(\delta_\pi)
	\]
	by Proposition~\ref{prop:fix-move-product} and
	\[
	\fix^B(\beta\delta_\pi) = \fix^B(\beta)\fix^{B'}(\delta_\pi)^{\move^B(\beta)^{-1}}\text{.}
	\]
	by Lemma~\ref{lem:fix-product}.	
	This shows that $\varphi$ indeed induces an isomorphism between
	the posets
	\eqref{eq:left_link} and \eqref{eq:right_link},
	namely it is the identity on the second factor and conjugation
	by $\move^B(\beta)$ on the first.
	Since these posets are isomorphic, the given restriction of $\varphi$ 
	is an isomorphism of orthoscheme complexes and by Proposition~\ref{prop:prod-oc}, 
	this isomorphism is an isometry.
\end{proof}
\begin{cor}
  Let $B\subseteq [n]$. If $\comp(\braid_{n-\card{B}})$ is $\cat(0)$ then $\comp(\braid_n(B,\cdot))$ is $\cat(0)$ as well.
\end{cor}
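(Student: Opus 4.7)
The proof will be a short application of Theorem~\ref{thm:boundary-product} together with a standard fact about $\cat(0)$ spaces. The plan is to identify the two factors in the metric decomposition and observe that each is $\cat(0)$ under our hypothesis.

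First I would invoke Theorem~\ref{thm:boundary-product} to obtain the isometric identification
\[
\comp(\braid_n(B,\cdot)) \cong \comp(\fix_n(B)) \oprod \comp(\move_n(B,\cdot))\text{,}
\]
which by Proposition~\ref{prop:prod-oc} is isometric to the metric direct product $\comp(\fix_n(B)) \times \comp(\move_n(B,\cdot))$. Next I would recall that $\fix_n(B) = \braid_A$ where $A = [n] - B$ by Lemma~\ref{lem:fix=para}, and that this group is isomorphic to $\braid_{n-\card{B}}$ by Definition~\ref{def:A-subgroups}; because the dual simple braids and their relations in $\braid_A$ match those in $\braid_{n-\card{B}}$, this algebraic isomorphism extends to an isometry of orthoscheme complexes $\comp(\fix_n(B)) \cong \comp(\braid_{n-\card{B}})$. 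By hypothesis the latter is $\cat(0)$, so $\comp(\fix_n(B))$ is $\cat(0)$.

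Finally, Corollary~\ref{cor:move-cat} tells us that $\comp(\move_n(B,\cdot))$ is a $\cat(0)$ subcomplex of the dual braid complex. A standard result in metric geometry (see for instance \cite[II.1.15]{brihae}) asserts that the metric direct product of two $\cat(0)$ spaces is again $\cat(0)$, so we conclude that $\comp(\braid_n(B,\cdot))$ is $\cat(0)$. There is no real obstacle in the argument: all of the substantive work has already been carried out in Theorem~\ref{thm:boundary-product} and Corollary~\ref{cor:move-cat}, and the corollary simply combines these results with the identification of $\fix_n(B)$ as a smaller braid group.
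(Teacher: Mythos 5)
Your proof is correct and follows essentially the same route as the paper: apply Theorem~\ref{thm:boundary-product} to split the complex as a metric product, identify $\comp(\fix_n(B))$ with $\comp(\braid_{n-\card{B}})$, cite Corollary~\ref{cor:move-cat} for the move factor, and invoke the standard fact that a product of $\cat(0)$ spaces is $\cat(0)$. The only cosmetic difference is the Bridson--Haefliger reference (the paper cites Exercise~II.1.16(2) rather than II.1.15), and you spell out the identification of $\fix_n(B)$ with the smaller braid group a bit more explicitly.
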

\begin{proof}
By Theorem~\ref{thm:boundary-product}, $\comp(\braid_n(B,\cdot))$ is isomorphic to the 
metric direct product of $\comp(\fix_n(B))$, which is isomorphic to the dual
braid complex $\comp(\braid_{n-\card{B}})$, and 
$\comp(\move_n(B,\cdot))$, which is $\cat(0)$ by Corollary~\ref{cor:move-cat}. 
The claim therefore follows from \cite[Exercise~II.1.16(2)]{brihae}.
\end{proof}
%
\section{The Groupoid of Boundary Braids}\label{sec:groupoid}

We close with a more algebraic view on the results of the last section. We refer the reader to \cite[Chapter~12]{higgins71} and \cite[Chapter~II]{dehornoy15} for basic background on groupoids.

\begin{defn}
The \emph{groupoid of boundary braids} has as objects the finite subsets
of $[n]$. The morphisms from $B$ to $B'$ are $\braid_n(B,B')$
if $\card{B} = \card{B'}$ and empty otherwise. Composition is
composition of braids.
\end{defn}
\begin{rem}
To be precise one should say that morphisms are represented
by boundary braids as a braid may at the same time be a
$(B,B')$-boundary braid and a $(C,C')$-boundary braid thus
represent two different morphisms. Since a morphism
is uniquely determined by the braid and either its source
or its target, we trust that no confusion will arise from this
imprecision.
\end{rem}
Parabolic subgroups form a subgroupoid in a trivial way.

\begin{defn}
The \emph{groupoid of fix braids} has as its objects the finite
subsets of $[n]$. The morphisms from $B$ to $B'$ are
$\fix_n(B)$ if $B = B'$ and are empty otherwise.
\end{defn}
The groupoid of fix braids is normal in the following sense.

\begin{lem}
If $\beta \in \braid_n(B,B')$ and $\beta' \in \fix_n(B)$ then $\beta^{-1}\beta'\beta \in \fix_n(B')$.
\end{lem}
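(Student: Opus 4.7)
The plan is to reduce the claim to a wrapping-number computation and then invoke the characterization of fix braids via wrapping numbers (Lemma~\ref{lem:wrapping_0}). The underlying intuition is simply that conjugation by $\beta$ carries the strands starting at $P_B$ to the strands starting at $P_{B'}$, so conjugation sends something fixing $P_B$ to something fixing $P_{B'}$; one just needs a rigorous handle on ``fixing'' that survives the three-way composition.

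First I would fix an arbitrary $j \in B'$ and let $i \in B$ be the unique element with $i \cdot \perm(\beta) = j$, so that $\beta$ admits a representative in which the $(i,j)$-strand lies in $\partial D_n$. Then $\beta$ is an $(i,j)$-boundary braid, $\beta^{-1}$ is a $(j,i)$-boundary braid, and $\beta' \in \fix_n(B) \subseteq \fix_n(\{i\})$ is an $(i,i)$-boundary braid with $w_{\beta'}(i,i) = 0$ by Lemma~\ref{lem:wrapping_0}. Applying Lemma~\ref{lem:wrapping_add} twice, the composite $\beta^{-1}\beta'\beta$ is a $(j,j)$-boundary braid with
\[
w_{\beta^{-1}\beta'\beta}(j,j) \;=\; w_{\beta^{-1}}(j,i) + w_{\beta'}(i,i) + w_{\beta}(i,j) \;=\; -w_{\beta}(i,j) + 0 + w_{\beta}(i,j) \;=\; 0,
\]
where the identity $w_{\beta^{-1}}(j,i) = -w_\beta(i,j)$ comes from applying Lemma~\ref{lem:wrapping_add} to $\beta \cdot \beta^{-1} = 1$ together with Lemma~\ref{lem:wrapping_well-defined}.

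Since the above is true for every $j \in B'$, the braid $\beta^{-1}\beta'\beta$ is a $(j,\cdot)$-boundary braid for each $j \in B'$, so by Corollary~\ref{cor:intersections} it is a $(B',\cdot)$-boundary braid. All of its wrapping numbers along $B'$ vanish, so Lemma~\ref{lem:wrapping_0} gives $\beta^{-1}\beta'\beta \in \fix_n(B')$, as required.

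The only real content is bookkeeping of the indices $i \leftrightarrow j$ under $\perm(\beta)$ and the additivity of wrapping numbers; there is no genuine obstacle here, as every ingredient has already been established in Section~\ref{sec:boundary_braids}. The one spot where mild care is needed is ensuring that the simultaneous boundary-parallel property (Proposition~\ref{prop:simultaneously_boundary_parallel}, used through Corollary~\ref{cor:intersections}) is invoked before applying Lemma~\ref{lem:wrapping_0}, since the latter presupposes that $\beta^{-1}\beta'\beta$ is a $(B',\cdot)$-boundary braid.
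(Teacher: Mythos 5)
Your proof is correct and follows essentially the same route as the paper's: the key identity $w_{\beta^{-1}}(j) = -w_\beta(i)$ (obtained from additivity applied to $\beta\beta^{-1}=1$), then Lemma~\ref{lem:wrapping_add} to compute $w_{\beta^{-1}\beta'\beta}(j)=0$, then Lemma~\ref{lem:wrapping_0}. The paper states this in two sentences; your version fills in the bookkeeping, including the worthwhile explicit check (via Corollary~\ref{cor:intersections}) that $\beta^{-1}\beta'\beta$ is a $(B',\cdot)$-boundary braid before invoking the wrapping-number characterization of $\fix_n(B')$.
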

\begin{proof}
Note that if $\beta$ is a $(b,b')$-boundary braid then $w_\beta(b) = -w_{\beta^{-1}}(b')$. The claim now follows from Lemma~\ref{lem:wrapping_add} and Lemma~\ref{lem:wrapping_0}.
\end{proof}
The corresponding quotient morphism is the map $\bdry^B$ from Lemma~\ref{lem:bundle} that takes a $(B,B')$-boundary braid to a $(B,B')$-path in the fundamental groupoid of $\uconf_{\card{B}}(\Gamma_n,\Orth)$. The upshot of the last section is that this map splits with image move braids.

\begin{defn}
The \emph{groupoid of move braids} has objects finite
subsets of $[n]$. The morphisms from $B$ to $B'$ are
the braids $\move_n(B,B')$, which are images under
$\splt^B$ of $(B,B')$-paths.
\end{defn}
It follows from Proposition~\ref{prop:fix-move-product}\eqref{item:move_inverses} and~\eqref{item:move_multiplicative} that this is indeed a subgroupoid. Now the algebraic conclusion can be formulated as follows, see \cite[Section~4]{witzel} for a discussion of semidirect products.

\begin{thm}
The groupoid $\braid_n(\cdot,\cdot)$ is a semidirect product
\[
\fix_n(\cdot) \rtimes \move_n(\cdot,\cdot)\text{.}
\]
Specifically,
\begin{enumerate}
\item every $\beta \in \braid_n(B,B')$ decomposes uniquely as $\beta = \varphi \mu$ with $\varphi \in \fix_n(B)$ and $\mu \in \move_n(B,B')$;
\item if $\varphi \mu = \mu' \varphi'$ with $\mu,\mu' \in \move_n(B,B')$, $\varphi \in \fix_n(B)$ and $\varphi' \in \fix_n(B')$ then $\mu = \mu'$.
\end{enumerate}
\end{thm}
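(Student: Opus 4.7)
The first claim is essentially a restatement of Proposition~\ref{prop:fix-move-product} in groupoid language. Given $\beta \in \braid_n(B,B')$, I set $\varphi = \fix^B(\beta)$ and $\mu = \move^B(\beta)$; the proposition immediately produces $\beta = \varphi\mu$ together with uniqueness of such a factorization, and by construction the factors land in the claimed hom-sets $\fix_n(B)$ and $\move_n(B,B')$ respectively.

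For the second claim, the plan is to apply $\move^B$ to both sides of the identity $\varphi\mu = \mu'\varphi'$ and evaluate using the three parts of Proposition~\ref{prop:fix-move-product}. On the left, multiplicativity~\eqref{item:move_multiplicative} gives $\move^B(\varphi\mu) = \move^B(\varphi)\,\move^B(\mu)$; since $\varphi \in \fix_n(B)$ the first factor is the identity, and since $\mu \in \move_n(B,B')$ the idempotence statement~\eqref{item:move_idempotent} collapses the second factor to $\mu$, so the left side reduces to $\mu$. The same multiplicativity formula applied to the right side gives $\move^B(\mu'\varphi') = \move^B(\mu')\,\move^{B'}(\varphi') = \mu' \cdot 1 = \mu'$, using $\varphi' \in \fix_n(B')$ for the second equality. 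Equating these two evaluations yields $\mu = \mu'$.

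The semidirect product decomposition will then follow formally. The two subgroupoids $\fix_n(\cdot)$ and $\move_n(\cdot,\cdot)$ share the object set of $\braid_n(\cdot,\cdot)$; normality of $\fix_n(\cdot)$ is precisely the lemma established immediately before the theorem; the action of $\move_n(\cdot,\cdot)$ on $\fix_n(\cdot)$ by conjugation is exactly what Lemma~\ref{lem:fix-product} records via its explicit formula for $\fix^B(\beta\beta')$; and the two enumerated uniqueness statements above supply the remaining data of a semidirect product in the sense of \cite[Section~4]{witzel}. I do not expect a substantive obstacle, since all the geometric and combinatorial content has already been worked out in Section~\ref{sec:boundary_subcomplex}, and this theorem only repackages that content.
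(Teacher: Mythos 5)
Your proof is correct and takes essentially the same approach as the paper's: claim (1) is a direct restatement of Proposition~\ref{prop:fix-move-product}, and claim (2) follows by applying $\move^B$ to both sides of $\varphi\mu = \mu'\varphi'$ and observing that fix braids are annihilated while move braids are fixed. The paper phrases this second step in terms of $\bdry^B$ collapsing the fix factors, while you invoke the multiplicativity and idempotence of $\move^B$ from Proposition~\ref{prop:fix-move-product} directly; this is merely a different bookkeeping of the same content.
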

\begin{proof}
The first statement is Proposition~\ref{prop:fix-move-product}. For the second note that $\bdry^B(\varphi \mu) = \bdry^B(\mu'\varphi')$ since elements of $\fix_n(\cdot)$ are mapped trivially under $\bdry^B$. It follows that $\move^B(\mu'\varphi') = \move^B(\varphi\mu) = \mu$.
\end{proof}

\newcommand{\etalchar}[1]{$^{#1}$}
\providecommand{\bysame}{\leavevmode\hbox to3em{\hrulefill}\thinspace}
\providecommand{\MR}{\relax\ifhmode\unskip\space\fi MR }
\providecommand{\MRhref}[2]{%
  \href{http://www.ams.org/mathscinet-getitem?mr=#1}{#2}
}
\providecommand{\href}[2]{#2}

\end{document}